\theoremstyle{plain}
\newtheorem{theorem}{\protect\theoremname}[section] 
\theoremstyle{definition}
\newtheorem{definition}[theorem]{\protect\definitionname}
\theoremstyle{plain}
\newtheorem{lemma}[theorem]{\protect\lemmaname}
\theoremstyle{remark}
\newtheorem{remark}[theorem]{\protect\remarkname}
\theoremstyle{plain}
\newtheorem{corollary}[theorem]{\protect\corollaryname}
\theoremstyle{plain}
\newtheorem{proposition}[theorem]{\protect\propositionname}
\theoremstyle{plain}
\newtheorem{example}[theorem]{\protect\examplename}
\providecommand{\definitionname}{Definition}
\providecommand{\lemmaname}{Lemma}
\providecommand{\theoremname}{Theorem}
\providecommand{\corollaryname}{Corollary}
\providecommand{\remarkname}{Remark}
\providecommand{\propositionname}{Proposition}
\providecommand{\examplename}{Example}
\numberwithin{equation}{section}  
\def\rr{{\mathbb R}}
\def\rn{{{\rr}^n}}
\def\D{\mathcal{D}}
\def\d{\mathrm{d}}
\def\H{\mathcal{H}}
\def\M{\mathcal{M}}
\def\sgn{\operatorname{sgn}}
\def\BMO{\operatorname{BMO}(\rn)}
\def\CMO{\operatorname{CMO}(\rn)}
\begin{document}
	\title
	{\bf\Large
		 Bourgain-Morrey-Lorentz spaces and  operators on them
		\footnotetext{The corresponding author Jingshi Xu is supported by the National Natural Science Foundation of China (Grant No. 12161022) and the Science and Technology Project of Guangxi (Guike AD23023002).
		Pengfei Guo is supported by Hainan Provincial Natural Science Foundation of China (Grant No. 122RC652).
	}
		}
	
	\date{}
	
	\author{Tengfei Bai\textsuperscript{a}, Pengfei Guo\textsuperscript{a},  Jingshi Xu\textsuperscript{b,c,d}\footnote{Corresponding author, E-mail: jingshixu@126.com}  \\
		{\scriptsize \textsuperscript{a} \scriptsize College of Mathematics and Statistics, Hainan Normal University, Haikou, Hainan 571158,
			China}\\
		{\scriptsize  \textsuperscript{b} School of Mathematics and Computing Science, Guilin University of Electronic Technology, Guilin 541004, China} \\
		{\scriptsize  \textsuperscript{c} Center for Applied Mathematics of Guangxi (GUET), Guilin 541004, China}\\
		{\scriptsize  \textsuperscript{d} Guangxi Colleges and Universities Key Laboratory of Data Analysis and Computation, Guilin 541004, China}
	}
	
	\pagestyle{myheadings}\markboth{\footnotesize\rm\sc Tengfei Bai, Pengfei Guo,  Jingshi Xu}
	{\footnotesize\rm\sc  }
	
\maketitle
\begin{abstract}
We introduce  Bourgain-Morrey-Lorentz spaces and give a description of the predual of Bourgain-Morrey-Lorentz spaces via the block spaces. As an application of duality, we obtain the   boundedness of Hardy-Littlewood maximal operator, sharp maximal operator, Calder\'on-Zygmund operator, fractional integral operator, commutator on Bourgain-Morrey-Lorentz spaces. Moreover, we obtain a weak Hardy factorization terms of Calder\'on-Zygmund operator in  Bourgain-Morrey-Lorentz spaces. Using this result, we obtain a characterization of functions in $\BMO$ (the functions of ``bounded mean oscillation'') via the boundedness of commutators generated by them and  a homogeneous  Calder\'on-Zygmund operator. In the last, we show that the commutator generated by a function $b$ and a homogeneous  Calder\'on-Zygmund operator  is a compact operator on Bourgain-Morrey-Lorentz spaces if and only if $b$ is the
limit of compactly supported  smooth functions in $\BMO$.
\end{abstract}
\textbf{Keywords} 
Bourgain-Morrey-Lorentz space,
 block space,
  Calder\'on-Zygmund operator,
   commutator, 
  Hardy-Littlewood maximal operator,  compactness.

\noindent \textbf{Mathematics Subject Classification}  42B35, 42B20, 46E30, 46A20

\section{Introduction}

Morrey spaces were first introduced by Morrey in \cite{M43},
which include Lebesgue spaces, Lipschitz spaces, bounded mean oscillation spaces.
A special case of Bourgain-Morrey spaces was introduced by Bourgain in \cite{B91}. Over the past years, Bourgain-Morrey spaces have been applied to
various partial differential equations, especially the Strichartz estimate and nonlinear Schr\"odinger equations, see, e.g., \cite{BPV07,B91,M16,MV98,MVV99}.

In \cite{M16}, Masaki proposed a predual space of the Bourgain-Morrey space and characterized total boundedness of a bounded set in the Bourgain-Morrey space.
Further properties of Bourgain-Morrey spaces such as inclusion, dilation, translation, nontriviality, diversity, approximation,  density and duality were obtained by  Hatano, Nogayama, Sawano and Hakim in \cite{HNSH23}.
The boundedness of operators such as the Hardy-Littlewood maximal operator, fractional integral operators, fractional maximal operators, and singular integral operators on Bourgain-Morrey spaces was also  given in \cite{HNSH23}.

After then, some function spaces extending Bourgain-Morrey spaces were established.
For example, in \cite{ZSTYY23}, Zhao et al.  introduced Besov-Bourgain-Morrey spaces which connect Bourgain-Morrey spaces with amalgam-type spaces.  They obtained predual, dual spaces and complex interpolation  of these spaces. They also gave an equivalent norm with an integral expression  and  obtained the continuity  on these spaces
of the Hardy-Littlewood maximal operator, the fractional integral and the Calder\'on-Zygmund operator. 

Immediately after \cite{ZSTYY23},
 Hu, Li, and Yang  introduced  Triebel-Lizorkin-Bourgain-Morrey spaces which connect Bourgain-Morrey spaces and global Morrey spaces in \cite{HLY23}. 
They considered the embedding relations between Triebel-Lizorkin-Bourgain-Morrey spaces and Besov-Bourgain-Morrey spaces.  
  They studied various fundamental real-variable properties of these spaces. They obtained the sharp mapping on these spaces
of  the Hardy-Littlewood maximal operator, the Calder\'on-Zygmund operator and the fractional integral.

Inspired by the generalized grand Morrey spaces and Besov-Bourgain-Morrey spaces, Zhang et al. introduced generalized grand Besov-Bourgain-Morrey spaces in \cite{ZYZ24}. They obtained predual spaces and the Gagliardo-Peetre, the $\pm$ interpolation theorems,  extrapolation theorem.  The boundedness of the Hardy-Littlewood maximal operator, the fractional integral and the Calder\'on-Zygmund operator is also proved. 

In \cite{Dai24}, Dai et al. obtained the Bourgain-Brezis-Mironescu characterization on ball Banach function spaces on $\rn$.
In \cite{ZYY24}, Zhu, Yang and Yuan obtained the Bourgain-Brezis-Mironescu-type characterization of  the inhomogeneous ball Banach Sobolev space on extension domains. And they used the characterization to ten kinds of function spaces, including Besov-Bourgain-Morrey spaces. 
In particular, they obtained the boundedness of  the Hardy-Littlewood maximal operator on the preduals of Besov-Bourgain-Morrey spaces.
In \cite{ZYY242}, Zhu, Yang and Yuan obtained the Brezis-Seeger-Van Schaftingen-Yung-type characterization of  the homogeneous ball Banach Sobolev space on uniform domains. The spaces also includes  Besov-Bourgain-Morrey spaces.

In \cite{R12}, Ragusa studied the Embeddings of Morrey-Lorentz spaces.
In \cite{SE18}, Sawano and El-Shabrawy studied the  weak Morrey spaces and their preduals. As an application, they  obtained the boundedness of singular integral operators in weak Morrey spaces.
Dao and Krantz
introduced  the block space, which is the predual  of the Morrey-Lorentz space in  \cite{DK24}. They obtained the Morrey-Lorentz boundedness of powered Hardy-Littlewood maximal function, sharp maximal function, linear Calder\'on-Zygmund operator, commutator. They also extended the  Hardy factorization in terms of linear Calder\'on-Zygmund operators to Morrey-Lorentz spaces  and obtained  the  compactness characterization of  commutator  in Morrey-Lorentz spaces.

Very recently, the first author and the third author of the paper \cite{BX25} introduced the weighted homogeneous Bourgain-Morrey Besov spaces and Triebel-Lizorkin associated with the operator. They obtained the Hardy-Littlewood maximal function and Fefferman-Stein maximal inequality on weighted Bourgain-Morrey sapces. Some characterizations of homogeneous Bourgain-Morrey Besov and Triebel-Lizorkin spaces were also obtained, such as  Peetre maximal function, compactly supported  functions,  atomic decompositions and molecular decompositions. 

Inspired by the above literature, we introduce   Bourgain-Morrey-Lorentz spaces and obtain the predual of Bourgain-Morrey-Lorentz spaces via  block spaces. 
The article is organized as follows. In Section \ref{preliminaries}, we give the definition of Bourgain-Morrey-Lorentz spaces and  some fundamental properties such as embedding, dilation, translation invariance, nontriviality and completeness. We also show that the Bourgain-Morrey-Lorentz space include the sequence space $\ell^r$ by a geometric property. As an application, we get  the Lorentz space differs from Bourgain-Morrey-Lorentz space.
In Section \ref{block space}, we introduce the block space $\mathcal H_{ p', q'}^{t',r'} (\rn) $, which is the predual of the Bourgain-Morrey-Lorentz space.
In Section \ref{Applicaiton operator}, we obtain the  boundedness of Hardy-Littlewood maximal operator,  sharp maximal operator, Calder\'on-Zygmund operator, fractional integral operator, commutator on Bourgain-Morrey-Lorentz spaces and the block spaces.
In Section \ref{Hardy factorization Section}, we extend a weak Hardy factorization in terms of Calder\'on-Zygmund operator on  Bourgain-Morrey-Lorentz spaces. As an application, we obtain a characterization for functions $b$ in $\BMO$ via the boundedness of commutator $[b,T]$, where $T$ is a homogeneous  Calder\'on-Zygmund operator.
In the last Section \ref{Compactness}, a  Bourgain-Morrey-Lorentz compactness characterization of comutator generated by a function $b$ and a homogeneous  Calder\'on-Zygmund operator  is obtained. Precisely, the commutator generated by a function $b$ and  a homogeneous  Calder\'on-Zygmund operator  is a compact operator on Bourgain-Morrey-Lorentz spaces if and only if $b$ is the limit of compactly supported  smooth functions in $\BMO$.

Throughout the paper, we use the following notations.
For $x \in \rn$ and  $r >0$, we define $B(x,r) = \{ y\in \rn: |y-x| <r \}$   the open ball in $\rn$ equipped with the Euclidean norm $|\cdot|$. Denote by  $Q = Q(c_Q, \ell(Q)/2)$ the cube with center $c_Q$ and the side length $\ell(Q)$.
Let $a Q $ be the cube concentric with $Q$, having  the side length $a \ell (Q)$.
For $1\le p \le \infty$, let $ p' \in [1,\infty]$  such that $1/p + 1/p' =1$ and $p'$ is called the  conjugate exponent of $p$. 
For $j\in\mathbb{Z}$, $m\in\mathbb{Z}^{n}$, let $Q_{j,m}:=\prod_{i=1}^{n}[2^{-j}m_{i},2^{-j}(m_{i}+1))$.
 We
denote by $\mathcal{D} := \{Q_{j,m} : j\in \mathbb Z, m \in \mathbb Z ^n \}$ the the family of all dyadic cubes in $\mathbb{R}^{n}$,
while $\mathcal{D}_{j}$ is the set of all dyadic cubes with $\ell(Q)=2^{-j},j\in\mathbb{Z}$.
Let $\chi_{E}$ be the characteristic function of the set $E\subset\mathbb{R}^{n}$.
Let $\mathbb N := \{ 1,2,3, \cdots\} $ and $\mathbb{N}_{0}:=\mathbb{N\cup}\{0\}$.
Let $L^0(\rn)$ stands for the set of all measurable functions over $\rn$.
Let $L_c^\infty (\rn)$ be the set of   bounded functions with compact support on $\rn$.
Let $C_c^\infty (\rn)$ be the set of   smooth functions with compact support on $\rn$.
We use the symbol
$A\lesssim B$ to denote that there exists a positive constant $c$
such that $A\le cB$. If $A\lesssim B$ and $B\lesssim A$, then we
denote $A\approx B$. The letters $c,C$ will denote various positive
constants and may change in different lines. 

\section{Preliminaries} \label{preliminaries}
In this section, we study some properties of the  Bourgain-Morrey-Lorentz space.      We first recall the definition of Lorentz spaces.
\subsection{Lorentz spaces}
\begin{definition}
	For a measurable function $f$ on $(X, \mu)$, the {\it distribution function} of $f$ is the function $d_f$ defined on $[0,\infty)$ by $d_f (\alpha) = \mu (  \{ x\in X: |f(x)|  > \alpha \}  )$.
	The {\it decreasing rearrangement} of $f$ is the function $f^*$  defined on $[0,\infty)$ by
	\begin{equation*}
		f^*(t) = \inf\{  s>0 : d_f (s)  \le t    \}  = \inf\{  s\ge 0 : d_f (s)  \le t    \}
	\end{equation*}
with the usual convention that $\inf \emptyset = \infty$.
\end{definition}

Given  a measurable function $f$ and $0< p, q \le \infty$, define
\begin{equation*}
	\| f\|_{L^{p,q} (X) } = \begin{cases}
		\left(   \int_0^\infty   \left(t^{1/p  }f^* (t) \right)^q \frac{ \d t}{t}     \right)^{1/q}, & \operatorname{if} \; q <\infty , \\
		\sup_{t>0}  t^{1/p}  f^* (t), & \operatorname{if}\; q = \infty. 
	\end{cases}
\end{equation*}
\begin{remark}\label{basic Lorentz}
From \cite[Theorem 1.4.13]{G14}, for all $0< p, q \le \infty$ the spaces $L^{p,q} (X) $ are complete with respect to there quasi-norm and they are quasi-Banach spaces. $L^{p,q} (X) $ are normable when $ 1 < p \le \infty  $, and  $1 \le q \le \infty$, see \cite[page 83, line -12]{G14}. Note that $L^{1,\infty} (\rn)$ is not normable (\cite[Exercise 1.1.13]{G14}).

For all $0<p,r<\infty$ and $0<q \le \infty$, we have
\begin{equation*}
	\| |f|^r \|_{L^{p,q} (X)}  = \| f \|_{L^{pr,qr} (X)} ^r.
 \end{equation*} 
If $\mu (X) <\infty$ and $0< p_1 \le p_2 <\infty$, then
\begin{equation*}
	\| f\|_{L^{p_1,q} (X) }  \le \mu(X)^{1/p_1 - 1/p_2} \| f\|_{L^{p_2,q} (X) }.
\end{equation*} 

\end{remark}
We sometimes use the distribution function to calculate the Lorentz norm. The following result from \cite[Proposition 1.4.9]{G14}. 

\begin{lemma}
	For $0<p<\infty$ and $0<q \le \infty $, we have 
	\begin{equation*}
			\| f\|_{L^{p,q} (X)} = \begin{cases}
			 p^{1/q} \left(   \int_0^\infty   \left(d_f(s)^{1/p } s \right)^q \frac{ \d s}{s}     \right)^{1/q}, & \operatorname{if} \; q <\infty , \\
			\sup_{s>0}  s  d_f (s)^{1/p}, & \operatorname{if}\; q = \infty. 
		\end{cases}
	\end{equation*}
\end{lemma}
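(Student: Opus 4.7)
The plan is to derive both cases from the fundamental duality between the distribution function and the decreasing rearrangement, namely the equivalence
\[
f^*(t) > s \quad \Longleftrightarrow \quad d_f(s) > t,
\]
which follows directly from the definition $f^*(t) = \inf\{s>0 : d_f(s) \le t\}$ together with the right-continuity and monotonicity of $d_f$. I would first verify this equivalence (up to a null set in $t$, since $f^*$ is right-continuous), as everything downstream relies on it.

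For the case $q = \infty$, I would argue by a direct supremum comparison. If $A := \sup_{t>0} t^{1/p} f^*(t)$ and $B := \sup_{s>0} s\, d_f(s)^{1/p}$, then for any $s_0 > 0$ with $d_f(s_0) > 0$, picking $t_0 < d_f(s_0)$ gives $f^*(t_0) \ge s_0$, hence $t_0^{1/p} f^*(t_0) \ge s_0 t_0^{1/p}$; letting $t_0 \uparrow d_f(s_0)$ yields $A \ge s_0 d_f(s_0)^{1/p}$, so $A \ge B$. The reverse inequality is symmetric: for $t_0$ with $f^*(t_0) > s_0$, we have $d_f(s_0) > t_0$, so $s_0 d_f(s_0)^{1/p} \ge s_0 t_0^{1/p}$, and letting $s_0 \uparrow f^*(t_0)$ gives $B \ge A$.

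For the case $q < \infty$, the key trick is the layer-cake representation $f^*(t)^q = q\int_0^{f^*(t)} s^{q-1}\,\mathrm{d}s$ combined with Fubini. Starting from
\[
\int_0^\infty \bigl(t^{1/p} f^*(t)\bigr)^q \frac{\mathrm{d}t}{t} = \int_0^\infty t^{q/p-1} f^*(t)^q \,\mathrm{d}t = q\int_0^\infty\!\!\int_0^\infty t^{q/p-1} s^{q-1}\,\chi_{\{f^*(t)>s\}}\,\mathrm{d}s\,\mathrm{d}t,
\]
I would swap the order of integration using the equivalence above, which identifies $\{t : f^*(t) > s\}$ with $[0, d_f(s))$ modulo measure zero. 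The inner integral becomes $\int_0^{d_f(s)} t^{q/p-1}\,\mathrm{d}t = \tfrac{p}{q} d_f(s)^{q/p}$, and the final expression simplifies to
\[
p\int_0^\infty s^{q-1} d_f(s)^{q/p} \,\mathrm{d}s = p\int_0^\infty \bigl(s\, d_f(s)^{1/p}\bigr)^q \frac{\mathrm{d}s}{s}.
\]
Taking $q$-th roots gives the claimed identity.

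The main technical subtlety, and likely the only real obstacle, is justifying the equivalence $\{t : f^*(t) > s\} = [0, d_f(s))$ with sufficient care. Because $d_f$ is only right-continuous (not strictly decreasing), one has to argue via the infimum definition of $f^*$: if $f^*(t) > s$ then by definition there is $s' > s$ with $d_f(s') > t$, and monotonicity of $d_f$ gives $d_f(s) \ge d_f(s') > t$; conversely, if $d_f(s) > t$ then any candidate $s'$ in the infimum defining $f^*(t)$ must satisfy $s' > s$, so $f^*(t) \ge s$, with strict inequality handled by passing to a slightly smaller $s$. Once this identification holds almost everywhere, Fubini-Tonelli applies without issue since all integrands are nonnegative.
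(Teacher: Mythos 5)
Your proof is correct. The paper itself does not prove this lemma; it simply cites it as \cite[Proposition 1.4.9]{G14}, and the layer-cake/Fubini argument you give (together with the elementary equivalence $f^*(t)>s \Leftrightarrow d_f(s)>t$, which does use right-continuity of $d_f$ in one direction) is essentially the standard proof found in that reference, so there is no divergence in approach. One small imprecision worth tightening: your sentence ``if $d_f(s)>t$ then \dots $f^*(t)\ge s$, with strict inequality handled by passing to a slightly smaller $s$'' is better phrased by invoking right-continuity directly --- if $d_f(s)>t$ then $d_f(s+\epsilon)>t$ for some $\epsilon>0$, whence $f^*(t)\ge s+\epsilon>s$ --- though for the Tonelli step this does not matter since the sets $\{t: f^*(t)>s\}$ and $[0,d_f(s))$ can differ only at the single endpoint.
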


The following result is the H\"older-type inequality in Lorentz spaces.
\begin{lemma}[Theorem 2.9, \cite{CC21}] \label{holder lorentz}
	Let $(X,\mathcal A, \mu)$ be a measure space. Let $1\le p, q \le \infty$ and let $p' , q'$ be their conjugate exponents. If $f \in L^{p,q} (X)$ and $g \in L^{p',q'} (X)$, then 
	\begin{equation*}
		\|fg\|_{L^1 (X)}  \le  \| f \|_{  L^{p,q} (X) }  \| g \|_{  L^{p',q'} (X) }.
	\end{equation*}
\end{lemma}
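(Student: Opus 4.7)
The plan is to reduce the inequality to a one-dimensional H\"older inequality on the half-line by means of the classical Hardy--Littlewood rearrangement inequality. First, I would invoke the rearrangement inequality
\begin{equation*}
\int_X |fg|\,\d\mu \le \int_0^\infty f^*(t)\,g^*(t)\,\d t,
\end{equation*}
which can be established by writing both $|f|$ and $|g|$ via the layer-cake formula $|h|=\int_0^\infty \chi_{\{|h|>s\}}\,\d s$, swapping the order of integration, and using the elementary identity
\begin{equation*}
\mu\bigl(\{|f|>s\}\cap\{|g|>u\}\bigr) \le \int_0^\infty \chi_{\{f^*(t)>s\}}\chi_{\{g^*(t)>u\}}\,\d t,
\end{equation*}
which comes from the equivalence $f^*(t)>s \Leftrightarrow d_f(s)>t$. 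This step transfers the estimate from the abstract measure space $(X,\mathcal A,\mu)$ to the half-line.

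Next, using the conjugacy $1/p + 1/p' = 1$ to manufacture a weight factorization, I would rewrite the right-hand side as
\begin{equation*}
\int_0^\infty f^*(t)\,g^*(t)\,\d t = \int_0^\infty \bigl(t^{1/p} f^*(t)\bigr)\bigl(t^{1/p'} g^*(t)\bigr)\,\frac{\d t}{t},
\end{equation*}
and then apply the ordinary H\"older inequality on the measure space $\bigl((0,\infty),\d t/t\bigr)$ with the conjugate exponents $q,q'$. The resulting product is precisely $\|f\|_{L^{p,q}(X)}\|g\|_{L^{p',q'}(X)}$ as recorded in the definition of the Lorentz quasi-norm.

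The endpoint cases require only routine modification. When $q=\infty$ one extracts $\sup_{t>0}t^{1/p}f^*(t)$ and is left with the integral $\int_0^\infty t^{1/p'}g^*(t)\,\d t/t = \|g\|_{L^{p',1}(X)}$; the cases $p\in\{1,\infty\}$ are handled analogously, interpreting $t^{1/\infty}$ as $1$. The main obstacle, such as it is, lies in the Hardy--Littlewood rearrangement step: it is classical but requires a mild hypothesis on $(X,\mathcal A,\mu)$ (typically $\sigma$-finiteness) so that $f^*$ is a genuine equimeasurable rearrangement. Once this prerequisite is in place, the rest of the proof is a clean application of H\"older on the logarithmic measure, with no further analytic difficulty.
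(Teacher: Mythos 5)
The paper does not supply its own proof of this lemma; it simply cites Theorem 2.9 of Castillo--Chaparro. Your proposal is a correct and essentially complete proof, and it follows the standard route that the cited reference also uses: the Hardy--Littlewood rearrangement inequality $\int_X|fg|\,\d\mu\le\int_0^\infty f^*(t)g^*(t)\,\d t$ (itself a clean layer-cake computation as you sketch), followed by the factorization $1 = t^{1/p}\cdot t^{1/p'}/t$ and H\"older on $\bigl((0,\infty),\d t/t\bigr)$ with exponents $q,q'$. The endpoint cases are handled exactly as you indicate: for $q=\infty$ one pulls out $\sup_t t^{1/p}f^*(t)$ and is left with the $L^{p',1}$-norm of $g$, and the cases $p\in\{1,\infty\}$ either reduce to the classical $L^1$--$L^\infty$ H\"older inequality or become vacuous. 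Your remark about $\sigma$-finiteness in the rearrangement step is a fair caveat, though it is harmless here: since $p<\infty$ and $p'<\infty$ cannot both fail, at least one of $f,g$ has all of its superlevel sets of finite measure, so the relevant portion of $X$ is automatically $\sigma$-finite and Tonelli applies; and in this paper the lemma is only invoked on subsets of $\mathbb{R}^n$ in any case. In short, the proposal is correct and matches what the cited source does.
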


The following lemma  is the embedding of Lorentz spaces.

\begin{lemma}[Proposition 1.4.10, \cite{G14}]\label{embed Lorentz}
	Suppose that $ 0 < p \le \infty$ and $ 0< q_1 < q_2 \le \infty $. Then there exists a constant $C_{p, q_1,q_2} >0$ such that 
	\begin{equation*}
		\| f \|_{L^{p, q_2} (X)}  \le C_{p, q_1,q_2} \| f \|_{  L^{p, q_1} (X)}.
	\end{equation*}
\end{lemma}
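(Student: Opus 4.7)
The plan is to prove this by first establishing the key auxiliary bound $\|f\|_{L^{p,\infty}(X)} \lesssim \|f\|_{L^{p,q_1}(X)}$ (i.e.\ the special case $q_2 = \infty$), and then leveraging it via a pointwise interpolation argument in the variable $t$ to handle arbitrary $q_2 \in (q_1, \infty)$. Throughout I would assume $p < \infty$; the degenerate case $p = \infty$ forces $f^* \equiv 0$ whenever the right-hand side is finite and $q_1 < \infty$, so it is trivial.

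For the auxiliary bound, the central observation is that $f^*$ is non-increasing, so for every $t > 0$ and every $s \in (0, t]$ we have $f^*(t) \le f^*(s)$. Computing $\int_0^t s^{q_1/p - 1}\,\d s = \tfrac{p}{q_1} t^{q_1/p}$ and multiplying by $f^*(t)^{q_1}$, I would estimate
\begin{equation*}
t^{q_1/p} f^*(t)^{q_1} \;=\; \frac{q_1}{p} \int_0^t \bigl(s^{1/p} f^*(t)\bigr)^{q_1} \frac{\d s}{s} \;\le\; \frac{q_1}{p} \int_0^t \bigl(s^{1/p} f^*(s)\bigr)^{q_1} \frac{\d s}{s} \;\le\; \frac{q_1}{p} \, \|f\|_{L^{p,q_1}(X)}^{q_1}.
\end{equation*}
Taking the $q_1$-th root and then the supremum over $t$ yields $\|f\|_{L^{p,\infty}(X)} \le (q_1/p)^{1/q_1}\|f\|_{L^{p,q_1}(X)}$, which is exactly the desired inequality when $q_2 = \infty$.

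For $q_2 < \infty$, I would split the integrand by writing $(t^{1/p}f^*(t))^{q_2} = (t^{1/p}f^*(t))^{q_2 - q_1} \cdot (t^{1/p}f^*(t))^{q_1}$, then pull out the supremum on the first factor:
\begin{equation*}
\|f\|_{L^{p,q_2}(X)}^{q_2} \;\le\; \|f\|_{L^{p,\infty}(X)}^{q_2 - q_1} \int_0^\infty \bigl(t^{1/p} f^*(t)\bigr)^{q_1} \frac{\d t}{t} \;=\; \|f\|_{L^{p,\infty}(X)}^{q_2 - q_1}\, \|f\|_{L^{p,q_1}(X)}^{q_1}.
\end{equation*}
Substituting the auxiliary bound for $\|f\|_{L^{p,\infty}(X)}$ and taking $q_2$-th roots gives the claimed embedding with $C_{p,q_1,q_2} = (q_1/p)^{1/q_1 - 1/q_2}$.

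Since both ingredients (monotonicity of $f^*$ and the trivial pointwise bound by the supremum) are elementary, there is no real obstacle; the only subtle point is keeping track of the exponents in the interpolation step so that the constant comes out cleanly, and making sure the case $q_1 = \infty$ is excluded by hypothesis (it is, since $q_1 < q_2 \le \infty$ forces $q_1 < \infty$).
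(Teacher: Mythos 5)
The paper does not prove this lemma; it simply cites it as Proposition 1.4.10 in Grafakos's \emph{Classical Fourier Analysis}. Your argument is correct and is essentially the standard proof given there: use the monotonicity of $f^*$ to bound $t^{1/p}f^*(t)$ by the $L^{p,q_1}$ quasi-norm (giving the $q_2=\infty$ case), then factor $(t^{1/p}f^*(t))^{q_2} = (t^{1/p}f^*(t))^{q_2-q_1}\cdot(t^{1/p}f^*(t))^{q_1}$ and pull out the supremum, and your constant $(q_1/p)^{1/q_1 - 1/q_2}$ and the handling of $p=\infty$ are both correct.
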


Denote by $\M$  the Hardy-Littlewood maximal function of $f$:
\begin{equation*}
	\M f (x) = \sup_{B \ni x } \frac{1}{\mu(B)} \int_B | f (y) | \d \mu (y) 
\end{equation*}
where the sup is taken over all balls $B$ containing $x \in X$.
Then from \cite[Theorem 1.4.19]{G14},  we have the following result.
\begin{lemma}\label{HL Lorentz}
	For $1< p <\infty$ and $ 1 \le q \le \infty$, the maximal operator $\M$ is bounded on  $L^{p,q} (X)$. 
\end{lemma}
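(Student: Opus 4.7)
The plan is to reduce the claim to a standard Marcinkiewicz interpolation argument from two endpoint estimates, exactly as in Grafakos's treatment that the authors cite.

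First, I would establish the classical weak-type $(1,1)$ bound for $\M$: there is a constant $C$ depending only on the doubling geometry of balls in $X$ such that
\begin{equation*}
  d_{\M f}(\alpha) \le \frac{C}{\alpha}\|f\|_{L^{1}(X)}, \qquad \alpha>0,
\end{equation*}
which says that $\M:L^{1}(X)\to L^{1,\infty}(X)$ is bounded. The proof is the standard Vitali covering argument applied to the family of balls realizing $\M f(x)>\alpha$ at each point of $\{\M f>\alpha\}$. Second, the definition of $\M$ as a supremum of averages gives the trivial endpoint $\|\M f\|_{L^{\infty}(X)}\le \|f\|_{L^{\infty}(X)}$, so $\M$ is of strong type $(\infty,\infty)$.

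With these two endpoints in hand, and the observation that $\M$ is sublinear, I would invoke the Marcinkiewicz interpolation theorem phrased directly on Lorentz spaces (this is exactly \cite[Theorem 1.4.19]{G14}): a sublinear operator that is simultaneously of weak type $(1,1)$ and strong type $(\infty,\infty)$ extends to a bounded operator on $L^{p,q}(X)$ for every $1<p<\infty$ and $0<q\le \infty$. In particular, for $1<p<\infty$ and $1\le q\le \infty$ one obtains the desired inequality
\begin{equation*}
  \|\M f\|_{L^{p,q}(X)} \le C_{p,q}\,\|f\|_{L^{p,q}(X)}.
\end{equation*}
The restriction $q\ge 1$ is convenient because it keeps us in the normable regime recalled in Remark \ref{basic Lorentz}, so one does not have to worry about constants arising from the quasi-triangle inequality.

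The only nontrivial step is the Lorentz-space version of Marcinkiewicz interpolation itself, and its heart is the level-set decomposition $f=f\chi_{\{|f|>s\}}+f\chi_{\{|f|\le s\}}$ at each height $s$, combined with the representation of $\|\cdot\|_{L^{p,q}}$ in terms of $d_{f}$ given by the preceding lemma. Inserting the weak $(1,1)$ bound on the first piece and the $L^{\infty}$ bound on the second yields a pointwise estimate for $d_{\M f}$ that, once integrated against $s^{q}\,ds/s$ and optimized in $s$ for each fixed level, gives the required $L^{p,q}$ inequality. Since this classical machinery is precisely what the cited reference carries out, I would simply appeal to it rather than reprove it here.
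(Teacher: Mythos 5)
Your proposal is correct and coincides with the paper's approach: the paper offers no independent argument, simply deducing the lemma from \cite[Theorem 1.4.19]{G14}, which is exactly the Lorentz-space Marcinkiewicz interpolation result you invoke after recording the weak $(1,1)$ and strong $(\infty,\infty)$ endpoints. The extra explanation you give of the level-set decomposition underlying that interpolation theorem is accurate but, as you note, is precisely what the cited reference carries out.
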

The duals of Lorentz spaces can be seen in \cite[Theorem 1.4.16]{G14}. 
For more details on the Lorentz spaces, we refer the reader to \cite{ CC21, G14}.

\subsection{Bourgain-Morrey-Lorentz spaces}
Now we define the Bourgain-Morrey-Lorentz spaces.
\begin{definition}
	Let $ 0 < q \le \infty$. Let $ 0 <p \le t< \infty $ and $ 0 < r  \le \infty $. Then the Bourgain-Morrey-Lorentz space $M_{p,q}^{t,r} (\rn) $ is the set of all $f\in L^0 (\rn)$ such that 
	\begin{equation*}
		\| f \|_{M_{p,q}^{t,r} (\rn)   } = \left(  \sum_{Q \in \D} |Q|^{r /t- r/p} \| f\|_{ L^{p,q} (Q)} ^r \right)^{1/r} <\infty.
	\end{equation*}
\end{definition}
\begin{remark}
	If $q =p$, then $M_{p,q}^{t,r} (\rn)$  is the Bourgain-Morrey space $M_{p}^{t,r} (\rn)$  in \cite{HNSH23}.
	
	If $r=\infty$,  then $M_{p,q}^{t,\infty} (\rn)$  is the Morrey-Lorentz space; for example, see \cite{DK24}.
	
	If $q = \infty$, then $M_{p,\infty}^{t,r} (\rn)$ becomes the weak Bourgain-Morrey space. As far as the authors know, it is a new space.
\end{remark}

\subsection{Fundamental properties of the Bourgain-Morrey-Lorentz space $M_{p,q}^{t,r} (\rn) $}
In this subsection, we get some basic properties of Bourgain-Morrey-Lorentz spaces $M_{p,q}^{t,r} (\rn) $, such as embedding, dilation, translation invariance, nontriviality and completeness.

Given two quasi-Banach spaces $X$ and $Y$, we write $X \hookrightarrow Y $ if $X \subset  Y$ and the natural embedding is bounded.
The embedding of Morrey-Lorentz spaces can be found in \cite{R12}.
The following lemma is obtained from $ \ell ^{r_1}  \hookrightarrow  \ell ^{r_2}$ for $ 0 < r_1 \le r_2 \le \infty$. 
\begin{lemma}
	Let $0\le q \le \infty$.  Let $ 0 <p \le t < r_1 \le r_2 \le \infty $. Then $M_{p,q}^{t,r_1} (\rn)   \hookrightarrow  M_{p,q}^{t,r_2} (\rn) $.
\end{lemma}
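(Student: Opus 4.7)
The plan is to recognize the Bourgain--Morrey--Lorentz quasi-norm as the $\ell^{r}$-quasi-norm of a fixed sequence indexed by dyadic cubes, and then transfer the scalar inclusion $\ell^{r_1}\hookrightarrow\ell^{r_2}$ directly to the function-space level. Since $p$, $q$, and $t$ are fixed and play no role in the argument, the whole statement reduces to monotonicity in the outer exponent $r$.

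Concretely, for $f\in L^{0}(\rn)$ I would set
\begin{equation*}
a_{Q}:=|Q|^{1/t-1/p}\,\|f\|_{L^{p,q}(Q)},\qquad Q\in\mathcal{D},
\end{equation*}
so that, by the very definition of the Bourgain--Morrey--Lorentz space,
\begin{equation*}
\|f\|_{M_{p,q}^{t,r}(\rn)} \;=\; \bigl\|(a_{Q})_{Q\in\mathcal{D}}\bigr\|_{\ell^{r}(\mathcal{D})},
\end{equation*}
where the right-hand side is interpreted as $\sup_{Q\in\mathcal{D}}a_{Q}$ when $r=\infty$. The hypothesis $0<p\le t$ only enters to guarantee that each $a_{Q}$ is well defined with a nonpositive power of $|Q|$, and the condition $t<r_{1}$ ensures we stay inside the range where the space is nontrivial; neither is used below.

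Next, I would invoke the classical embedding $\ell^{r_{1}}(\mathcal{D})\hookrightarrow\ell^{r_{2}}(\mathcal{D})$ for $0<r_{1}\le r_{2}\le\infty$, which gives
\begin{equation*}
\bigl\|(a_{Q})\bigr\|_{\ell^{r_{2}}(\mathcal{D})} \;\le\; \bigl\|(a_{Q})\bigr\|_{\ell^{r_{1}}(\mathcal{D})}
\end{equation*}
for every nonnegative sequence. Applied to $(a_{Q})_{Q\in\mathcal{D}}$, this immediately yields
\begin{equation*}
\|f\|_{M_{p,q}^{t,r_{2}}(\rn)}\;\le\;\|f\|_{M_{p,q}^{t,r_{1}}(\rn)},
\end{equation*}
and in particular $f\in M_{p,q}^{t,r_{1}}(\rn)$ implies $f\in M_{p,q}^{t,r_{2}}(\rn)$, proving the continuous inclusion.

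There is no real obstacle here; the only point deserving any care is the case $r_{2}=\infty$, where one must remember that the outer $\ell^{\infty}$-norm corresponds to the supremum interpretation of the defining expression, so that the inclusion $\ell^{r_{1}}\hookrightarrow\ell^{\infty}$ still applies verbatim. Everything else is a one-line consequence of the scalar embedding between sequence spaces.
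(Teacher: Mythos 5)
Your proposal is correct and matches the paper's approach exactly: the paper states that this lemma follows from the embedding $\ell^{r_1}\hookrightarrow\ell^{r_2}$ for $0<r_1\le r_2\le\infty$, which is precisely what you carry out by viewing the Bourgain--Morrey--Lorentz quasi-norm as the $\ell^r$-quasi-norm of the sequence $(a_Q)_{Q\in\mathcal D}$.
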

Using  Lemma \ref{embed Lorentz}, we  obtain the following  embedding of  Bourgain-Morrey-Lorentz spaces.

\begin{lemma}
	Let $ 0< q_1 < q_2 \le \infty $. Let $ 0< p \le t < r \le \infty $. Then $M_{p,q_1}^{t,r} (\rn) \hookrightarrow M_{p,q_2}^{t,r} (\rn)  $.
\end{lemma}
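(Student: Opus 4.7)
The plan is to reduce the embedding for Bourgain-Morrey-Lorentz spaces directly to the corresponding embedding for Lorentz spaces on each dyadic cube. Since the norm defining $M_{p,q}^{t,r}(\rn)$ is a weighted $\ell^r$-sum (over $Q\in\D$) of the Lorentz quasinorms $\|f\|_{L^{p,q}(Q)}$, and the weights $|Q|^{r/t-r/p}$ do not depend on the second Lorentz exponent $q$, the whole statement should follow by applying Lemma \ref{embed Lorentz} cube by cube.

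Concretely, first I would fix $f\in M_{p,q_1}^{t,r}(\rn)$ and, for each $Q\in\D$, invoke Lemma \ref{embed Lorentz} on the measure space $(Q,\d x)$ to obtain a constant $C=C_{p,q_1,q_2}>0$, independent of $Q$, such that
\begin{equation*}
\|f\|_{L^{p,q_2}(Q)} \le C\,\|f\|_{L^{p,q_1}(Q)}.
\end{equation*}
Raising to the power $r$ (with the obvious interpretation if $r=\infty$, in which case one takes a supremum in what follows) and multiplying by $|Q|^{r/t-r/p}$, I get
\begin{equation*}
|Q|^{r/t-r/p}\,\|f\|_{L^{p,q_2}(Q)}^{r} \le C^{r}\,|Q|^{r/t-r/p}\,\|f\|_{L^{p,q_1}(Q)}^{r}.
\end{equation*}

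Summing over $Q\in\D$ and taking the $r$-th root yields
\begin{equation*}
\|f\|_{M_{p,q_2}^{t,r}(\rn)} \le C\,\|f\|_{M_{p,q_1}^{t,r}(\rn)},
\end{equation*}
which is exactly the continuous embedding claimed. There is really no obstacle here: the only thing to verify is that the constant $C_{p,q_1,q_2}$ from Lemma \ref{embed Lorentz} is uniform in the underlying measure space, which is indeed the case since the Lorentz embedding constant depends only on $p,q_1,q_2$ and not on $(X,\mu)$. The case $r=\infty$ is handled identically after replacing the $\ell^r$-sum by a supremum over $Q\in\D$.
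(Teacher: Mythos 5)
Your proposal is correct and is essentially the argument the paper intends: the paper gives no explicit proof and simply states that the embedding follows from Lemma \ref{embed Lorentz}, which is precisely what you do — apply the Lorentz-space embedding on each dyadic cube $Q$ (noting the constant $C_{p,q_1,q_2}$ is independent of the measure space, hence of $Q$) and then take the weighted $\ell^r$ norm over $Q\in\D$. Nothing is missing.
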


\begin{lemma}[Lemma 23,  \cite{SFH20}]
	Let $0 < p_1 < p_2 \le t <\infty$. Then $ M_{p_2,\infty}^{t,\infty} (\rn) \hookrightarrow M_{p_1,p_1}^{t,\infty} (\rn)   $.
\end{lemma}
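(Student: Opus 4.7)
The plan is to reduce the global inequality to a local, per-cube inequality and then invoke the standard finite-measure embedding of weak $L^{p_2}$ into $L^{p_1}$.

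First I would unpack the two norms under the assumption $r = \infty$. Since $q = p_1$ in the target space, $L^{p_1, p_1}(Q)$ coincides with $L^{p_1}(Q)$, and using the supremum form of the $r = \infty$ norm one reads off
\begin{equation*}
  \| f \|_{M_{p_1,p_1}^{t,\infty}(\rn)} = \sup_{Q \in \D} |Q|^{1/t - 1/p_1} \| f \|_{L^{p_1}(Q)},
  \qquad
  \| f \|_{M_{p_2,\infty}^{t,\infty}(\rn)} = \sup_{Q \in \D} |Q|^{1/t - 1/p_2} \| f \|_{L^{p_2,\infty}(Q)}.
\end{equation*}
Thus it suffices to show that, for every dyadic cube $Q$,
\begin{equation*}
  \| f \|_{L^{p_1}(Q)} \lesssim |Q|^{1/p_1 - 1/p_2} \| f \|_{L^{p_2,\infty}(Q)},
\end{equation*}
with a constant independent of $Q$; multiplying by $|Q|^{1/t - 1/p_1}$ and taking the supremum over $Q \in \D$ then yields the claimed embedding.

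The remaining local step is the classical fact that $L^{p_2, \infty}$ embeds into $L^{p_1}$ on a set of finite measure whenever $p_1 < p_2$. I would present it via the layer-cake identity
\begin{equation*}
  \| f \|_{L^{p_1}(Q)}^{p_1} = p_1 \int_0^\infty s^{p_1 - 1} d_{f \chi_Q}(s) \, \d s,
\end{equation*}
split the integral at a level $A > 0$, and estimate $d_{f \chi_Q}(s) \le |Q|$ on $(0, A]$ and $d_{f \chi_Q}(s) \le \| f \|_{L^{p_2, \infty}(Q)}^{p_2} s^{-p_2}$ on $(A, \infty)$. Optimizing in $A$ by choosing $A = \| f \|_{L^{p_2, \infty}(Q)} |Q|^{-1/p_2}$ balances the two contributions and produces
\begin{equation*}
  \| f \|_{L^{p_1}(Q)}^{p_1} \le C_{p_1, p_2} \, |Q|^{1 - p_1/p_2} \| f \|_{L^{p_2, \infty}(Q)}^{p_1},
\end{equation*}
which is exactly the desired local inequality after taking $p_1$-th roots.

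There is no serious obstacle here; the argument is essentially bookkeeping plus the standard weak-to-strong embedding on a finite-measure set. The only mild care point is the factor $(p_2 - p_1)^{-1}$ appearing in the constant, which blows up as $p_1 \to p_2$; but since $p_1 < p_2$ is fixed, this produces a finite constant depending only on $p_1$ and $p_2$, and the supremum over dyadic cubes is unaffected because the factors of $|Q|$ match precisely through the relation $1/t - 1/p_1 = (1/t - 1/p_2) - (1/p_2 - 1/p_1)$.
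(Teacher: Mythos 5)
Your argument is correct and complete. Note that the paper does not actually supply a proof of this statement; it is quoted verbatim from \cite{SFH20} as a known fact, so there is nothing in the paper to compare against. Your reduction to a per-cube inequality and the layer-cake estimate giving $\|f\|_{L^{p_1}(Q)} \le C_{p_1,p_2} |Q|^{1/p_1 - 1/p_2} \|f\|_{L^{p_2,\infty}(Q)}$ is the standard route, and the exponents match exactly since $(1/t - 1/p_1) + (1/p_1 - 1/p_2) = 1/t - 1/p_2$. One small observation: this local step cannot be obtained by simply chaining Remark \ref{basic Lorentz} with Lemma \ref{embed Lorentz}, since the nesting in the second index goes the wrong way (from small $q$ to large $q$), so the direct distribution-function argument you give is genuinely needed rather than a shortcut that could be avoided.
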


\begin{remark}
	From \cite[Example 17]{SFH20}, the weak Morrey spaces $M_{p,\infty}^{t,\infty} (\rn) $ are strict bigger than   Morrey spaces $M_{p,p}^{t,\infty} (\rn) $. Then from
	\begin{equation} \label{embedding p - epsilon}
		M_{p,q}^{t,r} (\rn) \hookrightarrow M_{p,\infty}^{t,r} (\rn) \hookrightarrow M_{p,\infty}^{t,\infty} (\rn) \hookrightarrow M_{p- \epsilon,p- \epsilon}^{t,\infty} (\rn)
	\end{equation}
and the completeness of Morrey spaces $M_{p- \epsilon,p- \epsilon}^{t,\infty} (\rn)$, by the Fatou lemma, it is not hard to show  the completeness of	$M_{p,q}^{t,r} (\rn)$.
	\end{remark}

Now we turn to consider the translation invariance of spaces $ M_{p,q}^{t,r} (\rn) $.
\begin{lemma}\label{translation BML}
			Let $ 0 < q \le \infty$. Let $ 0 <p \le t< \infty $ and $ 0 < r  \le \infty $.  There exists a constant $C_{q,p,n,r} >0$ depends on $q,p,n,r $ such that for $y \in \rn$  and $f \in  M_{p,q}^{t,r} (\rn) $, 
		\begin{equation*}
			\| f (\cdot -y)  \|_{   M_{p,q}^{t,r} (\rn)  }  \le C_{q,p,n,r} \| f   \|_{   M_{p,q}^{t,r} (\rn)  }.
		\end{equation*}
\end{lemma}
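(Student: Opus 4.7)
The plan is to reduce translation on arbitrary cubes to the dyadic covering, since the norm on $M_{p,q}^{t,r}(\rn)$ is defined only through dyadic cubes. The key point is that for a fixed dyadic cube $Q$ we have $\|f(\cdot -y)\|_{L^{p,q}(Q)} = \|f\|_{L^{p,q}(Q-y)}$, and the translated cube $Q-y$ is no longer dyadic, but it meets at most $2^n$ dyadic cubes of the same side length $\ell(Q)$.

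First I would fix $y\in\rn$ and, for each $Q\in\D_j$, enumerate the (at most $2^n$) dyadic cubes $R_{Q,1},\dots,R_{Q,N_Q}\in\D_j$ with $R_{Q,k}\cap(Q-y)\neq\emptyset$, so that $Q-y\subset\bigcup_{k}R_{Q,k}$. Since $L^{p,q}(\rn)$ is a quasi-Banach space (Remark \ref{basic Lorentz}), iterating the quasi-triangle inequality a fixed number of times gives a constant $K=K_{p,q,n}$ with
\begin{equation*}
\|f\|_{L^{p,q}(Q-y)}\le K\sum_{k=1}^{N_Q}\|f\|_{L^{p,q}(R_{Q,k})}.
\end{equation*}
Raising to the $r$-th power and using $(\sum_{k=1}^{N}a_k)^{r}\le C_{r,n}\sum_{k=1}^{N}a_k^{r}$ with the fixed bound $N\le 2^n$ (which holds for all $0<r\le\infty$, the constant depending only on $r$ and $n$) produces
\begin{equation*}
\|f\|_{L^{p,q}(Q-y)}^{r}\le C_{p,q,n,r}\sum_{k=1}^{N_Q}\|f\|_{L^{p,q}(R_{Q,k})}^{r}.
\end{equation*}

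Next I would plug this into the definition of the $M_{p,q}^{t,r}$-norm and interchange the order of summation. The dual counting observation is that for each fixed $R\in\D_j$, the set of $Q\in\D_j$ with $R\cap(Q-y)\neq\emptyset$, i.e.\ $Q\cap(R+y)\neq\emptyset$, has cardinality at most $2^n$. Because $|Q|=|R|$ whenever they belong to the same $\D_j$, the weight $|Q|^{r/t-r/p}$ is preserved under the swap. Hence
\begin{equation*}
\sum_{Q\in\D}|Q|^{r/t-r/p}\sum_{k=1}^{N_Q}\|f\|_{L^{p,q}(R_{Q,k})}^{r}\le 2^{n}\sum_{R\in\D}|R|^{r/t-r/p}\|f\|_{L^{p,q}(R)}^{r}=2^{n}\|f\|_{M_{p,q}^{t,r}(\rn)}^{r},
\end{equation*}
which yields the claimed bound with $C_{q,p,n,r}=(2^{n}C_{p,q,n,r})^{1/r}$ (interpreted appropriately when $r=\infty$, where the inner sum is replaced by a supremum and the outer swap still costs only a factor $2^{n}$).

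The main obstacle is purely bookkeeping: making the quasi-triangle constant, the $r$-th power inequality, and the two counting arguments ($Q-y$ meets $\le 2^n$ dyadic cubes, and $R$ lies in $\le 2^n$ translates $Q-y$) fit together cleanly across the full range $0<p\le t<\infty$, $0<q\le\infty$, $0<r\le\infty$. There is no analytic difficulty beyond keeping track of constants, and in particular the translation invariance holds with a constant independent of $y$.
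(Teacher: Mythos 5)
Your proposal is correct and follows essentially the same route as the paper: cover each translated dyadic cube $Q-y$ by at most $2^n$ dyadic cubes of the same generation, apply the quasi-triangle inequality in $L^{p,q}$, and account for the bounded overlap when resumming over $\D$. The paper phrases the covering as $Q_{v,m}-y\subset\bigcup_{k=1}^{2^n}Q_{v,m+m_k(v,y)}$ and handles the $\ell^r$ bookkeeping in the same way, so the two arguments are the same.
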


\begin{proof}
	We only prove $r<\infty$  since $r = \infty$  is similar. 
	We use the idea from \cite[Lemma 2.5]{HNSH23}. Then
	\begin{align*}
		\| f (\cdot -y)  \|_{   M_{p,q}^{t,r} (\rn)  }   & = \left\| \left\{ |Q_{v,m}|^{1/t-1/p}  \| f (\cdot -y) \chi_{ Q_{v,m}} \|_{ L^{p,q} (\rn) }    \right\}_{(v,m) \in \mathbb Z ^{1+n}}          \right\|_{\ell^r} \\
		 & = \left\| \left\{ |Q_{v,m}|^{1/t-1/p}  \| f  \chi_{ Q_{v,m} -y} \|_{ L^{p,q} (\rn) }    \right\}_{(v,m) \in \mathbb Z ^{1+n}}          \right\|_{\ell^r} .
	\end{align*}
For each $v\in \mathbb Z $ and $y \in \rn$, there exists a finite collection $m_1 (v,y ), \ldots , m_{2^n} (v,y )  \in \mathbb Z^n $ such that 
\begin{equation*}
	Q_{v,m} -y  \subset \bigcup_{k=1}^{2^n} Q_{v, m+m_k (v,y ) }
\end{equation*}
for all $m\in \mathbb Z ^n$. Therefore
\begin{align*}
& \| f (\cdot -y)  \|_{   M_{p,q}^{t,r} (\rn)  }  
\\
&
\le C_{q,p,n} \left\| \left\{ |Q_{v,m}|^{1/t-1/p} \sum_{k=1}^{2^n} \| f  \chi_{ Q_{v,m+m_k (v,y )} } \|_{ L^{p,q} (\rn) }    \right\}_{(v,m) \in \mathbb Z ^{1+n}}          \right\|_{\ell^r} \\
&
\le C_{q,p,n,r}\sum_{k=1}^{2^n} \left\| \left\{ |Q_{v,m}|^{1/t-1/p}  \| f  \chi_{ Q_{v,m+m_k (v,y )} } \|_{ L^{p,q} (\rn) }    \right\}_{(v,m) \in \mathbb Z ^{1+n}}          \right\|_{\ell^r} \\
& = 2^n C_{q,p,n,r} \| f\|_{  M_{p,q}^{t,r} (\rn)   }.
\end{align*}
This provides the desired inequality.
\end{proof}
Using this property and the triangle inequality for $ M_{p,q}^{t,r} (\rn)$, we obtain the following convolution inequality.
\begin{corollary}\label{convolution BML}
	Let $1 \le q \le \infty$. 
	Let $1 < p \le t < \infty$  and $ 1 \le r \le \infty$. Then there exists a constant $C_{q,p,n,r} >0$  such that
	\begin{equation*}
		\| g * f \|_{ M_{p,q}^{t,r} (\rn) }  \le C_{q,p,n,r}  \|g\|_{L^1} \| f\|_{ M_{p,q}^{t,r} (\rn)}
	\end{equation*}
for all $f \in  M_{p,q}^{t,r} (\rn)$  and $g \in L^{1} (\rn)$.
\end{corollary}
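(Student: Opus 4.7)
The plan is to prove the convolution inequality by Minkowski's integral inequality, after recognizing that the hypotheses on $p,q,r$ are chosen precisely to guarantee that $M_{p,q}^{t,r}(\rn)$ is a genuine normed space (not merely a quasi-normed one). Indeed, under $1<p\le\infty$ and $1\le q\le\infty$, the Lorentz space $L^{p,q}$ is normable (Remark~\ref{basic Lorentz}), and $1\le r\le\infty$ makes the outer $\ell^r$ sum a norm, so the triangle inequality and the integral version of Minkowski both hold in $M_{p,q}^{t,r}(\rn)$ up to a fixed equivalence constant.

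First, I would write
\[
(g*f)(x)=\int_{\rn} g(y)\, f(x-y)\, \d y,
\]
and view this Bochner-style as a vector-valued integral of the $M_{p,q}^{t,r}$-valued function $y\mapsto g(y)\,f(\cdot-y)$. By the translation invariance Lemma~\ref{translation BML}, the map $y\mapsto f(\cdot-y)$ is uniformly bounded in $M_{p,q}^{t,r}(\rn)$ with norm at most $C_{q,p,n,r}\|f\|_{M_{p,q}^{t,r}(\rn)}$, so the integrand is Bochner-integrable whenever $g\in L^1(\rn)$. Then I would apply the generalized Minkowski inequality
\[
\Bigl\|\int_{\rn} g(y)\, f(\cdot-y)\, \d y \Bigr\|_{M_{p,q}^{t,r}(\rn)}
\le \int_{\rn} |g(y)|\,\|f(\cdot-y)\|_{M_{p,q}^{t,r}(\rn)}\,\d y,
\]
and invoke Lemma~\ref{translation BML} inside the integral to obtain the bound $C_{q,p,n,r}\|g\|_{L^1}\|f\|_{M_{p,q}^{t,r}(\rn)}$.

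If one prefers to avoid a Bochner-integral formalism, the same inequality can be obtained by discretization: approximate $g$ by simple functions $g_N=\sum_j a_j \chi_{E_j}$ with the $E_j$ disjoint of small diameter; then $g_N*f$ is essentially a finite combination $\sum_j a_j |E_j|\, f(\cdot-y_j)$ (up to an error controlled by a modulus of continuity in $y$), and the triangle inequality in $M_{p,q}^{t,r}(\rn)$ combined with Lemma~\ref{translation BML} gives the bound for $g_N*f$ with constant $C_{q,p,n,r}\|g_N\|_{L^1}$. Passing to the limit via the embedding \eqref{embedding p - epsilon} into a normed Morrey space (where Fatou is available) yields the inequality for general $g\in L^1$.

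The main obstacle is the rigorous justification of the interchange between the norm $\|\cdot\|_{M_{p,q}^{t,r}(\rn)}$ and the integral $\int \d y$. Everything else (translation invariance and the triangle inequality) has already been recorded, so the heart of the argument is simply checking that Minkowski's integral inequality remains valid in the Banach space $M_{p,q}^{t,r}(\rn)$ — which, given the parameter restrictions $1\le q\le\infty$, $1<p\le t<\infty$, and $1\le r\le\infty$, reduces to combining the Minkowski inequality for the (normed) Lorentz space $L^{p,q}$ with the Minkowski inequality for $\ell^r$ on $\mathcal{D}$.
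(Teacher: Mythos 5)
Your proposal matches the paper's own (one-line) justification, which says the corollary follows from the translation invariance in Lemma~\ref{translation BML} together with the triangle inequality for $M_{p,q}^{t,r}(\rn)$, i.e., exactly the Minkowski-integral argument you describe. The elaborations you offer (Bochner integral or discretization, plus the observation that the parameter restrictions guarantee normability of $L^{p,q}$ and hence of $M_{p,q}^{t,r}$) are a correct fleshing-out of the same route.
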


For the boundedness of Hardy-Littlewood maximal operator in subsection \ref{HL section},
we define a equivalent norm of $M_{p,q}^{t,r} (\rn)  $ below. To do so, we use a dyadic grid $\mathcal D_{k,\vec a}$, $k \in \mathbb Z$, $\vec a \in \{ 0,1,2\}^n$. More precisely, let
\begin{equation*}
	\mathcal D_{k,\vec a}^0  \equiv  \{ 2^{-k} [ m +a/3, m+a/3 +1) : m\in \mathbb Z   \}
\end{equation*}
for $k \in \mathbb Z$  and $a = 0,1,2$. Consider
\begin{equation*}
		\mathcal D_{k,\vec a} \equiv \{ Q_1 \times Q_2 \times \cdots \times Q_n :Q_j \in	\mathcal D_{k,\vec a_j}^0 ,  j =1,2,\ldots , n \}
\end{equation*}
for $k \in \mathbb Z$  and  $\vec a = (a_1, a_2, \ldots, a_n) \in  \{ 0,1,2\}^n$. Hereafter,  a  dyadic grid is the family $\mathcal D _{\vec a} \equiv \cup_{k\in \mathbb Z} \mathcal D_{k,\vec a} $ for $\vec a \in \{0,1,2\}^n$.

The following result is an important property of the dyadic grids to prove the equivalent norm of the Bourgain-Morrey-Lorentz space.
\begin{lemma}[Lemma 2.8, \cite{HNSH23}] \label{cube be covered}
	For any cube $Q$ there exists $R\in \bigcup_{\vec a \in \{0,1,2\}^n}  \mathcal D _{\vec a}$ such that $Q \subset R$  and $|R| \le 6^n |Q|$.
\end{lemma}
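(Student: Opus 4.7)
The plan is to reduce to a one-dimensional claim and then combine by taking Cartesian products. Write $Q = I_1 \times \cdots \times I_n$, where every $I_i \subset \rr$ is an interval of common length $\ell := \ell(Q)$. I will establish the following one-dimensional statement: \emph{for any interval $I \subset \rr$ of length $\ell$, there exist $k \in \mathbb Z$, $a \in \{0,1,2\}$, and $R \in \mathcal D_{k,a}^0$ with $I \subset R$ and $\ell(R) < 6\ell$.} Crucially, the scale $k$ produced depends only on $\ell$, not on the placement of $I$, so applying the one-dimensional claim to each $I_i$ yields a common $k$, shifts $a_1, \ldots, a_n \in \{0,1,2\}$, and intervals $R_i \in \mathcal D_{k,a_i}^0$ with $I_i \subset R_i$; setting $\vec a := (a_1, \ldots, a_n)$ and $R := R_1 \times \cdots \times R_n$ then gives $R \in \mathcal D_{k,\vec a} \subset \mathcal D_{\vec a}$, $Q \subset R$, and $|R| = \ell(R)^n \le 6^n |Q|$.

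For the one-dimensional claim, choose $k \in \mathbb Z$ so that $L := 2^{-k}$ satisfies $3\ell \le L < 6\ell$; such $k$ exists and is unique because the interval $[3\ell, 6\ell)$ has ratio $2$ and therefore contains exactly one power of $2$. Writing $I = [x, x+\ell)$, the membership $I \subset [mL + aL/3,\, (m+1)L + aL/3)$ for some $m \in \mathbb Z$ is equivalent to $r_a := (x - aL/3) \bmod L \in [0,L)$ lying in the ``good'' subset $[0, L-\ell]$; failure means $r_a$ lies in the ``bad'' arc $(L-\ell, L)$, which I view as an open arc in the circle $\rr / L\mathbb Z$ of length $\ell \le L/3$. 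The three residues $r_0, r_1, r_2$ are equally spaced on this circle, with pairwise circle-distance $L/3$. Two of them lying simultaneously in an open arc of length at most $L/3$ would force their circle-distance to be strictly less than $L/3$, contradicting the spacing. Hence at least one $a \in \{0,1,2\}$ avoids the bad arc and delivers the desired $R \in \mathcal D_{k,a}^0$ of length $L < 6\ell$.

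The main (and essentially only) delicate step is the pigeonhole estimate on the circle: three equally-spaced points at gap $L/3$ cannot all sit inside a single arc of length $\le L/3$. The boundary case $\ell = L/3$, in which the bad arc's length matches the spacing, is exactly the place where the open/closed conventions matter, so one has to respect that the half-open definition $\mathcal D_{k,a}^0 = \{2^{-k}[m+a/3,\, m+a/3+1) : m \in \mathbb Z\}$ forces the good set to be the closed interval $[0, L-\ell]$ and the bad set to be the open arc $(L-\ell, L)$. With that convention fixed, the pigeonhole is immediate and the argument above goes through.
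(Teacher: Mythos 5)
The paper does not prove Lemma \ref{cube be covered}: it is quoted verbatim by citation to \cite{HNSH23}, so there is no in-text argument to compare against. Your proof is correct, and it is the standard ``one-third trick'' argument for shifted dyadic grids: reduce to one dimension, fix the unique dyadic scale $L = 2^{-k} \in [3\ell, 6\ell)$ (which depends only on $\ell$, so the same $k$ serves every coordinate and the Cartesian product is legitimate because $\mathcal D_{k,\vec a}$ permits independent shifts $a_j$ per coordinate), and observe that the three residues $r_0, r_1, r_2$ sit at pairwise circular distance exactly $L/3$, so at most one can fall in the open bad arc of length $\ell \le L/3$. Your handling of the boundary case $\ell = L/3$ via the half-open conventions is exactly the point that needs care, and you treat it correctly; as a small bonus your argument even yields the strict inequality $|R| < 6^n|Q|$.
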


Fix the dyadic grid $\mathfrak{D} \in \{  \mathcal D _{\vec a} : \vec a \in \{0,1,2\}^n \} $. Then we define
\begin{equation*}
	\| f \|_{  M_{p,q}^{t,r} (\mathfrak{D}) } : = \left\|\left\{|Q|^{1/t-1/p}  \| f\|_{L^{p,q} (Q) }  \right\}_{Q \in \mathfrak{D} }    \right\|_{\ell^r}
\end{equation*}
for all $f \in L^0 (\rn)$.
\begin{lemma} \label{D equivalence}
		Let $ 0 < q \le \infty$. Let $ 0 <p <t<r <\infty $ or $ 0 <p \le t < r =\infty $. 
		Then $ \|\cdot \|_{M_{p,q}^{t,r} (\rn)  }$  and $ \|\cdot \|_{M_{p,q}^{t,r} (\mathfrak{D})  } $ are equivalent for any dyadic grid $\mathfrak{D} \in \{  \mathcal D _{\vec a} : \vec a \in \{0,1,2\}^n \} $. That is, for each $f \in L^0 (\rn)$, $ \|f\|_{M_{p,q}^{t,r} (\rn)  }   \approx   \|f\|_{M_{p,q}^{t,r} (\mathfrak{D})  }  $.
\end{lemma}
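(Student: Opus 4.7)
The plan is to establish the inequalities $\|f\|_{M_{p,q}^{t,r}(\mathfrak{D})} \lesssim \|f\|_{M_{p,q}^{t,r}(\rn)}$ and $\|f\|_{M_{p,q}^{t,r}(\rn)} \lesssim \|f\|_{M_{p,q}^{t,r}(\mathfrak{D})}$ by essentially the same covering strategy in each direction, with Lemma \ref{cube be covered} as the crucial tool for passing from arbitrary cubes to cubes of a fixed dyadic grid.

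For the direction $\|f\|_{M_{p,q}^{t,r}(\rn)} \lesssim \|f\|_{M_{p,q}^{t,r}(\mathfrak{D})}$, I would fix an arbitrary cube $Q$ contributing to $\|f\|_{M_{p,q}^{t,r}(\rn)}$. By Lemma \ref{cube be covered}, there is some $R_0 \in \bigcup_{\vec b \in \{0,1,2\}^n} \mathcal{D}_{\vec b}$ with $Q \subset R_0$ and $|R_0| \le 6^n |Q|$. In general $R_0$ lies in a grid $\mathcal{D}_{\vec b}$ distinct from $\mathfrak{D} = \mathcal{D}_{\vec a}$, so I further cover $R_0$ by $\mathfrak{D}$-cubes at the same scale: since $\mathfrak{D}$ partitions $\rn$ at scale $\ell(R_0)$ and $R_0$ straddles at most one $\mathfrak{D}$-boundary in each coordinate direction, $R_0$ meets at most $2^n$ cubes $R_1,\dots,R_N \in \mathfrak{D}$ at this scale, and these cover $R_0$. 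Since $|R_j| = |R_0| \in [|Q|, 6^n|Q|]$, we have $|R_j| \approx |Q|$. Using the quasi-triangle inequality in $L^{p,q}$ (cf.\ Remark \ref{basic Lorentz}), one gets $\|f\|_{L^{p,q}(Q)} \le \|f\|_{L^{p,q}(\bigcup_j R_j)} \lesssim \sum_j \|f\|_{L^{p,q}(R_j)}$, which combined with $|Q|^{1/t-1/p} \approx |R_j|^{1/t-1/p}$ yields
\begin{equation*}
|Q|^{1/t - 1/p} \|f\|_{L^{p,q}(Q)} \lesssim \sum_{j=1}^{N} |R_j|^{1/t - 1/p} \|f\|_{L^{p,q}(R_j)}.
\end{equation*}
Raising to the $r$-th power (via $(\sum_j a_j)^r \lesssim_{r,N} \sum_j a_j^r$) and summing over $Q$ then reduces the proof to a bounded-overlap estimate: each $R \in \mathfrak{D}$ appears as some $R_j(Q)$ for at most $O_n(1)$ values of $Q$, because any such $Q$ satisfies $|Q| \approx |R|$ and is contained in a region of volume comparable to $|R|$.

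The reverse direction $\|f\|_{M_{p,q}^{t,r}(\mathfrak{D})} \lesssim \|f\|_{M_{p,q}^{t,r}(\rn)}$ is handled symmetrically: for each $R \in \mathfrak{D}$ I apply Lemma \ref{cube be covered} to obtain a cube of $\bigcup_{\vec b} \mathcal{D}_{\vec b}$ containing $R$ of comparable size, cover it by at most $2^n$ cubes from the family indexing the $\rn$-norm, and run the analogous pointwise estimate plus bounded-overlap summation. The main obstacle throughout is the rigorous verification of the bounded-overlap condition, which requires making the covering assignment $Q \mapsto \{R_j(Q)\}$ deterministic (e.g.\ via a lexicographic selection) and carefully tracking how the shifts $\vec a/3$ and $\vec b/3$ interact with the dyadic scales. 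All implicit constants depend only on $n$, $p$, $q$, $r$, and the $L^{p,q}$ quasi-triangle constant; the factor $6^n$ from Lemma \ref{cube be covered} and the factor $2^n$ from the one-scale covering propagate multiplicatively through the estimate.
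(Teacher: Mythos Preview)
Your argument is correct and in spirit matches the paper's approach: both start from Lemma~\ref{cube be covered} and compare a cube of one grid to cubes of the other grid of comparable size. The paper's proof is considerably terser---it invokes Lemma~\ref{cube be covered} and then passes directly from $\sum_{Q\in\D}$ to $\sum_{R\in\mathfrak D}$, together with the observation $R\subset 13Q$, without spelling out the intermediate covering by $\mathfrak D$-cubes or the bounded-overlap count. Your additional step (covering the cube $R_0\in\bigcup_{\vec b}\mathcal D_{\vec b}$ produced by Lemma~\ref{cube be covered} by at most $2^n$ cubes of the \emph{specific} grid $\mathfrak D$ at the same scale, and then verifying finite overlap in the assignment $Q\mapsto\{R_j(Q)\}$) is exactly what is needed to make the passage rigorous, since Lemma~\ref{cube be covered} only guarantees $R_0$ lies in \emph{some} shifted grid, not in $\mathfrak D$ itself.

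One simplification worth noting: the detour through Lemma~\ref{cube be covered} is not actually needed. Since $\mathcal D$ and $\mathfrak D$ each partition $\rn$ at every dyadic scale, any $Q\in\mathcal D_j$ is already covered by at most $2^n$ cubes of $\mathfrak D$ at the \emph{same} scale $j$ (the shift $2^{-j}\vec a/3$ is strictly less than the side length $2^{-j}$ in each coordinate), and then $|R_i|=|Q|$ exactly. This makes both the pointwise estimate and the bounded-overlap count immediate (each $R\in\mathfrak D_j$ meets at most $2^n$ cubes of $\mathcal D_j$), and avoids tracking the factor $6^n$ and the several possible scales of $R_0$.
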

\begin{proof}
	We use the idea from the proof of \cite[ (2.1)]{HNSH23}.
	By Lemma \ref{cube be covered}, for all cubes $Q$, we can find $R \in \bigcup_{\vec a \in \{0,1,2\}^n}  \mathcal D _{\vec a}$ such that $Q \subset R$ and $|R| \le 6^n|Q|$. Then
	\begin{align*}
		\| f \|_{ M_{p,q}^{t,r} (\rn) } & = \left(  \sum_{Q \in \D} |Q|^{r /t- r/p} \| f\|_{ L^{p,q} (Q)} ^r \right)^{1/r} \\
		& \le 6^{n/p-n/t} \left(  \sum_{R \in \mathfrak D} |R|^{r /t- r/p} \| f\|_{ L^{p,q} (R)} ^r \right)^{1/r} =  6^{n/p-n/t} 	\| f \|_{ M_{p,q}^{t,r} (\mathfrak D) } .
	\end{align*}
Meanwhile, since $Q \subset R$ and $|R| \le 6^n |Q|$, we have $R \subset 13 Q$. Thus,
\begin{equation*}
	\| f \|_{ M_{p,q}^{t,r} (\mathfrak D) }  \lesssim \| f \|_{ M_{p,q}^{t,r} (\rn) }.
\end{equation*}
Thus the proof is finished.
\end{proof}

Next  Bourgain-Morrey-Lorentz spaces $M_{p,q}^{t,r} (\rn) $ have the same  dilation property as Bourgain-Morrey spaces $M_{p}^{t,r} (\rn) = M_{p,p}^{t,r} (\rn) $ and Lebesgue spaces $L^t (\rn)$.
\begin{proposition}
	Let $ 0 < q \le \infty$. Let $ 0 <p \le t< \infty $ and $ 0 < r  \le \infty $.
	For $\epsilon >0$, we have	$ \| f (\epsilon\cdot ) \|_{ M_{p,q}^{t,r} (\rn) } \approx \epsilon^{- n/t} \| f \|_{ M_{p,q}^{t,r} (\rn) }  $.	
\end{proposition}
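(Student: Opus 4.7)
The plan is to reduce the statement to a straightforward change of variables at the level of a single cube, then handle the fact that the dilated dyadic grid $\epsilon\mathcal{D}$ is not one of the admissible grids $\mathcal{D}_{\vec a}$ by invoking the covering lemma and the grid-equivalence lemma already established.

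First I would use the dilation identity for Lorentz quasi-norms: for any cube $Q$, the substitution $y=\epsilon x$ in the distribution function gives $d_{f(\epsilon\cdot)\chi_Q}(\alpha)=\epsilon^{-n}d_{f\chi_{\epsilon Q}}(\alpha)$, hence the standard identity
\begin{equation*}
\|f(\epsilon\cdot)\|_{L^{p,q}(Q)} = \epsilon^{-n/p}\|f\|_{L^{p,q}(\epsilon Q)}.
\end{equation*}
Plugging this into the definition of the Bourgain-Morrey-Lorentz quasi-norm and pulling out the powers of $\epsilon$ using $|Q|=\epsilon^{-n}|\epsilon Q|$ yields
\begin{equation*}
\|f(\epsilon\cdot)\|_{M_{p,q}^{t,r}(\rn)} = \epsilon^{-n/t}\left(\sum_{Q\in\mathcal{D}}|\epsilon Q|^{r/t-r/p}\|f\|_{L^{p,q}(\epsilon Q)}^{r}\right)^{1/r}
\end{equation*}
(with the obvious supremum modification when $r=\infty$). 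So the desired equivalence reduces to showing that the right-hand side, which is the analogue of the $M_{p,q}^{t,r}$ quasi-norm computed over the grid $\epsilon\mathcal{D}$, is comparable to $\|f\|_{M_{p,q}^{t,r}(\rn)}$.

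To establish this equivalence of grid-based quasi-norms I would argue in complete analogy with the proof of Lemma \ref{D equivalence}. Write $\epsilon=2^{-j_0}\alpha$ with $\alpha\in[1,2)$ and $j_0\in\mathbb{Z}$; then every cube $\epsilon Q$ with $Q\in\mathcal{D}$ has sidelength $\alpha\cdot 2^{-j-j_0}$, so by Lemma \ref{cube be covered} it is contained in some $R\in\bigcup_{\vec a\in\{0,1,2\}^n}\mathcal{D}_{\vec a}$ with $|R|\le 6^n|\epsilon Q|$. This gives the one-sided domination
\begin{equation*}
\left(\sum_{Q\in\mathcal{D}}|\epsilon Q|^{r/t-r/p}\|f\|_{L^{p,q}(\epsilon Q)}^{r}\right)^{1/r} \lesssim \sum_{\vec a}\|f\|_{M_{p,q}^{t,r}(\mathcal{D}_{\vec a})},
\end{equation*}
and by Lemma \ref{D equivalence} the right-hand side is $\lesssim\|f\|_{M_{p,q}^{t,r}(\rn)}$. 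The reverse inequality follows symmetrically: every cube $Q\in\mathcal{D}$ is the $\epsilon^{-1}$-dilate of a cube in the grid $\epsilon^{-1}\mathcal{D}$, and the same covering argument applied to $\epsilon^{-1}\mathcal{D}$ (again through the admissible grids $\mathcal{D}_{\vec a}$) provides the control.

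The only genuine obstacle is the Step 4 grid equivalence, because $\epsilon\mathcal{D}$ is not literally one of the grids $\mathcal{D}_{\vec a}$ for general $\epsilon>0$; however, this is exactly the scenario handled by the covering Lemma \ref{cube be covered}, and the three-lines bookkeeping between bounded multiplicity and the conversion of sums indexed by one grid into sums indexed by the shifted grid is routine. Care must be taken with the quasi-triangle inequality in $\ell^r$ when $r<1$, but this only affects constants. The remaining case $r=\infty$ is handled by replacing sums with suprema throughout with no essential change.
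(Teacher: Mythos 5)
Your proof is correct and, as far as can be judged from the paper's one-line reference to the analogous proof in \cite{HNSH23}, takes essentially the same approach: a change of variables in the Lorentz quasi-norm reduces the claim to comparing the sum over the dilated grid $\epsilon\mathcal{D}$ with the sum over $\mathcal{D}$, which is handled by the covering Lemma \ref{cube be covered} and the grid-equivalence Lemma \ref{D equivalence}. Two small points worth flagging for rigor rather than substance: (1) Lemma \ref{D equivalence} is stated only for the exponent ranges in which $M^{t,r}_{p,q}(\rn)$ is nontrivial, so formally one should note that outside those ranges both sides vanish and the claim is vacuous. (2) When you pass from the pointwise covering $\epsilon Q\subset R$ to a comparison of $\ell^r$-sums, one needs the observation that the map $\epsilon Q\mapsto R$ has multiplicity bounded by a dimensional constant (because $|\epsilon Q|\ge 6^{-n}|R|$ and $\epsilon Q\subset R$ with the $\epsilon Q$'s pairwise disjoint at a fixed scale); you wave at this as ``routine bookkeeping,'' which it is, but it is the one genuinely nontrivial step. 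Your reduction $\epsilon=2^{-j_0}\alpha$, $\alpha\in[1,2)$ is a nice normalization, and the observation that the reverse inequality follows by replacing $\epsilon\mapsto\epsilon^{-1}$ and $f\mapsto f(\epsilon\cdot)$ (keeping track of the uniform constants) is exactly the right symmetry to invoke.
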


\begin{proof}
	The proof is similar to \cite[Lemma 2.4]{HNSH23}. We omit the  details here.
\end{proof}

Below, we investigate the necessary and sufficient conditions for the Bourgain-Morrey-Lorentz space $ M_{p,q}^{t,r} (\rn)$ to be nontrivial.  That is $M_{p,q}^{t,r} (\rn)  \neq \{0\} $.
 We first consider the following example.
\begin{example} \label{example chi_Q}
	Let $ 0 < q \le \infty$. 	Let $0 < p \le t <\infty$  and $0 <r \le \infty$.
	Let $ f = \chi_{ Q_{j,0}}$. Then $f \in M_{p,q}^{t,r} (\rn)$  if and only if $ 0 <p <t<r <\infty $ or $ 0 <p \le t < r =\infty $. 
\end{example}
\begin{proof}
	We only prove $r<\infty$  since $r =\infty$ is similar.
	Let $k  \le j$. Then there is only one cube $Q_k  \in \mathcal D_k$ such that $  Q_{j,0} \subset Q_k  $. Hence
	\begin{equation*}
		\| f\|_{L^{p,q} ( Q_k)} = \begin{cases}
			p^{1/q} \left(   \int_0^1   \left( 2^{-jn /p } s \right)^q \frac{ \d s}{s}     \right)^{1/q} = 2^{-jn/p}  \big(  \frac{p}{q} \big) ^{1/q} , & \operatorname{if} \; q <\infty , \\
			\sup_{1>s>0}  s  2^{-jn/p} =  2^{-jn/p}, & \operatorname{if}\; q = \infty. 
		\end{cases}
	\end{equation*}	
	Let $  k >j $. Then there exists $2^{ (k-j)n }$ cubes $Q_k^i \in \mathcal D_k$ such that $ Q_{j,0} = \cup_{i=1}^{2^{ (k-j)n } } Q_k^i $. Hence
	\begin{equation*}
		\| f\|_{L^{p,q} ( Q_k^i)} = \begin{cases}
			p^{1/q} \left(   \int_0^1   \left( 2^{-kn /p } s \right)^q \frac{ \d s}{s}     \right)^{1/q} = 2^{-kn/p}  \big(  \frac{p}{q} \big) ^{1/q} , & \operatorname{if} \; q <\infty , \\
			\sup_{1>s>0}  s  2^{-kn/p} =  2^{-kn/p}, & \operatorname{if}\; q = \infty. 
		\end{cases}
	\end{equation*}
	Let 
	\begin{equation*}
		C_{p,q} = \begin{cases}
			\big(  \frac{p}{q} \big) ^{1/q} , & \operatorname{if} \; q <\infty , \\
			1,	& \operatorname{if}\; q = \infty. 
		\end{cases}
	\end{equation*}
	Then
	\begin{align*}
		\| f \|_{M_{p,q}^{t,r} (\rn)   }^r 
		& = C_{p,q} \left(  \sum_{ k \le j } 2^{-kn (r/t-r/p)} 2^{-jnr/p}   +  \sum_{ k> j} 2^{-kn (r/t-r/p)} 2^{(k-j)n }  2^{-knr/p} \right) .
	\end{align*}
	Then $ \| f \|_{M_{p,q}^{t,r} (\rn)   }  <\infty$  if and only if $0< p <t <r <\infty$. And when $0< p <t <r <\infty$,
	$ \| f \|_{M_{p,q}^{t,r} (\rn)   }  \approx  2^{-jn/t} = \ell( Q_{j,0}) ^{n/t} $ where the implicit constant depends only on $ p,q,t,r,n $. 
\end{proof}

\begin{theorem}
	Let $ 0 < q \le \infty$.  
	Let $0 < p \le t <\infty$  and $0 <r \le \infty$.
	 Then $M_{p,q}^{t,r} (\rn)$  is not trivial if and only if $0< p <t <r <\infty$ or  $  0<p \le t < r=\infty$.
\end{theorem}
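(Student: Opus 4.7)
The plan is to split the proof into the two implications of the equivalence; the sufficiency is immediate from Example \ref{example chi_Q}, while the necessity will be proved by contradiction via explicit divergent subsums of $\|f\|_{M_{p,q}^{t,r}(\rn)}$ for an arbitrary nonzero $f$.

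For the sufficiency, under either hypothesis Example \ref{example chi_Q} asserts that $\chi_{Q_{0,0}} \in M_{p,q}^{t,r}(\rn)$; this is a nonzero element, so the space is nontrivial.

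For the necessity I would argue by contradiction. Under the standing assumption $0 < p \le t < \infty$ and $0 < r \le \infty$, failure of the two listed conditions means $r < \infty$ together with either (a) $p = t$, or (b) $p < t$ with $r \le t$. I pick any nonzero $f \in M_{p,q}^{t,r}(\rn)$ and choose $\alpha, R > 0$ so that $E := \{|f| > \alpha\} \cap B(0,R)$ has positive finite measure. Combining $|f| \ge \alpha \chi_E$ pointwise with the identity $\|\chi_A\|_{L^{p,q}(\rn)} = C(p,q)\,|A|^{1/p}$, obtained directly from the decreasing rearrangement of $\chi_A$ and monotonicity of the Lorentz quasi-norm, yields
\[
\|f\|_{L^{p,q}(Q)} \ge c\,\alpha\,|Q\cap E|^{1/p}
\]
for every cube $Q$, where $c$ depends only on $p$ and $q$.

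In case (a) the weight $|Q|^{r/t-r/p}$ is identically $1$, and for each sufficiently negative level $k$ the bounded set $E$ is covered by at most $2^n$ cubes of $\D_k$, so some $Q_k \in \D_k$ satisfies $|Q_k\cap E| \ge 2^{-n}|E|$ and hence $\|f\|_{L^{p,q}(Q_k)} \ge c' > 0$; the subsum over these infinitely many $Q_k$ is already infinite, contradicting $f \in M_{p,q}^{t,r}(\rn)$. In case (b) I would instead invoke the Lebesgue density theorem at small scales: for each sufficiently large $k$ there are at least $c|E|\cdot 2^{kn}$ dyadic cubes $Q \in \D_k$ with $|Q\cap E| \ge |Q|/2$, each contributing at least a constant multiple of $|Q|^{r/t-r/p}\,|Q|^{r/p} = 2^{-knr/t}$ to $\|f\|_{M_{p,q}^{t,r}(\rn)}^r$, so the level-$k$ subsum is of order at least $|E|\cdot 2^{kn(1-r/t)}$. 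Since $r \le t$, this either tends to infinity as $k \to \infty$ (when $r < t$) or remains bounded below by a positive constant for every $k$ (when $r = t$), and in either situation the total sum diverges. The principal technical obstacle is making the two complementary subcases exhaustive --- large cubes when $p = t$ and small cubes when $p < t$ with $r \le t$ --- which is the only delicate point since the lower bound on $\|f\|_{L^{p,q}(Q)}$ and the Lebesgue-density counting are essentially routine once $E$ has been fixed.
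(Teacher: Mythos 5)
Your proof is correct, and it takes a genuinely different route from the paper's. The paper reduces the ``only if'' direction to Example \ref{example chi_Q} by structural means: it uses the scaling identity $\| |f|^u \|_{M_{p,q}^{t,r}} = \|f\|_{M_{pu,qu}^{tu,ru}}^u$ to renormalize the indices so the space is a Banach space, then convolves a normalized $f$ with $\chi_{[0,1]^n}$ (Corollary \ref{convolution BML}) to produce a nonzero continuous element dominating some $\epsilon\chi_{B(x_0,R)}$, and finally invokes translation and dilation invariance to conclude $\chi_{[0,1]^n}\in M_{p,q}^{t,r}(\rn)$, whence Example \ref{example chi_Q} forces the index constraints. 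You instead argue by contradiction directly for an arbitrary nonzero $f$: after extracting a bounded positive-measure set $E$ on which $|f|$ exceeds a threshold, you get $\|f\|_{L^{p,q}(Q)}\gtrsim |Q\cap E|^{1/p}$ for every cube $Q$, and then exhibit a divergent subsum of the defining series --- over coarse dyadic scales when $p=t$ (unit weight, one cube per scale capturing a fixed fraction of $E$), and over fine scales when $p<t$ and $r\le t<\infty$ (Lebesgue/martingale density giving $\gtrsim |E|2^{kn}$ cubes contributing $\gtrsim 2^{-knr/t}$ each, so the level-$k$ sums $\gtrsim 2^{kn(1-r/t)}$ fail to be summable). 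Your case split $\{p=t\}\cup\{p<t,\ r\le t\}$ is indeed exhaustive once $r<\infty$ is forced, and your two regimes mirror precisely the two divergent subsums appearing in the computation of Example \ref{example chi_Q}. Your approach is more self-contained (no appeal to completeness, convolution, or dilation invariance) and quantitatively transparent about which scales cause divergence; the paper's approach is shorter and offloads all computation onto a single example. The one point you should make explicit, which you already flag as routine, is the Egorov-type uniformization needed to pass from the a.e.\ pointwise dyadic density statement to ``at least $c|E|2^{kn}$ cubes at every sufficiently large $k$'' --- without it the bound holds only for $k$ large depending on the point.
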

\begin{proof}
	We use the idea from \cite[Theorem 2.10]{HNSH23}.
	 Example \ref{example chi_Q} implies  `if' part. Thus, let us prove the `only if' part. Since
	\begin{equation*}
		\| |f|^u \|_{ M_{p,q}^{t,r} (\rn) }  = \left( \|f\|_{  M_{pu,qu}^{tu,ru} (\rn)  } \right) ^u
	\end{equation*}
for all $f \in L^0 (\rn)$, we can assume that $  1 <p \le t<\infty  $ and $1 < r <\infty $. In this case, $M_{p,q}^{t,r} (\rn) $ is a Banach space. Let $f  \in M_{p,q}^{t,r} (\rn) \backslash \{0\}$. By replacing $f$ with $|f|$, we can assume that $f$ is non-negative. By  replacing $f$ with $\min(1,|f|)$, we also assume that $f \in L^\infty (\rn)$. By Corollary \ref{convolution BML},  $  \chi_{ [0,1]^n } * f  \in M_{p,q}^{t,r} (\rn) \backslash \{0\}$. Since $ \chi_{ [0,1]^n } * f$  is a non-negative non-zero continuous function, there exists $x_0 \in \rn $, $\epsilon >0$ and $R >0 $  such that $  \chi_{ [0,1]^n } * f  \ge \epsilon \chi_{B(x_0 ,R) }$. Thus $  \chi_{B(x_0 ,R)} \in  M_{p,q}^{t,r} (\rn)$. Since $  M_{p,q}^{t,r} (\rn)$  is invariant under translation and dilation, we have $ \chi_{ [0,1]^n } \in  M_{p,q}^{t,r} (\rn) $. Thus,  from Example \ref{example chi_Q}, we obtain $0< p <t <r <\infty$ or  $  0<p \le t < r=\infty$.
\end{proof}

Next we get a geometric property of the space $M_{p,q}^{t,r} (\rn)$.
\begin{theorem} \label{tau y f +g le}
		Let $ 0 < q \le \infty$.  Let $0< p <t <r \le \infty$. Suppose that we have $f, g \in M_{p,q}^{t,r} (\rn)$ supported on a dyadic cube $Q \in \D$.
		
		{\rm (i)} If $r =\infty$, then for all $\epsilon >0$, there exists $ y \in \rn$ such that 
		\begin{align*}
			\| f(\cdot -y) +g \|_{M_{p,q}^{t,\infty} (\rn)  }  
			& \le \max \left( \| f\|_{M_{p,q}^{t,\infty} (\rn)} ,  \| g\|_{M_{p,q}^{t,\infty} (\rn)} \right) \\
			& \le(1+\epsilon) \max \left( \| f\|_{M_{p,q}^{t,\infty} (\rn)} ,  \| g\|_{M_{p,q}^{t,\infty} (\rn)}  \right)  .
		\end{align*}
	
		{\rm (ii)} If $r <\infty$, then for all $\epsilon >0$, there exists $ y \in \rn$ such that 
	\begin{equation*}
		\| f(\cdot -y) +g \|_{M_{p,q}^{t,r} (\rn)  }  \le (1+\epsilon) \left( \| f\|_{M_{p,q}^{t,r} (\rn)} ^r + \| g\|_{M_{p,q}^{t,r} (\rn)} ^r \right)^{1/r} .
	\end{equation*}
	 \end{theorem}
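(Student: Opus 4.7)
The idea is to translate $f$ by a vector $y$ that lies on the dyadic grid, chosen so that the support of $f(\cdot-y)$ is pushed far away from the support of $g$, so that the decomposition of the norm over dyadic cubes splits almost cleanly into $\|f\|$ and $\|g\|$ parts. Write $\ell(Q)=2^{-j_0}$ and, for a large integer $M$ to be chosen later, take $y=2^{M-j_0}e_1$. Then $Q$ and $Q+y$ are disjoint dyadic cubes of the same side length; for every dyadic cube $R\in\D$ with $\ell(R)\le 2^{M-j_0}$ the translate $R-y$ is itself a dyadic cube of the same size; and the smallest dyadic cube containing both $Q$ and $Q+y$ has side length comparable to $2^{M-j_0}$.

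Split $\D$ into three classes: (I) cubes $R$ meeting $\operatorname{supp} g$ but not $\operatorname{supp} f(\cdot-y)$; (II) cubes $R$ meeting $\operatorname{supp} f(\cdot-y)$ but not $\operatorname{supp} g$; (III) cubes meeting both. On class (I), $(f(\cdot-y)+g)\chi_R=g\chi_R$, so the part of $\|f(\cdot-y)+g\|_{M_{p,q}^{t,r}(\rn)}$ (or its $r$-th power for $r<\infty$) coming from class (I) is a sub-sum of the defining expression for $\|g\|_{M_{p,q}^{t,r}(\rn)}$ and is therefore bounded by it. On class (II) the map $R\mapsto R-y$ is an injection into $\D$ (by the choice of $y$), the cubes in its image meet $\operatorname{supp} f$, and $|R|^{1/t-1/p}\|f(\cdot-y)\chi_R\|_{L^{p,q}(\rn)}=|R-y|^{1/t-1/p}\|f\chi_{R-y}\|_{L^{p,q}(\rn)}$; so this contribution is controlled by (a sub-sum of) $\|f\|_{M_{p,q}^{t,r}(\rn)}$.

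The hypothesis $p<t$ enters only on class (III). Here $R\supset Q\cup(Q+y)$ forces $\ell(R)\gtrsim 2^{M-j_0}$, and since $f(\cdot-y)\chi_R=f(\cdot-y)$ and $g\chi_R=g$, the quasi-triangle inequality in $L^{p,q}(\rn)$ yields $\|(f(\cdot-y)+g)\chi_R\|_{L^{p,q}(\rn)}\le C_{p,q}(\|f\|_{L^{p,q}(\rn)}+\|g\|_{L^{p,q}(\rn)})$. Because $1/t-1/p<0$, the factor $|R|^{1/t-1/p}\le C\,2^{-Mn(1/p-1/t)}$ is small, and for $r<\infty$ summing the geometric series over the dyadic ancestors of the smallest common ancestor still gives a quantity of the same order; choosing $M$ large enough makes the class-(III) contribution smaller than $\max(\|f\|_{M_{p,q}^{t,\infty}(\rn)},\|g\|_{M_{p,q}^{t,\infty}(\rn)})$ in (i), and smaller than $\epsilon^r(\|f\|_{M_{p,q}^{t,r}(\rn)}^r+\|g\|_{M_{p,q}^{t,r}(\rn)}^r)$ in (ii). Combining the three classes yields the desired estimates.

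The principal obstacle is the bijection argument for class (II): the translation identity $\|f(\cdot-y)\chi_R\|_{L^{p,q}(\rn)}=\|f\chi_{R-y}\|_{L^{p,q}(\rn)}$ can be absorbed into $\|f\|_{M_{p,q}^{t,r}(\rn)}$ only when $R-y$ is again dyadic; otherwise one would have to invoke Lemma \ref{translation BML}, whose multiplicative constant would wreck the sharp $\max$-bound in (i) and the $(1+\epsilon)$-constant in (ii). Taking $y$ to be a dyadic vector on a coarse scale (precisely, on a scale at least as coarse as the cubes one needs to translate) is what makes the argument work, and the cubes $R$ at scales larger than that of $y$ are automatically absorbed by the small class-(III) remainder.
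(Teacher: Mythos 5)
Your proof is correct and follows essentially the same strategy as the paper's: pick $y$ so that $y+Q$ is a dyadic cube disjoint from $Q$, split the dyadic cubes into those seeing only $g$, only $f(\cdot-y)$, or both, use the dyadic translation identity for the second class, and control the third class by the smallness of $|R|^{1/t-1/p}$ for large $R$, enabled by $p<t$. One small remark on your bookkeeping: a cube $R$ at a scale coarser than $2^{M-j_0}$ can still lie in your class~(II) (meeting only $y+Q$, not $Q$), in which case $R-y$ need not be dyadic; you acknowledge that these are ``absorbed by the class-(III) remainder,'' which is right, but strictly speaking it means the large-cube estimate covers not just cubes containing both supports but all cubes at coarse scales — the same small imprecision is already present in the paper's three-way split, so this does not affect the substance.
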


\begin{proof}
	We use the idea from \cite[Theorem 2.14]{HNSH23}.
	We first recall the inequality from \cite[(1.4.9)]{G14}. For all $0<p,q \le\infty$,
	\begin{equation*}
		\|f+g\|_{L^{p,q} (X)} \le c_{p,q} ( 	\|f\|_{L^{p,q} (X)}  + 	\|g\|_{L^{p,q} (X)}  )
	\end{equation*}
with $  c_{p,q} = 2^{1/p} \max(1, 2^{(1-q) /q})$.
	Let $y \in \rn $ be such that $y+Q \in \D$ and $Q \cap (y+Q) = \emptyset$.
	
	(i) We write out the norm in full:
	\begin{align*}
		&	\| f(\cdot -y) +g \|_{M_{p,q}^{t,\infty} (\rn)  } \\
	&	= \sup_{R\in \D:  R \cap ( Q \cup (y+Q)}  |R|^{1/t-1/p}  \|f(\cdot -y) +g  \|_{ L^{p,q}  (  R \cap ( Q \cup (y+Q) )      ) }  \\
	& \le \max \bigg( \| f\|_{M_{p,q}^{t,\infty} (\rn)} ,  \| g\|_{M_{p,q}^{t,\infty} (\rn)},  \\
	& \quad  \sup_{R\in \D:  R \cap ( Q \cup (y+Q)}  |R|^{1/t-1/p} c_{p,q} \left(  \|f(\cdot -y)   \|_{ L^{p,q}  (  y+Q )      ) } +   \|g  \|_{ L^{p,q}  (  Q )      ) } \right)      \bigg) \\
	& \le \max \left( \| f\|_{M_{p,q}^{t,\infty} (\rn)} ,  \| g\|_{M_{p,q}^{t,\infty} (\rn)}\right)    \sup_{R\in \D:  R \cap ( Q \cup (y+Q) )  \neq \emptyset }  
	\max \left( 1,  2 c_{p,q} \frac{|R|^{1/t-1/p}}{|Q|^{1/t-1/p}} \right)  
   \\
   & \le \max \left( \| f\|_{M_{p,q}^{t,\infty} (\rn)} ,  \| g\|_{M_{p,q}^{t,\infty} (\rn)}\right)    \sup_{R\in \D:  R \cap ( Q \cup (y+Q) )  \neq \emptyset }  
   \max \left( 1, 2^{1/n} c_{p,q} ^{1/n} \frac{(\ell(Q) +|y|)^{1/t-1/p}}{(\ell(Q))^{1/t-1/p}} \right) ^{n} .
	\end{align*}
Since $p<t$, choose $|y|$  sufficiently large such that 
\begin{equation*}
	\left( 2^{1/n }c_{p,q} ^{1/n} \frac{(\ell(Q) +|y|)^{1/t-1/p}}{(\ell(Q))^{1/t-1/p}} \right) ^{n} \le 1.
\end{equation*}
Thus, we obtain the desired result.

(ii) This is similar to (i). 
We write out the norm in full as
	\begin{align*}
	\| f(\cdot -y) +g \|_{M_{p,q}^{t,r} (\rn)  }^r 
	&	=\sum_{R\in \D:  R \cap ( Q \cup (y+Q)}  |R|^{r/t-r/p}  \|f(\cdot -y) +g  \|_{ L^{p,q}  (  R \cap ( Q \cup (y+Q) )  ) }^r \\
= &  \sum_{R\in \D:  R \subset Q} |R|^{r/t-r/p}  \| g  \|_{ L^{p,q}  (  R  ) }^r   
 + \sum_{R\in \D:  R \subset (y+Q)}|R|^{r/t-r/p}  \|f \|_{ L^{p,q}  (  R   ) }^r   \\
& \quad +  \sum_{R\in \D:  ( Q \cup (y+Q) \subset R} |R|^{r/t-r/p}  \|f(\cdot -y) +g  \|_{ L^{p,q}  (  R \cap ( Q \cup (y+Q) )  ) }^r  \\
& \le 	\| f \|_{M_{p,q}^{t,r} (\rn)  } ^r + \| g \|_{M_{p,q}^{t,r} (\rn)  } ^r  \\
& \quad +  \sum_{R\in \D:  ( Q \cup (y+Q) \subset R}  |R|^{r/t-r/p}  c_{p,q}^r \left( \|f\|_{ L^{p,q}  (  Q) }  +   \|g \|_{ L^{p,q}  (  Q ) }  \right)^r \\
& \le 	\| f \|_{M_{p,q}^{t,r} (\rn)  } ^r + \| g \|_{M_{p,q}^{t,r} (\rn)  } ^r  \\
& \quad +  \sum_{R\in \D:  ( Q \cup (y+Q) \subset R}  |R|^{r/t-r/p}  c_{p,q}^r 2^{1/r'}  \left( \|f\|_{ L^{p,q}  (  Q) }^r  +   \|g \|_{ L^{p,q}  (  Q ) } ^r\right)  \\
& \le \left( 	\| f \|_{M_{p,q}^{t,r} (\rn)  } ^r + \| g \|_{M_{p,q}^{t,r} (\rn)  } ^r \right)  \\
& \quad \times \left( 1 + \sum_{R\in \D:  ( Q \cup (y+Q) \subset R}  |R|^{r/t-r/p}  c_{p,q}^r 2^{1/r'}  |Q|^{r/p-r/t} \right) .
\end{align*}
Next, we estimate $ I:= \sum_{R\in \D:  ( Q \cup (y+Q) \subset R}  |R|^{r/t-r/p}  c_{p,q}^r 2^{1/r'}  |Q|^{r/p-r/t} $.
Let $ m = \lfloor\log_2  (|y|+ \ell(Q)) \rfloor +1$ where $ \lfloor a \rfloor $ is the integer part of real number $a$.
Note that there at most one cube $R \in \D _k$ such that $ ( Q \cup (y+Q) \subset R $ for $ k \le m$.
Since $t >p$ and $\ell(R) \ge |y|+ \ell(Q)$, we have
\begin{align*}
	I & \le c_{p,q}^r 2^{1/r'} \ell (Q) ^{nr (1/p-1/t) } \sum_{k =-\infty }^{ m} 2^{ -knr (1/t-1/p)  } \\
	& \lesssim  c_{p,q}^r 2^{1/r'} \ell (Q) ^{nr (1/p-1/t) }  \left(  |y|+ \ell(Q)  \right) ^{nr (1/t-1/p)  }  
\end{align*}
Choose $|y|$ large enough such that $I \le \epsilon '$ where $\epsilon' >0$ such that $(1 + \epsilon')^{1/r} = (1+\epsilon)  $.
Then 
\begin{align*}
 	\| f(\cdot -y) +g \|_{M_{p,q}^{t,r} (\rn)  } \le  \left( 	\| f \|_{M_{p,q}^{t,r} (\rn)  } ^r + \| g \|_{M_{p,q}^{t,r} (\rn)  } ^r \right)^{1/r} (1 + \epsilon) .
\end{align*}
So, we complete the proof.
\end{proof}
An important geometric property of the space $M_{p,q}^{t,r} (\rn)$ is that it includes $\ell^r$.
\begin{theorem} \label{ell r M}
	Let $ 0 < q \le \infty$.  Let $0< p <t <r \le \infty$. Then there exists a linear mapping $\Phi:\ell^r \to M_{p,q}^{t,r} (\rn) $ such that 
	\begin{equation*}
		C^{-1} \|a \|_{\ell^r} \le \| \Phi(a) \|_{ M_{p,q}^{t,r} (\rn)}  \le C\|a \|_{\ell^r} 
	\end{equation*}
for all $a \in \ell^r$ for some constant $C>0$ independent of $a$.
\end{theorem}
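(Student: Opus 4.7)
The plan is to build $\Phi$ by placing translates of a single building block. Take $f_{0}:=\chi_{[0,1)^{n}}$; by Example \ref{example chi_Q}, $C_{0}:=\|f_{0}\|_{M_{p,q}^{t,r}(\rn)}\in(0,\infty)$. I would construct a sequence $\{y_{k}\}_{k=1}^{\infty}\subset\mathbb{Z}^{n}$ inductively, entirely independently of any coefficients, and set
\[
\Phi(a):=\sum_{k=1}^{\infty}a_{k}\,f_{0}(\cdot-y_{k}),\qquad a=(a_{k})_{k}\in\ell^{r},
\]
which is manifestly linear. The two-sided estimate would then follow from the near-additivity provided by Theorem \ref{tau y f +g le}.

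For the inductive choice set $y_{1}:=0$ and $\epsilon_{k}:=2^{-k}$. Given $y_{1},\dots,y_{k-1}$, pick a dyadic cube $Q_{k-1}\in\D$ containing $[0,1)^{n}$ and each $[0,1)^{n}+y_{j}$ for $1\le j\le k-1$. Apply Theorem \ref{tau y f +g le} to $Q_{k-1}$ with $\epsilon=\epsilon_{k}$: its proof selects $y_{k}$ from a smallness condition on $\ell(Q_{k-1})/|y_{k}|$ alone and hence is independent of the coefficients $(a_{j})_{j}$. Using the resulting $y_{k}$ with $f:=a_{k}f_{0}$ and $g:=\sum_{j=1}^{k-1}a_{j}f_{0}(\cdot-y_{j})$ (both supported in $Q_{k-1}$), induction yields, for $r<\infty$,
\[
\|\Phi_{N}(a)\|_{M_{p,q}^{t,r}(\rn)}^{r}\le \prod_{k=2}^{N}(1+2^{-k})^{r}\,\sum_{k=1}^{N}|a_{k}|^{r}C_{0}^{r}\le C_{1}^{r}C_{0}^{r}\,\|a\|_{\ell^{r}}^{r},
\]
with $C_{1}:=\prod_{k\ge 2}(1+2^{-k})<\infty$, and the analogous $\max$-version when $r=\infty$. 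Since $[0,1)^{n}\subset Q_{k-1}$ forces $\ell(Q_{k-1})\in\{1,2,4,\dots\}$, the condition $Q_{k-1}+y_{k}\in\D$ forces $y_{k}\in\mathbb{Z}^{n}$, so $R_{k}:=[0,1)^{n}+y_{k}\in\mathcal{D}_{0}$; disjointness $Q_{k-1}\cap(Q_{k-1}+y_{k})=\emptyset$ propagates $R_{j}\cap R_{k}=\emptyset$ for $j\neq k$. The same uniform estimate applied to tails shows $\{\Phi_{N}(a)\}$ is Cauchy in $M_{p,q}^{t,r}(\rn)$, so by completeness (cf.\ (\ref{embedding p - epsilon})) it converges to $\Phi(a)$, transferring the upper bound.

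For the lower bound I restrict the defining sum of the $M_{p,q}^{t,r}(\rn)$ norm to the distinguished subfamily $\{R_{k}\}$: by pointwise disjointness $\Phi(a)\big|_{R_{k}}=a_{k}\chi_{R_{k}}$, so (with the obvious modification taking $\sup_{k}$ when $r=\infty$)
\[
\|\Phi(a)\|_{M_{p,q}^{t,r}(\rn)}^{r}\ge\sum_{k=1}^{\infty}|R_{k}|^{r/t-r/p}\,\|a_{k}\chi_{R_{k}}\|_{L^{p,q}(R_{k})}^{r}=C_{2}^{r}\,\|a\|_{\ell^{r}}^{r},
\]
where $C_{2}:=\|\chi_{[0,1)^{n}}\|_{L^{p,q}(\rn)}>0$ is constant across $k$ because $|R_{k}|=1$.

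The main subtlety is the coefficient-independence of the construction: a single sequence $\{y_{k}\}$ must work simultaneously for every $a\in\ell^{r}$, which requires reading off from the proof of Theorem \ref{tau y f +g le} that $y_{k}$ depends only on $Q_{k-1}$ and $\epsilon_{k}$, and then verifying that the resulting translates $R_{k}$ are themselves dyadic cubes of side $1$ so that each contributes exactly one clean term $|a_{k}|^{r}C_{2}^{r}$ to the norm sum.
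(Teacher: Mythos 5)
Your proof is correct and follows essentially the same approach as the paper: translate $\chi_{[0,1)^n}$ to build an isomorphic copy of $\ell^r$ inside $M_{p,q}^{t,r}(\rn)$, using the near-additivity of Theorem \ref{tau y f +g le}. You usefully make explicit two points the paper leaves implicit: that the shift $y$ produced by Theorem \ref{tau y f +g le} depends only on the supporting cube and $\epsilon$ (so a single sequence $\{y_k\}$ works simultaneously for every coefficient sequence, making $\Phi$ genuinely linear and the upper bound uniform), and that the lower bound $\|a\|_{\ell^r}\lesssim\|\Phi(a)\|_{M_{p,q}^{t,r}(\rn)}$ --- called ``obvious'' in the paper --- follows by restricting the defining sum over $\D$ to the pairwise-disjoint unit dyadic cubes $R_k$.
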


\begin{proof}
	We use the idea from \cite[Theorem 2.15]{HNSH23}.
	Let $f_1 = \chi_{Q_{0,1}}$. Then $ \|f_1\|_{M_{p,q}^{t,r} (\rn)  }  \approx |Q_{0,1}|^{1/t} =1$. We inductively construct a sequence $\{ f_j\}_{j\ge 1}$ such that
	\begin{equation} \label{induce}
		\left(  \|f_1 +f_2 + \cdots + f_j \|_{M_{p,q}^{t,r} (\rn)}     \right)^r \le \prod_{l = 1}^j  (1+ 2^{-l}) \times \sum_{l=1}^j  ( \|f_l \|_{M_{p,q}^{t,r} (\rn)}    )^r
	\end{equation}
and that each $f_j$ is the indicator function of the cube with volume $1$. Once this is achieved, let 
\begin{equation*}
	\Phi (  \{ a_j \}_{j\ge 1}  ) = \sum_{j\ge 1} a_j f_j.
\end{equation*}
Then  $\Phi$ is the desired function since it is obvious that $ \|a\|_{\ell^r}  \lesssim  \| \Phi (a) \|_{M_{p,q}^{t,r} (\rn)} $  and that  $\prod_{l = 1}^\infty  (1+ 2^{-l})$  is finite.

Suppose that we construct $f_1, \ldots, f_j$ such that (\ref{induce}) is true.  By Theorem \ref{tau y f +g le}, we can select $f_{j+1}$, which is the indicator function of the cube of volume $1$, such  that
\begin{align*}
		& \left(  \|f_1 +f_2 + \cdots + f_j +f_{j+1} \|_{M_{p,q}^{t,r} (\rn)}     \right)^r \\
		&\le  (1+ 2^{-j-1}) \left( 	 \|f_1 +f_2 + \cdots + f_j  \|_{M_{p,q}^{t,r} (\rn)}  ^r +  \| f_{j+1} \|_{M_{p,q}^{t,r} (\rn)}  ^r   \right) 
\end{align*}
If we use the induction assumption, then a function  $f_{j+1}$ satisfying (\ref{induce}) holds for $j+1$.
\end{proof}

Then we get that the  Lorentz space $L^{p,w} (\rn)$ is different with the Bourgain-Morrey-Lorentz space $M_{p,q}^{t,r} (\rn) $. 
We remark here that  the  Lorentz space $L^{p,w} (\rn)$ differs the Bourgain-Morrey space $M_{p,p}^{t,r} (\rn) $   in \cite[Corollary 2.16]{HNSH23}. 

\begin{corollary}
	Let $ 0<q<\infty $  and $ 0< w <\infty $. Let $0< p <t <r \le \infty$. Then $L^{p,w} (\rn)$  and $M_{p,q}^{t,r} (\rn)$ are not isomorphic.
More precisely, $M_{p,q}^{t,r} (\rn) $ can not be embedded into $L^{p,w} (\rn)$. 
\end{corollary}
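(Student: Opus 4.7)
The strategy is to argue by contradiction, reducing via Theorem \ref{ell r M} to the non-embeddability of $\ell^{r}$ into $L^{p,w}(\rn)$ under the given indices. Suppose toward contradiction that $T\colon M_{p,q}^{t,r}(\rn)\to L^{p,w}(\rn)$ is a bounded linear map satisfying $\|Tf\|_{L^{p,w}}\approx\|f\|_{M_{p,q}^{t,r}}$ for every $f$, and let $\Phi\colon\ell^{r}\to M_{p,q}^{t,r}(\rn)$ be the linear isomorphism onto its image from Theorem \ref{ell r M}. Then $T\circ\Phi\colon\ell^{r}\to L^{p,w}(\rn)$ is an isomorphic embedding, so the task reduces to ruling out any such embedding of $\ell^{r}$ into $L^{p,w}(\rn)$. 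Since this stronger statement already rules out an isomorphism, it suffices to establish it.

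For $r=\infty$, I would argue by separability. For $0<p<\infty$ and $0<w<\infty$, the Lorentz space $L^{p,w}(\rn)$ is separable: rational-coefficient simple functions supported on finite unions of rational-vertex dyadic cubes are dense in $L^{p,w}(\rn)$, because the absolute continuity of the Lorentz quasi-norm (valid when $w<\infty$) enables approximation in norm. Since $\ell^{\infty}$ is non-separable, an isomorphic embedding of $\ell^{\infty}$ into a separable space is impossible.

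For $p<r<\infty$, I would proceed by disjointification. The images $h_{n}:=T(\Phi(e_{n}))$ form a semi-normalized sequence in $L^{p,w}(\rn)$ with $\|\sum_{n=1}^{N}a_{n}h_{n}\|_{L^{p,w}}\approx\|a\|_{\ell^{r}}$. By the Kadec--Pelczynski subsequence-splitting principle in the order-continuous rearrangement invariant space $L^{p,w}(\rn)$, I would extract a subsequence $(h_{n_{k}})$ and disjointly supported $g_{k}\in L^{p,w}(\rn)$ such that $\sum_{k}\|h_{n_{k}}-g_{k}\|_{L^{p,w}}$ is arbitrarily small; the standard basic-sequence perturbation then preserves the $\ell^{r}$-equivalence for $(g_{k})$. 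After a further subsequencing which forces the support measures $|\operatorname{supp} g_{k}|$ to be uniformly bounded, a direct computation of the decreasing rearrangement of the disjoint sum via the formula $\|f\|_{L^{p,w}}^{w}=\int_{0}^{\infty}(t^{1/p}f^{\ast}(t))^{w}\,\d t/t$ yields $\|\sum_{k=1}^{N}g_{k}\|_{L^{p,w}}\gtrsim N^{1/p}$. Combined with the $\ell^{r}$-equivalence, this forces $N^{1/r}\gtrsim N^{1/p}$ for every $N$, contradicting $r>p$.

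The main obstacle is the Kadec--Pelczynski disjointification step together with the uniform verification of the lower $\ell^{p}$-bound: disjointly supported normalized sequences in $L^{p,w}(\rn)$ can be equivalent to a range of Lorentz-type sequence bases (an $\ell^{p}$-behaviour for uniformly bounded support measures, but an $\ell^{w}$-behaviour when the support measures grow geometrically), so the monotonicity/regime reduction on $|\operatorname{supp} g_{k}|$ is essential to close the argument. This is the Lorentz-scale adaptation of the Bourgain-Morrey reasoning carried out in \cite[Corollary 2.16]{HNSH23}, to which one may appeal after the initial reduction to $\ell^{r}\hookrightarrow L^{p,w}(\rn)$.
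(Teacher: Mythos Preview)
Your interpretation of ``embedding'' as an arbitrary isomorphic linear embedding leads you to a considerably harder problem than the paper actually addresses. In the paper's convention, $X\hookrightarrow Y$ means $X\subset Y$ with bounded natural inclusion, and the corollary is asserting that this specific inclusion fails (and hence, a fortiori, that the two spaces do not coincide with equivalent norms). The paper's proof simply exploits the explicit form of the map $\Phi$ from Theorem~\ref{ell r M}: each $f_j$ is the indicator of a unit-volume cube, and these cubes are pairwise disjoint. Hence for any $0$--$1$ sequence $a$, the function $\Phi(a)=\sum_j a_j f_j$ is itself the indicator of a set of measure $\|a\|_{\ell^1}$, so $\|\Phi(a)\|_{L^{p,w}(\rn)}=C\,\|a\|_{\ell^1}^{1/p}=C\,\|a\|_{\ell^p}$. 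Comparing with $\|\Phi(a)\|_{M_{p,q}^{t,r}(\rn)}\approx\|a\|_{\ell^r}$ and taking $a$ with $N$ ones gives $N^{1/p}$ against $N^{1/r}$, which immediately rules out the inclusion since $p<r$. No disjointification machinery is needed because the $f_j$ are disjoint by construction.

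Your route via Kadec--Pe{\l}czy\'nski would, if completed, prove something genuinely stronger (no isomorphic copy of $\ell^r$ inside $L^{p,w}$ for $p<r$), but the gap you yourself flag is real: nothing forces $|\operatorname{supp}g_k|$ to be uniformly bounded after subsequencing, and without that the $\ell^p$-type lower bound can fail (disjoint normalised sequences in $L^{p,w}$ can span anything from $\ell^p$ to $\ell^w$ depending on how the supports spread). The separability argument for $r=\infty$ is clean and correct, though it again proves more than the corollary claims.
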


\begin{proof}
For the function $\Phi$ constructed in the proof of Theorem \ref{ell r M}, for any sequence $a$ such that $a_j = 0 $ or $1$ for all $j \in \mathbb N$, we have
\begin{align*}
	\| \Phi (a) \|_{ L^{p,w} (\rn) } & =\left\|  \sum_{j=1}^\infty a_j f_j \right\|_{L^{p,w} (\rn) } = \left\|  \chi_{ [0,1)^{n-1}  \times [0, \|a\|_{\ell^1} ] } \right\|_{L^{p,w} (\rn) } \\
	& = C \left| [0,1)^{n-1}  \times [0, \|a\|_{\ell^1} ]  \right|^{1/p}  =C \| a\|_{\ell^p}
\end{align*}
for some constant $C >0$ independent of $a$. But by Theorem \ref{ell r M}, $ \|a\|_{\ell^r} \approx \| \Phi (a) \|_{ M_{p,q}^{t,r} (\rn) }  $. Thus, $ L^{p,w} (\rn) $  and  $M_{p,q}^{t,r} (\rn) $  are not isomorphic.
\end{proof}

\section{Block spaces} \label{block space}
In this section, we introduce the block space $\mathcal H_{ p', q'}^{t',r'} (\rn)$, which is the predual of the  Bourgain-Morrey-Lorentz space  $M_{p,q}^{t,r} (\rn)$.
Given a quasi-Banach space $Y$  with norm $\|\cdot\|_{Y}$, its dual $Y^\ast$  is the space of all continuous linear functionals $T$ on $Y$ equipped with the norm 
\begin{equation*}
	\|T\|_{Y^\ast } = \sup_{ \|x\|_Y  \le 1}  |T(x)|.
\end{equation*}
Note that the dual of a quasi-Banach space is always a Banach space.
The predual of a Banach space $Y$ is actually a Banach space  $X$ such that $X^*$ is isomorphic to $Y$.

Next, we define  block functions.
\begin{definition}
		Let $ 1 \le q \le \infty$. Let $ 1 <p <t<r <\infty $ or $ 1 <p \le t < r =\infty $. A function $b$ is called $(p', q', t')$-block if there exists a cube $Q$ that supports $b$ such that
		\begin{equation*}
			\|b\|_{ L^{ p', q'}  }  \le |Q|^{ 1/ p' - 1/ t'}.
		\end{equation*}
	If we need to indicate $Q$, we say that $b$ is a $(p', q', t')$-block supported on $Q$.
\end{definition}
Now, we define the space $\mathcal H_{ p', q'}^{t',r'} (\rn)$ via $(p', q', t')$-block.
\begin{definition}
		Let $ 1 \le q \le \infty$. Let $ 1 <p <t<r <\infty $ or $ 1 <p \le t < r =\infty $. Define the function space $\mathcal H_{ p', q'}^{t',r'} (\rn) $ as the set of all measurable functions $f$ for which $f$ is realized as the sum
		\begin{equation} \label{f = block}
			f = \sum_{ (j,k) \in \mathbb Z ^{1+n} }  \lambda_{j,k} b_{j,k}
		\end{equation}
	with some $ \lambda = \{\lambda_{j,k} \} _{  (j,k) \in \mathbb Z ^{1+n}  }  \in \ell^{r'}  (\mathbb Z ^{1+n})$  and $ b_{j,k}$  is a  $(p', q', t')$-block supported on $ Q_{j,k}$ for each $  (j,k) \in \mathbb Z ^{1+n}$, where the converge occurs in $L^1_{\operatorname{loc}}$. The norm $\| f\|_{ \mathcal H_{ p', q'}^{t',r'} (\rn) }$ for $f \in \mathcal H_{ p', q'}^{t',r'} (\rn)$  is defined as
	\begin{equation*}
		\|f \|_{ \mathcal H_{ p', q'}^{t',r'} (\rn)} = \inf_\lambda \| \lambda \|_{\ell ^{r'}}, 
	\end{equation*}
where $ \lambda = \{\lambda_{j,k} \} _{  (j,k) \in \mathbb Z ^{1+n}  }  \in \ell^{r'}  (\mathbb Z ^{1+n})$ runs over all admissible expressions (\ref{f = block}) and $ b_{j,k}$  is a  $(p', q', t')$-block supported on $ Q_{j,k}$ for each $  (j,k) \in \mathbb Z ^{1+n}$.
\end{definition}

\begin{remark}
	The space $\mathcal H_{ p', q'}^{t',r'} (\rn) $ is also  invariant under translation. The proof is similar to Lemma \ref{translation BML} and we omit it. Together this and the definition of $\mathcal H_{ p', q'}^{t',r'} (\rn) $, we obtain that for all cube $Q$ with $\ell(Q) = R$, $ \|\chi_Q \|_{\mathcal H_{ p', q'}^{t',r'} (\rn)  }  \approx R^{n/t'} .$
	
		It is easy to see that $\mathcal H_{ p', q'}^{t',r'} (\rn) $ has the lattice property. That is, if $ |f | \le |g| $  and $g \in \mathcal H_{ p', q'}^{t',r'} (\rn)$, then $f \in \mathcal H_{ p', q'}^{t',r'} (\rn)$  and $ \|f \|_{\mathcal H_{ p', q'}^{t',r'} (\rn)} \le \|g \|_{\mathcal H_{ p', q'}^{t',r'} (\rn)}  $.
\end{remark}

\begin{lemma} \label{chi Q f in H}
	Let $ 1< p <\infty $ and $ 1\le  q \le \infty$.
	For any cube in $Q$ in $\rn$, and for $f \in L^{p'}_{\operatorname{loc}} (\rn)$, we have
	\begin{equation*}
		\left\| \chi_Q f \right\|_{ \mathcal H_{ p', q'}^{t',r'} (\rn)}  \lesssim   |Q|^{1/t' -1/p'}  \| f \|_{L^{p',q'}  (Q )}.
	\end{equation*}
\end{lemma}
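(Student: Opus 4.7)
The plan is to write $\chi_Q f$ as a finite sum of rescaled $(p', q', t')$-blocks, each supported on a dyadic cube of comparable size to $Q$. Given an arbitrary cube $Q \subset \rn$, first choose $j \in \mathbb{Z}$ so that $2^{-j-1} < 2\ell(Q) \le 2^{-j}$; then any $Q_{j,k} \in \mathcal{D}_j$ has side length comparable to $\ell(Q)$, and $Q$ meets at most $2^n$ cubes from $\mathcal{D}_j$. Enumerate these as $Q_{j,k_1}, \ldots, Q_{j,k_N}$ with $N \le 2^n$, so that $\chi_Q = \sum_{i=1}^N \chi_{Q \cap Q_{j,k_i}}$ and hence $\chi_Q f = \sum_{i=1}^N g_i$ where $g_i := \chi_Q f\, \chi_{Q_{j,k_i}}$ is supported in $Q_{j,k_i}$.

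The next step is to turn each $g_i$ into a block. Set
\begin{equation*}
\lambda_i := |Q_{j,k_i}|^{1/t'-1/p'}\, \|f\|_{L^{p',q'}(Q)}, \qquad b_i := \lambda_i^{-1} g_i.
\end{equation*}
Since $p \le t$ one has $1/t' - 1/p' \ge 0$, and $\|g_i\|_{L^{p',q'}} \le \|f\|_{L^{p',q'}(Q)}$ by the lattice property of the Lorentz quasi-norm. Therefore $\|b_i\|_{L^{p',q'}} \le |Q_{j,k_i}|^{1/p'-1/t'}$ and $\operatorname{supp} b_i \subset Q_{j,k_i}$, so each $b_i$ is a genuine $(p', q', t')$-block supported on the dyadic cube $Q_{j,k_i}$. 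Completing the sum by setting $\lambda_{j',k'} = 0$ and taking $b_{j',k'}$ to be any block for the remaining indices gives a representation of $\chi_Q f$ of the type required in the definition of $\mathcal{H}_{p',q'}^{t',r'}(\rn)$.

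Finally, since $|Q_{j,k_i}| = 2^{-jn} \approx |Q|$ with constants depending only on $n$, one gets
\begin{equation*}
\|\chi_Q f\|_{\mathcal{H}_{p',q'}^{t',r'}(\rn)} \le \left( \sum_{i=1}^N \lambda_i^{r'} \right)^{1/r'} \le N^{1/r'} |Q|^{1/t'-1/p'} \|f\|_{L^{p',q'}(Q)} \lesssim |Q|^{1/t'-1/p'} \|f\|_{L^{p',q'}(Q)},
\end{equation*}
which is the claimed estimate. I do not foresee a serious obstacle here; the only points to verify carefully are the sign $1/t' - 1/p' \ge 0$ (which uses $p \le t$, so that one can absorb the ratio $|Q|/|Q_{j,k_i}|$ into a dimensional constant) and the fact that the definition allows restriction to dyadic cubes rather than arbitrary cubes, which is why the initial step of covering $Q$ by $2^n$ dyadic cubes from $\mathcal{D}_j$ is needed.
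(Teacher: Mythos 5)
Your proof is correct and follows essentially the same route as the paper: cover $Q$ by at most $2^n$ dyadic cubes of comparable size, restrict $\chi_Q f$ to each, and recognize each piece as a rescaled $(p',q',t')$-block, then sum the coefficients. The only cosmetic differences are that you fix a dyadic level $j$ with $2^{-j}$ slightly larger than $\ell(Q)$ and normalize each piece by $\|f\|_{L^{p',q'}(Q)}$ rather than $\|f\|_{L^{p',q'}(Q_i\cap Q)}$, which streamlines the block condition and avoids the paper's case split when a restricted norm vanishes; both choices give the same constant up to dimensional factors.
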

\begin{proof}

	There are at most $2^n$ disjoint cubes $Q_i \in \D$ such that $Q \subset \cup_{i=1}^{2^n} Q_i$ and $Q_i \subset 2Q $ for all $i \in \{ 1,2, \ldots , 2^n\}$. Then $|Q_i| \approx |Q| $  and $\chi_Q f= \sum_{i=1}^{2^n} \chi_{Q_i \cap Q } f $. 
	
	 For each $i$, let 
	\begin{equation*}
		b_i (x) =
		\begin{cases}
			\frac{\chi_{Q_i \cap Q} (x) f (x) }{|Q_i|^{1/t' -1/p'}  \| f \|_{L^{p',q'}  (Q_i \cap Q  )} }, & \operatorname{if} \; \| f \|_{L^{p',q'}  (Q_i\cap Q )} \neq 0, \\
			0, & \operatorname{else.}
		\end{cases}	 
	\end{equation*}
Then $b_i$ is a $(p', q', t')$-block supported on $Q_i$. Let $ \lambda_i =|Q_i|^{1/t' -1/p'}  \| f \|_{L^{p',q'}  (Q_i\cap Q )} $. Then $  \chi_Q f =  \sum_{i=1}^{2^n} \lambda_i b_i.$
Thus, 
\begin{equation*}
	\left\| \chi_Q f \right\|_{ \mathcal H_{ p', q'}^{t',r'} (\rn)} \le  \left(  \sum_{i=1}^{2^n} \lambda_i ^{r'} \right)^{1/r'}  \approx \sup_{i \in \{ 1,2, \ldots , 2^n\} } \lambda_i \lesssim   |Q|^{1/t' -1/p'}  \| f \|_{L^{p',q'}  (Q )}.
\end{equation*}
We obtain the desired inequality.
\end{proof}

%
%
%
In \cite[Theorem 1.2]{DK24},  Dao and Krantz obtained the predual of Morrey-Lorentz space.
Now, we state a main result of this section that 
  the predual of  $M_{p,q}^{t,r} (\rn)$ is  the block space $\mathcal H_{ p', q'}^{t',r'} (\rn) $. 

\begin{theorem}\label{M is predual of H}
	Let $1 <q <\infty$.  Let $ 1 <p <t<r <\infty $ or $ 1 <p \le t < r =\infty $. 
	Then, we have
	\begin{equation} \label{M = H '}
		M_{p,q}^{t,r} (\rn) = \left( \mathcal H_{ p', q'}^{t',r'} (\rn)   \right) ^\ast.
	\end{equation}
Here (\ref{M = H '}) is  understood in the following sense.

{\rm (i)} Let $f\in M_{p,q}^{t,r} (\rn) $. Then for any   $g \in  \mathcal H_{ p', q'}^{t',r'} (\rn) $,  we have $f g \in L^1 (\rn)$  and the mapping
\begin{equation*}
	 \mathcal H_{ p', q'}^{t',r'} (\rn) \ni g \mapsto \int_\rn f(x) g(x) \d x \in \mathbb C
\end{equation*}
defines a continuous linear functional $L_f$  on $  \mathcal H_{ p', q'}^{t',r'} (\rn)$.

{\rm (ii)} Conversely, any continuous linear functional $L $ on $ \mathcal H_{ p', q'}^{t',r'} (\rn)$, can be realized as $L = L_f | \mathcal H_{ p', q'}^{t',r'} (\rn)  $ with a certain $f \in M_{p,q}^{t,r} (\rn)$. 

{\rm (iii)} The correspondence
\begin{equation*}
	\tau : M_{p,q}^{t,r} (\rn) \ni f \mapsto L_f \in \left( \mathcal H_{ p', q'}^{t',r'} (\rn) \right)^*
\end{equation*}
is an isomorphism. Furthermore,
\begin{equation} \label{f M = sup}
	\| f\|_{ M_{p,q}^{t,r} (\rn) }  \approx \sup_{ \|k\|_{ \mathcal H_{ p', q'}^{t',r'} (\rn) } =1 }  \left|  \int_\rn f(x) k (x) \d x \right|
\end{equation}
for $f \in M_{p,q}^{t,r} (\rn) $ and 
\begin{equation} \label{g H = sup}
	\| g \|_{ \mathcal H_{ p', q'}^{t',r'} (\rn)  }  \approx \sup_{ \|k\|_{ M_{p,q}^{t,r} (\rn) } =1 }  \left|  \int_\rn f(x) k (x) \d x \right|
\end{equation}
for $g \in \mathcal H_{ p', q'}^{t',r'} (\rn)$.

\end{theorem}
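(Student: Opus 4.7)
The plan is to prove both halves of the duality and then read off the norm equivalences in (iii). For part (i), I would fix $f\in M_{p,q}^{t,r}(\rn)$ and, given a block decomposition $g=\sum_{(j,k)}\lambda_{j,k}b_{j,k}$ of $g\in\mathcal H_{p',q'}^{t',r'}(\rn)$, apply Lemma \ref{holder lorentz} to each pairing. Since $b_{j,k}$ is a $(p',q',t')$-block on $Q_{j,k}$,
\begin{equation*}
\left|\int_{\rn} f\,b_{j,k}\right|\le \|f\|_{L^{p,q}(Q_{j,k})}\|b_{j,k}\|_{L^{p',q'}}\le |Q_{j,k}|^{1/p'-1/t'}\|f\|_{L^{p,q}(Q_{j,k})},
\end{equation*}
and the exponent identity $1/p'-1/t'=1/t-1/p$ reproduces exactly the Bourgain-Morrey-Lorentz weight. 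Summing and applying H\"older on $\ell^{r'}\times\ell^{r}$ gives $|\int fg|\le \|\lambda\|_{\ell^{r'}}\|f\|_{M_{p,q}^{t,r}}$; taking the infimum over admissible decompositions yields $|L_f(g)|\le \|g\|_{\mathcal H_{p',q'}^{t',r'}}\|f\|_{M_{p,q}^{t,r}}$. This simultaneously gives $fg\in L^1(\rn)$, continuity of $L_f$, and the $\lesssim$ half of \eqref{f M = sup}.

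For part (ii), starting from $L\in(\mathcal H_{p',q'}^{t',r'})^*$, I would construct a representing $f$ by localization. For each dyadic cube $Q$, Lemma \ref{chi Q f in H} shows that $L^{p',q'}(Q)\ni h\mapsto \chi_Q h\in\mathcal H_{p',q'}^{t',r'}(\rn)$ is bounded, so $h\mapsto L(\chi_Q h)$ is a continuous linear functional on $L^{p',q'}(Q)$. Since $1<p'<\infty$ and $1<q'<\infty$, Lorentz duality (\cite[Theorem 1.4.16]{G14}) furnishes a unique $f_Q\in L^{p,q}(Q)$ with $L(\chi_Q h)=\int_Q f_Q h$ for all $h\in L^{p',q'}(Q)$. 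Uniqueness forces $f_{Q_1}=f_{Q_2}$ a.e.\ on $Q_1\cap Q_2$, so exhausting $\rn$ by a nested sequence of dyadic cubes produces one $f\in L^{p,q}_{\operatorname{loc}}(\rn)$ satisfying $L(\chi_Q h)=\int f\chi_Q h$ for every cube $Q$ and every $h\in L^{p',q'}(Q)$.

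To bound $\|f\|_{M_{p,q}^{t,r}}$ by $\|L\|$, I would pair local $L^{p,q}$-$L^{p',q'}$ duality with global $\ell^{r}$-$\ell^{r'}$ duality. Given a finitely supported nonnegative $(\epsilon_{j,k})$ with $\|\epsilon\|_{\ell^{r'}}\le 1$ and $h_{j,k}\in L^{p',q'}(Q_{j,k})$ with $\|h_{j,k}\|_{L^{p',q'}}\le 1$, set $b_{j,k}:=|Q_{j,k}|^{1/p'-1/t'}h_{j,k}\chi_{Q_{j,k}}$, which is a $(p',q',t')$-block. The telescoping $1/t-1/p=1/p'-1/t'$ gives
\begin{equation*}
g:=\sum_{(j,k)}\epsilon_{j,k}|Q_{j,k}|^{1/t-1/p}h_{j,k}\chi_{Q_{j,k}}=\sum_{(j,k)}\epsilon_{j,k}b_{j,k},
\end{equation*}
so $\|g\|_{\mathcal H}\le\|\epsilon\|_{\ell^{r'}}\le 1$. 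Selecting each $h_{j,k}$ to nearly attain the Lorentz pairing, $\int_{Q_{j,k}}f h_{j,k}\gtrsim \|f\|_{L^{p,q}(Q_{j,k})}$, I would then deduce
\begin{equation*}
\sum_{(j,k)}\epsilon_{j,k}|Q_{j,k}|^{1/t-1/p}\|f\|_{L^{p,q}(Q_{j,k})}\lesssim L(g)\le \|L\|,
\end{equation*}
and taking the supremum over $\epsilon$ via $\ell^{r}$-$\ell^{r'}$ duality (with the case $r=\infty$ handled by $\sup_{\|\epsilon\|_{\ell^1}\le 1}\sum\epsilon_{j,k}a_{j,k}=\|a\|_{\ell^\infty}$) yields $\|f\|_{M_{p,q}^{t,r}}\lesssim\|L\|$.

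The main obstacle will be the bookkeeping in part (ii): verifying compatibility of the local representatives $f_Q$, promoting the identity $L=L_f$ from finite block sums to all of $\mathcal H_{p',q'}^{t',r'}$ (which needs $L^1_{\operatorname{loc}}$ control of the defining series together with continuity of $L$), and keeping track of uniform constants across both the $r<\infty$ and $r=\infty$ regimes. Once (i) and (ii) are in hand, they assemble the isomorphism $\tau$; identity \eqref{f M = sup} is simply the definition of the operator norm on $(\mathcal H_{p',q'}^{t',r'})^*$, and \eqref{g H = sup} then follows because the canonical embedding $\mathcal H_{p',q'}^{t',r'}\hookrightarrow (\mathcal H_{p',q'}^{t',r'})^{**}\cong (M_{p,q}^{t,r})^*$ is isometric by Hahn-Banach, completing (iii).
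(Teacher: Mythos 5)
Your proposal follows essentially the same route as the paper: part (i) via term-by-term Hölder in Lorentz spaces on the block decomposition plus $\ell^r$--$\ell^{r'}$ Hölder, part (ii) via local Lorentz duality on each dyadic cube (using Lemma \ref{chi Q f in H}) and then a finite-sum pairing with near-optimal $h_{j,k}$ and $\ell^{r'}$-sequences to recover the $M_{p,q}^{t,r}$ norm, and part (iii) via the operator-norm definition together with Hahn--Banach (the paper invokes the norm-attainer theorem from \cite{SFH20} at the same point). The obstacles you flag in (ii) — consistency of the local representatives and extending the identity from finite block sums — are precisely the bookkeeping the paper also handles, so the proposal is a faithful blind reconstruction of the argument.
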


\begin{proof}
	(i) 
	We first prove that
	\begin{equation} \label{M hook H'}
		 M_{p,q}^{t,r} (\rn)  \hookrightarrow \left( \mathcal H_{ p', q'}^{t',r'} (\rn)   \right) ^* .
	\end{equation}
For every $f \in M_{p,q}^{t,r} (\rn)  $ and for $g \in  \mathcal H_{ p', q'}^{t',r'} (\rn) $,  by the H\"older inequality (Lemma \ref{holder lorentz}), we obtain
\begin{align*}
	\left| \int_\rn f (x) g (x) \d x \right| & \le \sum_{ (j,k) \in \mathbb Z ^{1+n} } |\lambda_{j,k} | \int_{Q_{j,k}} | f (x) b_{j,k} (x) |  \d x \\
	& \le \sum_{ (j,k) \in \mathbb Z ^{1+n} } |\lambda_{j,k} | \|f \|_{ L^{p,q} (Q_{j,k} ) }  \| b_{j,k}  \|_{ L^{p ',q'} (Q_{j,k} ) }\\
	&  \le \sum_{ (j,k) \in \mathbb Z ^{1+n} } |\lambda_{j,k} | \|f \|_{ L^{p,q} (Q_{j,k} ) }  |Q|^{ 1/ t - 1/ p} \\
	& \le \| f\|_{ M_{p,q}^{t,r} (\rn) }  \left( \sum_{ (j,k) \in \mathbb Z ^{1+n} } |\lambda_{j,k} |^{r'} \right)^{1/r'},
\end{align*}
provided that $g =\sum_{ (j,k) \in \mathbb Z ^{1+n} }  \lambda_{j,k} b_{j,k} $, 	with some $ \lambda = \{\lambda_{j,k} \} _{  (j,k) \in \mathbb Z ^{1+n}  }  \in \ell^{r'}  (\mathbb Z ^{1+n})$  and $ b_{j,k}$  is a  $(p', q', t')$-block supported on $ Q_{j,k}$ for each $  (j,k) \in \mathbb Z ^{1+n}$.
Thus, it follows from the last inequality that  
\begin{equation*}
		\left| \int_\rn f (x) g (x) \d x \right| \le  \| f\|_{ M_{p,q}^{t,r} (\rn) } \|g\|_{ \mathcal H_{ p', q'}^{t',r'} (\rn)  } .
\end{equation*}
Therefore, operator $ T_f : \mathcal H_{ p', q'}^{t',r'} (\rn) \to \mathbb C $ defined by 
\begin{equation*}
	 T_f (g) := \int_\rn f (x) g (x) \d x
\end{equation*}
is linear and continuous.

Next, define $T (f) = T_f$  for $f \in  M_{p,q}^{t,r} (\rn) $. It is clear that $T :  M_{p,q}^{t,r} (\rn) \to  \mathcal H_{ p', q'}^{t',r'} (\rn) $ is a linear operator. We claim that $T$  is injective. To obtain the result, it suffices to show that if $T (f)  =0$, then $f (x) =0$ for a.e. $x \in \rn$. We show there is a contradiction that there is $R_0>0$ such that $f (x) \neq 0$ for a.e. $x \in Q(0,R_0)$ where $Q(0,R_0)$  is the cube center at zero with side length $ 2 R_0$. 

Since $f\in M_{p,q}^{t,r} (\rn)$, then we have $f \in L^{p,q}( Q(0,R_0) )$. By duality, there exists a function $\bar g \in   L^{p',q'}( Q(0,R_0) ) $, $\bar g \neq 0$  such that 
\begin{equation} \label{0 < int fg}
	0 < \| f\|_{  L^{p,q}( Q(0,R_0) } \approx \left| \int_{ Q(0,R_0) }  f(x ) \bar g (x)  \d x     \right|.
\end{equation}
Let 
\begin{equation*}
	g(x) =  | Q(0,R_0) |^{1/p' - 1/t'}  \frac{\bar g (x)\chi_{Q(0,R_0)}  (x)}{ \| \bar g \|_{  L^{p',q'}( Q(0,R_0) )  } }.
\end{equation*}
It is clear that $g$  is multiple of  $(p', q', t')$-block supported on $  Q(0,R_0)$. (The cube $Q(0,R_0)$ is not a dyadic cube in $\mathcal D$, but it can be covered by at most $2^n$  disjoint dyadic cubes). By this fact and (\ref{0 < int fg}), we get
\begin{equation*}
	 | T(f) (g)| =c  \left|  \int_{ Q(0,R_0) }  f(x ) \bar g (x)  \d x     \right| >0,
\end{equation*}
which contradicts $T(f) =0 $ in $ \left( \mathcal H_{ p', q'}^{t',r'} (\rn)   \right)^\ast $.
This implies that linear operator $ T : M_{p,q}^{t,r} (\rn) \to \left( \mathcal H_{ p', q'}^{t',r'} (\rn)   \right) ^\ast $ is injective. Thus, we obtain (\ref{M hook H'}).

(ii) It remains to show that 
\begin{equation} \label{H' hook M}
	\left( \mathcal H_{ p', q'}^{t',r'} (\rn)   \right) ^*  \hookrightarrow  M_{p,q}^{t,r} (\rn) .
\end{equation}

Let $L  \in 	\left( \mathcal H_{ p', q'}^{t',r'} (\rn)   \right) ^* $. For
any cube $Q$,
 by Lemma \ref{chi Q f in H}, we have  function $ L \chi_Q \in L^{p,q} (\rn)$.
 Indeed,
 \begin{align*}
 	\|L \chi_Q \|_{ L^{p,q} (\rn) } & \approx \sup_{ \|g\|_{L^{p',q'} (\rn) }  =1}  \left| \int_\rn  L \chi_Q g \d x  \right| \\
 	& \le \|L\|_{  \mathcal H_{ p', q'}^{t',r'} (\rn)  \to \mathbb C }  \|\chi_Q g \|_{  \mathcal H_{ p', q'}^{t',r'} (\rn)  }  \\
 	& \lesssim \|L\|_{  \mathcal H_{ p', q'}^{t',r'} (\rn)  \to \mathbb C }  |Q|^{1/t' -1/p'}   <\infty.
 \end{align*}
  Thus, by the duality, there exists $f_Q \in   L^{p,q} (\rn) $  supported on $Q$ such that 
\begin{equation} \label{L Q fg}
	\langle  L \chi_{Q } , g \rangle_{L^{p,q} (\rn)  ,L^{p',q'} (\rn)}  =\int_\rn f_Q (x) \overline{g(x)} \d x,\; \forall g \in L^{p',q'} (\rn) .
\end{equation}
Let $\rn = \bigcup_{k\ge 1} Q_k $, with $Q_k  \Subset Q_{k+1}$ for all $k\ge 1$. (we use the notation $A \Subset B$ to denote that the closure of $A \subset \rn$ is compact and contained in $B\subset \rn$). Then we define $f (x) = f_{Q_k}$ if $x\in Q_k$, which make sense by (\ref{L Q fg}). This implies that  $f_{Q_k}(x) = f_{Q_{k+1}} (x)$ for a.e. $x\in Q_k$.
Thus, it suffices to prove that $f \in M_{p,q}^{t,r} (\rn)$.

Fix $\epsilon >0$ and a finite set $K \subset Z^{1+n}$. For each $(j,k) \in K$, there exists $g_{j,k} \in  L^{p',q'} (Q_{j,k}) $ such that $\| g_{j,k } \|_{  L^{p',q'} (Q_{j,k}) }  \le 1$ and
\begin{equation*}
	\| f \|_{L^{p,q} (Q_{j,k}) } \lesssim \int_{ Q_{j,k} } f(x) g_{j,k} (x) \d x.
\end{equation*}
Note that $ |Q_{j,k}|^{1/t-1/p} g_{j,k}$ is a $(p',q',t')$-block supported on $Q_{j,k}$.
 Take
an arbitrary nonnegative sequence $\{ \lambda_{j,k}\} \in \ell^{r'} (\mathbb Z ^{1+n})$ supported on $K$ and set
\begin{equation}\label{g_K decomposition}
	g_{K} = \sum_{ (j,k) \in K}\lambda_{j,k} |Q_{j,k}|^{1/t-1/p} g_{j,k}  \in  \mathcal H_{ p', q'}^{t',r'} (\rn) .
\end{equation}
We get 
\begin{align*}
	\sum_{ (j,k) \in K} \lambda_{j,k}  |Q_{j,k}|^{1/t-1/p}  	\| f \|_{L^{p,q} (Q_{j,k}) }   &\lesssim   \int_\rn |f(x) 	g_{K,\epsilon} (x) | \d x =   c L(h),
\end{align*}
where $h =g_{K} (x)  \overline{ \sgn  ( f(x))}$. Further, by the decomposition (\ref{g_K decomposition}), we obtain
\begin{equation*}
	| L(h)|  \le \|L \|_{ \mathcal H_{ p', q'}^{t',r'} (\rn)  \to \mathbb C}  \|h\|_{\mathcal H_{ p', q'}^{t',r'} (\rn) }  \le \|L \|_{ \mathcal H_{ p', q'}^{t',r'} (\rn)  \to \mathbb C}  \| \{\lambda_{j,k}  \}_{(j,k) \in K} \|_{\ell^{r'} (\mathbb Z^{1+n})}.
\end{equation*}
Since $r > 1$ and since $K$ and $\{ \lambda_{j,k} \}$ are arbitrary, we conclude that
\begin{align*}
	\| f\|_{M_{p,q}^{t,r} (\rn) } & = \sup_{\| \{\lambda_{j,k}  \}_{(j,k) \in K} \|_{\ell^{r'} (\mathbb Z^{1+n})} \le 1  }  \sum_{ (j,k) \in K} \lambda_{j,k}  |Q_{j,k}|^{1/t-1/p}  	\| f \|_{L^{p,q} (Q_{j,k}) }  \\
	& \le c  \|L \|_{ \mathcal H_{ p', q'}^{t',r'} (\rn)  \to \mathbb C} .
\end{align*}

The proof of (\ref{f M = sup}) and (\ref{g H = sup}) is included in what we have proven. (The proof of  (\ref{g H = sup}) can  be obtained by using \cite[Theorem 87, existence of the norm attainer]{SFH20}). Thus we finish the proof.
\end{proof}

\begin{remark}
		Let $1 <q <\infty$.  Let $ 1 <p <t<r <\infty $ or $ 1 <p \le t < r =\infty $. 
		If $ q =p $, then Theorem \ref{M is predual of H} becomes \cite[Theorem 2.17]{M16}.
		If $r =\infty$, then Theorem \ref{M is predual of H} becomes \cite[Theorem 1.2]{DK24}. We do not consider $ q= 1$ or $q =\infty$ since we want to use the duality $  ( L^{p,q} (X) )^*  = L^{p',q'} (X) $.
\end{remark}

Below, we turn to  the Fatou property. We first recall the definition of quasi-Banach latties, which will be used in proving the boundedness of powered Hardy-Littlewood maximal operator on block spces $\mathcal H_{ p', q'}^{t',r'} (\rn)$.

\begin{definition}
	Let $L^0(\Omega, \mu)$ be the space of measurable complex-valued functions on a measure space 
	$(\Omega,  \mu)$. We say that $X (\Omega, \mu ) \subset L^0(\Omega, \mu) $  is a quasi-normed space
	if it is a quasi-normed space and satisfies the following lattice property: if $g \in L^0(\Omega, \mu)$, $|g|\le |f|$ and $ f \in X (\Omega, \mu )$, then  $g \in X (\Omega, \mu )$.
	
	A quasi-Banach lattice  $X (\Omega, \mu )$ has the Fatou property with constant $C_{\mathcal F} >0$ if $0 \le f_n \uparrow f$ for a sequence of functions $\{ f_n\}_{n=1}^\infty  \subset X (\Omega, \mu )$  and $\sup_{n\ge 1} \| f_n\|_{X} <\infty$, then $f\in X (\Omega, \mu )$ and $ \|f \|_X \le C_{\mathcal F} \sup_{n\ge 1} \| f_n\|_{X}  $.

	The Fatou property is stronger than completeness. The proof that the Fatou property of a quasi-normed lattice indeed implies its completeness can be seen in \cite[Remark 2.1(ii)]{LN24}.
\end{definition} 
\begin{theorem} \label{Fatou block}
	Let $1 < q < \infty $.
	Let $1 < p < t < r < \infty $ or $ 1 < p \le  t < r = \infty.$ Suppose that $f$ and $\{f_m\}_{m\in \mathbb N}$ are
	nonnegative, and $\|f_m\|_{ \mathcal H_{ p', q'}^{t',r'} (\rn) } \le 1$ and $f_m \uparrow f$ a.e.
 Then $f \in \mathcal H_{ p', q'}^{t',r'} (\rn)$  and $\| f\|_{\mathcal H_{ p', q'}^{t',r'} (\rn)} \le 1$.
\end{theorem}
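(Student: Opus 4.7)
The plan is to reduce the Fatou property to the duality identification $M_{p,q}^{t,r}(\rn) = (\mathcal{H}_{p',q'}^{t',r'}(\rn))^\ast$ of Theorem \ref{M is predual of H}, and in particular to the norm characterization (\ref{g H = sup}),
\begin{equation*}
\|g\|_{\mathcal{H}_{p',q'}^{t',r'}(\rn)} \approx \sup_{\|k\|_{M_{p,q}^{t,r}(\rn)} = 1} \left|\int_\rn g(x)\, k(x)\, dx\right|.
\end{equation*}
Since $1<q<\infty$ and the exponents lie in the allowed range, all hypotheses of Theorem \ref{M is predual of H} are in force. The strategy is to verify that $f$ induces a bounded linear functional on $M_{p,q}^{t,r}(\rn)$ and then invoke this characterization.

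First I would fix any $k \in M_{p,q}^{t,r}(\rn)$ with $\|k\|_{M_{p,q}^{t,r}(\rn)} \le 1$. Replacing $k$ by $|k|$ (which has the same $M_{p,q}^{t,r}$-norm, since the definition depends only on $|k|$), and using that $f$ and each $f_m$ are nonnegative with $f_m \uparrow f$ a.e., the monotone convergence theorem gives
\begin{equation*}
\int_\rn f(x)\,|k(x)|\, dx = \lim_{m\to\infty}\int_\rn f_m(x)\,|k(x)|\, dx.
\end{equation*}
By part (i) of Theorem \ref{M is predual of H}, each integral on the right is bounded, up to an absolute constant, by $\|f_m\|_{\mathcal{H}_{p',q'}^{t',r'}(\rn)}\,\||k|\|_{M_{p,q}^{t,r}(\rn)} \le 1$, uniformly in $m$. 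Passing to the limit shows $f\,|k| \in L^1(\rn)$ with $\int_\rn f|k|\lesssim 1$, and hence $\bigl|\int_\rn f\,k\bigr| \le \int_\rn f|k| \lesssim 1$.

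Taking the supremum over all such $k$ and then applying the reverse direction of the duality (parts (ii) and (iii) of Theorem \ref{M is predual of H}) produces a unique element of $\mathcal{H}_{p',q'}^{t',r'}(\rn)$, of $\mathcal{H}$-norm $\lesssim 1$, which represents the functional $k \mapsto \int f\,k$. Since $f$ is measurable (as a pointwise monotone limit of measurable functions) and acts as this very functional on, say, the dense class of cubes' characteristic functions scaled to have $M_{p,q}^{t,r}$-norm $1$, uniqueness of the representation forces $f$ itself to lie in $\mathcal{H}_{p',q'}^{t',r'}(\rn)$ with $\|f\|_{\mathcal{H}_{p',q'}^{t',r'}(\rn)} \le 1$ (after absorbing the equivalence constant into the norm, which is harmless for the Fatou property as formulated in the preceding definition with constant $C_{\mathcal{F}}$).

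The only real obstacle is the gap between the isomorphism and an isometry in (\ref{g H = sup}): strictly, the argument yields $\|f\|_{\mathcal{H}_{p',q'}^{t',r'}(\rn)} \le C$ for some absolute $C$, not the sharp constant $1$. This matches the definition of the Fatou property with constant $C_{\mathcal{F}}$ given above, and the implicit constant can be traced back to the equivalence constants appearing in Theorem \ref{M is predual of H}. Verifying that these constants are independent of $f$ and $\{f_m\}$ is the main point to be checked carefully, and it follows from the fact that the duality constants depend only on $p,q,t,r,n$.
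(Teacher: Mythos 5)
Your proposal relies on a direction of the duality that Theorem \ref{M is predual of H} does not actually give. That theorem establishes $M_{p,q}^{t,r}(\rn) = \bigl(\mathcal H_{p',q'}^{t',r'}(\rn)\bigr)^\ast$, i.e.\ $\mathcal H$ is a predual of $M$; it does \emph{not} identify $\mathcal H$ with the dual of $M$. Consequently, the norm characterization (\ref{g H = sup}) you invoke,
\begin{equation*}
\|g\|_{\mathcal H_{p',q'}^{t',r'}(\rn)} \approx \sup_{\|k\|_{M_{p,q}^{t,r}(\rn)} = 1}\Bigl|\int_\rn g(x)\,k(x)\,\d x\Bigr|,
\end{equation*}
is stated and proved only for $g$ \emph{already known} to be in $\mathcal H_{p',q'}^{t',r'}(\rn)$. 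You correctly show (via monotone convergence and part (i) of Theorem \ref{M is predual of H}) that $k\mapsto\int f k$ is a bounded functional on $M_{p,q}^{t,r}(\rn)$, but being a bounded functional on $M$ puts $f$ in $M^\ast$, which is in general strictly larger than $\mathcal H$ — the predual of a non-reflexive Banach space is a proper subspace of its dual. Parts (ii) and (iii) of Theorem \ref{M is predual of H} go the opposite way: they produce an element of $M$ from a functional on $\mathcal H$, so they cannot be used to place $f$ in $\mathcal H$.

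What your argument implicitly needs is the identification of $\mathcal H_{p',q'}^{t',r'}(\rn)$ as the \emph{K\"othe dual} of $M_{p,q}^{t,r}(\rn)$. But that is exactly Theorem \ref{M ' = H}, and the paper's proof of Theorem \ref{M ' = H} explicitly uses the Fatou property (Theorem \ref{Fatou block}). So the proposed argument is circular. The paper's own proof avoids this by working directly with block decompositions: it takes block decompositions of the $f_m$, extracts a diagonal subsequence using weak$^*$ compactness of a suitable $L^{t'-\eta}$ on each dyadic cube so that the coefficients and blocks converge, builds a candidate limit $f_0\in\mathcal H$ with $\|f_0\|\le 1+\epsilon$ via Fatou's lemma on the coefficients, and then shows $f=f_0$ a.e.\ by testing against characteristic functions of cubes and invoking Lebesgue differentiation. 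This also delivers the sharp constant $1$ in the conclusion, which a duality argument with equivalence constants cannot produce.
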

\begin{proof}
	The case $1 < q < \infty $ and $ 1 < p \le  t < r = \infty$ is in \cite[Theorem 6.1]{DK24}. Hence we only need to consider the case $1 < q < \infty $ and $1 < p < t < r < \infty $. We use the ideas from the proof of  \cite[Theorem 1.2]{ST15}.
	We  decompose $f_m$ as 
	\begin{equation*}
		f_m = \sum_{(j,k)\in\mathbb{Z}^{n+1}}\lambda_{jk}^ { (m) } b_{jk} ^{(m)} 
	\end{equation*}
	where $b_{jk} ^{(m)}  $ is a $ (p',q',t') $-block supported on $ Q_{j,k} $ and  $ \| \lambda^{(m)} \|_{\ell^{r'}}  \le  \| f_m \|_{   \mathcal H_{ p', q'}^{t',r'} (\rn) }  +\epsilon \le 1+\epsilon $.
	Since $1 < t' < p' <\infty$, let $\eta >0$  such that $ 1<t' - \eta  <  p'$.
		 Using the H\"older inequality with $p ' /(t' - \eta) > 1$,
	 \begin{align*}
	 	\|b_{jk} ^{(m)}  \|_{L^{(t' - \eta)} (\rn) }^{(t' - \eta)} & = \int_{Q_{j,k} } | b_{jk} ^{(m)} (x) |^{(t' - \eta)}  \d x \\
	 	& \le \|  | b_{jk} ^{(m)} |^{(t' - \eta)} \|_{ L^{p ' /(t' - \eta), q'/(t' - \eta) }  (Q_{j,k})}  \| 1 \|_{ L^{ (p ' /(t' - \eta))', (q'/ (t' - \eta))' }  (Q_{j,k})  } \\
	 	& \lesssim  \|   b_{jk} ^{(m)}  \|_{ L^{p ' , q' } (Q_{j,k})} ^{(t' - \eta)}  |Q_{j,k} |^{1 / (p ' /(t' - \eta))' } \\
	 	& = 2^{ \frac{-jnp'}{p' - t' + \eta}  } \|   b_{jk} ^{(m)}  \|_{ L^{p ' , q' } (Q_{j,k})} ^{t'}  <\infty .
	 \end{align*} 
	Using the weak$*$-compactness of the Lebesgue space $L^{t'-\eta}(Q_{j,k})$, apply a diagonalization argument and, hence, we can select an infinite subsequence $ \{ f_{m_v}\}_{v=1}^\infty   \subset \{ f_m \} _{m=1} ^\infty$ such that
	\begin{equation*}
		f_{m_v} = \sum_{(j,k)\in\mathbb{Z}^{n+1}}\lambda_{jk}^ { (m_v) } b_{jk} ^{(m_v)} ,
	\end{equation*} 
	\begin{equation}
		\lim_{v\to \infty} \lambda_{jk}^ { (m_v) } = \lambda_{jk},
	\end{equation}
	\begin{equation}
		\lim_{v\to \infty} b_{jk} ^{(m_v)}=  b_{jk}  \; \operatorname{in \; the \; weak}* \operatorname{-topology\; of} L^{t'}(Q_{j,k}).
	\end{equation}
	where $b_{jk} $ is a $(p', q',t')$-block with supp $b_{jk} \subset Q_{j,k}$.
	Let 
	\begin{equation*}
		f_0 := \sum_{(j,k)\in\mathbb{Z}^{n+1}}\lambda_{jk} b_{jk}.
	\end{equation*}
	They by the Fatou Lemma,
	\begin{equation*}
		\left( \sum_{(j,k)\in\mathbb{Z}^{n+1}}  |\lambda_{jk}|^{r'} \right)^{1/r'} \le \liminf_{v\to \infty } \left( \sum_{(j,k)\in\mathbb{Z}^{n+1}}  |\lambda_{jk} ^{ (m_v) } |^{r'} \right)^{1/r'} \le 1+\epsilon,
	\end{equation*}
	which implies that $f_0  \in   \mathcal H_{ p', q'}^{t',r'} (\rn) $.

		We claim that $f = f_0 $. Once this is achieved, we obtain $ f \in  \mathcal H_{ p', q'}^{t',r'} (\rn) $ and
	\begin{equation} \label{f liminf 1+ eps}
		\|f\|_{ \mathcal H_{ p', q'}^{t',r'} (\rn)  } \le 	\left( \sum_{(j,k)\in\mathbb{Z}^{n+1}}  |\lambda_{jk}|^{r'} \right)^{1/r'} \le \liminf_{v\to \infty } \left( \sum_{(j,k)\in\mathbb{Z}^{n+1}}  |\lambda_{jk} ^{ (m_v) } |^{r'} \right)^{1/r'} \le 1+\epsilon.
	\end{equation}
	By the Lebesgue differentiation theorem, this amounts to the proof of the equality:
	\begin{equation*}
		\int_Q f (x)\d x = \int_Q f_0 (x)\d x
	\end{equation*}
	for all $Q \in \D$. Now fix $Q_0 \in \D$. Let $ j_{Q_0} := -\log_2 ( \ell (Q_0)) $. Then $ Q_0 \in \D _{ j_{Q_0}}$.
	By the definition of $ f_{ m_ v } $ and the fact that the sum defining $ f_{ m_ v } $ converges in $L^1_{\operatorname{loc}}$, we have
	\begin{align*}
		\int_{Q_0} f_{ m_ v  } (x) \d x = \sum_{(j,k)\in\mathbb{Z}^{n+1}} \lambda_{jk}^ { (m_v) }  \int_{Q_0}   b_{jk} ^{(m_v)}  (x) \d x  = \sum_{ j \in \mathbb Z } \sum_{k \in \mathbb Z^n}  \lambda_{jk}^ { (m_v) }  \int_{Q_0}   b_{jk} ^{(m_v)}  (x) \d x .
	\end{align*}
	Note that 
	\begin{align*}
		\left|  \int_{Q_0} b_{jk} ^{(m_v)}  (x)  \d x \right| &  \le \int_{Q_0}  | b_{jk} ^{(m_v)}  (x) | \d x  \\
		& \le  \left(   \int_{Q_0}  | b_{jk} ^{(m_v)}  (x) | ^{p'} \chi_{Q_{j,k} } (x) \d x \right)^{1/p'} |Q_{j,k} \cap Q_{0} |^{1/p} \\
		& \le |Q_{j,k}|^{1/t-1/p}  |Q \cap Q_{0} |^{1/p} .
	\end{align*}
	Since  $1/t-1/p <0$  and $ 1- r/t < 0$,
	\begin{align*}
		& \left(  \sum_{j = -\infty} ^\infty \sum_{k \in \mathbb Z^n,  Q_{j,k} \cap  Q_{0} \neq \emptyset } |Q_{j,k}|^{r/t-r/p}  |Q \cap Q_{0} |^{r/p} \right)^{1/r} \\
		&= \left(\sum_{j= -\infty} ^{ j_{Q_0} } 2^{ -m r n (1/t-1/p)} 2^{ - r j_{Q_0} n/p }  +\sum_{ j =j_{Q_0}  +1 } ^\infty   2^{ - j_{Q_0} n} 2^{m n  (1 -r/t) } \right)^{1/r}  \le C <\infty.
	\end{align*}
	Thus \begin{align*}
			\left|  \int_{Q_0} b_{jk} ^{(m_v)}  (x)  \d x \right|  \le C \left( \sum_{(j,k)\in\mathbb{Z}^{n+1}}  |\lambda_{jk} ^{ (m_v) } |^{r'} \right)^{1/r'}  \le  C (1+\epsilon) <\infty.
	\end{align*}
	Now we can  use  Lebesgue's convergence theorem to  obtain
	\begin{align*}
		\lim_{v \to \infty} \int_{Q_0} b_{jk} ^{(m_v)}  (x)  \d x 
		= \sum_{j= -\infty} ^\infty \left( \lim_{v \to \infty} \sum_{k \in \mathbb Z^n,  Q_{j,k} \cap  Q_{0} \neq \emptyset  } \lambda_{jk} ^{ (m_v) }  \int_{Q_0}  b_{jk} ^{(m_v)}  (x) \d x    \right).
	\end{align*}
	Since 
	$
	\sum_{k \in \mathbb Z^n,  Q_{j,k} \cap  Q_{0} \neq \emptyset }
	$
	is the symbol of summation over a finite set for each $j \in \mathbb Z$, we have
	\begin{align*}
		& \lim_{v \to \infty} \sum_{j= -\infty} ^\infty  \sum_{k \in \mathbb Z^n,  Q_{j,k} \cap  Q_{0} \neq \emptyset  } \lambda_{jk} ^{ (m_v) }  \int_{Q_0}  b_{jk} ^{(m_v)}  (x) \d x    \\
	&	 =  \sum_{(j,k) \in \mathbb Z^{n+1}} \lambda_{jk}  \int_{Q_0}  b_{jk}  (x) \d x = \int_{Q_0} f_0 (x)\d x.
	\end{align*}
		Since $f_m \uparrow f$ a.e.,
	we obtain
	\begin{equation*}
		\int_{Q_0} f (x)\d x = \lim_{v \to \infty } \int_{Q_0} f_{m_v} (x)\d x = \lim_{v \to \infty} \sum_{(j,k) \in \mathbb Z^{n+1}} \lambda_{jk} ^{ (m_v) }  \int_{Q_0}  b_{jk} ^{(m_v)}  (x) \d x  = \int_{Q_0} f_0 (x)\d x.
	\end{equation*}
 This yields $f= f_0$ a.e., by virtue of the Lebesgue differentiation theorem, and, hence $f \in  \mathcal H_{ p', q'}^{t',r'} (\rn) $.
Letting $\epsilon \to 0^+$ in  (\ref{f liminf 1+ eps}), we obtain $  	\|f\|_{ \mathcal H_{ p', q'}^{t',r'} (\rn)  }  \le 1.$
	This completes the proof of the theorem.
\end{proof}

The following result is a characterization of the predual of the block space $\mathcal H_{ p', q'}^{t',r'} (\rn)$ in terms of the K\"othe dual. 
We first recall  the K\"othe dual can be seen in \cite[Section 9.1.5]{SFH20}.
The definition of  ball Banach function norm can be seen in \cite[Definition 14]{SFH20}.

\begin{definition}
	Let $\rho$ be a Banach function norm
	over a measure space  $ ( X, \mathcal A , \mu)$. The collection $ \mathcal X =\mathcal X (\rho) $ of all  measurable functions $f$ for which $\rho (|f|)  <\infty$ is called a Banach function space. For each $f \in  \mathcal X  $, define $ \| f\|_{ \mathcal X }  : =  \mathcal X $.
\end{definition}

\begin{definition}
	Let $f, g$ be measurable  functions on $\rn$.
	If $\rho$  is a ball Banach function norm, its ``associate norm'' $\rho '$  is defined by 
	\begin{equation*}
		\rho ' (g) := \sup \left\{  \| f g \|_{L^1}  : \rho (f) \le 1 \right\}.
	\end{equation*}
\end{definition}
\begin{definition}
	Let $\rho$ be a ball Banach function norm. Let $\mathcal X =\mathcal X (\rho)$ be the ball Banach function space determined by $\rho$. Let $\rho '$ be the  associate norm of $\rho$. The Banach function space $ \mathcal X (\rho') = \mathcal X ' (\rho)$ determined by $\rho '$  is called the associate space or the K\"othe dual of $\mathcal X $.
\end{definition}

\begin{theorem}\label{M ' = H}
		Let $1 <q <\infty$.  Let $ 1 <p <t<r <\infty $ or $ 1 <p \le t < r =\infty $. Then  the  K\"othe dual space $\left(	M_{p,q}^{t,r} (\rn)  \right)' $   coincides with the block space
	  $\mathcal H_{ p', q'}^{t',r'} (\rn) $. 
\end{theorem}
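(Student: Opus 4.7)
The plan is to deduce this identification directly from the duality already established in Theorem \ref{M is predual of H}, together with the elementary observation that $\|\cdot\|_{M_{p,q}^{t,r}(\rn)}$ depends on a function only through its modulus. This reduces the whole problem to matching the associate norm $\rho'$ with the supremum formula in equation (\ref{g H = sup}), so essentially no new computation is required.

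First I would verify briefly that $M_{p,q}^{t,r}(\rn)$ is a ball Banach function space so that the K\"othe dual is defined in the sense of the preceding definitions. The lattice property and (quasi-)triangle structure are inherited from the corresponding properties of $\|\cdot\|_{L^{p,q}}$ and the $\ell^{r}$-sum over $\mathcal D$; the Fatou property follows from monotone convergence of the $L^{p,q}$-norms on each dyadic cube followed by monotone convergence of the dyadic sum; Example \ref{example chi_Q} shows $\chi_{Q}\in M_{p,q}^{t,r}(\rn)$ for any cube $Q$; and for a ball $B$ embedded in a dyadic cube $Q$ of comparable size, Lemma \ref{holder lorentz} gives
\begin{equation*}
\int_B|f|\,\d x\le \|f\|_{L^{p,q}(Q)}\|\chi_B\|_{L^{p',q'}(Q)}\lesssim |Q|^{1/p-1/t}\|\chi_B\|_{L^{p',q'}(Q)}\,\|f\|_{M_{p,q}^{t,r}(\rn)}.
\end{equation*}

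The key step is the identity
\begin{equation*}
\|g\|_{(M_{p,q}^{t,r}(\rn))'}=\sup_{\|f\|_{M_{p,q}^{t,r}(\rn)}\le 1}\int_{\rn}|f(x)g(x)|\,\d x=\sup_{\|k\|_{M_{p,q}^{t,r}(\rn)}\le 1}\left|\int_{\rn}k(x)g(x)\,\d x\right|.
\end{equation*}
The first equality is by definition; for the second, note that $\|f\|_{M_{p,q}^{t,r}(\rn)}$ depends only on $|f|$, so given any nonnegative $f$ with $\|f\|_{M}\le 1$ the function $k:=f\cdot\overline{\sgn(g)}$ satisfies $|k|=f$, hence $\|k\|_{M}\le 1$ and $\int kg=\int|fg|$, while conversely $|\int kg|\le\int|kg|$ and $\|\,|k|\,\|_{M}=\|k\|_{M}$.

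Combining this identity with formula (\ref{g H = sup}) from Theorem \ref{M is predual of H} immediately yields
\begin{equation*}
\|g\|_{(M_{p,q}^{t,r}(\rn))'}=\sup_{\|k\|_{M_{p,q}^{t,r}(\rn)}\le 1}\left|\int_{\rn}k(x)g(x)\,\d x\right|\approx \|g\|_{\mathcal H_{p',q'}^{t',r'}(\rn)},
\end{equation*}
so the two spaces coincide as sets with equivalent norms. The only mild obstacle I anticipate is being careful about the sign/phase reduction in the case of complex-valued $g$ (ensuring the chosen $\sgn$ is measurable and that the manipulation $|k|=f$ is literally true on the support of $g$), together with a short check that the parameter ranges $1<p\le t<r\le\infty$ and $1<q<\infty$ make the invocation of Theorem \ref{M is predual of H} legitimate; these are all routine once the reduction above is in place.
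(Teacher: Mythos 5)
Your argument for the easy inclusion $\mathcal H_{p',q'}^{t',r'}(\rn) \subset (M_{p,q}^{t,r}(\rn))'$ is fine, as is the observation that the associate norm can be rewritten as $\sup_{\|k\|_M\le 1}|\int kg\,\d x|$ via the sign trick. The gap is in the reverse inclusion. You invoke formula (\ref{g H = sup}) to conclude that any $g$ with finite associate norm satisfies $\|g\|_{\mathcal H_{p',q'}^{t',r'}(\rn)}\lesssim \rho'(g)<\infty$, but (\ref{g H = sup}) is proved in Theorem \ref{M is predual of H} only for $g$ that is \emph{already known} to lie in $\mathcal H_{p',q'}^{t',r'}(\rn)$ (it comes from the Hahn--Banach norm-attainer applied inside $\mathcal H$, paired with the duality $M=(\mathcal H)^*$). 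For an arbitrary measurable $g$ with finite $\rho'(g)$ the formula simply does not apply, and asserting that it does is circular: the content of Theorem \ref{M ' = H} is precisely that finiteness of $\sup_{\|k\|_M\le 1}|\int kg\,\d x|$ forces $g$ into the block space.

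The missing ingredient is the Fatou property of $\mathcal H_{p',q'}^{t',r'}(\rn)$, established separately as Theorem \ref{Fatou block}. The paper's route is: take $f\ge 0$ with $\rho'(f)\le 1$, set $f_k=\min\{f,k\}\chi_{Q_k}\in L_c^\infty(\rn)\subset \mathcal H_{p',q'}^{t',r'}(\rn)$ (so (\ref{g H = sup}) \emph{does} apply to each $f_k$), obtain a uniform bound $\|f_k\|_{\mathcal H}\le C$, and then use $f_k\uparrow f$ together with the Fatou property to conclude $f\in\mathcal H_{p',q'}^{t',r'}(\rn)$ with $\|f\|_{\mathcal H}\le C$. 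Without some such monotone-limit closure property of the block space, your "reduction" cannot be completed, and in fact the Fatou theorem is not a formality here — its proof (via weak-$*$ compactness in $L^{t'-\eta}$ and a diagonal argument) is the technical heart of the identification. If you want to repair the proposal, you should import Theorem \ref{Fatou block} explicitly and run the truncation argument before appealing to (\ref{g H = sup}).
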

\begin{proof} The proof is a mere combination of Theorems \ref{M is predual of H} and \ref{Fatou block}.
	Thanks to (\ref{M = H '}),  we have 
	\begin{equation*}
		\mathcal H_{ p', q'}^{t',r'} (\rn)  \subset  \left(M_{p,q}^{t,r} (\rn) \right) ' .
	\end{equation*}
Hence, we will verify the converse.
	In fact, suppose that $f \in L^0(\rn) $  satisfies
	\begin{equation} \label{fg le 1}
		\sup \left\{ \left|\int_\rn f(x) g(x) \d x \right| : \|g\|_{ M_{p,q}^{t,r} (\rn) }  \le 1      \right\} \le 1.
	\end{equation}
	Then we observe that $|f(x)| <\infty$ for a.e. $x\in \rn $. Without loss of generality, we can assume that $f \ge 0$. Otherwise, split $f$ into its real
	and imaginary parts and each of these into its positive and negative parts; and we treat each of them.
	
	For every $k\ge 1$, let $Q_k = Q (0,k)$ and let $f_k (x) = \min\{ f(x), k\} \chi_{Q_k} (x)$. Then $f_k \in \mathcal H_{ p', q'}^{t',r'} (\rn) $ since $f_k \in L_c^\infty (\rn)$. 
	
	On the other hand, since $  M_{p,q}^{t,r} (\rn)  = \left( \mathcal H_{ p', q'}^{t',r'} (\rn) \right)^* $, and by (\ref{fg le 1}), we have
	\begin{align*}
		\| f_k\|_{  \mathcal H_{ p', q'}^{t',r'} (\rn) } &  \le C \sup_{ \|g\|_{_{ M_{p,q}^{t,r} (\rn) }  } \le 1 } \left| \int_\rn f_k (x) g(x) \d x \right| \\
		&  \le C \sup_{ \|g\|_{_{ M_{p,q}^{t,r} (\rn) }  } \le 1 } \left| \int_\rn f (x) g(x) \d x \right| \le C <\infty.
	\end{align*}
Here $C$  comes from (\ref{g H = sup}).
	By the Fatou property of $\mathcal H_{ p', q'}^{t',r'} (\rn)$(Theorem \ref{Fatou block}), we deduce  $\|f\|_{\mathcal H_{ p', q'}^{t',r'} (\rn)  } \le C$. 
	By duality and (\ref{fg le 1}), we obtain
	\begin{align*}
		\| f\|_{\mathcal H_{ p', q'}^{t',r'} (\rn)  } & \le C \sup_{ \|g\|_{_{ M_{p,q}^{t,r} (\rn) }  } \le 1 } \left| \int_\rn f (x) g(x) \d x \right| 
		\\
		& \le C   \sup_{ \|g\|_{_{ M_{p,q}^{t,r} (\rn) }  } \le 1 } \|f\|_{\left(M_{p,q}^{t,r} (\rn) \right)' }  \| g\|_{ M_{p,q}^{t,r} (\rn) } \\
		&\le C \|f\|_{\left(M_{p,q}^{t,r} (\rn) \right)' },
	\end{align*}
	which yields 	$  \left(M_{p,q}^{t,r} (\rn) \right) '  \subset  \mathcal H_{ p', q'}^{t',r'} (\rn)  .$
	Hence we have completed the proof.
\end{proof}

Finally, we consider the denseness of $\mathcal H_{ p', q'}^{t',r'} (\rn) $.
\begin{theorem} \label{dense block}
Let $1<q<\infty$.
	Let $1 < p<t<r<\infty$ 	or $1< p\le t<r=\infty$. Then $L^{\infty}  _c (\rn )$ is dense in $\mathcal H_{ p', q'}^{t',r'} (\rn)  $. In particular, by mollification, $C_c^\infty (\rn)$ is dense in $\mathcal H_{ p', q'}^{t',r'} (\rn) $.
\end{theorem}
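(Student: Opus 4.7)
The plan is to approximate an arbitrary $f\in \mathcal{H}_{p',q'}^{t',r'}(\rn)$ in two stages: first, truncate an almost-optimal block decomposition to a finite sum; second, approximate each of the finitely many blocks (supported on bounded cubes) by an $L^\infty_c$ function. Fix $\epsilon>0$ and, by definition of the block norm, select an admissible representation $f=\sum_{(j,k)\in\mathbb Z^{1+n}}\lambda_{j,k}\,b_{j,k}$ with $b_{j,k}$ a $(p',q',t')$-block supported on $Q_{j,k}$ and $\|\lambda\|_{\ell^{r'}}\le \|f\|_{\mathcal H_{p',q'}^{t',r'}(\rn)}+\epsilon$. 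Since both parameter regimes force $r>1$, we have $r'<\infty$ (with $r'=1$ when $r=\infty$), so one can pick a finite set $K\subset\mathbb Z^{1+n}$ with $\bigl(\sum_{(j,k)\notin K}|\lambda_{j,k}|^{r'}\bigr)^{1/r'}<\epsilon$. Setting $f_K:=\sum_{(j,k)\in K}\lambda_{j,k}\,b_{j,k}$, the tail $\sum_{(j,k)\notin K}\lambda_{j,k}\,b_{j,k}$ is itself an admissible block-representation of $f-f_K$, so $\|f-f_K\|_{\mathcal H_{p',q'}^{t',r'}(\rn)}\le\epsilon$.

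For the second stage, for each $(j,k)\in K$ I would truncate by height, setting $b_{j,k}^M:=b_{j,k}\,\chi_{\{|b_{j,k}|\le M\}}$, which belongs to $L^\infty_c(\rn)$ because $b_{j,k}$ is supported on the bounded cube $Q_{j,k}$. Since $1<q<\infty$ and $1<p\le t<\infty$, the conjugates satisfy $1<p',q'<\infty$. The rearrangement of $b_{j,k}-b_{j,k}^M=b_{j,k}\chi_{\{|b_{j,k}|>M\}}$ vanishes for $s\ge d_{b_{j,k}}(M)$, and $d_{b_{j,k}}(M)\downarrow 0$ as $M\to\infty$ (as $b_{j,k}$ is finite a.e.\ on the bounded set $Q_{j,k}$), hence
\[
\|b_{j,k}-b_{j,k}^M\|_{L^{p',q'}(Q_{j,k})}^{q'}=\int_0^{d_{b_{j,k}}(M)}\bigl(s^{1/p'}\,b_{j,k}^*(s)\bigr)^{q'}\,\frac{\d s}{s}\longrightarrow 0
\]
by dominated convergence against the finite integrand of $\|b_{j,k}\|_{L^{p',q'}}^{q'}$. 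Applying Lemma \ref{chi Q f in H} to $b_{j,k}-b_{j,k}^M$ (supported on $Q_{j,k}$) yields $\|b_{j,k}-b_{j,k}^M\|_{\mathcal H_{p',q'}^{t',r'}(\rn)}\lesssim|Q_{j,k}|^{1/t'-1/p'}\|b_{j,k}-b_{j,k}^M\|_{L^{p',q'}(Q_{j,k})}$, which is arbitrarily small. Choosing $M=M(j,k)$ so large that the weighted sum over the finite set $K$ is below $\epsilon$, the function $g:=\sum_{(j,k)\in K}\lambda_{j,k}\,b_{j,k}^M\in L^\infty_c(\rn)$ satisfies $\|f-g\|_{\mathcal H_{p',q'}^{t',r'}(\rn)}<2\epsilon$, establishing the density of $L^\infty_c(\rn)$.

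For the $C_c^\infty$ version, given $h\in L^\infty_c(\rn)$ supported in a cube $Q$, I would mollify: let $\varphi_\delta$ be a standard smooth approximate identity of compact support. For small $\delta$, $\varphi_\delta\ast h\in C_c^\infty(\rn)$ is supported in a fixed dilate $2Q$; moreover $\varphi_\delta\ast h\to h$ pointwise a.e.\ with uniform bound $\|h\|_{L^\infty}$, so $\varphi_\delta\ast h\to h$ in $L^{p',q'}(2Q)$ by the same distributional/dominated convergence argument as above. Lemma \ref{chi Q f in H} then gives $\|h-\varphi_\delta\ast h\|_{\mathcal H_{p',q'}^{t',r'}(\rn)}\to 0$, completing the chain $C_c^\infty\subset L^\infty_c\subset\overline{\mathcal H_{p',q'}^{t',r'}}$. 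The main technical point is the $L^{p',q'}$-convergence under height truncation, which hinges on $q'<\infty$; this is precisely why $1<q<\infty$ is assumed, and the argument would genuinely fail for $q\in\{1,\infty\}$.
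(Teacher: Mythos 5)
Your proof is correct, and it follows the same two-stage strategy as the paper (truncate an almost-optimal block decomposition to finitely many terms, then approximate the remaining finite sum), but it adds a step that the paper actually needs and skips. The paper truncates to $f_N=\sum_{|(j,k)|_\infty\le N}\lambda_{j,k}b_{j,k}$ and then asserts ``it is obvious that $f_N\in L^\infty_c(\rn)$ since $N$ is finite.'' That assertion is not justified as written: a $(p',q',t')$-block is only constrained in $L^{p',q'}$-norm, and $L^{p',q'}$ with $p'<\infty$ contains unbounded functions, so a finite sum of blocks is compactly supported but not automatically bounded. Your height truncation $b_{j,k}^M=b_{j,k}\chi_{\{|b_{j,k}|\le M\}}$, combined with the $L^{p',q'}$-convergence $\|b_{j,k}-b_{j,k}^M\|_{L^{p',q'}(Q_{j,k})}\to 0$ (which uses $q'<\infty$, hence $q>1$) and Lemma~\ref{chi Q f in H}, is precisely the missing piece. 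Your mollification step for the $C_c^\infty$ density is also somewhat cleaner than the paper's: you mollify the $L^\infty_c$ approximant $h$ and control $\|h-\varphi_\delta\ast h\|_{\mathcal H_{p',q'}^{t',r'}(\rn)}$ directly via Lemma~\ref{chi Q f in H} together with an $L^{p',q'}(2Q)$ dominated-convergence argument, whereas the paper mollifies the finite block sum $f_{N_0}$ directly, verifies that $\omega_\eta\ast b_{j,k}\to b_{j,k}$ in $L^{p',q'}$ using the Hardy--Littlewood maximal dominant, and rebuilds the block structure with explicit constants. Both routes are valid; yours is more modular, while the paper's avoids the $L^\infty_c$ intermediary for the smooth approximation step. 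One minor caution: when you invoke dominated convergence in $L^{p',q'}$ for $\varphi_\delta\ast h\to h$, the domination by the constant $\|h\|_{L^\infty}$ is available on the fixed bounded cube $2Q$, so the argument goes through, but it is worth noting explicitly that $\chi_{2Q}\|h\|_{L^\infty}\in L^{p',q'}$ precisely because the measure of $2Q$ is finite.
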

\begin{proof}
	We use the idea from \cite[Theorem 345]{SFH20}.
	Since $ f \in \mathcal H_{ p', q'}^{t',r'} (\rn)  $, there exist a sequence $\{ \lambda_{j,k} \}_{ (j,k) \in \mathbb Z ^{1+n} }  \in \ell ^{r'} $  and a sequence  $(p', q', t')$-block $\{  b_{j,k} \}  _{ (j,k) \in \mathbb Z ^{1+n} } $  supported on  $Q_{j,k}   $
	such that $f 	=\sum_{(j,k)\in\mathbb{Z}^{n+1}}\lambda_{j,k}b_{j,k} $  and $\| \{ \lambda_{j,k} \} \|_{\ell ^{r'}}  \le \| f\|_{\mathcal H_{ p', q'}^{t',r'} (\rn) }    +\epsilon$.	
	Define 
	\begin{equation} \label{f_N dense}
		f_N = \sum_{|(j,k)|_\infty  \le N}\lambda_{j,k}b_{j,k},
	\end{equation}
	where $|(j,k)|_\infty  = \max  \{ |j|, |k_1|, \ldots, |k_n| \}  $.
	Since $r' \in [1,\infty)$, we have
	\begin{equation*}
		\| f -f_N \|_{ \mathcal H_{ p', q'}^{t',r'} (\rn) }  \le  \left( \sum_{|j| \le N ,|k|_\infty  > N} | \lambda_{j,k} |^{r'}  + \sum_{ |j| > N ,|k|_\infty  \le  N}| \lambda_{j,k} |^{r'}  \right)^{1/r'}   \to 0 
	\end{equation*}
	as $N\to \infty.$
	It is obvious that $f_N \in L^{\infty}  _c (\rn )$ since $N$ is finite.
 Hence
	$L^{\infty}  _c (\rn )$ is dense in $\mathcal H_{ p', q'}^{t',r'} (\rn)  $.

	Now we show $C_c^\infty (\rn)$ is dense in $\mathcal H_{ p', q'}^{t',r'} (\rn) $.  Let $f_N$  be  (\ref{f_N dense}).  For a fixed $\epsilon>0$, we can choose $N_0 \in \mathbb N$ such that for all $N \ge N_0$,
	\begin{equation*}
		\| f -f_N \|_{ \mathcal H_{ p', q'}^{t',r'} (\rn) }  < \epsilon.
	\end{equation*} 
	Let	
	\begin{equation*}
		\omega (x) = 	\begin{cases}
			c e ^{ 1/ (|x|^2 -1) }, & \operatorname{if}  |x| < 1; \\
			0, & \operatorname{else},
		\end{cases}
	\end{equation*} 
	where the constant $c  >0 $  is such that $ \int_{B(0,1)}  \omega (x) \d x =1 $. Set $\omega _\eta (x)  = \eta^{-n} \omega ( \eta^{-1} x )$ for $\eta>0$.
	From \cite[Theorem 2.6-1]{C13}, $\omega_\eta * f_N \in C_c^\infty (\rn) $   and 
	\begin{equation*}
		\omega_\eta *	f_N = \sum_{|(j,k)|_\infty  \le N}\lambda_{j,k} \omega_\eta * b_{j,k}.
	\end{equation*}
Since the support of $b_{j,k} $  is compact, by \cite[Theorem 11.18]{CR16},  $ \omega_\eta *b_{j,k}   $  converges to $b_{j,k}$  uniformly in the support of $b_{j,k}$. 
Since  $ \sup_{ \eta >0 } | \omega_\eta *	b_{j,k} (x)|  \le \| \omega\|_{L^1(\rn)} \M b_{j,k} (x) $, by the boundedness of Hardy-Littlewood maximal operator on  $L^{p',q'} (\rn), p', q' \in (1,\infty)$, we obtain $\omega_\eta *	b_{j,k}  \in L^{p',q'} (\rn)$. Then by Lebesgue dominated converge theorem, we get
\begin{equation*}
	\lim_{\eta \to 0} \| \omega_\eta *  b_{j,k} -  b_{j,k} \|_{L^{p',q'} (\rn)}  =0.
\end{equation*}
	There exists $\eta_0 >0 $ such that for all $0<\eta <\eta_0$, all $ |(j,k)|_\infty \le N_0  $, supp   $\omega_\eta * b_{j,k}  \subset 2 Q _{j,k}$ and 
	\begin{equation*}
		\epsilon^{-1} (2 N_0 +1)^{ (n+1) / r'}  \lambda_{j,k}  \|  \omega_\eta * b_{j,k} -  b_{j,k} \|_{L^{p',q'} (\rn)} \le     | 2Q _{j,k}|^{1/t - 1/p} .
	\end{equation*}
	Hence $ \epsilon^{-1} (N_0 +1)^{ (n+1) / r'}  \lambda_{j,k} ( \omega_\eta * b_{j,k} -  b_{j,k}) $ is a $ (p',q', t') $-block supported on $ 2Q _{j,k} $.
	Then 
	\begin{align*}
		f_{N_0}  - \omega_\eta *	f_{N_0} &=   \sum_{|(j,k)|_\infty  \le N_0}    \lambda_{j,k} \left( b_{j,k}  -  \omega_\eta * b_{j,k}  \right) \\
		& = \sum_{|(j,k)|_\infty  \le {N_0} }  \epsilon (2 N_0 +1)^{-(n+1)/r'  }     
		\\
		& \quad \times           \left(  \epsilon^{-1} ( 2 N_0 +1)^{ (n+1) / r'}  \lambda_{j,k} ( \omega_\eta * b_{j,k} -  b_{j,k})         \right)  . 
	\end{align*}
	Hence 
	\begin{equation*}
		\| f_N  - \omega_\eta *	f_N \|_{ \mathcal H_{ p', q'}^{t',r'} (\rn)  } \le  \epsilon ( 2 N_0 +1)^{-(n+1)/r'  }   \left( \sum_{|(j,k)|_\infty  \le N_0}   1     \right) ^{1/r'}  = \epsilon .
	\end{equation*}
	Then by  Minkowski's inequality, we obtain
	\begin{equation*}
		\| f  - \omega_\eta *	f_N \|_{ \mathcal H_{ p', q'}^{t',r'} (\rn)  } 	\le  \| f -f_N \|_{ \mathcal H_{ p', q'}^{t',r'} (\rn)  } + 	\| f_N  - \omega_\eta *	f_N \|_{ \mathcal H_{ p', q'}^{t',r'} (\rn) }  \le 2 \epsilon.
	\end{equation*} 
	Since  $\epsilon$  is arbitrary, we prove that  $C_c^\infty (\rn)$ is dense in $\mathcal H_{ p', q'}^{t',r'} (\rn)   $.
\end{proof}

%
%

\section{Applications of operators} \label{Applicaiton operator}
We next show the boundedness of  the classical operators on Bourgain-Morrey-Lorentz spaces and its predual, such as the
Hardy-Littlewood maximal operator, sharp maximal operator, Calder\'on-Zygmund operator, fractional integral operator, commutator on Bourgain-Morrey-Lorentz spaces and the block spaces.

\subsection{Hardy-Littlewood maximal operator}\label{HL section}

In this subsection, we consider the boundedness property of the Hardy-Littlewood maximal operator on Bourgain-Morrey-Lorentz spaces $M_{p,q}^{t,r} (\rn) $ and block spaces $\mathcal H_{ p', q'}^{t',r'} (\rn)$.

Denote by $\mathcal M_{\operatorname{dyadic}}$ the dyadic maximal operator generated by the dyadic cubes  in $\mathcal D$. We do not have a pointwise estimate to control $\mathcal M$ in terms of $\mathcal M_{\operatorname{dyadic}}$. 
The maximal operator generated by a family $\mathfrak D$ is defined by 
\begin{equation*}
	\mathcal M_{\mathfrak D} f (x) = \sup_{Q \in \mathfrak D} \frac{\chi_Q (x)}{|Q|} \int_Q |f(y) | \d y
\end{equation*}
for $f\in L^0 (\rn)$ and a dyadic grid $\mathfrak D \in \{ \mathcal D_{\vec a} : \vec a \in \{0,1,2\}^n   \}$.
As in \cite{LN19},
\begin{equation} \label{M le Mdya}
	\mathcal M f (x) \lesssim \sum_{ \vec a \in \{0,1,2\}^n }M_{\mathcal D _{\vec a}} f (x).
\end{equation}

\begin{theorem} \label{HL M}
	Let $1 \le q \le \infty$. Let $ 1 <p <t<r <\infty $ or $ 1 <p \le t < r =\infty $. Then $\mathcal M$  is bounded on  $M_{p,q}^{t,r} (\rn) $.
\end{theorem}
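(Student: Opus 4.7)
The idea is to pass to the dyadic formulation via Lemma \ref{D equivalence} and reduce everything to the $L^{p,q}$-boundedness of the dyadic maximal operator. Since $\mathcal{M} f \lesssim \sum_{\vec{a} \in \{0,1,2\}^n} \mathcal{M}_{\mathcal{D}_{\vec{a}}} f$ by \eqref{M le Mdya}, and $\|\cdot\|_{M_{p,q}^{t,r}(\rn)} \approx \|\cdot\|_{M_{p,q}^{t,r}(\mathfrak{D})}$ for every dyadic grid $\mathfrak{D}$, it suffices to show that $\mathcal{M}_{\mathfrak{D}}$ is bounded on $M_{p,q}^{t,r}(\mathfrak{D})$ for each fixed $\mathfrak{D} \in \{\mathcal{D}_{\vec{a}} : \vec{a} \in \{0,1,2\}^n\}$.

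Fix $R \in \mathfrak{D}$. Using the nested property of the dyadic grid, for every $x \in R$ we split the supremum defining $\mathcal{M}_{\mathfrak{D}} f(x)$ into cubes $Q \subseteq R$ and cubes $Q \supsetneq R$:
\begin{equation*}
\mathcal{M}_{\mathfrak{D}} f(x) \le \mathcal{M}_{\mathfrak{D}}(f\chi_R)(x) + \sup_{Q \in \mathfrak{D},\, Q \supsetneq R} \frac{1}{|Q|} \int_Q |f(y)|\,\d y.
\end{equation*}
The first term is controlled by Lemma \ref{HL Lorentz}, giving $\|\mathcal{M}_{\mathfrak{D}}(f\chi_R)\|_{L^{p,q}(R)} \lesssim \|f\|_{L^{p,q}(R)}$. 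For the tail, the H\"older inequality in Lorentz spaces (Lemma \ref{holder lorentz}) applied to $f$ against $\chi_Q \in L^{p',q'}$ yields
\begin{equation*}
\frac{1}{|Q|} \int_Q |f| \lesssim |Q|^{-1/p}\, \|f\|_{L^{p,q}(Q)}.
\end{equation*}
Writing $a_Q := |Q|^{1/t - 1/p} \|f\|_{L^{p,q}(Q)}$, multiplying through by $|R|^{1/t - 1/p}$ and using $\|c\chi_R\|_{L^{p,q}(R)} \approx c|R|^{1/p}$, we obtain
\begin{equation*}
|R|^{1/t - 1/p} \|\mathcal{M}_{\mathfrak{D}} f\|_{L^{p,q}(R)} \lesssim a_R + \sup_{k \ge 1} 2^{-nk/t}\, a_{R^{(k)}},
\end{equation*}
where $R^{(k)}$ denotes the $k$-th dyadic ancestor of $R$ in $\mathfrak{D}$.

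The main step is then the combinatorial summation over $R \in \mathfrak{D}$. For $r < \infty$ use $(\sup_k c_k)^r \le \sum_k c_k^r$ to write
\begin{equation*}
\sum_{R \in \mathfrak{D}} \sup_{k \ge 1} 2^{-nkr/t}\, a_{R^{(k)}}^r \le \sum_{k \ge 1} 2^{-nkr/t} \sum_{R \in \mathfrak{D}} a_{R^{(k)}}^r = \sum_{k \ge 1} 2^{nk(1 - r/t)}\, \|a\|_{\ell^r}^r,
\end{equation*}
since each $Q \in \mathfrak{D}$ is the $k$-th ancestor of exactly $2^{nk}$ members of $\mathfrak{D}$. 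The geometric series converges precisely because the hypothesis $t < r$ forces $1 - r/t < 0$. For $r = \infty$ the bound is immediate from $(|R|/|Q|)^{1/t} \le 1$, which also accommodates the admissible case $p = t$. The principal obstacle throughout is the control of the tail contribution from strictly larger dyadic ancestors, and it is there that the assumption $t < r$ (or $r = \infty$) enters in an essential way; the dyadic reduction is the decisive technical simplification, since it replaces a summation over geometrically arbitrary cubes by a clean count of descendants of a fixed cube.
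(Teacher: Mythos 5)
Your proof is correct and follows essentially the same route as the paper's: dyadic reduction via Lemma \ref{D equivalence} and \eqref{M le Mdya}, a local/tail split of $\mathcal{M}_{\mathfrak{D}}$, H\"older in Lorentz spaces to control the tail by averages over dyadic ancestors, and the $2^{kn}$-descendant count to sum over the grid, with the geometric series converging exactly because $t<r$. The only (minor) difference is that you bound the tail by a supremum over ancestors and use $\ell^r\hookrightarrow\ell^\infty$, whereas the paper bounds it by a sum and applies Minkowski's inequality in $\ell^r$, and you handle $r=\infty$ directly rather than citing \cite[Lemma 3.1]{DK24}; both are cosmetic.
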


\begin{proof}
	We only need to prove the case $r<\infty$, since $r =\infty$ is in \cite[Lemma 3.1]{DK24}. Due to (\ref{M le Mdya}) and Lemma \ref{D equivalence}, it sufficient to show the boundedness for the dyadic maximal operator $ 	\mathcal M_{\mathfrak D}$  instead of the Hardy-Littlewood maximal function $\mathcal M$.	
	Let $f \in M_{p,q}^{t,r} (\rn) $, $Q \in \mathfrak D$, $f_1 = f \chi_Q$ and $f_2 = f \chi_{\rn \backslash Q}$. By Lemma \ref{HL Lorentz}, we have
	\begin{equation} \label{M f_1}
		\| 	\mathcal M_{\mathfrak D} f_1 \|_{ L^{p,q} (Q)} \lesssim 	\| f_1 \|_{ L^{p,q} (\rn)} = \| f \|_{ L^{p,q} (Q)}.
	\end{equation}
For $k \in \mathbb N$, let $Q_k$  be the $k^{\operatorname{th}}$ dyadic parent of $Q$, which is the dyadic cube in $\mathfrak{D}$ satisfying $Q \subset Q_k$ and $\ell (Q_k) = 2^k \ell (Q)$. 
Hence, for $x \in Q$, applying the H\"older inequality in Lorentz spaces, we obtain
\begin{align*}
		\mathcal M_{\mathfrak D} f_2 (x) & \le \sum_{k=1} \frac{1}{|Q_k|} \int_{Q_k} |f(y) | \d y \\
	& \lesssim \sum_{k=1} |Q_k|^{-1/p} \| f \|_{ L^{p,q} (Q_k)}  .
\end{align*}
Thus,
\begin{align*}
	\|	\mathcal M_{\mathfrak D} f_2 \|_{ L^{p,q} (Q) } \lesssim |Q|^{1/p} \sum_{k=1} |Q_k|^{-1/p} \| f \|_{ L^{p,q} (Q_k)} .
\end{align*}
Let $R \in \mathfrak D$ and $k\in \mathbb N$. A geometric observation shows there are $2^{kn}$ dyadic cubes $Q$ such that $Q_k = R$. Namely,
\begin{equation} \label{geo 2kn}
	 \sum_{  Q  \in \mathfrak D, Q_k = R}  |Q_k|^{r/t -r/p} \|f\|_{  L^{p,q} (Q_k)} ^r  = 2^{kn} |R|^{r/t -r/p} \|f\|_{  L^{p,q} (R)} ^r .
\end{equation}
Adding (\ref{geo 2kn}) over  $R \in \mathfrak D$ gives
\begin{equation*}
	\sum_{Q \in \mathfrak D} |Q_k| ^{r/t -r/p} \|f\|_{  L^{p,q} (Q_k)} ^r  =2^{kn} \sum_{R \in \mathfrak D } |R|^{r/t -r/p} \|f\|_{  L^{p,q} (R)} ^r  = 2^{kn}  \|f\|_{M_{p,q}^{t,r} (\rn) } ^r .
\end{equation*}
Then 
\begin{align*}
	\left(\sum_{Q\in \mathfrak D}  |Q|^{r/t-r/p} \|	\mathcal M_{\mathfrak D} f_2 \|_{ L^{p,q} (Q) } ^r  \right)^{1/r} 
& \lesssim \left(\sum_{Q\in \mathfrak D}  |Q|^{r/t} \left(  \sum_{k=1} |Q_k|^{-1/p} \| f \|_{ L^{p,q} (Q_k)} \right)^r  \right)^{1/r} \\
& \le \sum_{k=1}^\infty 2^{-kn/t}  \left(\sum_{Q\in \mathfrak D}  |Q_k|^{r/t -r/p}   \| f \|_{ L^{p,q} (Q_k)}^r  \right)^{1/r} \\
& \le \sum_{k=1}^\infty 2^{-kn/t} 2^{kn/r}  \left(\sum_{Q_k\in \mathfrak D}  |Q_k|^{r/t -r/p}   \| f \|_{ L^{p,q} (Q_k)}^r  \right)^{1/r} \\
& \lesssim \| f\|_{M_{p,q}^{t,r} (\rn)  } .
\end{align*}
Together this with (\ref{M f_1}), we prove 
\begin{equation*}
\| \mathcal M_{\mathfrak D} f\|_{M_{p,q}^{t,r} (\rn)  } 	\lesssim \| f\|_{M_{p,q}^{t,r} (\rn)  } 
\end{equation*}
as desired.
\end{proof}

To obtain the  boundedness of the Hardy-Littlewood maximal operator on  block spaces $\mathcal H_{ p', q'}^{t',r'} (\rn)$, we need some preparation.
\begin{definition}
	Let  $1\le q \le \infty$.
	Let $1\le p\le t\le r\le\infty$. The function space $\mathcal{H}_{p', q',*}^{t',r'} (\rn)$
	is the set of all measurable functions $f$ such that $f$ is realized
	as the sum
	\begin{equation}
		f=\sum_{v = 1}^\infty  \lambda_{v}g_{v}\label{eq:block f 2}
	\end{equation}
	with some $\lambda=\{\lambda_{v}\}_{v \in\mathbb{N}}\in\ell^{r'}(\mathbb{N})$
	and $g_{jk}$ is a $(p',q',t')$-block, where
	the convergence is in $L_{\mathrm{loc}}^{1}.$ The norm of $\mathcal{H}_{p', q' ,*}^{t',r'}$
	is defined by
	\[
	\|f\|_{\mathcal{H}_{p',q',*}^{t',r'}}:=\inf_{\lambda}\|\lambda\|_{\ell^{r'}},
	\]
	where the infimum is taken over all admissible sequence $\lambda$
	such that (\ref{eq:block f 2}) holds.
\end{definition}

\begin{remark}
	Let $\D := \{ Q^ {(j)} \}_{j=1}^\infty  $ be an enumeration of $\D$. Then we  immediately get
	\begin{equation*}
		\| f \| _{ \mathcal{H}_{p',q',*}^{t',r'} (\rn) } \le  	\| f \| _{ \mathcal{H}_{p',q'}^{t',r'} (\rn) } .
	\end{equation*}
Thus, $ \mathcal{H}_{p',q'}^{t',r'} (\rn) \subset   \mathcal{H}_{p',q',*}^{t',r'} (\rn) $. 
\end{remark}

\begin{lemma}\label{r'=1 equi *}
	The spaces $ \mathcal{H}_{p',q'}^{t',,r'} (\rn)$ and $ \mathcal{H}_{p',q',*}^{t',r'} (\rn)$ are equivalent when $r' =1$. 
\end{lemma}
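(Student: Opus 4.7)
The containment $\mathcal{H}_{p',q'}^{t',r'}(\rn)\subset\mathcal{H}_{p',q',*}^{t',r'}(\rn)$ with $\|f\|_{\mathcal{H}_{p',q',*}^{t',r'}}\le\|f\|_{\mathcal{H}_{p',q'}^{t',r'}}$ is already recorded in the preceding remark, so the task is to establish the reverse inequality when $r'=1$. The plan is to take an admissible expansion $f=\sum_{v=1}^{\infty}\lambda_{v}g_{v}$ realising (up to $\epsilon$) the $\mathcal{H}_{p',q',*}^{t',1}$-norm, bound each $\|g_{v}\|_{\mathcal{H}_{p',q'}^{t',1}}$ by an absolute constant using Lemma~\ref{chi Q f in H}, and then exploit the fact that the quasi-norm $\|\cdot\|_{\mathcal{H}_{p',q'}^{t',1}}$ is a genuine norm (because the outer $\ell^{r'}$ with $r'=1$ satisfies the triangle inequality) to pass the absolute summation through.

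The first main step is to observe that if $g$ is a $(p',q',t')$-block supported on an arbitrary cube $Q$ (not necessarily a member of $\mathcal{D}$), then by Lemma~\ref{chi Q f in H}
\begin{equation*}
\|g\|_{\mathcal{H}_{p',q'}^{t',1}(\rn)}=\|\chi_{Q}g\|_{\mathcal{H}_{p',q'}^{t',1}(\rn)}\lesssim|Q|^{1/t'-1/p'}\|g\|_{L^{p',q'}(Q)}\le C_{0},
\end{equation*}
for some absolute constant $C_{0}>0$ depending only on $n,p',t'$. Thus every $(p',q',t')$-block is, up to a universal factor, an element of $\mathcal{H}_{p',q'}^{t',1}(\rn)$ of unit norm. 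The second step is to verify that for $r'=1$ the functional $\|\cdot\|_{\mathcal{H}_{p',q'}^{t',1}}$ is subadditive on countable sums: given decompositions $h_{v}=\sum_{(j,k)}\mu_{j,k}^{(v)}b_{j,k}^{(v)}$ with $\sum_{(j,k)}|\mu_{j,k}^{(v)}|\le\|h_{v}\|_{\mathcal{H}_{p',q'}^{t',1}}+2^{-v}\epsilon$, enumerate the coefficients across both $v$ and $(j,k)$ and, when several blocks happen to sit on the same dyadic cube $Q_{j,k}$, replace them by a single block obtained as a convex combination (using that $L^{p',q'}$ is normable for $p'\in(1,\infty)$, $q'\in[1,\infty]$, as recalled in Remark~\ref{basic Lorentz}) to obtain an admissible expansion of $\sum_{v}h_{v}$.

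Combining these two facts with $f=\sum_{v}\lambda_{v}g_{v}$ gives
\begin{equation*}
\|f\|_{\mathcal{H}_{p',q'}^{t',1}(\rn)}\le\sum_{v=1}^{\infty}|\lambda_{v}|\,\|g_{v}\|_{\mathcal{H}_{p',q'}^{t',1}(\rn)}\le C_{0}\sum_{v=1}^{\infty}|\lambda_{v}|.
\end{equation*}
Taking the infimum over all admissible decompositions of $f$ in $\mathcal{H}_{p',q',*}^{t',1}(\rn)$ yields $\|f\|_{\mathcal{H}_{p',q'}^{t',1}(\rn)}\lesssim\|f\|_{\mathcal{H}_{p',q',*}^{t',1}(\rn)}$, finishing the proof.

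The main technical point is the subadditivity step: one must justify rearranging the double sum so that each dyadic cube appears at most once as the support of a block, which requires the normability of $L^{p',q'}$ (and therefore relies on $p'>1$, equivalently on the hypothesis $p>1$ built into the definition of the block space). Once that is in place the argument is a direct application of Lemma~\ref{chi Q f in H} together with the triangle inequality for the $\ell^{1}$-norm used in the definition of $\mathcal{H}_{p',q'}^{t',1}$; there is no issue at $r'=\infty$ or intermediate $r'>1$ because our use of the identity $\sum|\lambda_{v}|\|g_{v}\|\le C_{0}\sum|\lambda_{v}|$ is genuinely an $\ell^{1}$-to-$\ell^{1}$ calculation that fails for $r'>1$.
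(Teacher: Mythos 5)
Your proposal is correct and rests on the same underlying mechanism as the paper's proof — namely, covering an arbitrary supporting cube by a bounded number of comparable dyadic cubes and then merging any blocks that land on the same dyadic cube via the triangle inequality in $L^{p',q'}$ — but you package the argument more modularly: you first reduce to the two abstract sub-claims that (a) every $(p',q',t')$-block has $\mathcal{H}_{p',q'}^{t',1}$-norm bounded by an absolute constant (which you extract from Lemma~\ref{chi Q f in H}), and (b) the $\mathcal{H}_{p',q'}^{t',1}$-norm is countably (quasi-)subadditive, and then conclude by the $\ell^1$-triangle inequality. The paper instead carries out a single hands-on construction: each block $b_v$ is assigned to a canonically chosen dyadic cube $Q^{(j_v)}$ (the first, in a fixed enumeration, satisfying $\operatorname{supp} b_v\subset 3Q^{(j_v)}$ and $|Q_v|\ge |Q^{(j_v)}|$), the blocks landing on the same $Q$ are averaged into a single block $b(Q)$ on $3Q$, and the $\ell^1$-sum of the new coefficients is controlled by $3^n\sum_v|\lambda_v|$. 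Your version is arguably cleaner because it isolates where $r'=1$ is actually used and does not need the explicit assignment map $j_v$; the paper's version is self-contained and does not rely on Lemma~\ref{chi Q f in H}. One small imprecision: your parenthetical assertion that $\|\cdot\|_{\mathcal{H}_{p',q'}^{t',1}}$ is ``a genuine norm (because the outer $\ell^{r'}$ with $r'=1$ satisfies the triangle inequality)'' overstates things. The $\ell^1$-triangle inequality alone does not give subadditivity; one must also merge distinct blocks that share a dyadic cube, and the $L^{p',q'}$ quasi-norm used in the definition of a block only satisfies the triangle inequality up to a constant (it is \emph{equivalent} to a norm when $p'>1$, as Remark~\ref{basic Lorentz} records, not itself a norm). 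This only yields quasi-subadditivity with a bounded constant, which is still sufficient for your conclusion — and, to be fair, the paper's own ``modulo a multiplicative constant'' absorbs the very same constant.
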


\begin{proof}
	The proof is similar to \cite[Lemma 343]{SFH20}. For the convenience of the reader, we give the proof here. Let  $ f \in \mathcal{H}_{p',q',*}^{t',r'} (\rn)$. Then $f$ can be decomposed as 
\begin{equation*}
	f = \sum_{v \in K} \lambda_{v} b_v
\end{equation*}
where $K \subset \mathbb N$ is an index set, and 
\begin{equation*}
	0 < \sum_{v \in K}  | \lambda_{v} |\le (1+\epsilon)\| f\|_{  \mathcal{H}_{p',q',*}^{t',r'} (\rn) },
\end{equation*}
and each $b_v$ is a $ (p',q',t')$-block supported on some cube $Q_v$. We divide $K$ into the disjoint sets $K(Q) \subset \mathbb N$, $Q \in \D$, as 
\begin{equation*}
	K = \bigcup_{ Q \in \D } K(Q)
\end{equation*}
and $K(Q)$ satisfies   supp $(b_v) \subset 3Q$ and  $ |Q_v | \ge |Q| $ when $v \in K(Q)$.

Let $\D := \{ Q^ {(j)} \}_{j=1}^\infty  $ be an enumeration of $\D$. For each $v \in K$, we define
\begin{equation*}
	j_v :=\min\{ j :  \operatorname{supp} (b_v) \subset 3Q^{(j)} , |Q_v | \ge |Q^{(j)} |\}.
\end{equation*}
We write $ K ( Q ^{(j)} ) :=  \{ v\in K : j_v = j \}$. Set
\begin{equation*}
	\lambda (Q) := 3^n   \sum_{v\in K (Q)} |\lambda_{v} |,
\end{equation*}
and
\begin{equation*}
	b(Q)  := \begin{cases}
		\frac{1}{	\lambda (Q)} \sum_{v\in  K(Q)}  \lambda_{v} b_v , & \operatorname{if} \; 	\lambda (Q) \neq 0, \\
		0,  & \operatorname{else.} 
	\end{cases}
\end{equation*}
Now rewrite $f$ as
\begin{align*}
	f & = \sum_{v \in K} \lambda_{v} b_v = \sum_{Q \in \D} \left(\sum_{v\in K (Q)} \lambda_{v} b_v  \right) \\
	& = \sum_{Q \in \D}  3^n   \sum_{v\in K (Q)} |\lambda_{v} |
	\left\{
	\left( 3^n   \sum_{v\in K (Q)} |\lambda_{v} |
	\right)^{-1}  \sum_{v\in K (Q)} \lambda_{v} b_v  \right\} \\
	& = \sum_{Q \in \D} \lambda (Q) b (Q).
\end{align*}
Hence,
\begin{align*}
	\sum_{Q \in \D}  |\lambda (Q)|&= 3^n \sum_{Q \in \D} \sum_{v\in K (Q)} |\lambda_{v} |  
	= 3^n \sum_{v \in K} |\lambda_{v} |\le 3^n (1+\epsilon)\| f\|_{  \mathcal{H}_{p',q',*}^{t',r'} (\rn) } .
\end{align*}
Since each $b_v$ is a $(p',q',t')$-block,  we obtain
\begin{align*}
	 \left( 3^n  \sum_{v\in K (Q)} |\lambda_{v} |
	\right)^{-1}  \left\| \sum_{v\in K (Q)} \lambda_{v} b_v \right\|_{L^{p',q'}} 
	& \le 3^{-n} \left (  \sum_{v\in K (Q)} |\lambda_{v} |\right)^{-1}  \sum_{v\in K (Q)} | \lambda_{v} | \| b_v\|_{L^{p',q'}} \\
	& \le 3^{-n}  |Q|^{ 1/ p' - 1/ t'}  \lesssim    | 3Q |^{1/ p' - 1/ t'} ,
\end{align*}	
which implies that $b (Q)$ is  a $(p',q'.t')$-block supported on $ 3Q $ modulo a multiplicative
constant.  Letting $\epsilon \to 0$, we  complete the proof.
\end{proof}

 To get the relation of $ \mathcal{H}_{p',q'}^{t',,r'} (\rn)$ and $ \mathcal{H}_{p',q',*}^{t',r'} (\rn)$ when $ 1< r' <\infty$, we need the definition of finite overlapping.

\begin{definition} \label{def finite over}
	Let $\D := \{ Q^ {(j)} \}_{j=1}^\infty  $ be an enumeration of $\D$. 
	Let $f  \in  \mathcal{H}_{p',q',*}^{t',r'} (\rn)$ with $ \| f\|_{\mathcal{H}_{p',q',*}^{t',r'} (\rn)  } >0 $. 
	Let $0< \epsilon < 1/2^{10}$.
	Decompose $f$ as 	\begin{equation}\label{condi 1}
		f = \sum_{v \in K} \lambda_{v} b_v ,
	\end{equation}
	where $K \subset \mathbb N$ is an index set, and 
	\begin{equation} \label{condi 2}
		0 < \left( \sum_{v \in K}  | \lambda_{v} |^{r'}      \right)^{1/r'} \le (1+\epsilon) \| f\|_{  \mathcal{H}_{p',*}^{t',r'} (\rn) }
	\end{equation}
	and
	\begin{equation} \label{condi 3}
		\operatorname{each} \; b_v \; \operatorname{is \; a\;} (p',q',t') \operatorname{-block \; supported \; on \; some \;  cube \; } Q_v.
	\end{equation}
	
	We define a map $T : K \subset \mathbb N \to \mathbb N$ as follows. For each $ v \in K$,
	\begin{equation*}
		T(v)  :=\min\{ j :  \operatorname{supp} (b_v) \subset 3Q^{(j)} , |Q_v | \ge |Q^{(j)} |\}.
	\end{equation*}
	For each $Q^{(j)} \in \D = \{ Q^ {(j)} \}_{j=1}^\infty  $, we define 
	\begin{equation*}
		T ^{(-1)}( Q^{(j)} )  := \{ v \in K :  T(v)   =j \}.
	\end{equation*}

	Then for each  dyadic cube $Q^{(j)}$ may have three cases: $\sharp 	T ^{(-1)}( Q^{(j)} ) =0 $,  $0< \sharp 	T ^{(-1)}( Q^{(j)} ) <\infty  $,  $\sharp 	T ^{(-1)}( Q^{(j)} ) =\infty  $. 
	
	We say that the decomposition  	$f$ satisfying   (\ref{condi 1}), (\ref{condi 2}),  (\ref{condi 3}) is finite overlapping of $C$ if 
	\begin{equation*}
		\sup_{ Q^{(j)} \in \D }  \sharp 	T ^{(-1)}( Q^{(j)} )  \le C <\infty.
	\end{equation*}
	
	We say that $f \in  \mathcal{H}_{p',q',*}^{t',r'} (\rn) $ is finite overlapping  of $C$ if for each  decomposition of $f$  satisfying   (\ref{condi 1}), (\ref{condi 2}),  (\ref{condi 3})  is finite overlapping of $C$. 
\end{definition}

Similar to Lemma \ref{r'=1 equi *},  we obtain the similar result under the condition of  finite overlapping.
\begin{theorem} \label{finite over}
	Let $ 1<p < t< r <\infty $. Let $f \in \mathcal{H}_{p',q',*}^{t',r'} (\rn) $ be finite overlapping  of $C$. Then 
	\begin{equation*}
		\| f \| _{ \mathcal{H}_{p',q'}^{t',r'} (\rn) }   \le   c_{n,p',t',r',C} \| f \| _{ \mathcal{H}_{p',q',*}^{t',r'} (\rn) } .
	\end{equation*}
	where the constant  $c_{n,p',t',r',C}$ only depends on $n,p',t',r',C$.
\end{theorem}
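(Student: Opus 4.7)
The plan is to adapt the $r' = 1$ argument of Lemma \ref{r'=1 equi *} to general $r' \in (1, \infty)$, using the finite overlapping hypothesis to supply the extra ingredient needed for H\"older's inequality. First I would fix $\epsilon \in (0, 1/2^{10})$ and select a decomposition $f = \sum_{v \in K} \lambda_v b_v$ as in Definition \ref{def finite over} with $\bigl( \sum_{v \in K} |\lambda_v|^{r'} \bigr)^{1/r'} \le (1 + \epsilon) \|f\|_{\mathcal{H}^{t', r'}_{p', q', *}(\rn)}$, and invoke the map $T$ there. For each $j \in \mathbb{N}$ with $K^{(j)} := T^{-1}(Q^{(j)}) \neq \emptyset$ I would aggregate $\{b_v\}_{v \in K^{(j)}}$ into a single composite $B_j := \mu_j^{-1} \sum_{v \in K^{(j)}} \lambda_v b_v$ carrying the weight $\mu_j := \sum_{v \in K^{(j)}} |\lambda_v|$, so that $f = \sum_j \mu_j B_j$.

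The first technical step is to verify that $B_j$ is, up to a multiplicative constant depending only on $n$, $p'$, $t'$, a $(p', q', t')$-block supported on $3 Q^{(j)}$. For every $v \in K^{(j)}$ we have $\operatorname{supp} b_v \subset 3 Q^{(j)}$ and $|Q_v| \ge |Q^{(j)}|$; since $1 < p < t$ forces $1/p' - 1/t' < 0$, the block bound yields $\|b_v\|_{L^{p', q'}} \le |Q_v|^{1/p' - 1/t'} \le |Q^{(j)}|^{1/p' - 1/t'} \lesssim |3 Q^{(j)}|^{1/p' - 1/t'}$, and the triangle inequality on the convex combination defining $B_j$ carries this estimate over to $B_j$. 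Next I would cover $3 Q^{(j)}$ by at most $4^n$ dyadic cubes $R_{j, 1}, \ldots, R_{j, N_j} \in \mathcal{D}$ with $|R_{j, i}| = |Q^{(j)}|$, split $B_j = \sum_{i=1}^{N_j} B_j \chi_{R_{j, i}}$, and observe that, after dividing by an explicit constant, each $B_j \chi_{R_{j, i}}$ is a $(p', q', t')$-block on the dyadic cube $R_{j, i}$.

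To pass to the canonical block-space sum $f = \sum_{(j', k') \in \mathbb{Z}^{1+n}} \tilde{\lambda}_{j', k'} \tilde{b}_{j', k'}$, I would exploit the fact that for a fixed dyadic cube $Q_{j', k'}$ only those $j$ with $Q_{j', k'} \subset 3 Q^{(j)}$ and $|Q^{(j)}| = |Q_{j', k'}|$ can contribute a piece supported on $Q_{j', k'}$, and a straightforward geometric count shows there are at most $3^n$ such $j$. Hence $\tilde{\lambda}_{j', k'}$ is majorized by an absolute-constant multiple of the sum of at most $3^n$ of the $\mu_j$'s. H\"older's inequality together with $\# K^{(j)} \le C$ yields $\mu_j \le C^{1/r} \bigl( \sum_{v \in K^{(j)}} |\lambda_v|^{r'} \bigr)^{1/r'}$, and combining this with the bounded overlap and the $\ell^{r'}$-triangle inequality delivers
\[
\sum_{(j', k') \in \mathbb{Z}^{1 + n}} |\tilde{\lambda}_{j', k'}|^{r'} \le c_{n, p', t', r', C}^{r'} \sum_{v \in K} |\lambda_v|^{r'} \le c_{n, p', t', r', C}^{r'} (1 + \epsilon)^{r'} \|f\|_{\mathcal{H}^{t', r'}_{p', q', *}(\rn)}^{r'}.
\]
Letting $\epsilon \to 0$ then produces the claim. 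I expect the main technical obstacle to be the aggregation step, which crucially uses both the lower bound $|Q_v| \ge |Q^{(j)}|$ built into the map $T$ and the sign condition $1/p' - 1/t' < 0$ coming from $p < t$; the remaining combinatorics is routine control by the constants $C$, $3^n$, and $4^n$.
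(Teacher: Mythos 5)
The paper gives no proof of this theorem; it only remarks that the result follows ``similar to Lemma~\ref{r'=1 equi *}'', and your proposal fills in exactly the expected details along those lines, so it is correct and matches the intended argument. One cosmetic point: for a dyadic cube $Q^{(j)}$ the concentric triple $3Q^{(j)}$ is \emph{exactly} a union of $3^n$ same-scale dyadic cubes, so your covering count $4^n$ can be tightened to $3^n$ (and it is this number, not $4^n$, that also yields your bounded-overlap count), but this is immaterial to the estimate.
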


The block spaces $ \mathcal{H}_{p',q'}^{t',r'} (\rn)$ have the similar property in Lemma \ref{D equivalence} as $M_{p,q}^{t,r} (\rn)  $.

\begin{theorem} \label{M block}
	Let $1 <q' <\infty$.
	Let $ 1 <r' <t' <p' < \infty $ or $ 1 = r' <  t' \le p' < \infty $ .
	Let $b $  be a  $ (p',q',t')$-block. 
	Then $  \M (b)  $  has a decomposition:
	\begin{equation*}
		\M (b) =  \sum_{k=0}^\infty c  2^{-kn/t} m_k,
	\end{equation*}
	where $m_k$ is a $(p',q',t')$-block, $  \left(  \sum_{k=0}^\infty | c  2^{-kn/t} |^{r'} \right)^{1/r'} \le C_{ t, n,\eta,p,r }  <\infty$ and $ C_{ t, n,\eta,p,r }$  depends only on the norm of $\M $ on $L^{p',q'} (\rn)$, $n$, $\eta$, $p$, $r$. 
	Furthermore,  this decomposition satisfies the finite overlapping property in Definition \ref{def finite over}.
	Hence 	$  \M (b)  $ 
	belongs to $ \H _{p',q'} ^{t' , r'} (\rn) $.
\end{theorem}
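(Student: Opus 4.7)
The plan is an annular decomposition of $\M(b)$ around the support cube $Q$ of the block $b$. Write $\M(b) = \sum_{k=0}^\infty \M(b) \chi_{A_k}$, where $A_0 := 2Q$ and $A_k := 2^{k+1} Q \setminus 2^k Q$ for $k \ge 1$, and show that each piece is, up to a coefficient of size $c \cdot 2^{-kn/t}$, a $(p', q', t')$-block supported on $2^{k+1} Q$.

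For the local piece $k=0$, Lemma \ref{HL Lorentz} (applicable since $1 < p' < \infty$ and $1 < q' < \infty$) together with the block hypothesis $\|b\|_{L^{p', q'}} \le |Q|^{1/p' - 1/t'}$ give $\|\M(b) \chi_{2Q}\|_{L^{p', q'}} \lesssim |Q|^{1/p' - 1/t'} \approx |2Q|^{1/p' - 1/t'}$. For $k \ge 1$, I would first establish the pointwise decay estimate: if $x \notin 2^k Q$, then any ball $B \ni x$ meeting $Q$ satisfies $|B| \gtrsim 2^{kn} |Q|$, so $\M(b)(x) \lesssim 2^{-kn} |Q|^{-1} \|b\|_{L^1(Q)}$. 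The Lorentz-H\"older inequality (Lemma \ref{holder lorentz}) and the block condition yield $\|b\|_{L^1(Q)} \le \|b\|_{L^{p', q'}} \|\chi_Q\|_{L^{p, q}} \lesssim |Q|^{1/t}$, so $\M(b)(x) \lesssim 2^{-kn} |Q|^{-1/t'}$ on $A_k$. Combining this with $\|\chi_{A_k}\|_{L^{p', q'}} \lesssim 2^{kn/p'} |Q|^{1/p'}$ and using the conjugacy identities $1/p + 1/p' = 1$ and $1/t + 1/t' = 1$, a direct arithmetic gives
\[
\|\M(b) \chi_{A_k}\|_{L^{p', q'}} \lesssim 2^{-kn/t} \cdot |2^{k+1} Q|^{1/p' - 1/t'}.
\]
Setting $\lambda_k := c \cdot 2^{-kn/t}$ with $c$ absorbing all implicit constants, and $m_k := \lambda_k^{-1} \M(b) \chi_{A_k}$ (with the analogous construction on $2Q$ for $k=0$), yields the required $(p',q',t')$-block decomposition.

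The $\ell^{r'}$-summability $\sum_{k\ge 0}(c\cdot 2^{-kn/t})^{r'} < \infty$ is a convergent geometric series since $nr'/t > 0$, with bound depending only on $n$, $p$, $t$, $r$ and the $L^{p', q'}$-operator norm of $\M$. For the finite overlapping property of Definition \ref{def finite over}, the supports $2^{k+1} Q$ have geometrically increasing volume, so for any fixed dyadic cube $Q^{(j)}$ the two constraints $\operatorname{supp}(m_k) \subset 3 Q^{(j)}$ and $|2^{k+1} Q| \ge |Q^{(j)}|$ pin the index $k$ down to $O(1)$ choices, yielding the uniform bound on $\sharp T^{-1}(Q^{(j)})$. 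The main technical hurdle I anticipate is the bookkeeping of exponents needed to confirm the coefficient is precisely of the form $c\cdot 2^{-kn/t}$; this collapse hinges on the identity $-k/p - (k+1)(1/p' - 1/t') = -k/t - 1/p' + 2/t'$ (times $n$), which only falls out cleanly after both conjugacy relations are applied simultaneously.
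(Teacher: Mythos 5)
Your argument is correct and follows essentially the same strategy as the paper's: the same annular decomposition of $\M(b)$ around the support cube $Q$, the local piece controlled by the $L^{p',q'}(\rn)$-boundedness of $\M$, the far annuli controlled by the pointwise decay estimate combined with Lorentz--H\"older, and the same coefficient $c\,2^{-kn/t}$ with geometric $\ell^{r'}$-summability. The only cosmetic differences are your re-indexing of the annuli ($A_0=2Q$ rather than $Q$) and your appeal to the finite-overlapping criterion (Theorem \ref{finite over}) to land in $\H_{p',q'}^{t',r'}(\rn)$, where the paper instead splits each $2^kQ$ into at most $2^n$ dyadic sub-cubes; also, your parenthetical identity should read $-k/t-1/p'+1/t'$ rather than $-k/t-1/p'+2/t'$, though the displayed bound it was meant to justify is correct.
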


\begin{proof}
	
	Let $b $  be a  $ (p',q',t')$-block supported on a cube $Q \in \D_v$, $v\in \mathbb Z$. Note that $\| b\|_{\H _{p',q'} ^{t' , r'} (\rn)} \le 1 $.

	For $k=0$, define $\chi_k := \chi_Q$; for $ k\ge 1$, let $ \chi_k := \chi_{2^k Q \backslash 2^{k-1}Q } $.
	Then
	\begin{equation*}
		\M (b) =  \sum_{k=0} \M (b)\chi_k .
	\end{equation*}
	For $k=0$,
	the function $ \M (b)\chi_k$ is supported on $Q$ and using the boundedness of $	\M$ on $L^{p',q'} (\rn)$, we have
	\begin{equation*}
		\| \M (b)\chi_k\|_{L^{p',q'} (\rn)} \le \|\M \|_{ L^{p',q'} (\rn)} 	\|b \|_{L^{p',q'} (\rn)} \le \|\M \|_{ L^{p',q'} (\rn) }  |Q|^{1/p' -1/t'}.
	\end{equation*}
	Thus $ \frac{1}{\|\M \|_{ L^{p',q'} (\rn)} } M (b)\chi_k   $ is a $ (p',q',t') $-block on $Q$. 
	
	For $k\ge 1$, $M (b)\chi_k$ is supported on $ 2^k Q  $ and
	\begin{align*}
		\M (b)\chi_k \le c \frac{ \int _Q |b| \d x }{| 2^k Q |} \le c \frac{ \| b\|_{L^{p',q'} (\rn) }}{| 2^k Q |} |Q|^{ 1/p}.
	\end{align*}
	Hence
	\begin{align*}
		\left\|      \M (b)\chi_k \right\|_{ L^{p',q'} (\rn) } & \le c  \frac{ \| b\|_{L^{p',q'} (\rn) }}{| 2^k Q |} |Q|^{ 1/p} | 2^{k}Q |^{1/p'} \\
		& \le c 
		\frac{ |Q|^{1/p' - 1/t'}     }{| 2^k Q |} |Q|^{1/p}  | 2^{k}Q |^{1/p'} \\
		& = c 2^{-kn (1-1 / t') }   | 2^{k}Q |^{1/p'   - 1/t'}.
	\end{align*}
	Thus, $  c 2^{kn/t}  \M (b)\chi_k  $ is a $ (p',q',t')$-block supported on $2^k Q$. 
	Let $ b_k =  c 2^{kn/p}   \M (b)\chi_k  $ for $k\ge 1$ and $b_0 = c \M (b)\chi_0  $. Let $c_k = c   2^{-kn/t}  $ for $k\ge 0$.
	Then 
	\begin{equation} \label{decom M b}
		\M (b) =  \sum_{k=0}^\infty c_k m_k
	\end{equation}
	and for any $s >0$,
	\begin{equation*}
		\left(  \sum_{k=0} |c_k| ^{s} \right)^{1/s}  \le c.
	\end{equation*}
	There are at most $2^n$ cubes $Q_i  \in \D _{ v -k}$  such that 
	\begin{equation*}
		2^k Q \subset   \cup_{i=1}^{2^n} Q_i.
	\end{equation*}
	Set $ m_{ki} =m_k \chi_{Q_i}  $. $ m_{ki} $ is a  $ (p',t')$-block supported on $Q_i  \in \D _{ v -k}$.
	Hence
	\begin{equation} \label{decom M b 2}
		\M (b) =  \sum_{k=0}^\infty c_k  \sum_{i=1}^{2^n} m_{ki}
	\end{equation}
	Finally, we have prove that $\|  \M  (b) \|_{\H _{p',q'} ^{t' , r'} (\rn)  } \lesssim  \left(  \sum_{k=0} |c_k| ^{r'} \right)^{1/r'}  <\infty$.
\end{proof}
Then we have the boundedness property of the Hardy-Littlewood maximal operator on  $ \mathcal{H}_{p',q'}^{t',r'} (\rn)$.
\begin{theorem} \label{HL block}
	Let $1<q' <\infty$.
	Let $ 1 <r' <t' <p' < \infty $ or $ 1 = r' <  t' \le p' < \infty $.  
	Then $\M$ is  bounded  on  $ \H _{p',q'} ^{t' , r'} (\rn) $.
\end{theorem}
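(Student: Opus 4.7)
The plan is to combine the sublinearity of $\mathcal{M}$, the atomic decomposition of the block space, and the single-block estimate from Theorem \ref{M block}. First, given $f \in \mathcal{H}_{p',q'}^{t',r'}(\mathbb{R}^n)$, I fix $\epsilon > 0$ and an admissible decomposition $f = \sum_{(j,k) \in \mathbb{Z}^{1+n}} \lambda_{j,k} b_{j,k}$ with each $b_{j,k}$ a $(p',q',t')$-block on $Q_{j,k}$ and $\|\lambda\|_{\ell^{r'}} \le (1+\epsilon) \|f\|_{\mathcal{H}_{p',q'}^{t',r'}}$. Since $\mathcal{M}$ is sublinear and positive, we have the pointwise bound $\mathcal{M} f(x) \le \sum_{(j,k)} |\lambda_{j,k}|\, \mathcal{M}(b_{j,k})(x)$.

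Next, I apply Theorem \ref{M block} to each $\mathcal{M}(b_{j,k})$ to obtain a block decomposition
\[
\mathcal{M}(b_{j,k}) = \sum_{l=0}^{\infty} c\, 2^{-l n/t}\, m^{(j,k)}_l
\]
where each $m^{(j,k)}_l$ is a $(p',q',t')$-block supported in a dyadic cube contained in (a bounded dilation of) $2^l Q_{j,k}$. Combining gives a candidate decomposition
\[
g := \sum_{(j,k)} \sum_{l=0}^{\infty} |\lambda_{j,k}|\, c\, 2^{-l n/t}\, m^{(j,k)}_l \;\ge\; \mathcal{M}f
\]
with coefficient sequence $\mu_{(j,k),l} := |\lambda_{j,k}| \, c\, 2^{-l n/t}$. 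Since $n/t > 0$, the geometric sum $\sum_{l \ge 0} (c\, 2^{-l n/t})^{r'}$ converges (with the case $r' = 1$ handled by absolute summability), so Fubini/Tonelli gives
\[
\|\mu\|_{\ell^{r'}} \le \|\lambda\|_{\ell^{r'}} \Bigl(\sum_{l=0}^{\infty} (c\, 2^{-l n/t})^{r'}\Bigr)^{1/r'} \lesssim \|f\|_{\mathcal{H}_{p',q'}^{t',r'}}.
\]
This immediately places $g$ in the auxiliary space $\mathcal{H}_{p',q',*}^{t',r'}(\mathbb{R}^n)$ with norm bounded by $\|f\|_{\mathcal{H}_{p',q'}^{t',r'}}$.

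To land back in $\mathcal{H}_{p',q'}^{t',r'}(\mathbb{R}^n)$ itself, I invoke either Lemma \ref{r'=1 equi *} when $r' = 1$, or Theorem \ref{finite over} when $1 < r' < \infty$. The latter requires verifying that the concatenated decomposition of $g$ enjoys the finite overlapping property of Definition \ref{def finite over}; this is inherited from the per-block finite overlapping established in Theorem \ref{M block}, since the blocks $m^{(j,k)}_l$ produced there already come from a bounded partition of $2^l Q_{j,k}$ into at most $2^n$ pieces per scale, and the reindexing by dyadic cubes keeps the multiplicity controlled. Once $g \in \mathcal{H}_{p',q'}^{t',r'}$ with norm $\lesssim \|f\|_{\mathcal{H}_{p',q'}^{t',r'}}$, the lattice property of $\mathcal{H}_{p',q'}^{t',r'}$ and the pointwise bound $\mathcal{M}f \le g$ yield $\|\mathcal{M}f\|_{\mathcal{H}_{p',q'}^{t',r'}} \lesssim \|f\|_{\mathcal{H}_{p',q'}^{t',r'}}$, as required.

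The main obstacle I anticipate is the bookkeeping behind the finite overlapping step: one must confirm that when the individual decompositions of the $\mathcal{M}(b_{j,k})$ are amalgamated, the preimage map $T$ in Definition \ref{def finite over} still has uniformly bounded fibers, so that Theorem \ref{finite over} applies with a constant independent of $f$. The remaining calculations (convergence of the geometric series, sublinearity, and application of the lattice property) are routine.
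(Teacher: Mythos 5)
Your opening steps match the paper's: take an admissible decomposition $f = \sum_{(j,k)} \lambda_{j,k} b_{j,k}$, use sublinearity of $\mathcal M$, apply Theorem~\ref{M block} to each block, and compute that the amalgamated coefficient sequence $\mu_{(j,k),l} = |\lambda_{j,k}|\,c\,2^{-ln/t}$ lies in $\ell^{r'}$ with $\|\mu\|_{\ell^{r'}} \lesssim \|\lambda\|_{\ell^{r'}}$. The case $r'=1$ via Lemma~\ref{r'=1 equi *} is fine.

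The gap is in the case $1<r'<\infty$, exactly at the step you flagged as the "main obstacle": the concatenated decomposition does \emph{not} inherit the finite-overlapping property from Theorem~\ref{M block}, and the claim that "the reindexing by dyadic cubes keeps the multiplicity controlled" is false. Fix a dyadic cube $R \in \D_{j_0}$. The pieces $m^{(j,k)}_l$ that the map $T$ of Definition~\ref{def finite over} would assign to $R$ are those supported on dyadic cubes of volume comparable to $|R|$ and contained in $3R$. For each $l \ge 0$, every source cube $Q_{j,k} \in \D_{j_0+l}$ lying inside (or near) $R$ contributes such a piece, because $2^l Q_{j,k}$ has side length $\approx \ell(R)$; there are about $2^{ln}$ such source cubes at scale $l$. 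Summing over $l$ gives an unbounded fiber $\sharp\, T^{(-1)}(R)$, so the hypothesis of Theorem~\ref{finite over} is not met and the passage from $\mathcal H_{p',q',*}^{t',r'}$ back to $\mathcal H_{p',q'}^{t',r'}$ is not justified. Theorem~\ref{M block} controls the overlap within a \emph{single} $\mathcal M(b_{j,k})$, but this control is destroyed when infinitely many such decompositions are superposed.

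The paper's proof avoids $\mathcal H_*$ and Theorem~\ref{finite over} entirely. After applying Theorem~\ref{M block}, it \emph{regroups} the double sum by target dyadic cube: for each $R = Q_{j,k}$ it collects all pieces whose support lands in $R$, and shows that the resulting sum (weighted by the geometrically decaying factors $2^{-vn/t}$, which are summable in $\ell^1$) is again a single $(p',q',t')$-block on $R$ up to a constant. This produces a decomposition in the standard form required by the definition of $\mathcal H_{p',q'}^{t',r'}$ — one block per dyadic cube — and the new coefficient sequence is a (discrete) convolution of $\{|\lambda_{j,k}|\}$ with $\{2^{-vn/t}\}_{v\ge 0}$, hence in $\ell^{r'}$ by Young's inequality. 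To repair your argument, replace the appeal to finite overlapping with this regrouping step: the essential point is that $\ell^1$-summability of the tail weights lets you merge, for each target cube, all contributions into one block, rather than keeping them as separate atoms whose overlap you would need to control.
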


\begin{proof}
	Without loss of generality,
	we may assume that   $\|  f\|_ { \H _{p', q'} ^{t' , r'} (\rn) } = 1 $. 
	Then there  exist a sequence $\{ \lambda_{j,k} \}_{ (j,k) \in \mathbb Z ^{1+n} }  \in \ell ^{r'} $  and a sequence  $(p',q',t')$-block $\{  b_{j,k} \}  _{ (j,k) \in \mathbb Z ^{1+n} } $  supported on  $Q_{j,k}   $
	such that $f 	=\sum_{(j,k)\in\mathbb{Z}^{n+1}}\lambda_{j,k}b_{j,k} $  and $\| \{ \lambda_{j,k} \} \|_{\ell ^{r'}}  \le 1   +\epsilon$.	
	Using Theorem  \ref{M block} to  each block $ b_{j,k}$, we have 
	\begin{equation*}
		\M (b_{j,k} )   =c  \sum_{v=0}^\infty 2^{-vn/t}  \sum_{i=1}^{2^n} m_{j,k,v,i}
	\end{equation*}
	and for each $v\ge 0$, each $i = 1,\ldots, 2^n$, $m_{j,k,v,i}$ is  $(p',q',t')$-block.
	
	We then arrange the sum. For each $Q_{j,k}$, 
	there is only one $i \in \{  1, \ldots, 2^n\}$ such that supp $m_{j,k,v,i} \subset Q_{j,k}$.
	Let 
	\begin{equation*}
		b_{Q_{j,k}}  =  \sum_{v=0}^{ \infty } 2^{-vn/t} m_{ j+v, k, v, i' }, \; \operatorname{for \; some\;}  i' \in \{  1, \ldots, 2^n\}.
	\end{equation*}
	Then  supp  $b_{Q_{j,k}} \subset Q_{j,k}$ and
	\begin{align*}
		\|	b_{Q_{j,k}}\|_{L^{p',q'} (\rn)}  &\le  \sum_{v=0}^{ \infty } 2^{-vn/t} \|	 m_{ j+v, k, v, i' }\|_{L^{p',q'} (\rn)} \\
		& \le  c |Q_{j,k}|^{1/p' - 1/t'}.
	\end{align*}
	Hence 	$b_{Q_{j,k}} $ is a  $(p',q',t')$-block supported on  $Q_{j,k}   $ modulo a multiplicative
	constant.
	
	Then
	\begin{align*}
		\M 	(f)  & \le \sum_{(j,k)\in\mathbb{Z}^{n+1}} | \lambda_{j,k}|  \M  (  b_{j,k} ) \\
		& = c \sum_{(j,k)\in\mathbb{Z}^{n+1}} | \lambda_{j,k}| \sum_{v=0}^\infty  2^{-vn/t} \sum_{i=1}^{2^n} m_{j,k,v,i} \\
		& \le c \sum_{i=1}^{2^n}  \sum_{(j,k)\in\mathbb{Z}^{n+1}} | \lambda_{j,k}|  \sum_{v=0}^{ \infty } 2^{-vn/t} m_{ j+v, k, v, i }  \\
		& = c   \sum_{(j,k)\in\mathbb{Z}^{n+1}} | \lambda_{j,k}|  b_{Q_{j,k}}
		. 
	\end{align*}
	By the  lattice property, we have
	\begin{equation*}
		\| \M	(f) \|_{  \H _{p',q'} ^{t' , r'} (\rn) }  \le c \left(   \sum_{(j,k)\in\mathbb{Z}^{n+1}} | \lambda_{jk}|^{r'}          \right) ^{1/r'} \le c (1+\epsilon) = c (\|f\|_{\H _{p',q'} ^{t' , r'} (\rn)} + \epsilon).
	\end{equation*}

	Letting $\epsilon \to 0$, we obtain  	$\| \M 	(f) \|_{  \H _{p',q'} ^{t' , r'} (\rn) } \le c\|f\|_{\H _{p',q'} ^{t' , r'} (\rn)}.$ Thus the proof is complete.
\end{proof}

Now we consider the boundedness of the powered Hardy-Littlewood maximal operator $\M _\eta ( f) : =\M ( f^\eta ) ^{1/\eta}$ for $0<\eta<\infty$ on block spaces  $\H _{p',q'} ^{t' , r'} (\rn)$. The boundedness will be used in the proof of Sharp maximal operator.

\begin{remark} \label{banach lattic fatou}
	Let $1<q' <\infty$.
Let $ 1 <r' <t' <p' \le \infty $ or $ 1 = r' <  t' \le p' \le \infty $.
By the definition of spaces $\H _{p', q'} ^{t' , r'} (\rn) $, it is not hard to show that spaces $\H _{p',q'} ^{t' , r'} (\rn) $ have lattice property. 
Together  with Theorem \ref{Fatou block}, we have shown that each block space $\H _{p',q'} ^{t' , r'} (\rn) $ is a Banach lattices with  the Fatou property with constant $C_{\mathcal F} =1 $.
\end{remark}

\begin{lemma} [Theorem 1.1,  \cite{S24}] \label{char M on X}
	Let $X(\Omega,d,\mu)$ be a quasi-Banach lattice over a space of homogeneous type $(\Omega,d,\mu)$. If $ X(\Omega,d,\mu) $ has the Fatou property, the following statements are equivalent:
	
	{\rm (i)} $\M$ is bounded on $X(\Omega,d,\mu)$;
	
	{\rm (ii)} There exists $r_0 \in (0,1)$  such that if $r \in [ r_0,1 )$, then  $\M$ is bounded on $X^{(r)}  (\Omega,d,\mu)$ where
	\begin{equation*}
		X^{(r)}  (\Omega,d,\mu) := \{ f : |f|^r \in  X(\Omega,d,\mu) \}.
	\end{equation*}
\end{lemma}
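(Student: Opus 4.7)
The plan is to prove both implications via the Rubio de Francia iteration algorithm, which manufactures a pointwise $A_{1}$-majorant out of the $\M$-boundedness hypothesis on the ambient space; the Fatou property is used only to ensure that the iteration series converges inside the lattice.

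For (i) $\Rightarrow$ (ii), given $f\in X^{(r)}$ I would set $g:=|f|^{r}\in X$ and form the Rubio de Francia iterate
\[
Rg:=\sum_{k=0}^{\infty}\bigl(2\|\M\|_{X\to X}\bigr)^{-k}\M^{k}g.
\]
The Fatou property of $X$ forces $Rg\in X$ with $\|Rg\|_{X}\lesssim\|g\|_{X}$ and $Rg\ge g$, while $\M(Rg)\le 2\|\M\|_{X\to X}\cdot Rg$ shows $Rg\in A_{1}(\Omega,\mu)$ with $[Rg]_{A_{1}}\le 2\|\M\|_{X\to X}$. By the sharp reverse H\"older inequality for $A_{1}$-weights on a space of homogeneous type, there exists $\varepsilon>0$ depending only on $\|\M\|_{X\to X}$ such that $(Rg)^{1+\varepsilon}$ is again $A_{1}$ with uniformly controlled constant. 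Set $r_{0}:=1/(1+\varepsilon)$. For each $r\in[r_{0},1)$ the function $w:=(Rg)^{1/r}$ belongs to $A_{1}$ and dominates $|f|$, so $\M f\le[w]_{A_{1}}\,w$ pointwise, and
\[
\|\M f\|_{X^{(r)}}^{r}=\|(\M f)^{r}\|_{X}\lesssim\|w^{r}\|_{X}=\|Rg\|_{X}\lesssim\|f\|_{X^{(r)}}^{r}.
\]

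For (ii) $\Rightarrow$ (i), fix any $r\in[r_{0},1)$. The space $X^{(r)}$ inherits the Fatou property from $X$, and $\M$ is bounded on $X^{(r)}$ by hypothesis. Given $f\in X$, the function $g:=|f|^{1/r}$ lies in $X^{(r)}$ with $\|g\|_{X^{(r)}}=\|f\|_{X}^{1/r}$, and admits a Rubio de Francia iterate $w\ge g$ with $w\in A_{1}$ and $\|w\|_{X^{(r)}}\lesssim\|g\|_{X^{(r)}}$. Since $0<r<1$, the concavity of $t\mapsto t^{r}$ gives $\M(w^{r})\le(\M w)^{r}\le[w]_{A_{1}}^{r}w^{r}$, so $w^{r}\in A_{1}$. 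Because $w^{r}\ge|f|$, we conclude
\[
\|\M f\|_{X}\le[w^{r}]_{A_{1}}\|w^{r}\|_{X}=[w^{r}]_{A_{1}}\|w\|_{X^{(r)}}^{r}\lesssim\|f\|_{X}.
\]

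The main obstacle will be to verify that the two classical ingredients—convergence of the Rubio de Francia series in a general quasi-Banach lattice having only the Fatou property, and the sharp reverse H\"older inequality for $A_{1}$-weights with $\varepsilon$ depending only on the $A_{1}$-constant—remain available in the abstract setting of a space of homogeneous type. Once these transfers are granted, both implications reduce to the $A_{1}$-manipulations above.
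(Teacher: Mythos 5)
The paper does not prove this lemma; it is quoted verbatim from Shalukhina \cite{S24} and used as a black box, so there is no internal proof to compare against. Your route via the Rubio de Francia iteration, $A_1$-weights, and the sharp reverse H\"older inequality is the standard mechanism for this kind of self-improvement statement (it is the scheme of Lerner--Ombrosi on $\rn$, and \cite{S24} transfers exactly this machinery to spaces of homogeneous type), so in spirit you are reproducing the intended argument.

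Two remarks on the substance. First, your implication (ii) $\Rightarrow$ (i) does not actually need Rubio de Francia at all: for $0<r<1$ and any ball $B$, Jensen's inequality with the concave map $t\mapsto t^{r}$ gives
\begin{equation*}
\frac{1}{\mu(B)}\int_{B}|f|\,d\mu \le \left(\frac{1}{\mu(B)}\int_{B}|f|^{1/r}\,d\mu\right)^{r},
\end{equation*}
hence $\M f \le \bigl(\M(|f|^{1/r})\bigr)^{r}$ pointwise, and then
\begin{equation*}
\|\M f\|_{X} \le \bigl\|\M(|f|^{1/r})\bigr\|_{X^{(r)}}^{r}
\le \|\M\|_{X^{(r)}\to X^{(r)}}^{r}\,\bigl\||f|^{1/r}\bigr\|_{X^{(r)}}^{r}
= \|\M\|_{X^{(r)}\to X^{(r)}}^{r}\,\|f\|_{X}.
\end{equation*}
This one-line proof avoids the $A_1$ detour entirely in that direction and makes the dependence of constants transparent. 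Second, the obstacles you flag at the end are real and are precisely the content of \cite{S24}: (a) convergence of the Rubio de Francia series in a quasi-Banach lattice requires the Aoki--Rolewicz $p$-renorming (the plain triangle inequality iterated over the series fails; with a $p$-norm one has $\|\sum_k a_k\|^p\le\sum_k\|a_k\|^p$, which is what makes $\|Rg\|_X\lesssim\|g\|_X$ go through), and (b) the reverse H\"older inequality for $A_1$ on a general space of homogeneous type is not the naive one on $\rn$ — one needs the Hyt\"onen--P\'erez--Rela-type sharp weak RHI, and then a short extra step to pass from the weak (dilated-ball) RHI to the $A_1$ conclusion for intermediate powers. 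Those two verifications should not be left as remarks but carried out; once they are in place your implication (i) $\Rightarrow$ (ii) is correct, with the uniformity of $r_0$ coming from the uniform $A_1$-constant $[Rg]_{A_1}\le 2\|\M\|_{X\to X}$ of the iterate.
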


	Let $X(\Omega,d,\mu)$ be a quasi-Banach lattice over a space of homogeneous type $(\Omega,d,\mu)$ with the Fatou property.
	From the proof of \cite[Theorem 4.2]{S24}, we conclude that there exists $\eta >1$ such that $\M_\eta $ is bounded on $ X(\Omega,d,\mu)$ provided that $\M$  is bounded on $X(\Omega,d,\mu)$. Therefore, from Theorem \ref{HL block}, Remark \ref{banach lattic fatou}, we have the following corollary.

\begin{corollary} \label{power HL block}
	Let $1<q' <\infty$.
Let $ 1 <r' <t' <p' < \infty $ or $ 1 = r' <  t' \le p' < \infty $.  Then there exists $\eta > 1$  such that
$\M_\eta$ is a bounded operator on  $ \H _{p', q'} ^{t' , r'} (\rn) $.
\end{corollary}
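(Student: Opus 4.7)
The plan is to apply Lemma~\ref{char M on X} directly to $X := \H_{p', q'}^{t', r'} (\rn)$. The two hypotheses of that characterization are already in hand. By Remark~\ref{banach lattic fatou}, $X$ is a Banach lattice with the Fatou property (with constant $C_{\mathcal F} = 1$), and by Theorem~\ref{HL block}, $\M$ is bounded on $X$. Viewing $\rn$ with the Euclidean metric and Lebesgue measure as a space of homogeneous type, Lemma~\ref{char M on X} then produces an $r_0 \in (0, 1)$ such that $\M$ is bounded on $X^{(r)}$ for every $r \in [r_0, 1)$.

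To convert this into boundedness of $\M_\eta$ on $X$, I would fix any $r \in (r_0, 1)$ and set $\eta := 1/r$, so that $\eta > 1$. Since $(\M_\eta f)^\eta = \M(|f|^\eta)$ and the natural quasi-norm on $X^{(1/\eta)}$ satisfies $\|g\|_{X^{(1/\eta)}} = \||g|^{1/\eta}\|_X^\eta$, one has $\|\M_\eta f\|_X = \|\M(|f|^\eta)\|_{X^{(1/\eta)}}^{1/\eta}$. Applying the bound $\|\M h\|_{X^{(1/\eta)}} \lesssim \|h\|_{X^{(1/\eta)}}$ with $h = |f|^\eta$ yields $\|\M_\eta f\|_X \lesssim \||f|^\eta\|_{X^{(1/\eta)}}^{1/\eta} = \|f\|_X$, which is exactly the desired inequality.

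The only delicate point is bookkeeping: one has to verify that the quasi-Banach lattice setting on $(\rn, |\cdot|, dx)$ fits the hypotheses of Lemma~\ref{char M on X}, and keep the exponent identification $r \leftrightarrow 1/\eta$ consistent throughout. As indicated in the paragraph preceding the corollary, this same conclusion can alternatively be read off directly from the proof of \cite[Theorem~4.2]{S24}, which shows that any quasi-Banach lattice with the Fatou property on which $\M$ is bounded automatically admits an $\eta > 1$ with $\M_\eta$ bounded.
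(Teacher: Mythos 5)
Your proposal is correct and reaches the same conclusion by essentially the same route: both arguments rest on Theorem~\ref{HL block}, Remark~\ref{banach lattic fatou}, and the self-improvement machinery of \cite{S24}. The only stylistic difference is that the paper's remark simply invokes the proof of \cite[Theorem~4.2]{S24} to produce $\eta>1$ directly, whereas you instead use the stated Lemma~\ref{char M on X} (i.e.\ \cite[Theorem~1.1]{S24}) and carry out the exponent bookkeeping $\|\M_\eta f\|_X=\|\M(|f|^\eta)\|_{X^{(1/\eta)}}^{1/\eta}\lesssim \||f|^\eta\|_{X^{(1/\eta)}}^{1/\eta}=\|f\|_X$ explicitly with $\eta=1/r$, $r\in[r_0,1)$; this identity is correct with the usual convention $\|g\|_{X^{(r)}}=\||g|^r\|_X^{1/r}$, so your derivation is a slightly more self-contained reading of the same result.
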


\subsection{Fractional integral operators}

Next, we consider the boundedness of fractional integral operators in Bourgain-Morrey-Lorentz spaces.
Let $ 0 <\alpha <n$, the fractional integral operator $I_\alpha$ of order $\alpha$ is defined by
\begin{equation*}
	I_\alpha f (x) = \int_\rn \frac{f(y)}{|x-y|^{n-\alpha} }  \d y, \quad  x\in \rn,
\end{equation*}
for $f \in L^1_{\operatorname{loc}} (\rn)$ as long as the right-hand side makes sense. By the standard argument for fractional integral operators on Morrey spaces (for example, see \cite{SFH20}), we know the following pointwise estimate for measurable functions $f$ 
\begin{equation*}
		I_\alpha f (x) \lesssim \| f \|_ {M_{ 1,1 }^{ t,\infty } (\rn) }  ^{ t\alpha /n }  \mathcal M f (x) ^{ 1 - t \alpha /n },  \quad   x \in \rn
\end{equation*}
for $1 \le t < n /\alpha$. Combining this pointwise estimate, the embedding property and the boundedness of the Hardy-Littlewood maximal function, we obtain the following result.

\begin{theorem}
	Let $ 1< q_i  <\infty $ for $i =1,2$.
	Let $ 0 <\alpha <n$ and $t_1, t_2 \in (1,\infty) $ satisfy 
	\begin{equation*}
		\frac{1}{t_2} = \frac{1}{t_2} -\frac{\alpha}{n}.
	\end{equation*}
Assume that either
\begin{equation*}
 1<p_i <t_i <r_i <\infty,  \; \operatorname{for}\; i =1,2 \; \operatorname{and} \;	\frac{p_1}{p_2} = 	\frac{q_1}{q_2}	 = \frac{t_1}{t_2} = \frac{r_1}{r_2}
\end{equation*}
or 
\begin{equation*}
	1<p_i \le t_i <r_i = \infty,  \; \operatorname{for}\; i =1,2 \; \operatorname{and} \;	\frac{p_1}{p_2} = 	\frac{q_1}{q_2}	 = \frac{t_1}{t_2} .
\end{equation*}
Then for $f\in M_{ p_1, q_1 }^{ t_1,r_1 } (\rn)$, we have
\begin{equation*}
		\| I_\alpha f\|_{ M_{ p_2, q_2 }^{ t_2,r_2 } (\rn) }   \lesssim \| f \|_ {	M_{ p_1, q_1 }^{ t_1,r_1 } (\rn) }.
\end{equation*}
	
\end{theorem}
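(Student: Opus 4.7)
The plan is to follow the hint immediately preceding the statement, namely combine the pointwise Hedberg-type inequality
$$I_\alpha f(x) \lesssim \|f\|_{M_{1,1}^{t_1,\infty}(\rn)}^{t_1\alpha/n}\,\M f(x)^{\,1-t_1\alpha/n}$$
with the boundedness of $\M$ on $M_{p_1,q_1}^{t_1,r_1}(\rn)$ (Theorem \ref{HL M}) and the scaling identity for Lorentz norms that was recorded in Remark \ref{basic Lorentz}. Set $\theta := 1-t_1\alpha/n$. The identity $1/t_2=1/t_1-\alpha/n$ gives $\theta=t_1/t_2$, so the scaling relations $p_1/p_2=q_1/q_2=t_1/t_2=r_1/r_2$ hypothesized in the theorem become the single equality $p_1=\theta p_2$, $q_1=\theta q_2$, $t_1=\theta t_2$, $r_1=\theta r_2$ (in the second regime $r_1=r_2=\infty$ is read trivially).

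First I would raise the pointwise inequality to the $M_{p_2,q_2}^{t_2,r_2}$ norm, obtaining
$$\|I_\alpha f\|_{M_{p_2,q_2}^{t_2,r_2}(\rn)} \lesssim \|f\|_{M_{1,1}^{t_1,\infty}(\rn)}^{\,t_1\alpha/n}\,\bigl\|(\M f)^{\theta}\bigr\|_{M_{p_2,q_2}^{t_2,r_2}(\rn)}.$$
Next I would transport the power $\theta$ outside the norm by the identity $\||g|^{\theta}\|_{L^{p,q}(Q)} = \|g\|_{L^{\theta p,\theta q}(Q)}^{\theta}$ from Remark \ref{basic Lorentz}; plugging this cube by cube into the definition of $\|\cdot\|_{M_{p_2,q_2}^{t_2,r_2}}$ and using $|Q|^{r_2/t_2-r_2/p_2}=\bigl(|Q|^{r_1/t_1-r_1/p_1}\bigr)^{1/\theta}$ shows
$$\bigl\|(\M f)^{\theta}\bigr\|_{M_{p_2,q_2}^{t_2,r_2}(\rn)} = \|\M f\|_{M_{p_1,q_1}^{t_1,r_1}(\rn)}^{\,\theta}.$$
Theorem \ref{HL M} then controls the right-hand side by $\|f\|_{M_{p_1,q_1}^{t_1,r_1}}^{\,\theta}$.

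It remains to dominate $\|f\|_{M_{1,1}^{t_1,\infty}(\rn)}$ by $\|f\|_{M_{p_1,q_1}^{t_1,r_1}(\rn)}$. This I would obtain from the chain
$$M_{p_1,q_1}^{t_1,r_1}(\rn)\hookrightarrow M_{p_1,\infty}^{t_1,r_1}(\rn)\hookrightarrow M_{p_1,\infty}^{t_1,\infty}(\rn)\hookrightarrow M_{1,1}^{t_1,\infty}(\rn),$$
the first two embeddings being the content of Lemma \ref{embed Lorentz} (embedded into the Bourgain-Morrey-Lorentz frame) and the previously stated embedding along the $r$-parameter, and the last inequality being exactly Lemma 23 of \cite{SFH20} quoted above (applicable since $1<p_1\le t_1$). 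Combining everything yields
$$\|I_\alpha f\|_{M_{p_2,q_2}^{t_2,r_2}(\rn)}\lesssim \|f\|_{M_{p_1,q_1}^{t_1,r_1}(\rn)}^{\,t_1\alpha/n}\,\|f\|_{M_{p_1,q_1}^{t_1,r_1}(\rn)}^{\,1-t_1\alpha/n}= \|f\|_{M_{p_1,q_1}^{t_1,r_1}(\rn)},$$
which is the desired inequality.

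The only delicate point I expect is the scaling identity at the level of the full Bourgain-Morrey-Lorentz norm: one has to be careful that taking the $\theta$-th power of a nonnegative function turns the $\ell^{r_2}$ sum over dyadic cubes into an $\ell^{r_1}$ sum with the correct weight. This is a direct computation once the Lorentz-norm scaling is granted, but it is the step that simultaneously consumes all four of the coupled relations $p_1/p_2=q_1/q_2=t_1/t_2=r_1/r_2$ assumed in the hypothesis, so it is the conceptual core of the argument. Everything else is routine invocation of the earlier sections.
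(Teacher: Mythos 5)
Your proof is essentially the same as the paper's: both rest on the Hedberg-type pointwise estimate $I_\alpha f(x)\lesssim\|f\|_{M_{1,1}^{t_1,\infty}}^{t_1\alpha/n}\,\mathcal{M}f(x)^{1-t_1\alpha/n}$, transport the exponent $\theta=1-t_1\alpha/n=t_1/t_2$ through the Bourgain-Morrey-Lorentz norm via the Lorentz power-scaling identity (converting $M_{p_2,q_2}^{t_2,r_2}$ into $M_{p_1,q_1}^{t_1,r_1}$), and close with Theorem \ref{HL M}. The only (cosmetic) divergence is how you reach the $M_{1,1}^{t_1,\infty}$ bound: the paper applies the Lorentz H\"older inequality cube-by-cube to get $M_{p_1,q_1}^{t_1,r_1}\hookrightarrow M_{1,1}^{t_1,r_1}\hookrightarrow M_{1,1}^{t_1,\infty}$ directly, while you take the longer detour $M_{p_1,q_1}^{t_1,r_1}\hookrightarrow M_{p_1,\infty}^{t_1,r_1}\hookrightarrow M_{p_1,\infty}^{t_1,\infty}\hookrightarrow M_{1,1}^{t_1,\infty}$ via Lemma 23 of \cite{SFH20}; both chains are valid under the stated hypotheses, and the rest of the argument is identical.
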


\begin{proof}
	By the H\"older inequality for Lorentz spaces, we obtain
	\begin{align*}
		|Q|^{1/t-1} \int_Q |f(x) |\d x \lesssim |Q|^{1/t-1} \|f\|_{L^{p,q} (Q)} |Q|^{1/p'} =  |Q|^{1/t-1/p} \|f\|_{L^{p,q} (Q)} .
	\end{align*}
	Thus using the embedding 
	\begin{equation*}
		M_{ p_1, q_1 }^{ t_1,r_1 } (\rn)       \hookrightarrow  M_{ 1,1 }^{ t_1,r_1 } (\rn)    \hookrightarrow   M_{ 1,1 }^{ t_1,\infty } (\rn),
	\end{equation*}
we obtain
		\begin{align*}
		|I_\alpha f  (x)| \le 	I_\alpha (|f|)  (x)  & \lesssim 
		\| f \|_ {M_{ 1,1 }^{ t_1,\infty } (\rn) }  ^{ t_1  \alpha /n }  \mathcal M f (x) ^{ 1 - t_1 \alpha /n } \\
		& \lesssim  	\| f \|_ {	M_{ p_1, q_1 }^{ t_1,r_1 } (\rn) }  ^{ t_1  \alpha /n }  \mathcal M f (x) ^{ 1 - t_1 \alpha /n }.
	\end{align*}
Thus 
\begin{align*}
	\| I_\alpha f\|_{ M_{ p_2, q_2 }^{ t_2,r_2 } (\rn) }  & \lesssim 	\| f \|_ {	M_{ p_1, q_1 }^{ t_1,r_1 } (\rn) }  ^{ t_1  \alpha /n }   \|  \mathcal M f ^{ 1 - t_1 \alpha /n }\|_{ M_{ p_2, q_2 }^{ t_2,r_2 } (\rn) } \\
	& = 	\| f \|_ {	M_{ p_1, q_1 }^{ t_1,r_1 } (\rn) }  ^{ t_1  \alpha /n }   \|  \mathcal M f \|_{ M_{ p_2 (1 - t_1 \alpha /n) , q_2(1 - t_1 \alpha /n)  }^{ t_2(1 - t_1 \alpha /n) ,r_2 (1 - t_1 \alpha /n) } (\rn) } ^{(1 - t_1 \alpha /n) } \\
		& = 	\| f \|_ {	M_{ p_1, q_1 }^{ t_1,r_1 } (\rn) }  ^{ t_1  \alpha /n }   \|  \mathcal M f \|_{ M_{ p_1, q_1  }^{ t_1 ,r_1 } (\rn) } ^{(1 - t_1 \alpha /n) } \\
		& \lesssim \| f \|_ {	M_{ p_1, q_1 }^{ t_1,r_1 } (\rn) }.
\end{align*}
Hence we obtain the desired result.
\end{proof}
\begin{remark}
	Using duality and 
	\begin{align*}
		\int_\rn I_\alpha f (x) g(x) \d x &= 
	   \int_\rn \int_\rn \frac{f(y)}{|x-y|^{n-\alpha} }  \d y  g(x) \d x \\
	   &
		  = \int_\rn  f (y) \int_\rn  \frac{g (x)}{|x-y|^{n-\alpha} }   \d x    \d y    = \int_\rn f(y) I_\alpha (y) \d y,
		\end{align*}
one obtains the boundedness of fractional integral operators on block spaces. We omit it here and leave the details to the reader.
\end{remark}
As a corollary, we can also show the boundedness of the fractional maximal operator $\mathcal M_\alpha$. The fractional maximal operator  $\mathcal M_\alpha$ is defined by 
\begin{equation*}
	 \mathcal M_\alpha f (x) = \sup_{x \in Q}  \frac{1}{|Q|^{1- \alpha/n}}  \int_Q |f(y) | \d y, x\in \rn,
\end{equation*}
where the supremum is taken over all cubes $Q$ in $\rn$ containing $x$. The pointwise inequality $ \mathcal M_\alpha f (x) \lesssim I_\alpha ( |f| ) (x) $, $x\in \rn $  leads to the following corollary.
\begin{corollary}
		Let $ 0 <\alpha <n$ and $t_1, t_2 \in (1,\infty) $ satisfy 
	\begin{equation*}
		\frac{1}{t_2} = \frac{1}{t_2} -\frac{\alpha}{n}.
	\end{equation*}
	Assume that either
	\begin{equation*}
		1<p_i <t_i <r_i <\infty,  p_i \le q_i <\infty \; \operatorname{for}\; i =1,2 \; \operatorname{and} \;	\frac{p_1}{p_2} = 	\frac{q_1}{q_2}	 = \frac{t_1}{t_2} = \frac{r_1}{r_2}
	\end{equation*}
	or 
	\begin{equation*}
		1<p_i \le t_i <r_i = \infty,  p_i \le q_i <\infty \; \operatorname{for}\; i =1,2 \; \operatorname{and} \;	\frac{p_1}{p_2} = 	\frac{q_1}{q_2}	 = \frac{t_1}{t_2} .
	\end{equation*}
	Then for $f\in M_{ p_1, q_1 }^{ t_1,r_1 } (\rn)$, we have
	\begin{equation*}
		\| \mathcal M_\alpha f\|_{ M_{ p_2, q_2 }^{ t_2,r_2 } (\rn) }   \lesssim \| f \|_ {	M_{ p_1, q_1 }^{ t_1,r_1 } (\rn) }.
	\end{equation*}
\end{corollary}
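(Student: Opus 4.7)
The plan is essentially a one-line reduction to the previous theorem via the pointwise dominance recorded just before the statement. First I would verify the pointwise inequality $\mathcal{M}_\alpha f(x) \lesssim I_\alpha(|f|)(x)$ in the standard way: for any cube $Q\ni x$ and any $y\in Q$ one has $|x-y|\le \sqrt{n}\,\ell(Q)$, so
\begin{equation*}
\frac{1}{|Q|^{1-\alpha/n}}\int_Q |f(y)|\,\d y \lesssim \int_Q \frac{|f(y)|}{|x-y|^{n-\alpha}}\,\d y \le I_\alpha(|f|)(x),
\end{equation*}
and taking the supremum over cubes containing $x$ gives the claim.

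Next I would pass to the norm. The Bourgain-Morrey-Lorentz space $M_{p_2,q_2}^{t_2,r_2}(\rn)$ is built from $L^{p_2,q_2}(Q)$ norms summed in $\ell^{r_2}$, so the norm is monotone in the absolute value of the function. Therefore
\begin{equation*}
\|\mathcal{M}_\alpha f\|_{M_{p_2,q_2}^{t_2,r_2}(\rn)} \lesssim \|I_\alpha(|f|)\|_{M_{p_2,q_2}^{t_2,r_2}(\rn)}.
\end{equation*}

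Finally, since the parameters $(p_i,q_i,t_i,r_i)$ satisfy exactly the scaling hypotheses of the previous theorem on $I_\alpha$, and $|f|$ belongs to $M_{p_1,q_1}^{t_1,r_1}(\rn)$ with $\||f|\|_{M_{p_1,q_1}^{t_1,r_1}(\rn)}=\|f\|_{M_{p_1,q_1}^{t_1,r_1}(\rn)}$, I can apply that theorem to $|f|$ to conclude
\begin{equation*}
\|I_\alpha(|f|)\|_{M_{p_2,q_2}^{t_2,r_2}(\rn)} \lesssim \||f|\|_{M_{p_1,q_1}^{t_1,r_1}(\rn)} = \|f\|_{M_{p_1,q_1}^{t_1,r_1}(\rn)},
\end{equation*}
which chains with the preceding displayed inequality to yield the desired bound. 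There is no genuine obstacle here: the only ingredients are the classical pointwise comparison between $\mathcal{M}_\alpha$ and $I_\alpha$, the lattice property of $M_{p_2,q_2}^{t_2,r_2}(\rn)$, and the fractional integral result just established; the extra assumption $p_i\le q_i$ in the corollary hypotheses is harmless and simply reflects the parameter range inherited from that theorem.
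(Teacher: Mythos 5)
Your proof is correct and follows exactly the same route as the paper: the paper states the pointwise inequality $\mathcal M_\alpha f(x)\lesssim I_\alpha(|f|)(x)$ and notes that, combined with the preceding theorem on $I_\alpha$, it yields the corollary. You simply supply the (standard) verification of the pointwise bound and make the lattice-property step explicit.
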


\subsection{Sharp maximal function}
In this subsection, we prove a Bourgain-Morrey-Lorentz bound for the sharp maximal function, introduced by Fefferman-Stein \cite{FS72}
\begin{equation*}
	\mathcal M ^{\sharp} (f) (x) = \sup_{ x\in Q} \frac{1}{|Q|} \int_Q |f(y)- f_Q | \d y,
\end{equation*}
where $f_Q = \int_Q f (y) \d y /|Q| $, and the supremum is taken over all cubes $Q$ containing $x$.

Let $ \mathbb M ^+ (\rn) $ be  the cone of all non-negative Lebesgue measurable functions on $\rn$.
For $j \ge 1$, a cube $Q \in \D $, let $j_{\operatorname{pa}} Q$  be the $j$-th parent of $Q$, that is $Q \subset j_{\operatorname{pa}} Q \in \D$ and $2^{j} \ell (Q) =  \ell ( j_{\operatorname{pa}} Q  )$.

Now, we have the following sharp maximal inequality for Bourgain-Morrey-Lorentz spaces.

\begin{theorem} \label{sharp BML}
	Let $1 < q < \infty$. Let $ 1 <p <t<r <\infty $ or $ 1 <p \le t < r =\infty $. Then for $f \in M_{p,q}^{t,r} (\rn)$  and $g \in  \H _{p',q'} ^{t' , r'} (\rn)$, we have
	\begin{equation*}
		\left| \int_\rn f(x) g (x) \d x \right|  \lesssim \int_\rn  \M ^\sharp f (x) \M_\D g (x) \d x \lesssim \int_\rn  \M ^\sharp f (x) \M g (x) \d x .
	\end{equation*}
	Furthermore, by duality, we obtain  for $ f \in M_{p,q}^{t,r} (\rn)$,
	\begin{equation*}
		\| f\|_{ M_{p,q}^{t,r} (\rn) } \lesssim \|  \M ^\sharp f \|_{ M_{p,q}^{t,r} (\rn) } .
	\end{equation*} 
\end{theorem}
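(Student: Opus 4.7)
The plan splits into two stages: first, establish the pointwise-integral inequality $\int fg\,\d x\lesssim \int \M^\sharp f\cdot \M_\D g\,\d x\lesssim \int \M^\sharp f\cdot \M g\,\d x$; second, upgrade it to the norm inequality via duality. The second half of the integral inequality is immediate from the pointwise bound $\M_\D g(x)\le \M g(x)$, so the real content lies in the first inequality.

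For the first inequality I would adapt the classical Fefferman-Stein dyadic argument. By Theorem \ref{dense block} and the usual splitting into real/imaginary and positive/negative parts, reduce to the case $g\in L^\infty_c(\rn)$ with $g\ge 0$; similarly truncate $f$ so all dyadic averages $f_Q$ make sense, passing to the limit at the end. Expand $f$ via its dyadic martingale decomposition $f=\sum_{k\in\mathbb Z}(E_{k+1}f-E_k f)$, where $E_k f=\sum_{Q\in\D_k} f_Q\chi_Q$, and exploit the parent-child estimate valid for $Q'\in\D_{k+1}$ with $Q'\subset Q\in\D_k$:
\[
|f_{Q'}-f_Q|\le \frac{1}{|Q'|}\int_{Q'}|f-f_Q|\,\d y\le 2^n\frac{1}{|Q|}\int_Q|f-f_Q|\,\d y\le 2^n \inf_{y\in Q}\M^\sharp f(y).
\]
Pairing this against $g$ yields $\int fg\,\d x\lesssim \sum_k\sum_{Q\in\D_k}\bigl(\inf_{y\in Q}\M^\sharp f(y)\bigr)\int_Q g\,\d y$, and a Calder\'on-Zygmund stopping-time argument along the dyadic level sets of $\M_\D g$ (select for each $k$ the maximal cubes with $g_Q>2^k$ and use disjointness/maximality) collapses the double sum into $\int \M^\sharp f\cdot \M_\D g\,\d y$. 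Equivalently, one may route through the good-$\lambda$ inequality $|\{\M_\D f>2\lambda,\ \M^\sharp f\le\gamma\lambda\}|\lesssim \gamma|\{\M_\D f>\lambda\}|$ integrated against $g$ via the layer-cake formula.

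With the integral inequality in hand, the norm bound is a short duality computation. By Theorem \ref{M is predual of H},
\[
\|f\|_{M^{t,r}_{p,q}(\rn)}\approx \sup_{\|g\|_{\H_{p',q'}^{t',r'}}\le 1}\left|\int_\rn fg\,\d x\right|\lesssim \sup_{\|g\|_{\H_{p',q'}^{t',r'}}\le 1}\int_\rn \M^\sharp f\cdot \M g\,\d x.
\]
Applying once more the H\"older-type pairing from Theorem \ref{M is predual of H} and the block-space boundedness of $\M$ from Theorem \ref{HL block}, the right side is $\lesssim\|\M^\sharp f\|_{M^{t,r}_{p,q}}\sup\|\M g\|_{\H}\lesssim \|\M^\sharp f\|_{M^{t,r}_{p,q}}$. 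The main obstacle is the rearrangement step inside the first inequality: the telescoping martingale sum naively double-counts $\M^\sharp f$ at every scale, and the stopping-time bookkeeping that matches each dyadic scale to a level of $\M_\D g$ so the inner integrals of $g$ collapse into a single factor of $\M_\D g(y)$ is the substantive combinatorial work. Once that step is done, the remainder is a straightforward application of already-established duality and maximal-operator bounds.
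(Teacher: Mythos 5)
Your overall strategy (reduce to truncated $f,g$; establish the integral pairing $\int |fg|\lesssim\int\M^\sharp f\cdot\M_\D g$ by a Fefferman--Stein dyadic argument; then run the duality using Theorems \ref{M is predual of H} and \ref{HL block}) matches the paper in spirit, and the duality step at the end is correct and essentially identical to the paper's. The gap is in the first inequality, and it is real, not just ``combinatorial bookkeeping I will omit.''

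The decomposition you choose is different from the paper's. You expand $f$ into its dyadic martingale differences $E_{k+1}f-E_kf$ and invoke the parent--child bound $|f_{Q'}-f_Q|\le 2^n\inf_{Q}\M^\sharp f$. But the resulting estimate
\[
\left|\int fg\right|\le 2^n\sum_{k\in\mathbb{Z}}\sum_{Q\in\D_k}\Bigl(\inf_{Q}\M^\sharp f\Bigr)\int_Q|g|
\]
is genuinely divergent: for each fixed $k$ the inner sum is already bounded by $\int\M^\sharp f\cdot|g|$, so the full sum is $\sum_k(\text{something }\approx\text{const})$, not a quantity dominated by $\int\M^\sharp f\cdot\M_\D g$. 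You flag this as double-counting to be fixed by a stopping-time argument, but that repair is precisely the substance of the theorem, and once you actually run the stopping-time construction you are no longer decomposing $f$: the natural thing to do is to stop on level sets $\{\M_\D g>2^l\}$ and decompose $g$ into a ``good'' part $G_k$ and a ``bad'' part $B_k=|g|-G_k$ with $B_l-B_{l+1}$ having mean zero on each stopping cube and pointwise bound $\lesssim 2^l$. This is exactly what the paper does (following SFH20, Theorem 389). The alternative route through the good-$\lambda$ inequality would give $\|\M_\D f\|_{L^p}\lesssim\|\M^\sharp f\|_{L^p}$ but does not by itself yield the bilinear inequality $\int|fg|\lesssim\int\M^\sharp f\cdot\M g$; integrating the good-$\lambda$ estimate against $g$ via layer-cake does not produce the right-hand side without further ideas.

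There is a second omission that is just as serious in this setting. Whichever decomposition you use, you must show that the ``bottom'' boundary term disappears as the starting scale $k\to-\infty$. In the paper this is the estimate $I_k=\int f\,G_k=O(2^{k\theta})$, which is obtained by interpolating $G_k\lesssim 2^{k\theta}(\M_\D g)^{1-\theta}$ and then using Corollary \ref{power HL block} (boundedness of the powered maximal operator $\M_\eta$ on the block space $\H_{p',q'}^{t',r'}$). This input is specific to Bourgain--Morrey--Lorentz duality and is nontrivial; your proposal never touches it. If you instead truncate $f$ to lie in $L^\infty$ (not $L^\infty_c$) as the paper does, then $\int E_{-N}f\cdot g$ has no reason to vanish without such an argument, since $E_{-N}f$ stays of size $\|f\|_\infty$. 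So the passage ``and all boundary terms vanish'' needs the powered-maximal machinery, and omitting it leaves the proof incomplete.

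In short: the duality half is fine, but the martingale-in-$f$ route does not close as sketched, the good-$\lambda$ alternative does not give the bilinear inequality, and the vanishing of the low-scale term (which the paper handles via Corollary \ref{power HL block}) is not addressed.
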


\begin{proof}
	We use the idea from \cite[Theorem 389]{SFH20}.
	Let $f \in M_{p,q}^{t,r} (\rn)$  and $g \in  \H _{p',q'} ^{t' , r'} (\rn)$.
	We may assume that $g \in L_c^\infty (\rn)$ and $f \in L^\infty (\rn) \cap \mathbb M ^+ (\rn) $  by the truncation. 
	
	Let $k \in \mathbb Z$ be  fixed. We partition $\{ x \in \rn: \M _{\D} g (x) > 2^k \} $ into a collection of maximal dyadic cubes such that
	\begin{equation*}
		m_{ Q_j^k } (|g|) : = \frac{1}{|Q_j^k|} \int_{Q_j^k }  |g(y) | \d y >2^k.
	\end{equation*}
	That is 
	\begin{equation*}
		\{ x \in \rn: \M _{\D} g (x) > 2^k \} = \bigcup_{j \in J^k} Q_j^k ,
	\end{equation*}
	where $ \{ Q_j^k \}_{ j \in J^k} $ is a disjoint collection of dyadic cubes.
	Since any two dyadic cubes are either disjoint, or one is contained in the other, dyadic cubes that are maximal with respect to some property are necessarily disjoint.
	
	Define 
	\begin{equation*}
		G_k (x):= \chi_{ \{ \M ^{\D} g \le 2^k \} } (x) | g(x) |  + \sum_{j\in J^k} m_{ Q_j^k } (|g|)  \chi_{  Q_j^k  } (x), \quad  B_k := |g| - G_k.
	\end{equation*}
	Note that $G_k  \le G_{k+1}, B_k \ge  B_{k+1}$  for all $k \in \mathbb Z$.
	
	Then we move $k$ now. For each $j ' \in J^{k+1} $, there exists $j \in J^k$ such
	that  $  Q_{j'}^{k+1}  \subset Q_j^k  $. Therefore, for all $j \in J^k$, we have
	\begin{equation*}
		\int_{  Q_j^k }  \left( B_{k+1} (x) - B_k (x)  \right) \d x = 0.
	\end{equation*}
	Note that 
	\begin{equation*}
		\frac{1}{ 2^{-n}|  2_{\operatorname{pa}}  Q_j^k  |}  \int_{  Q_j^k  } | g (x) | \d x  \le 	\frac{2^n}{ |  2_{\operatorname{pa}}  Q_j^k  |}  \int_{ 2_{\operatorname{pa}}  Q_j^k  } | g (x) | \d x \le 2^{ k +n } .
	\end{equation*}	
	In view of the Lebesgue differentiation theorem,
	\begin{align*}
		| B_{k+1} (x) - B_k (x) | = |G_k (x) - G_{k+1} (x) | \le 3 \cdot 2^{n+k}
	\end{align*}
	for almost $x\in \rn$. Since $g\in L^\infty (\rn)$, $ B_k \equiv 0 $ when $ k \ge  \log_2 (\|g\|_{L^\infty (\rn)})  +1 $. Therefore
	\begin{align*}
		\left| \int_\rn f (x) g (x) \d x \right| & \le   \int_\rn f (x) G_k (x) \d x + \int_\rn f (x) B_k (x) \d x \\
		& = \int_\rn f (x) G_k (x) \d x  +  \sum_{l = k} ^\infty \int_\rn f (x) ( B_l (x)  -  B_{l +1} (x)   )\d x  \\
		& = : I_k + II_k .
	\end{align*}
	Next we show $ \lim _{k\to -\infty} I_k = 0$ for $f \in M_{p,q}^{t,r} (\rn)$  and $g \in \H _{p', q'} ^{q',r'} (\rn) $.
	Note that 
	\begin{equation*}
		G_k (x) \le \M_{\D} g (x) \lesssim 2^{ k \theta} \M_{\D} ^{1-\theta} g (x)
		\lesssim
		2^{ k \theta}  \M g (x)^{1-\theta} \quad (x\in \rn).
	\end{equation*}
	(The dyadic Hardy-Littlewood maximal operator can be dominated by the Hardy-Littlewood maximal operator).
	By Corollary \ref{power HL block},	 $ \M g (x)^{1-\theta} \in   \H _{p', q'} ^{q',r'} (\rn)$  as long as $\theta$  is sufficiently small. 
	Thus 
	\begin{align*}
		I_k \lesssim 2^{k\theta } \| f\|_{  M_{p,q}^{t,r} (\rn)}  \| \M g ^{1-\theta}  \|_{\H _{p', q'} ^{q',r'} (\rn)   } = O (2^{k\theta}) .
	\end{align*}
	Hence  $ \lim _{k\to -\infty} I_k = 0$. As for $ II_k $, we have
	\begin{align*}
		II_k & \le  \sum_{l = k} ^\infty  \sum_{ j \in J^l } \int_{Q_j^l }  |f (x)  -m_{ Q_j^l } (f)  |  \cdot | B_l (x)  -  B_{l +1} (x)  | \d x  \\
		& \le 3 \sum_{l = k} ^\infty  2^{n+l} \sum_{ j \in J^l } \int_{Q_j^l }  |f (x)  -m_{ Q_j^l } (f)  |  \d x .
	\end{align*}
	Hence, 
	\begin{align*}
		II_k & \le 3 \sum_{ l = -\infty } ^\infty   2^{n+l} \sum_{ j \in J^l } \int_{  \{\M g >2^l \} }  \M ^\sharp (f)   \d x \\ 
		& \lesssim \| \M ^\sharp (f) \M_\D g \|_{L^1 (\rn)} .
	\end{align*}
	Thus, we obtain
	\begin{equation} \label{sharp fg}
		\left| \int_\rn f(x) g (x) \d x \right|  \lesssim \int_\rn  \M ^\sharp f (x) \M_\D g (x) \d x \lesssim \int_\rn  \M ^\sharp f(x) \M g(x) \d x .
	\end{equation}

In the general case,   considering the real part and the imaginary part of $f$, we may suppose that $f$ is a real function.
 let 
\begin{equation*}
	f_j(x) = \begin{cases}
		-j , &  \operatorname{if \;} f (x) < -j , \\
		f (x), &  \operatorname{if\;} |f(x) | \le j, \\
		j , &  \operatorname{if\;} f (x) >j,
	\end{cases}
\end{equation*}
 and
 \begin{equation*}
 	g_j(x) =  \begin{cases}
 		-j  \chi_{B(0,j) } (x) , &  \operatorname{if \;} g (x) < -j , \\
 		g (x) \chi_{B(0,j) } (x), &  \operatorname{if\;} |g(x) | \le j ,\\
 		j \chi_{B(0,j) } (x) , &  \operatorname{if\;} g (x) >j.
 	\end{cases}
 \end{equation*}
  Then from \cite[exercie 3.1.4]{G142}, we obtain
\begin{equation*}
		\left| \int_\rn f_j (x) g_j (x) \d x \right|  \lesssim \int_\rn  \M ^\sharp f_j  (x) \M_\D g_j (x) \d x \lesssim \int_\rn  \M ^\sharp f(x) \M g (x)\d x .
\end{equation*}
Then by the Fatou Lemma, we have (\ref{sharp fg}).

	Finally, by duality and the boundedneess of Hardy-Littlewood maximal operator on $\H _{p', q'} ^{q',r'} (\rn)$, we get
	\begin{equation*}
		\| f\|_{ M_{p,q}^{t,r} (\rn) } \lesssim \|  \M ^\sharp f \|_{ M_{p,q}^{t,r} (\rn) } .
	\end{equation*} 
	Thus we finish the proof.
\end{proof}

%
%
%
%

\subsection{Calder\'on-Zygmund operators}
In this subsection, we consider the Calder\'on-Zygmund operator. We first recall their definition.

\begin{definition}
	A function $K(x, y)$ defined on $\{(x,y) \in \rn \times \rn : x \neq y\}$ is called a standard kernel if there exists $\delta, A > 0$ satisfying the size condition
	\begin{equation*}
		|K(x,y)| \le \frac{A}{|x-y|^n}
	\end{equation*}
and the regularity conditions
\begin{equation*}
	|K(x,y) - K (x',y) | \le \frac{A |x-x'|^\delta}{(|x-y|+|x'-y|)^{n+\delta}},
\end{equation*}
whenever $|x- x'| \le  \max( |x-y| , |x' -y |  )  /2 $ and 
\begin{equation*}
	|K(x,y) - K (x,y') | \le \frac{A |y-y'|^\delta}{(|x-y|+|x-y'|)^{n+\delta}},
\end{equation*}
whenever $|y - y'| \le 1/2 \max( |x-y| , |x -y '|  )  /2$. The class of all standard kernels with
constants $\delta$ and $A$ is denoted by $\operatorname{SK}(\delta, A).$
\end{definition}

\begin{definition}
	\label{def CZ op}
	Let $\delta, A > 0$, and $K \in \operatorname{SK}(\delta, A)$. A continuous linear operator $T$ from $\mathscr S (\rn)$ to $\mathscr S' (\rn)$ is said a Calder\'on-Zygmund operator associated with  $K$ if it satisfies
	\begin{equation*}
		T(f) (x) = \int_\rn K (x,y) f (y) \d y
	\end{equation*}
for all $f \in C_c^\infty (\rn)$ and $x$  not in the support of $f$,
and admits a bounded extension on $L^2(\rn)$, i.e., there exists a constant $C>0$ such that 
\begin{equation*}
	\|Tf \|_{L^2(\rn)}  \le C \|f \|_{L^2(\rn)} 
\end{equation*}
for all $f \in L^2(\rn)$.	
\end{definition}

Let $T$ be a  Calder\'on-Zygmund operator as in Definition \ref{def CZ op}. Define $T^*$ by 
\begin{equation*}
	\int_\rn T f(x) g(x) \d x = \int_\rn f (y) T^* g (y) \d y
\end{equation*}
via all $f,g \in C_c^\infty (\rn)$. It is easy to get that  the kernel $K^* $ of $T^*$ belongs to $\operatorname{SK}(\delta, A)$ and $K^* (x,y) = K (y,x)$.

Since $T$  is weak type $(s,s)$ for $s \in[1,\infty)$,  by \cite[Theorem 1.4.19]{G14}, we obtain $T$  is bounded on  $L^{p,q} (\rn)$ for $p,q \in (1,\infty)$.

We first show that  a singular integral operator $T$, which is initially defined in $L^s (\rn)$, for $s \in (1,\infty)$, can be extended to a bounded linear operator on $M_{p,q}^{t,r} (\rn)$. 
\begin{theorem} \label{T r = infty H}
	Let $1<q<\infty$. Let  $ 1 <p \le t < r =\infty $. Let $T$ be a  Calder\'on-Zygmund operator as in Definition \ref{def CZ op}. Then $T$  is bounded on $\H _{p',q'} ^{t' , r'} (\rn)$.
\end{theorem}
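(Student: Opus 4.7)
The plan is to reduce the boundedness of $T$ to an atomic-type estimate: verify that $T$ maps a $(p',q',t')$-block into a sum of (constant multiples of) $(p',q',t')$-blocks whose coefficients form an absolutely convergent series. Since $r=\infty$ we have $r'=1$, so the $\ell^{r'}$-norm is just the $\ell^1$-sum, which makes the bookkeeping clean. Concretely, starting from $f=\sum_{v}\lambda_v b_v\in\mathcal H_{p',q'}^{t',1}(\rn)$ with $\sum_v|\lambda_v|\le\|f\|_{\mathcal H_{p',q'}^{t',1}}+\epsilon$ (by Lemma \ref{r'=1 equi *} I may take arbitrary supporting cubes $Q_v$ in the decomposition), I will apply $T$ termwise to the block expansion, expand each $Tb_v$ via annuli, and regroup.

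Fix a $(p',q',t')$-block $b$ supported on a cube $Q$, and split
\[
Tb=(Tb)\chi_{2Q}+\sum_{k=1}^\infty (Tb)\chi_{A_k},\qquad A_k:=2^{k+1}Q\setminus 2^kQ.
\]
For the local piece, the $L^{p',q'}(\rn)$-boundedness of $T$ (which follows from the weak $(s,s)$ bounds of a Calder\'on--Zygmund operator together with Lemma \ref{embed Lorentz}) gives $\|(Tb)\chi_{2Q}\|_{L^{p',q'}}\lesssim\|b\|_{L^{p',q'}}\lesssim|Q|^{1/p'-1/t'}\approx|2Q|^{1/p'-1/t'}$, so $(Tb)\chi_{2Q}$ is a bounded multiple of a $(p',q',t')$-block supported on $2Q$. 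For the tail piece, the size condition on the kernel yields, for $x\in A_k$,
\[
|Tb(x)|\lesssim \frac{1}{(2^k\ell(Q))^n}\int_Q|b(y)|\,dy\lesssim 2^{-kn}|Q|^{-1}\,\|b\|_{L^{p',q'}}\|\chi_Q\|_{L^{p,q}}\lesssim 2^{-kn}|Q|^{1/t-1},
\]
where Lemma \ref{holder lorentz} and $\|\chi_Q\|_{L^{p,q}}\approx|Q|^{1/p}$ were used. Consequently
\[
\|(Tb)\chi_{A_k}\|_{L^{p',q'}}\lesssim 2^{-kn}|Q|^{1/t-1}|2^{k+1}Q|^{1/p'}\lesssim 2^{-kn/t}\,|2^{k+1}Q|^{1/p'-1/t'},
\]
using the identity $1/p+1/p'-1/t'=1/t$. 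Hence, writing $c_k:=C\,2^{-kn/t}$, each $c_k^{-1}(Tb)\chi_{A_k}$ is a $(p',q',t')$-block on $2^{k+1}Q$, and $\sum_{k\ge 0}c_k<\infty$ since $t<\infty$.

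Putting these pieces together, $Tf=\sum_v\lambda_v Tb_v$ admits a representation as a sum of (constant multiples of) $(p',q',t')$-blocks, with total $\ell^1$-weight bounded by
\[
C\Bigl(\sum_{k\ge0}2^{-kn/t}\Bigr)\sum_v|\lambda_v|\lesssim \|f\|_{\mathcal H_{p',q'}^{t',1}(\rn)}+\epsilon,
\]
so $Tf\in\mathcal H_{p',q',*}^{t',1}(\rn)$ with control on the norm; Lemma \ref{r'=1 equi *} then identifies this with $\mathcal H_{p',q'}^{t',1}(\rn)$. Letting $\epsilon\to 0$ gives the desired bound. A minor technical point is that the annular dilates $2^{k+1}Q$ need not be dyadic, which is harmless thanks to Lemma \ref{r'=1 equi *}; one could alternatively cover each $2^{k+1}Q$ by at most $2^n$ dyadic cubes of comparable size as in Lemma \ref{chi Q f in H}. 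The main obstacle is really the bookkeeping: one must check that the exponents combine to give exactly $2^{-kn/t}$ (the geometric decay comes from having $t<\infty$, i.e. $r=\infty$), and that the resulting sum lies in $\ell^{r'}=\ell^{1}$; the size condition on $K$ alone suffices, so no smoothness of the kernel is invoked.
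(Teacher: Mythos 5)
Your proof is correct and follows essentially the same approach as the paper: split $Tb$ into the local piece on $2Q$ (handled by $L^{p',q'}$-boundedness of $T$) and annular pieces $2^{k+1}Q\setminus 2^k Q$ (handled by the kernel size condition), identify each piece as a multiple of a $(p',q',t')$-block, and sum the coefficients in $\ell^1$ using Lemma \ref{r'=1 equi *}. In fact your normalization $c_k\approx 2^{-kn/t}$ is the right one (the paper's proof of this theorem writes $2^{-kn}$ and then asserts that $2^{kn}\chi_{2^{k+1}Q\setminus 2^kQ}\,Ta$ is a block on $2^{k+1}Q$, which does not follow from the stated $L^{p',q'}$-bound since $1/t-1/p\le 0$; the paper's own proof of Theorem \ref{M block} carries out the analogous computation correctly and arrives at the same $2^{-kn/t}$ that you do, so this appears to be a slip rather than a different argument).
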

\begin{proof}
	Since $r=\infty$, $r'=1$.  Given a block $a$, which is supported on  a cube $Q$  and  satisfy    $  \|a \|_{L^{p',q'}(\rn ) }  \le |Q|^{1/t-1/p}  $, we need to show $\|T a \|_{  \H _{p',q'} ^{t' , 1} (\rn) }  \lesssim 1 $. 
	Since $T$ is bounded on $L^{p',q'}(\rn )$, we obtain $\chi_{2Q}  Ta$  is a $  (p',q', t') $-block modulo a multiplicative constant. Consequently, 
	\begin{equation*}
		\| \chi_{2Q}  Ta \|_{ \H _{p',q'} ^{t' , r'} (\rn) }  \lesssim 1.
	\end{equation*}
For each $k \in \mathbb N$,
\begin{align*}
	| \chi_{2^{k +1} Q  \backslash 2^k Q }  (x) Ta (x) | & \lesssim  \chi_{2^{k +1} Q  \backslash 2^k Q } (x) \int_Q  |x-y|^{-n}  |a (y)| \d y \\
	& \lesssim \frac{1}{2^{kn} |Q| }\int_Q    |a (y)| \d y \\
	& \lesssim \frac{1}{2^{kn} |Q| }    |Q|^{1/t}.
\end{align*}
Hence 
\begin{equation*}
	\| \chi_{2^{k +1} Q  \backslash 2^k Q }  Ta  \|_{ L^{p',q'}  (\rn )}  \lesssim  \frac{1}{2^{kn} |Q| }    |Q|^{1/t} |Q|^{1/p'} \approx 2^{-kn}  |Q|^{1/t - 1/p}.
\end{equation*}
Hence $2^kn  \chi_{2^{k +1} Q  \backslash 2^k Q }  Ta$ is a $  (p',q', t') $-block modulo a multiplicative constant.
Then let $T_a =  \chi_{2Q}  Ta  +  \sum_{k=1}^\infty       2^{-kn}  2^{kn} \chi_{2^{k +1} Q  \backslash 2^k Q }  (x) Ta (x)  $. By Lemma \ref{r'=1 equi *}
\begin{equation*}
	\| T_a \|_{\H _{p',q'} ^{t' , r'} (\rn) }  \lesssim 1+ \sum_{k=1}^\infty  2^{-kn} \lesssim 1.
\end{equation*}
Thus the proof is complete.
\end{proof}

Then we discuss how to extend the  Calder\'on-Zygmund operator   $T$  on $M_{p,q}^{t,r} (\rn)$.
\begin{theorem}
	Let $1<q<\infty$.  Let $ 1 <p <t<r <\infty $ or $ 1 <p \le t < r =\infty $. Let $T$ be a  Calder\'on-Zygmund operator as in Definition \ref{def CZ op}. Then $T$ can be  extended to a 	bounded linear operator on $M_{p,q}^{t,r} (\rn)$. 
\end{theorem}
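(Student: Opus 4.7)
The plan is to use the duality established in Theorem \ref{M is predual of H}: since $M_{p,q}^{t,r} (\rn) = \left( \mathcal H_{ p', q'}^{t',r'} (\rn)   \right)^\ast$, it suffices to show that the adjoint $T^\ast$, which is again a Calder\'on-Zygmund operator with kernel $K^\ast(x,y) = K(y,x) \in \operatorname{SK}(\delta, A)$, is bounded on $\mathcal H_{ p', q'}^{t',r'} (\rn)$. Once this is shown, $T$ is defined on $f \in M_{p,q}^{t,r} (\rn)$ via $\langle Tf, g \rangle := \langle f, T^\ast g \rangle$ for all $g \in \mathcal H_{ p', q'}^{t',r'} (\rn)$, and the duality isomorphism of Theorem \ref{M is predual of H} will give $\|Tf\|_{M_{p,q}^{t,r}(\rn)} \lesssim \|T^\ast\|_{\mathcal H \to \mathcal H} \, \|f\|_{M_{p,q}^{t,r}(\rn)}$. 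One then checks consistency: when $f \in C_c^\infty(\rn)$, the pointwise $Tf$ agrees with this distributional extension by the standard identity $\int Tf \cdot g = \int f \cdot T^\ast g$.

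The key step is therefore to bound $T^\ast$ on $\mathcal H_{ p', q'}^{t',r'} (\rn)$. Fix a $(p', q', t')$-block $b$ supported on a dyadic cube $Q$, and write
\[
T^\ast b = \chi_{2Q} T^\ast b + \sum_{k=1}^\infty \chi_{2^{k+1}Q \setminus 2^k Q}\, T^\ast b.
\]
The boundedness of $T^\ast$ on $L^{p',q'}(\rn)$ shows that $\chi_{2Q} T^\ast b$ is (up to a constant) a $(p', q', t')$-block on $2Q$. For the far annuli, the size condition on the kernel, followed by H\"older's inequality in Lorentz spaces, gives for $x \in 2^{k+1}Q\setminus 2^k Q$,
\[
|T^\ast b(x)| \lesssim \frac{1}{(2^k \ell(Q))^n}\int_Q |b(y)|\, dy \lesssim 2^{-kn}|Q|^{1/t - 1},
\]
from which a direct computation (using $1/t - 1/p = 1/p' - 1/t'$) yields
\[
\|\chi_{2^{k+1}Q \setminus 2^k Q}\, T^\ast b\|_{L^{p',q'}} \lesssim 2^{-kn/t}\, |2^{k+1}Q|^{1/p'-1/t'}.
\]
Hence $T^\ast b = \sum_{k=0}^\infty c_k m_k$ where $c_k \approx 2^{-kn/t}$ and each $m_k$ is (up to constants) a $(p', q', t')$-block supported on $2^{k+1} Q$. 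Since $n/t > 0$, the sequence $\{c_k\}$ lies in $\ell^{r'}$ for every $r' \ge 1$.

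For a general $g = \sum_{(j,k)} \lambda_{j,k} b_{j,k} \in \mathcal H_{ p', q'}^{t',r'} (\rn)$ with $\|\lambda\|_{\ell^{r'}} \le \|g\|_{\mathcal{H}} + \epsilon$, write $T^\ast g = \sum_{(j,k)} \lambda_{j,k} \sum_{v \ge 0} 2^{-vn/t}\, m_{j,k,v}$, where $m_{j,k,v}$ is a block supported on $2^{v+1} Q_{j,k}$. The cube $2^{v+1} Q_{j,k}$ is not dyadic, but is covered by $2^n$ dyadic cubes of the same scale in one of the grids $\mathcal D_{\vec a}$; splitting each $m_{j,k,v}$ along this cover and then reindexing exactly as in the proof of Theorem \ref{HL block}, one reorganizes $T^\ast g = \sum_{(j',k')} |\lambda'_{j',k'}|\, \widetilde b_{j',k'}$ where $\widetilde b_{j',k'}$ is a $(p',q',t')$-block on $Q_{j',k'}$ (modulo a universal constant) and the new coefficients satisfy $\|\lambda'\|_{\ell^{r'}} \lesssim \|\{c_v\}\|_{\ell^{r'}}\,\|\lambda\|_{\ell^{r'}} \lesssim \|g\|_{\mathcal{H}}$. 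This yields $\|T^\ast g\|_{\mathcal H_{ p', q'}^{t',r'}(\rn)} \lesssim \|g\|_{\mathcal H_{ p', q'}^{t',r'}(\rn)}$, completing the proof by the duality argument above. The case $r = \infty$ is already contained in Theorem \ref{T r = infty H} (applied to $T^\ast$), so only the case $1 < p < t < r < \infty$ (equivalently $1 < r' < \infty$) requires the full reindexing.

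The main obstacle will be the precise reindexing in the last step: one must carry the $\ell^{r'}$-summability across the change from the dilated cubes $2^{v+1}Q_{j,k}$ (which appear naturally from the kernel estimates) to honest dyadic cubes, while preserving the block structure. This is a translation-and-scale bookkeeping exercise that parallels the proof of Theorem \ref{HL block} (with the $2^n$ translated dyadic grids $\mathcal D_{\vec a}$ absorbing the discrepancy), and the geometric factor $\sum_v 2^{-vnr'/t}$ controls the cost of summing across scales thanks to $t < \infty$.
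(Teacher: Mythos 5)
Your route differs from the paper's. The paper only proves that $T^\ast$ is bounded on $\mathcal H_{p',q'}^{t',1}(\rn)$ (Theorem \ref{T r = infty H}, the case $r'=1$), uses this together with duality to get the extension on $M_{p,q}^{t,\infty}(\rn)$ (splitting further into $p\le q$ and $q<p$, the latter via the embedding $M_{p,q}^{t,\infty}\hookrightarrow M_{p-\epsilon,p-\epsilon}^{t,\infty}$ where the classical Morrey extension is known), and then simply restricts along $M_{p,q}^{t,r}\hookrightarrow M_{p,q}^{t,\infty}$ for $r<\infty$. You instead prove boundedness of $T^\ast$ on $\mathcal H_{p',q'}^{t',r'}(\rn)$ directly for every $r'\ge 1$ by the same block-splitting as in Theorem \ref{T r = infty H} followed by a reindexing in the spirit of Theorem \ref{HL block}, and then apply duality once, uniformly in $r$. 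Your version is more symmetric and avoids the two subcases, but it front-loads the $\ell^{r'}$ reindexing work that the paper defers; the paper's version is shorter given that it has the $r=\infty$ case available as a lemma. Both are legitimate.

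One quantitative slip in your final step: the new coefficient sequence is not controlled by $\|\{c_v\}\|_{\ell^{r'}}\,\|\lambda\|_{\ell^{r'}}$. When you re-sort the blocks $m_{j,k,v}$ (supported at scale $2^{v+1}\ell(Q_{j,k})$) onto honest dyadic cubes $Q_{j',m'}$, roughly $2^{(v+1)n}$ original cubes $Q_{j'+v+1,k}$ land inside a single $Q_{j',m'}$, so after H\"older you pick up a multiplicity factor $2^{(v+1)n/r}$ per level $v$. The correct bound is
\[
\|\lambda'\|_{\ell^{r'}} \lesssim \|\lambda\|_{\ell^{r'}} \sum_{v\ge 0} 2^{-vn/t}\,2^{(v+1)n/r},
\]
and convergence of the geometric series requires $t<r$ — not merely $t<\infty$ as you wrote. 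This is still fine under the stated hypotheses (both $p<t<r<\infty$ and $p\le t<r=\infty$ force $t<r$), so the conclusion survives, but the constant and the sufficient condition should be corrected.
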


\begin{proof}
	We use the idea from \cite[Theorem 401]{SFH20}.
	
	We first consider the case $1 <q <\infty $  and $ 1 <p \le t < r =\infty $.
	We divide it into two subcases. 
	
	Subcase $1<p \le q <\infty $ and  $ 1 <p \le t < r =\infty $.
	By the embedding of Lorentz spaces and the H\"older inequality, we obtain
	\begin{equation*}
		|Q|^{1/t-1/p}  \| f \|_{ L^{p,q} (Q)} \lesssim 	|Q|^{1/t-1/p}  \| f \|_{ L^{p,p} (Q)} \le \| f \|_{ L^{t} (Q)}.
	\end{equation*}
Taking the supremum over $Q \in \D$, we get
\begin{equation*}
	\|f\|_{M_{p,q}^{t,r} (\rn) }  \lesssim \| f \|_{L^t (\rn) }.
\end{equation*}
Let $f  \in L^t (\rn)$  and note that $T$ can be extended to a bounded linear operator  on  $L^t (\rn)$.
	
	 Define the functional 
	\begin{equation*}
		g \in \H _{p',q'} ^{t' , r'} (\rn)  \mapsto  \int_\rn f(x)  T^\ast  g (x)  \d x \in \mathbb C. 
	\end{equation*}
Note that this functional is bounded on $\H _{p',q'} ^{t' , r'} (\rn)$ by Theorem \ref{T r = infty H}. Therefore, thanks to Theorem \ref{M is predual of H}, there  exists a unique element $Tf \in  M_{p,q}^{t,r} (\rn) $  such that 
\begin{equation*}
	\int_\rn Tf  (x)  g (x) \d x =  \int_\rn f(x)  T^\ast  g (x)  \d x
\end{equation*}
for all $g \in\H _{p',q'} ^{t' , r'} (\rn) $. We claim that the mapping $f  \in L^t (\rn) \mapsto Tf \in  M_{p,q}^{t,r} (\rn)$  is the desired extension. 

By the Hahn-Banach theorem (for example, see \cite[Theorem 87]{SFH20}), for all $f \in M_{p,q}^{t,r} (\rn)$, we can find $g \in \H _{p',q'} ^{t' , r'} (\rn)$  with $\| g\|_{ \H _{p',q'} ^{t' , r'} (\rn) }  \le 1$  such that 
\begin{equation*}
	\| Tf\|_{  M_{p,q}^{t,r} (\rn) } \approx \int_\rn Tf (x) g (x) \d x.
\end{equation*}
By Theorems \ref{M is predual of H} and \ref{T r = infty H}, we obtain
\begin{equation*}
		\| Tf\|_{  M_{p,q}^{t,r} (\rn) } \lesssim \| f\|_{  M_{p,q}^{t,r} (\rn)}  \|T^* g \|_{ \H _{p',q'} ^{t' , r'} (\rn) }  \lesssim \| f\|_{  M_{p,q}^{t,r} (\rn)}  \| g \|_{ \H _{p',q'} ^{t' , r'} (\rn) } .
\end{equation*}
Taking into account that $g$ has unit norm, we see that $	\| Tf\|_{  M_{p,q}^{t,r} (\rn) }\lesssim \| f\|_{  M_{p,q}^{t,r} (\rn)} $, which is the desired result.

Subcase $1<q < p <\infty $ and  $ 1 <p \le t < r =\infty $. 
From \cite[Theorem 401]{SFH20}, we know that $T$ can be extended to a
bounded linear operator on Morrey spaces  $M_{p_0,p_0}^{t_0,\infty} (\rn)  $ for all  $1<p_0 \le  t_0 <\infty $.
By the embedding (\ref{embedding p - epsilon}),
\begin{equation*}
		M_{p,q}^{t,\infty} (\rn) \hookrightarrow M_{p,\infty}^{t,\infty} (\rn) \hookrightarrow M_{p- \epsilon,p- \epsilon}^{t,\infty} (\rn),
\end{equation*}
we see that   $T$, initially defined on  $M_{p- \epsilon,p- \epsilon}^{t,\infty} (\rn)$, can be defined on $M_{p,q}^{t,\infty} (\rn)$ by restriction.

 Case $1<q  <\infty $ and $ 1 <p < t < r <\infty $. 
 
 By the embedding 
 \begin{equation*}
 	M_{p,q}^{t,r} (\rn) \hookrightarrow M_{p,q}^{t,\infty} (\rn),
 \end{equation*}
 we see that   $T$, initially defined on  $ M_{p,q}^{t,\infty} (\rn)$, can be defined on $M_{p,q}^{t,r} (\rn)$ by restriction.
 Hence we complete the proof. 
\end{proof}

%
Now we have the boundedness of Calder\'on-Zygmund operator on  $ M_{p,q}^{t,r} (\rn)  $.
\begin{theorem}
		Let $1 \le q \le \infty$. Let $ 1 <p <t<r <\infty $ or $ 1 <p \le t < r =\infty $. 
		Suppose that $T$ is a  Calder\'on-Zygmund operator as in Definition \ref{def CZ op}. 
		Then $T$ is a bounded  operator from  $ M_{p,q}^{t,r} (\rn)  $ to $M_{p,q}^{t,r} (\rn)  $ .
\end{theorem}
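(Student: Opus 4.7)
For the interior range $1 < q < \infty$ the conclusion is already contained in the preceding extension theorem, so it suffices to establish the boundedness at the two endpoints $q = 1$ and $q = \infty$. At these endpoints the duality identification $M_{p,q}^{t,r} (\rn) = (\H_{p',q'}^{t',r'} (\rn))^\ast$ used there is unavailable, so the plan is to proceed by a direct dyadic decomposition modelled on the argument of Theorem \ref{HL M}.

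The input is the boundedness of $T$ on $L^{p,q} (\rn)$ for every $p \in (1,\infty)$ and $q \in [1,\infty]$, which follows from the $L^2$-boundedness and the Calder\'on--Zygmund weak-type $(1,1)$ estimate by real interpolation. Fix $Q \in \D$ and decompose $f = f\chi_{2Q} + f\chi_{(2Q)^c}$. The local piece is controlled directly by $\|T(f\chi_{2Q})\|_{L^{p,q}(Q)} \lesssim \|f\|_{L^{p,q}(2Q)}$. For the far piece, the size condition on the kernel combined with H\"older's inequality in Lorentz spaces (Lemma \ref{holder lorentz}, valid for $q \in [1,\infty]$) gives, for every $x \in Q$,
\[
|T(f\chi_{(2Q)^c})(x)| \lesssim \sum_{k=1}^{\infty}\frac{1}{|2^{k+1}Q|}\int_{2^{k+1}Q}|f(y)|\,\d y \lesssim \sum_{k=1}^{\infty}\frac{\|f\|_{L^{p,q}(2^{k+1}Q)}}{|2^{k+1}Q|^{1/p}}.
\]
Since the bound is independent of $x$, taking the $L^{p,q}(Q)$-norm contributes an extra factor $|Q|^{1/p}$.

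It remains to assemble these estimates into the $M_{p,q}^{t,r}$-norm. Multiplying by $|Q|^{1/t-1/p}$ and summing in $\ell^r(\D)$, the local piece is handled by noting that $2Q$ is covered by at most $3^n$ dyadic cubes of the same scale as $Q$, so after swapping sums one gets a bound by a constant multiple of $\|f\|_{M_{p,q}^{t,r}(\rn)}$. For the far piece, I invoke the covering Lemma \ref{cube be covered} to replace each non-dyadic cube $2^{k+1}Q$ by a cube $R_k(Q)$ of comparable size living in some shifted dyadic grid $\mathfrak{D}$; Minkowski's inequality (requiring $r \ge 1$, with the case $r=\infty$ treated via a supremum) then yields
\[
\left(\sum_{Q\in\D}\bigl(|Q|^{1/t-1/p}\|T(f\chi_{(2Q)^c})\|_{L^{p,q}(Q)}\bigr)^{r}\right)^{1/r} \lesssim \sum_{k=1}^{\infty}2^{-kn/t}\left(\sum_{Q\in\D}|R_k(Q)|^{r(1/t-1/p)}\|f\|_{L^{p,q}(R_k(Q))}^{r}\right)^{1/r}.
\]
A counting argument shows that each $R \in \mathfrak{D}$ arises as $R_k(Q)$ for at most $O(2^{kn})$ cubes $Q$, and combining this with Lemma \ref{D equivalence} bounds the inner sum by a constant times $2^{kn}\|f\|_{M_{p,q}^{t,r}(\rn)}^{r}$. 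The remaining series $\sum_{k\ge 1} 2^{kn(1/r-1/t)}$ converges precisely because of the hypothesis $t<r$.

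The main obstacle is the bookkeeping in this last step: transferring between non-dyadic dilates of $Q$ and the dyadic grid while keeping exact control of the multiplicity so that the exponent balance $1/r < 1/t$ drives the final geometric convergence. With this in place, the endpoint cases $q \in \{1,\infty\}$ are settled and, together with the preceding extension theorem, the proof is complete.
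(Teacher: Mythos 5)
Your proposal is correct and follows essentially the same overall strategy as the paper: a direct dyadic decomposition modeled on the Hardy--Littlewood maximal argument, combining the $L^{p,q}$-boundedness of $T$ (valid for all $1<p<\infty$, $q\in[1,\infty]$ by Marcinkiewicz interpolation, which the paper sources to a reference on Calder\'on--Zygmund operators on Lorentz spaces) for the local piece with the kernel size estimate plus Lorentz--H\"older for the tail. The differences are routing choices rather than substance. First, the paper does not split off the interior range $1<q<\infty$: it runs the direct argument for all $q\in[1,\infty]$ in one pass (and dispatches $r=\infty$ by citation), so your appeal to the preceding duality-based extension theorem for $1<q<\infty$ is correct but unnecessary. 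Second, for the far piece the paper keeps everything inside the single grid $\mathcal{D}$ by writing $f = f\chi_Q + f\chi_{Q^c}$ and expanding the tail over annuli $Q_k\setminus Q_{k-1}$ between consecutive dyadic parents $Q_k$ of $Q$; this gives the exact multiplicity count (each $R\in\mathcal{D}$ is the $k$-th parent of exactly $2^{kn}$ cubes $Q$) without invoking the covering Lemma \ref{cube be covered} or shifted grids. Your route through concentric dilates $2^{k+1}Q$ and the shifted dyadic grids also works, at the cost of the extra bookkeeping you flag, and has the modest advantage that $x\in Q$, $y\in(2Q)^c$ guarantees $|x-y|\gtrsim\ell(Q)$, which makes the pointwise kernel estimate immediate; the paper's $f\chi_Q$ splitting glosses over this near-boundary separation. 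Both roads lead to the same final convergence $\sum_k 2^{kn(1/r-1/t)}<\infty$ from $t<r$.
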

\begin{proof}
	The case $r=\infty$ is in \cite[Lemma 3.3]{DK24}. We only need to prove $r<\infty$. 	We use the idea from \cite[Theorem 4.7]{HNSH23}. 
	We will show 
	\begin{equation*}
		\| Tf\|_{M_{p,q}^{t,r} (\rn) } \lesssim 	\| f\|_{M_{p,q}^{t,r} (\rn) }.
	\end{equation*} 
	Fix $Q \in \mathcal D$. Denote by $Q_k$ be the $k^{\operatorname{th}}$ dyadic parent of $Q$. Then we decompose $f= f_1 +f_2$ where $f_1 = f \chi_Q$. By the $L^{p,q} (\rn) $-boundedness of the Calder\'on-Zygmund operator (see \cite[Theorem 1.1]{CLSS21}), we obtain
	\begin{equation*}
		\|T f_1 \|_{L^{p,q} (Q)} \lesssim 	\| f_1 \|_{L^{p,q} (\rn)} = 	\| f_1 \|_{L^{p,q} (Q)} .
	\end{equation*}
Meanwhile, by the H\"older inequality, for $x\in Q$,
\begin{align*}
	|Tf_2 (x) | & \lesssim \int_{Q^c} \frac{ |f(y) |}{|x-y|^n} \d y = \sum_{k=1}^\infty \int_{Q_k \backslash Q_{k-1} } \frac{ |f(y) |}{|x-y|^n} \d y \\
	& \lesssim \sum_{k=1}^\infty |Q_k|^{-1} \|f \|_{ L^{p,q} (Q_k)} |Q_k|^{1/p'} = \sum_{k=1}^\infty |Q_k|^{-1/p} \|f \|_{ L^{p,q} (Q_k)}.
\end{align*}
Then we conclude
\begin{align*}
 |Q|^{1/t-1/p}	\|Tf_2 \|_{ L^{p,q} (Q) }  & \lesssim  |Q|^{1/t}  \sum_{k=1}^\infty |Q_k|^{-1/p} \|f \|_{ L^{p,q} (Q_k)}  \\
 & =   \sum_{k=1}^\infty 2^{-kn/t} |Q_k|^{1/t-1/p} \|f \|_{ L^{p,q} (Q_k)}  .
\end{align*}
Hence, by Minkowski's inequality, we get
\begin{align*}
	 & \left(\sum_{Q\in \D} \left( |Q|^{1/t-1/p}  \| Tf\|_{ L^{p,q} (Q) }   \right) ^r \right)^{1/r} \\
	& \lesssim \left(\sum_{Q\in \D} \left( |Q|^{1/t-1/p}  \| f\|_{ L^{p,q} (Q) }   \right) ^r \right)^{1/r}  + \sum_{k=1}^\infty 2^{-kn/t}  \left(\sum_{Q\in \D} \left( |Q_k|^{1/t-1/p}  \| f\|_{ L^{p,q} (Q_k) }   \right) ^r \right)^{1/r} \\
	& \lesssim \| f\|_{M_{p,q}^{t,r} (\rn) } + \sum_{k=1}^\infty 2^{-kn/t + kn/r} \| f\|_{M_{p,q}^{t,r} (\rn) } \approx \| f\|_{M_{p,q}^{t,r} (\rn) } .
\end{align*}
Thus, we obtain $ \|Tf\|_{ M_{p,q}^{t,r} (\rn) }  \lesssim \|f\|_{ M_{p,q}^{t,r} (\rn) }$  as desired.
\end{proof}
By duality, we obtain the Calder\'on-Zygmund operator $T$ is bounded on $ \H _{p',q'} ^{t' , r'} (\rn)  $.
\begin{corollary}
	Let $1 < q < \infty$. Let $ 1 <p <t<r <\infty $ or $ 1 <p \le t < r =\infty $. 	Suppose that $T$ is a  Calder\'on-Zygmund operator as in Definition \ref{def CZ op}. 
	Then $T$ is bounded on $\H _{p',q'} ^{t' , r'} (\rn)  $.
\end{corollary}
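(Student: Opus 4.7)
The approach is pure duality, leveraging three ingredients already established: the predual identification $\left(\H_{p',q'}^{t',r'}(\rn)\right)^{\ast} = M_{p,q}^{t,r}(\rn)$ from Theorem \ref{M is predual of H}, the boundedness of Calder\'on-Zygmund operators on $M_{p,q}^{t,r}(\rn)$ just proved, and the symmetry of the Calder\'on-Zygmund class under taking adjoints. Since $K \in \operatorname{SK}(\delta,A)$ implies $K^{\ast}(x,y) := K(y,x) \in \operatorname{SK}(\delta,A)$, the formal adjoint $T^{\ast}$ is again a Calder\'on-Zygmund operator in the sense of Definition \ref{def CZ op}, so the preceding theorem gives that $T^{\ast}$ is bounded on $M_{p,q}^{t,r}(\rn)$.

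Next I would define $Tg$ for $g$ in a dense subclass of $\H_{p',q'}^{t',r'}(\rn)$: by Theorem \ref{dense block}, $C_c^\infty(\rn)$ is dense, and on this class $Tg$ has its standard meaning via the kernel representation away from the support together with the $L^2$-extension. For such $g$ and any $k \in M_{p,q}^{t,r}(\rn)$ the pairing identity
\begin{equation*}
\int_\rn Tg(x)\, k(x)\,\d x \;=\; \int_\rn g(x)\, T^{\ast} k(x)\,\d x
\end{equation*}
holds. Combined with formula (\ref{g H = sup}) this yields
\begin{equation*}
\|Tg\|_{\H_{p',q'}^{t',r'}(\rn)} \;\approx\; \sup_{\|k\|_{M_{p,q}^{t,r}(\rn)}=1} \left| \int_\rn Tg(x)\, k(x)\,\d x \right| \;=\; \sup_{\|k\|_{M_{p,q}^{t,r}(\rn)}=1} \left| \int_\rn g(x)\, T^{\ast} k(x)\,\d x \right|.
\end{equation*}
The H\"older-type pairing from Theorem \ref{M is predual of H}(i) bounds the right-hand side by $\|g\|_{\H_{p',q'}^{t',r'}(\rn)}\,\|T^{\ast}k\|_{M_{p,q}^{t,r}(\rn)}$, and the $M_{p,q}^{t,r}$-boundedness of $T^{\ast}$ then gives the estimate $\|Tg\|_{\H_{p',q'}^{t',r'}(\rn)} \lesssim \|g\|_{\H_{p',q'}^{t',r'}(\rn)}$ uniformly on $C_c^\infty(\rn)$. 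A density extension produces a bounded operator on the whole of $\H_{p',q'}^{t',r'}(\rn)$, after checking that the bound is independent of the approximating sequence.

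The step I expect to require the most care is the rigorous justification of the pairing identity $\int Tg \cdot k\,\d x = \int g \cdot T^{\ast} k\,\d x$ for $g \in C_c^\infty(\rn)$ and general $k \in M_{p,q}^{t,r}(\rn)$, since $k$ need not lie in $L^2(\rn)$ where the $(T, T^{\ast})$ duality is elementary. The natural remedy is to approximate $k$ by truncations $k_N := k\,\chi_{\{|k| \le N\} \cap B(0,N)}$, which lie in $L^\infty_c(\rn) \subset L^2(\rn)$ where the identity is trivial, and then pass to the limit on both sides: the left-hand side converges because $Tg$ is a fixed integrable function on the support of $g$ together with acceptable decay at infinity, while the right-hand side converges thanks to the H\"older-type pairing together with the $L^{p,q}$-continuity of $T^{\ast}$ on the nested increasing sets used in the truncation. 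Once the identity is secured on $C_c^\infty(\rn)$, the remainder of the argument is a short chain of inequalities.
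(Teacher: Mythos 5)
Your proposal follows exactly the same route as the paper: use $K^*(x,y)=K(y,x)\in\operatorname{SK}(\delta,A)$ so that $T^*$ is a Calder\'on-Zygmund operator bounded on $M_{p,q}^{t,r}(\rn)$, then compute $\|Tg\|_{\H_{p',q'}^{t',r'}(\rn)}$ via the dual-norm formula (\ref{g H = sup}), pass through the pairing identity $\int Tg\cdot k = \int g\cdot T^*k$, and close with the H\"older-type pairing from Theorem \ref{M is predual of H}(i). You are in fact somewhat more careful than the paper, which writes the chain of inequalities directly without flagging the need to justify the pairing identity for general $k\in M_{p,q}^{t,r}(\rn)$ or to restrict first to $C_c^\infty(\rn)$ and extend by density (Theorem \ref{dense block}); these are welcome refinements but not a different method.
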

\begin{proof}
	Let $T$ be a Calder\'on-Zygmund operator associated with  $K \in \operatorname{SK}(\delta, A)$. Then $T^*$  is also a Calder\'on-Zygmund operator associated with  $K^* (x,y)  = {K (y,x)}  \in \operatorname{SK}(\delta, A)$.
	Then
	\begin{align*}
		\|T g\|_{ \H _{p',q'} ^{t' , r'} (\rn) }  & \approx  \sup_{ \|f\|_{ M_{p,q}^{t,r} (\rn) =1 } }   \left| \int_\rn Tg (x) f (x )  \d x \right| \\
		& = \sup_{ \|f\|_{ M_{p,q}^{t,r} (\rn) } =1 }   \left| \int_\rn g (y) T^* f (y )  \d y \right| \\ 
		& \le \sup_{ \|f\|_{ M_{p,q}^{t,r} (\rn) } =1 }  \| g \|_{ \H _{p',q'} ^{t' , r'} (\rn)  }  \| T^* f \| _{  M_{p,q}^{t,r} (\rn)}  \\ 
		& \lesssim  \| g \|_{ \H _{p',q'} ^{t' , r'} (\rn)  }.
	\end{align*}
Thus, the proof is finished.
\end{proof}

\subsection{Commutators}
In this section, we consider  the boundedness of the commutators generated by the Calder\'on-Zygmund operators and BMO functions on Bourgain-Morrey-Lorentz spaces. To do so, we first recall the definition of bounded mean oscillation.

\begin{definition}
	For $f$ a complex valued locally integrable function on $\rn$, set 
	\begin{equation*}
		\|f \|_{\BMO} = \sup_Q \frac{1}{|Q|} \int_Q |f(y)- f_Q | \d y,
	\end{equation*} 
where $f_Q = \int_Q f (y) \d y /|Q| $, and the supremum is taken over all cubes $Q$ in $\rn$. The function $f$ is of bounded mean oscillation if $	\|f \|_{\BMO} <\infty$. $\BMO$ is the set of locally integrable such that $ 	\|f \|_{\BMO} <\infty $.

The $\BMO$-closure of $C_c^\infty (\rn)$, is denoted by $\CMO$. 
\end{definition}

Let $T$ be a  Calder\'on-Zygmund operator as in Definition \ref{def CZ op} and $b \in \BMO$. The commutator $[b,T]$ is defined by
\begin{equation*}
	[b,T] f (x) = b (x) T (f) (x) - T(bf) (x),  \quad  x\in \rn.
\end{equation*}
It is easy see that for $f , g\in C_c^\infty (\rn)$ with supp $f \cap$ supp $g = \emptyset$,  
\begin{align*}
	\int_\rn  	[b,T] f (x) g(x)\d x & = \int_\rn f (y) \int_\rn \left( b (x) - b (y)\right) K (x,y) g (x) \d x \d y\\
	& =\int_\rn f (y)  \left(  T^* (bg) (y) - b (y) T^* (g) (y) \right) \d y.
\end{align*}
Since $L^\infty_c (\rn)$  is dense in $\H _{p',q'} ^{t' , r'} (\rn)$,  the commutator $[b,T] $, which is initially defined in $L^\infty_c (\rn)$, can be  extended  on  block spaces $\H _{p',q'} ^{t' , r'} (\rn)$.
\begin{definition}
	Let $1 < q < \infty$. Let $ 1 <p <t<r <\infty $ or $ 1 <p \le t < r =\infty $. Let $b \in \BMO$ and  $T$ be a  Calder\'on-Zygmund operator as in Definition \ref{def CZ op}. Then the  commutator $[b,T] : M_{p,q}^{t,r} (\rn) \to M_{p,q}^{t,r} (\rn) $  is defined by way of the duality  $ \H _{p',q'} ^{t' , r'} (\rn)  - M_{p,q}^{t,r} (\rn)  $; for $f \in M_{p,q}^{t,r} (\rn) $, $ [b,T] f \in M_{p,q}^{t,r} (\rn) $   is a unique element $h \in M_{p,q}^{t,r} (\rn)$ such that for all $g \in  \H _{p',q'} ^{t' , r'} (\rn)$
	\begin{equation*}
		\int_\rn h (x) g (x) \d x = - \int_\rn f (x) [b,T^\ast]  g (x) \d x .
	\end{equation*}
\end{definition}

Next we recall the following lemma. Its proof is similar with  \cite[p. 418-419]{T86}. The similar result  for Morrey spaces  can be seen in \cite[Lemma 1]{DGR91}. Dao and Krantz used \cite[Lemma 1]{DGR91} to prove the boundedness of commutator $[b,T]$ on Morrey-Lorentz spaces. 
\begin{lemma} \label{est commutator}
	Let $1 < q < \infty$. Let $ 1 <p <t<r <\infty $ or $ 1 <p \le t < r =\infty $.
	Let $T$ be a  Calder\'on-Zygmund operator as in Definition \ref{def CZ op} and $b\in \BMO$. Then for $f \in M_{p,q}^{t,r} (\rn) $ and $1<s,t<\infty$,
	\begin{equation*}
		\M ^{\sharp} ( [b,T] f )  (x) \lesssim \| b\|_{\BMO} \left(  \M _s  ( T f ) (x) + \M_t ( f) (x) \right), \quad x\in \rn.
	\end{equation*}
\end{lemma}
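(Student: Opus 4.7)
\medskip

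The plan is to follow the classical argument of Strömberg and Pérez for pointwise sharp maximal estimates of commutators, adapted here only in that we produce $\M_s(Tf)$ and $\M_t(f)$ on the right-hand side. Fix $x\in\rn$ and a cube $Q\ni x$, and write $b_Q := |Q|^{-1}\int_Q b$. The key algebraic identity is
\begin{equation*}
[b,T]f(y) \;=\; (b(y)-b_Q)\,Tf(y) \;-\; T\bigl((b-b_Q)f_1\bigr)(y) \;-\; T\bigl((b-b_Q)f_2\bigr)(y),
\end{equation*}
where $f_1 := f\chi_{2Q}$ and $f_2 := f\chi_{\rn\setminus 2Q}$. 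Choosing the subtracted constant $c_Q := T((b-b_Q)f_2)(x_Q)$, with $x_Q$ the center of $Q$, one has
\begin{equation*}
\frac{1}{|Q|}\int_Q |[b,T]f(y)-c_Q|\,\d y \;\le\; I_1 + I_2 + I_3,
\end{equation*}
with $I_1,I_2,I_3$ the averages over $Q$ of the three terms above. Taking the supremum over cubes containing $x$ would then give $\M^{\sharp}([b,T]f)(x)$, so it suffices to bound each $I_j$ by $\|b\|_{\BMO}(\M_s(Tf)(x)+\M_t(f)(x))$ uniformly in $Q$.

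For $I_1$, apply H\"older's inequality with exponents $s$ and $s'$ and use the John--Nirenberg estimate $\bigl(|Q|^{-1}\int_Q |b-b_Q|^{s'}\bigr)^{1/s'}\lesssim \|b\|_{\BMO}$ to obtain
\begin{equation*}
I_1 \;\lesssim\; \|b\|_{\BMO}\,\M_s(Tf)(x).
\end{equation*}
For $I_2$, I would use Kolmogorov's inequality together with the weak-type $(1,1)$ boundedness of $T$: for any $p_0\in(0,1)$,
\begin{equation*}
\Bigl(\frac{1}{|Q|}\int_Q |T((b-b_Q)f_1)|^{p_0}\Bigr)^{1/p_0} \;\lesssim\; \frac{1}{|Q|}\int_{2Q}|b(z)-b_Q||f(z)|\,\d z.
\end{equation*}
H\"older's inequality with exponents $t',t$ and John--Nirenberg (again with the loss $|b_{2Q}-b_Q|\lesssim\|b\|_{\BMO}$ absorbed) yield $I_2 \lesssim \|b\|_{\BMO}\M_t(f)(x)$.

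For $I_3$, which is the genuinely cancellation-sensitive piece, I would exploit the standard kernel regularity. For $y\in Q$ and $z\notin 2Q$,
\begin{equation*}
|K(y,z)-K(x_Q,z)| \;\lesssim\; \frac{\ell(Q)^\delta}{|x_Q-z|^{n+\delta}},
\end{equation*}
so that $|T((b-b_Q)f_2)(y)-c_Q|$ is majorised by $\ell(Q)^\delta$ times an integral over $(2Q)^c$. Decomposing that integral into dyadic annuli $2^{k+1}Q\setminus 2^k Q$ gives a geometric factor $2^{-k\delta}$, and H\"older's inequality on each annulus produces the averages
\begin{equation*}
\Bigl(\frac{1}{|2^{k+1}Q|}\int_{2^{k+1}Q}|b-b_Q|^{t'}\Bigr)^{1/t'} \;\lesssim\; (k+1)\|b\|_{\BMO}, \qquad \Bigl(\frac{1}{|2^{k+1}Q|}\int_{2^{k+1}Q}|f|^{t}\Bigr)^{1/t} \;\le\; \M_t(f)(x).
\end{equation*}
Summing $\sum_{k\ge 1}(k+1)2^{-k\delta}<\infty$ yields $I_3 \lesssim \|b\|_{\BMO}\M_t(f)(x)$, and combining the three bounds completes the argument.

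The main (mild) obstacle is the bookkeeping in $I_3$: one must be careful that the constant $c_Q$ is chosen so that $I_3$ integrates a difference of kernels (cancellation), and that the passage from $b_Q$ to $b_{2^{k+1}Q}$ inside the $\BMO$ average only costs the extra $(k+1)$ factor, which is absorbed by the geometric tail $2^{-k\delta}$. Once those two points are handled, the estimate is routine. Note that the letter $t$ in the conclusion is a H\"older exponent, distinct from the Morrey parameter of $M_{p,q}^{t,r}(\rn)$; the pointwise inequality in fact holds for \emph{any} locally integrable $f$, and the restrictions on $p,q,t,r$ from the hypothesis only enter when the estimate is used in conjunction with Theorem~\ref{sharp BML} in the sequel.
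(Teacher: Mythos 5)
The paper does not prove this lemma; it simply cites Torchinsky \cite[p.~418--419]{T86} and Di~Fazio--Ragusa \cite[Lemma 1]{DGR91}. Your decomposition $[b,T]f = (b-b_Q)Tf - T((b-b_Q)f_1) - T((b-b_Q)f_2)$ with $c_Q = T((b-b_Q)f_2)(x_Q)$ is exactly the classical one used in those references, and your treatments of $I_1$ and $I_3$ are correct.

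There is, however, a genuine gap in $I_2$. Kolmogorov's inequality combined with weak type $(1,1)$ gives you a bound on
\begin{equation*}
\left(\frac{1}{|Q|}\int_Q \left|T\left((b-b_Q)f_1\right)\right|^{p_0}\,\d y\right)^{1/p_0}, \qquad p_0<1,
\end{equation*}
but since $p_0<1$, Jensen's inequality says this quantity is \emph{smaller} than the $L^1$-average $I_2 = \frac{1}{|Q|}\int_Q |T((b-b_Q)f_1)|$, so the estimate runs in the wrong direction and does not control $I_2$. (A weak $(1,1)$ bound never controls an $L^1$-average on a finite cube: the Cavalieri integral $\int_0^\infty \min(|Q|, A/\lambda)\,\d\lambda$ diverges.) What this step actually bounds is the $L^{p_0}$-oscillation sharp function $\M^{\sharp}_{p_0}$, not $\M^{\sharp}$; that is the Pérez variant, and it would require reformulating the lemma and Theorem~\ref{sharp BML}. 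The direct fix, and what Torchinsky and Di~Fazio--Ragusa do, is to pick $1<v<t$ and use the \emph{strong} $L^v$-boundedness of $T$ together with Jensen going the correct way: $I_2 \le \big(\frac{1}{|Q|}\int_Q |T((b-b_Q)f_1)|^v\big)^{1/v} \lesssim |Q|^{-1/v}\|(b-b_Q)f\|_{L^v(2Q)}$, then H\"older on $2Q$ with exponents $t/v$ and $(t/v)'$ and John--Nirenberg give $I_2 \lesssim \|b\|_{\BMO}\M_t f(x)$, since $1/v = 1/t + (t-v)/(vt)$ balances the volume factors. With that replacement your argument is complete.

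Your closing observation is correct and worth keeping: the letter $t$ in the conclusion clashes with the Morrey index $t$ in $M_{p,q}^{t,r}(\rn)$; the pointwise estimate holds for any locally integrable $f$ and any exponents $s,t\in(1,\infty)$, independently of the Bourgain--Morrey--Lorentz hypotheses.
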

Now we have the boundedness of commutators  on Bourgain-Morrey-Lorentz spaces.
\begin{theorem}
	Let $1 < q < \infty$. Let $ 1 <p <t<r <\infty $ or $ 1 <p \le t < r =\infty $. Suppose that $T$ is a  Calder\'on-Zygmund operator as in Definition \ref{def CZ op} and $b\in \BMO$. Then  $[b,T]$ is a bounded operator on  $ M_{p,q}^{t,r} (\rn)  $.
\end{theorem}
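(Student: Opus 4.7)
The plan is to prove boundedness of $[b,T]$ on $M_{p,q}^{t,r}(\rn)$ by combining the sharp maximal inequality of Theorem \ref{sharp BML}, the pointwise estimate of Lemma \ref{est commutator}, and the already-established boundedness of $T$ and $\mathcal M$ on $M_{p,q}^{t,r}(\rn)$. The target inequality is
\begin{equation*}
\|[b,T]f\|_{M_{p,q}^{t,r}(\rn)} \lesssim \|b\|_{\BMO}\,\|f\|_{M_{p,q}^{t,r}(\rn)}.
\end{equation*}

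First, I would establish that the powered Hardy-Littlewood maximal operator $\mathcal M_\sigma(g) := \mathcal M(|g|^\sigma)^{1/\sigma}$ is bounded on $M_{p,q}^{t,r}(\rn)$ for every $\sigma \in (1,p)$. Using the identity $\||h|^\sigma\|_{L^{p/\sigma,q/\sigma}(Q)} = \|h\|_{L^{p,q}(Q)}^\sigma$ from Remark \ref{basic Lorentz}, one verifies the homogeneity relation
\begin{equation*}
\|\mathcal M_\sigma g\|_{M_{p,q}^{t,r}(\rn)} = \|\mathcal M(|g|^\sigma)\|_{M_{p/\sigma,q/\sigma}^{t/\sigma,r/\sigma}(\rn)}^{1/\sigma}.
\end{equation*}
Since $1 < p/\sigma \le t/\sigma < r/\sigma$ (or the corresponding $r = \infty$ case), Theorem \ref{HL M} applies to $\mathcal M$ on $M_{p/\sigma,q/\sigma}^{t/\sigma,r/\sigma}(\rn)$, giving $\|\mathcal M_\sigma g\|_{M_{p,q}^{t,r}(\rn)} \lesssim \|g\|_{M_{p,q}^{t,r}(\rn)}$.

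Next, fix $\sigma_1, \sigma_2 \in (1,p)$. By Lemma \ref{est commutator} (applied with exponents $\sigma_1, \sigma_2$ in place of the generic $s, t$ there, to avoid a notational clash with the Morrey exponent $t$),
\begin{equation*}
\mathcal M^\sharp([b,T]f)(x) \lesssim \|b\|_{\BMO}\left(\mathcal M_{\sigma_1}(Tf)(x) + \mathcal M_{\sigma_2}(f)(x)\right).
\end{equation*}
Applying Theorem \ref{sharp BML}, the previous step, and the already-established $M_{p,q}^{t,r}(\rn)$-boundedness of the Calder\'on-Zygmund operator $T$, I obtain
\begin{equation*}
\|[b,T]f\|_{M_{p,q}^{t,r}(\rn)} \lesssim \|\mathcal M^\sharp([b,T]f)\|_{M_{p,q}^{t,r}(\rn)} \lesssim \|b\|_{\BMO}\big(\|\mathcal M_{\sigma_1}(Tf)\|_{M_{p,q}^{t,r}(\rn)} + \|\mathcal M_{\sigma_2}f\|_{M_{p,q}^{t,r}(\rn)}\big) \lesssim \|b\|_{\BMO}\|f\|_{M_{p,q}^{t,r}(\rn)}.
\end{equation*}

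The main obstacle I anticipate is ensuring the \emph{a priori} finiteness needed to legitimately apply the sharp maximal inequality of Theorem \ref{sharp BML} to $[b,T]f$. The cleanest way is to first verify the bound for a dense subclass (for instance $f \in L^\infty_c(\rn)$, where classical theory ensures $[b,T]f$ has finite $M_{p,q}^{t,r}$-norm and $\mathcal M^\sharp([b,T]f) \in M_{p,q}^{t,r}$), then to extend to general $f \in M_{p,q}^{t,r}(\rn)$ by density and the duality-based definition of $[b,T]$ recorded just above the theorem. A minor bookkeeping point is the choice $\sigma_1, \sigma_2 < p$, which is always feasible since $p > 1$.
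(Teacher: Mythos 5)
Your proposal follows essentially the same route as the paper: Lemma~\ref{est commutator} gives the pointwise sharp maximal bound, Theorem~\ref{sharp BML} converts it to a Bourgain-Morrey-Lorentz norm bound, and boundedness of the powered Hardy--Littlewood maximal operator together with the Calder\'on--Zygmund operator closes the loop. One correction is needed in your choice of powered exponents: you restrict to $\sigma \in (1,p)$, but the homogeneity identity $\|\M_\sigma g\|_{M_{p,q}^{t,r}(\rn)} = \|\M(|g|^\sigma)\|_{M_{p/\sigma,q/\sigma}^{t/\sigma,r/\sigma}(\rn)}^{1/\sigma}$ lands you in $M_{p/\sigma,q/\sigma}^{t/\sigma,r/\sigma}(\rn)$, and Theorem~\ref{HL M} (via the underlying Lemma~\ref{HL Lorentz}) also requires $q/\sigma \ge 1$. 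Thus you must take $\sigma_1,\sigma_2 \in (1,\min(p,q))$, exactly as the paper does with its $\eta$; since the hypotheses impose $1<q<\infty$, this interval is nonempty and the fix is immediate. Your concern about a priori finiteness is well-placed and is in fact glossed over in the paper's proof as well; the density argument you sketch (establish the estimate for $f\in L^\infty_c(\rn)$ and then pass to the duality-based extension of $[b,T]$) is the correct remedy. Finally, you treat $r<\infty$ and $r=\infty$ uniformly, whereas the paper dispatches $r=\infty$ to an external reference; your uniform treatment is legitimate because both Theorem~\ref{HL M} and Theorem~\ref{sharp BML} cover the $r=\infty$ case.
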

\begin{proof}
	The case $r=\infty$ is in \cite[Theorem 1.5]{DK24}. We only need to prove $r<\infty$. Let $ f \in  M_{p,q}^{t,r} (\rn)$.
	Chose $\eta \in (1, \min(p,q))$. By Lemma \ref{est commutator}, we have
	\begin{equation*}
			\M ^{\sharp} ( [b,T] f )  (x) \lesssim \| b\|_{\BMO} \left(  \M _\eta  ( T f ) (x) + \M_\eta ( f) (x) \right).
	\end{equation*}
	By Lemma \ref{sharp BML}, we obtain
	\begin{align*}
		\| [b,T] f \|_{ M_{p,q}^{t,r} (\rn) } & \lesssim \| \M^{\sharp}  [b,T] f \|_{ M_{p,q}^{t,r} (\rn) } \\
		& \lesssim \| b\|_{\BMO}  \left( \| \M _\eta  ( T f )  \|_{ M_{p,q}^{t,r} (\rn) }  + \|  \M_\eta ( f)\|_{ M_{p,q}^{t,r} (\rn) }  \right)\\
		& \lesssim \| b\|_{\BMO}  \left( \|    T f   \|_{ M_{p,q}^{t,r} (\rn) }  + \|  f\|_{ M_{p,q}^{t,r} (\rn) }  \right)\\
		& \lesssim \| b\|_{\BMO}  \|  f\|_{ M_{p,q}^{t,r} (\rn) }  .
	\end{align*}
	Thus, we obtain $ \| [b,T]f\|_{ M_{p,q}^{t,r} (\rn) }  \lesssim \|f\|_{ M_{p,q}^{t,r} (\rn) }$  as desired. 
\end{proof}
\begin{remark}
	By duality, we also obtain the boundedness of $[b,T]$ in $\H _{p',q'} ^{t' , r'} (\rn)$. In addition, we have
	\begin{equation*}
	 \| [b,T]f \|_{ \H _{p',q'} ^{t' , r'} (\rn) }  \lesssim \| b\|_{BMO} \|f\|_{ \H _{p',q'} ^{t' , r'} (\rn) } .
	\end{equation*}
\end{remark}

Next, we have the following  Minkowski type inequality for Bourgain-Morrey-Lorentz spaces. 
\begin{lemma} \label{Minkowski type BML}
	Let  $1 < q < \infty$. Let $ 1 <p <t<r <\infty $ or $ 1 <p \le t < r =\infty $. Then
	\begin{equation*}
			\left\|   \int_\rn f (\cdot, y) \d y \right\|_{ M_{p,q}^{t,r} (\rn) } \lesssim  \int_\rn     \| f (\cdot, y) \|_{  M_{p,q}^{t,r} (\rn) } \d y  .
	\end{equation*}
\end{lemma}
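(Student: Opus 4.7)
The plan is to exploit the duality between $M_{p,q}^{t,r}(\rn)$ and the block space $\mathcal{H}_{p',q'}^{t',r'}(\rn)$ already established in Theorem \ref{M is predual of H}, specifically the equivalence (\ref{f M = sup}), which lets me rewrite the Bourgain-Morrey-Lorentz norm as a supremum of $L^1$-type pairings against block-space elements. Once the norm is realized as such a supremum, the inequality follows from the classical Minkowski idea: pull the integral over $y$ outside the supremum.

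More concretely, I would first write
\begin{equation*}
\left\|\int_\rn f(\cdot, y)\, \d y\right\|_{M_{p,q}^{t,r}(\rn)} \approx \sup_{\|g\|_{\mathcal{H}_{p',q'}^{t',r'}(\rn)}=1} \left|\int_\rn \left(\int_\rn f(x,y)\, \d y\right) g(x)\, \d x\right|.
\end{equation*}
Then I would apply Fubini's theorem (justified under the tacit assumption that the right-hand side is finite, so that $y \mapsto f(\cdot,y)$ is Bochner-integrable in a suitable sense, and by first truncating $f$ to a bounded region if necessary) to swap the order of integration, obtaining
\begin{equation*}
\left|\int_\rn \int_\rn f(x,y)g(x)\, \d x\, \d y\right| \le \int_\rn \left|\int_\rn f(x,y) g(x)\, \d x\right| \d y.
\end{equation*}
For each fixed $y$, the inner integral is bounded by $\|f(\cdot,y)\|_{M_{p,q}^{t,r}(\rn)} \|g\|_{\mathcal{H}_{p',q'}^{t',r'}(\rn)}$ via the duality inequality from Theorem \ref{M is predual of H}(i). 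Pulling out $\|g\|_{\mathcal{H}_{p',q'}^{t',r'}(\rn)} = 1$ and taking the supremum yields the claimed inequality.

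The main obstacle is the measurability/integrability issue needed to apply Fubini: one needs $f(x,y)$ to be jointly measurable and the iterated integral $\int \|f(\cdot,y)\|_{M_{p,q}^{t,r}} \d y$ to be finite (otherwise there is nothing to prove). A standard workaround, if needed, is to first restrict attention to $y$ in a ball $B(0,N)$, apply the argument there, and then let $N \to \infty$ using monotone convergence of the integrand on the right. No serious analytical difficulty arises beyond these bookkeeping details; the inequality is essentially a duality-Fubini argument made possible by the predual description of $M_{p,q}^{t,r}(\rn)$ developed in Section \ref{block space}.
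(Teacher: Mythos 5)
Your proposal is correct and takes essentially the same route as the paper: realize the $M_{p,q}^{t,r}$-norm via the duality with $\mathcal{H}_{p',q'}^{t',r'}$ from Theorem \ref{M is predual of H}, swap the order of integration, bound the inner pairing by $\|f(\cdot,y)\|_{M_{p,q}^{t,r}}\|g\|_{\mathcal{H}_{p',q'}^{t',r'}}$ for each fixed $y$, and pull out $\|g\|=1$. Your additional remarks on justifying Fubini (joint measurability and truncation) are sensible bookkeeping that the paper leaves implicit.
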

\begin{proof}
	By Theorem \ref{M is predual of H}, we obtain
\begin{align*}
	\left\|   \int_\rn f (\cdot, y) \d y \right\|_{ M_{p,q}^{t,r} (\rn) } & \approx \sup_{ \|g\|_{ \H _{p',q'} ^{t' , r'} (\rn) }  \le 1 }  \left| \int_\rn g (x) \int_\rn f (x, y) \d y  \d x \right| \\
	& =  \sup_{ \|g\|_{ \H _{p',q'} ^{t' , r'} (\rn) }  \le 1 }   \int_\rn\int_\rn  |  g (x) f (x, y) | \d x \d y    \\
	& \le  \sup_{ \|g\|_{ \H _{p',q'} ^{t' , r'} (\rn) }  \le 1 }   \int_\rn   \|  g\|_{\H _{p',q'} ^{t' , r'} (\rn)   }   \| f (\cdot, y) \|_{  M_{p,q}^{t,r} (\rn) } \d y    \\
	& =  \int_\rn     \| f (\cdot, y) \|_{  M_{p,q}^{t,r} (\rn) } \d y  .
\end{align*}
Thus, we obtain the desired result.
\end{proof}
\begin{remark}
	Lemma \ref{Minkowski type BML} can not be obtained from Corollary \ref{convolution BML} since $\chi_\rn \notin L^1 (\rn)$.
\end{remark}

\section{Hardy factorization in Bourgain-Morrey-Lorentz spaces} \label{Hardy factorization Section}
In this section, we establish the Hardy factorization in Bourgain-Morrey-Lorentz spaces. For the theory of Hardy spaces, we refer the reader to   \cite[Chapter 2]{G142}. To our purpose, we use the  $L^\infty$-atom definition of the Hardy space $H^1(\rn)$.

\begin{definition}
	A complex-valued function $a$  is an $L^\infty$-atom if it is supported on a cube $Q$, $\int_\rn a(x) \d x =0$ and $\|a\|_{L^\infty (\rn)}  \le |Q|^{-1}$. We now denote the Hardy space $H^1(\rn)$ by 
	\begin{equation*}
		H^1(\rn) = \left\{    \sum_{k \ge 1} \lambda_k a_k : a_k \; \operatorname{atoms}, \lambda\in \mathbb C, \sum_{k\ge 1} |\lambda_k | <\infty  \right\} .
	\end{equation*}
We define a norm on $H^1(\rn)$ by 
\begin{equation*}
	\| f \|_{	H^1(\rn)}  = \inf \left\{ \sum_{k\ge 1} |\lambda_k |  : f =   \sum_{k \ge 1} \lambda_k a_k      \right\}.
\end{equation*}
\end{definition}

The following lemma is a basic result of $H^1 (\rn)$. 
\begin{lemma}[Lemma 4.3, \cite{KT06}] \label{H1 basic}
	Let $x_0, y_0 \in \rn$ be such that $|x_0 - y_0 | = MR$  for some $R>0$  and $M >10$. If $ \int_\rn F (x) \d x =0$  and for all $x\in \rn $, $ |F(x) | \le R^{-n}\left(  \chi_{B (x_0,R)}  +  \chi_{B (y_0,R)} \right)  $
then there is a constant $C = C_n >0$ such that
\begin{equation*}
	\| F\|_{H^1 (\rn)}  \le C \log  M.
\end{equation*}
\end{lemma}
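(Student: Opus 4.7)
The plan is to decompose $F$ into a small number of $L^\infty$-atoms plus a correction term that measures the ``mass transport'' between the two balls, then control the correction via a dyadic telescoping argument.

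First I would split $F = F_+ + F_-$ with $F_+ := F\chi_{B(x_0,R)}$ and $F_- := F\chi_{B(y_0,R)}$. Since $\int F = 0$, we have $c := \int F_+ = -\int F_- $, and the hypothesis forces $|c| \le C_n$ for a purely dimensional constant. Setting $\phi_{z}(x) := |B(z,R)|^{-1}\chi_{B(z,R)}(x)$, I rewrite
\begin{equation*}
F = \bigl(F_+ - c\,\phi_{x_0}\bigr) + \bigl(F_- + c\,\phi_{y_0}\bigr) + c\,\bigl(\phi_{x_0} - \phi_{y_0}\bigr).
\end{equation*}
Each of the first two summands is supported on a single ball of radius $R$, has mean zero by construction, and is bounded by a dimensional constant times $R^{-n}$; hence, up to multiplication by a dimensional constant, each is an $L^\infty$-atom. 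Their combined $H^1$-contribution is $O(1)$, so it remains to bound $\|\phi_{x_0} - \phi_{y_0}\|_{H^1}$, and the target is $O(\log M)$.

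The main obstacle is precisely this last estimate: a single-atom bound on a ball containing both centers gives only $M^n$, which is far too large. I would handle it by a dyadic telescoping that doubles the radius at each step. Set $N := \lceil \log_2 M \rceil + 2$ and, for $k = 0,1,\dots, N$, define
\begin{equation*}
\psi_k^{x_0} := \frac{1}{|B(x_0,2^k R)|}\chi_{B(x_0,2^k R)},\qquad \psi_k^{y_0} := \frac{1}{|B(y_0,2^k R)|}\chi_{B(y_0,2^k R)},
\end{equation*}
so that $\psi_0^{x_0} = \phi_{x_0}$ and $\psi_0^{y_0} = \phi_{y_0}$. Decompose
\begin{equation*}
\phi_{x_0} - \phi_{y_0} = \sum_{k=0}^{N-1}\bigl(\psi_k^{x_0} - \psi_{k+1}^{x_0}\bigr) + \bigl(\psi_N^{x_0} - \psi_N^{y_0}\bigr) + \sum_{k=0}^{N-1}\bigl(\psi_{k+1}^{y_0} - \psi_{k}^{y_0}\bigr).
\end{equation*}
Each $\psi_k^{x_0} - \psi_{k+1}^{x_0}$ has mean zero, is supported in $B(x_0, 2^{k+1}R)$, and its $L^\infty$ norm is comparable to $(2^k R)^{-n}$; rescaling by a dimensional constant turns it into an $L^\infty$-atom, so its $H^1$ norm is $O(1)$. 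The telescope on each side therefore contributes $O(N) = O(\log M)$. For the middle term, the choice $2^N R \ge 2MR$ ensures $B(y_0, 2^N R) \subset B(x_0, 2^{N+1}R)$, so $\psi_N^{x_0} - \psi_N^{y_0}$ is supported in this bigger ball, has mean zero, and its $L^\infty$ norm is $O((2^N R)^{-n})$; it is again a bounded multiple of an atom and contributes $O(1)$.

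Combining the three pieces yields $\|\phi_{x_0} - \phi_{y_0}\|_{H^1(\rn)} \le C_n \log M$, and feeding this back into the initial decomposition together with $|c|\le C_n$ gives $\|F\|_{H^1(\rn)} \le C_n \log M$, as required. The essential technical point—where care is needed in the write-up—is verifying that in each telescope step the differences $\psi_k^{x_0} - \psi_{k+1}^{x_0}$, once rescaled by the appropriate dimensional factor, genuinely satisfy the size condition for an $L^\infty$-atom on $B(x_0, 2^{k+1}R)$ (and analogously for the $y_0$-side and the middle term), since this is what converts the geometric doubling into the logarithmic dependence on $M$.
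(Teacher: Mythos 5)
Your argument is correct and is essentially the standard proof of this lemma; the paper itself does not prove it but cites it from Komori--Takahiro (Lemma 4.3 of \cite{KT06}), and the telescoping-by-dyadic-annuli decomposition you give is the same device used there (and going back to Coifman--Rochberg--Weiss). One small remark for the write-up: the paper's $L^\infty$-atoms are normalized on cubes, while you work with balls; since any ball $B(z,\rho)$ sits inside the concentric cube of side $2\rho$ with volume $(2^n/\omega_n)\,|B(z,\rho)|$, each of your ball-supported mean-zero pieces is a bounded (dimensional) multiple of a cube-atom, so this only changes the constant $C_n$ and the estimate $\|F\|_{H^1(\rn)} \le C_n \log M$ survives intact.
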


\begin{definition}
	We say that $T$ is homogeneous if, for any cube $Q = Q(c_Q, \ell(Q)/2)$  in $\rn$, $T$ satisfies
	\begin{equation*}
		|T (\chi_\Omega) | (x) | \ge  \frac{|\Omega|}{M^n |Q|}
	\end{equation*}
	for any Borel set $\Omega \subset Q$, and for all $|x -c_Q | = M \ell (Q) /2$, $M >10$.
\end{definition}
\begin{remark}
	The Riesz transforms and the Cauchy integral operators associated with Lipschitz curves are homogeneous. 
\end{remark}

Then we have the following result.

\begin{lemma} \label{H1 lemma}
	Let $T$ be a Calder\'on-Zygmund operator associated with  $K  \in \operatorname{SK}(\delta, A)$. 
	 Suppose that $T$ is homogeneous.
	If $f \in H^1 (\rn)$ can be written as $ f =\sum_{k\ge 1 }  \lambda_k a _k $, then there exists $ \{ g_k\}_{k\ge 1 } , \{ h_k\}_{k\ge 1 }  \subset L_c^\infty (\rn)$ such that 
	\begin{equation*}
		\| a_k - [ g_k T^* (h_k) - h_k T (g_k) ] \|_{H^1 (\rn)}  \lesssim \frac{\log M}{ M^\delta}
	\end{equation*}
and \begin{equation*}
	\sum_{k\ge 1} |\lambda_k| \| g_k \|_{\H _{p',q'} ^{t' , r'} (\rn) } 	\| h_k\|_{ M_{p,q}^{t,r} (\rn) } \le C M^n \|f\|_{H^1(\rn)},
\end{equation*}
where $M >0$ is sufficiently large. Furthermore, we have
\begin{equation*}
	\left\|  f - \sum_{k \ge 1}  \lambda_k \left(   g_k T^* (h_k) - h_k T (g_k)   \right)  \right\|_{ H^1 (\rn)}  \le \frac{1}{2} \| f\|_{H^1(\rn)} .
\end{equation*}
\end{lemma}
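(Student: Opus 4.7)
The plan is, for each atom $a_k$ in the decomposition, to set $h_k := a_k$ and to take $g_k$ to be a signed and scaled indicator of a companion cube $\tilde{Q}_k$ placed far from the supporting cube $Q_k$, chosen so that $T(g_k)$ equals $-1$ at the center of $Q_k$. More precisely, I would fix $M>10$ (to be optimized at the end), let $Q_k$ have center $c_k$ and side length $R_k$, and then pick $\tilde{Q}_k$ of side length $R_k$ whose center $\tilde{c}_k$ satisfies $|\tilde{c}_k-c_k|=MR_k/2$. The homogeneity of $T$, applied to the cube $\tilde{Q}_k$ with evaluation point $c_k$, forces $|T(\chi_{\tilde{Q}_k})(c_k)|\ge M^{-n}$, so defining $g_k := -\chi_{\tilde{Q}_k}/T(\chi_{\tilde{Q}_k})(c_k)$ gives $|g_k|\le M^n\chi_{\tilde{Q}_k}$ together with $T(g_k)(c_k)=-1$.

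The first main step is a pair of pointwise estimates. On $Q_k$, the kernel regularity in the first slot of $K$ applied to $x\in Q_k$ and $c_k$ yields
\[
|T(g_k)(x)-T(g_k)(c_k)| \le M^n\int_{\tilde{Q}_k}|K(x,y)-K(c_k,y)|\,\d y \lesssim M^n\cdot \frac{R_k^\delta}{(MR_k)^{n+\delta}}\cdot R_k^n = M^{-\delta},
\]
so $T(g_k)=-1+E_k$ on $Q_k$ with $|E_k|\lesssim M^{-\delta}$. On $\tilde{Q}_k$, the cancellation $\int a_k=0$ together with the same regularity gives
\[
|T^*(h_k)(x)|=\Bigl|\int [K(y,x)-K(c_k,x)]a_k(y)\,\d y\Bigr| \lesssim \frac{R_k^\delta}{(MR_k)^{n+\delta}} \lesssim \frac{1}{M^{n+\delta}|Q_k|},
\]
whence $|g_k T^*(h_k)(x)|\lesssim M^{-\delta}|Q_k|^{-1}$ on $\tilde{Q}_k$. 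Since $g_k$ vanishes on $Q_k$ and $h_k$ vanishes on $\tilde{Q}_k$, these two bounds combine to show that $F_k := a_k-[g_k T^*(h_k)-h_k T(g_k)]$ is supported in $Q_k\cup \tilde{Q}_k$ and obeys $|F_k|\le C M^{-\delta}|Q_k|^{-1}$ there. The identity $\int g_k T^*(h_k)=\int T(g_k) h_k$ together with $\int a_k=0$ forces $\int F_k=0$, and then Lemma \ref{H1 basic} (applied to a constant multiple of $M^\delta F_k$ with $R\sim R_k$ and ratio $|c_k-\tilde{c}_k|/R\sim M$) delivers $\|F_k\|_{H^1(\rn)}\lesssim M^{-\delta}\log M$.

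For the norm estimate on the building blocks, a direct dimensional computation (splitting the sum defining $\|\cdot\|_{M_{p,q}^{t,r}}$ into dyadic cubes contained in $Q_k$ and dyadic cubes containing $Q_k$, both geometric series that converge because $p<t$ and $t<r$) shows $\|h_k\|_{M_{p,q}^{t,r}(\rn)}\lesssim |Q_k|^{1/t-1}$. Using the observation $\|\chi_Q\|_{\H_{p',q'}^{t',r'}(\rn)}\approx |Q|^{1/t'}$ together with the lattice property of $\H_{p',q'}^{t',r'}(\rn)$ and $|g_k|\le M^n\chi_{\tilde{Q}_k}$ yields $\|g_k\|_{\H_{p',q'}^{t',r'}(\rn)}\lesssim M^n|Q_k|^{1/t'}$. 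Since $1/t+1/t'=1$, the product $\|g_k\|_{\H}\|h_k\|_{M}\lesssim M^n$. Starting from a near-optimal atomic decomposition with $\sum_k|\lambda_k|\le 2\|f\|_{H^1(\rn)}$ then produces $\sum_k|\lambda_k|\|g_k\|_{\H}\|h_k\|_{M}\lesssim M^n\|f\|_{H^1(\rn)}$.

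The third inequality follows at once from the first by subadditivity,
\[
\Bigl\|f-\sum_{k\ge 1}\lambda_k\bigl(g_k T^*(h_k)-h_k T(g_k)\bigr)\Bigr\|_{H^1(\rn)} \le \sum_{k\ge 1}|\lambda_k|\|F_k\|_{H^1}\lesssim \frac{\log M}{M^\delta}\|f\|_{H^1(\rn)},
\]
which is at most $\tfrac{1}{2}\|f\|_{H^1(\rn)}$ after choosing $M$ large. I expect the principal technical obstacle to be the simultaneous bookkeeping of the pointwise bound and the zero-mean condition for $F_k$: the pointwise bound requires two separate applications of the kernel smoothness on different geometric regions, and these must be reconciled with the homogeneity-based lower bound $|T(\chi_{\tilde{Q}_k})(c_k)|\ge M^{-n}$; the precise geometric choice (same side length $R_k$, centers at distance $MR_k/2$) is what allows both to operate at the same time and forces the final decay rate $M^{-\delta}\log M$ rather than anything weaker.
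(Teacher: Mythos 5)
Your proposal is correct and follows essentially the same path as the paper: choose a companion cube at distance $\sim MR_k$ from $Q_k$, use homogeneity of $T$ to get the lower bound $|T(\chi_{\tilde Q_k})(c_k)|\gtrsim M^{-n}$, derive pointwise $O(M^{-\delta}|Q_k|^{-1})$ bounds on $F_k$ from kernel regularity together with $\int a_k=0$, verify $\int F_k=0$ from $\int gT^*(h)=\int T(g)h$, and finish with Lemma \ref{H1 basic} plus the dimensional identities $\|\chi_Q\|_{\H_{p',q'}^{t',r'}}\approx|Q|^{1/t'}$ and $\|a_k\|_{M_{p,q}^{t,r}}\lesssim|Q_k|^{-1/t'}$. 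The only difference from the paper is a cosmetic one in the normalization—you absorb the factor $T(\chi_{\tilde Q_k})(c_k)^{-1}$ into $g_k$ and keep $h_k=a_k$, whereas the paper keeps $g=\chi_{\tilde Q}$ and puts the factor into $h=-a/T(g)(c_Q)$—which leads to identical estimates after relabeling.
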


\begin{proof}
	We use the idea from \cite[Lemma 4.2]{DK24}.
	Let $a$ be an $L^\infty$-atom for $H^1 (\rn)$, supported on $Q $ in $\rn$ such that 
	\begin{equation*}
		\|a\|_{L^\infty}  \le |Q|^{-1} \quad  \operatorname{and} \quad \int_\rn a (x) \d x=0.
	\end{equation*}
Let $M \ge 10$  be a real number, which will be determined later. Let $y_0 \in \rn$ be such that $|c_Q - y_0 | = M \ell (Q) /2$. Next, we set
\begin{equation*}
	g (x) = \chi_{ Q (y_0, \ell(Q) /2) } (x)  \quad \operatorname{and }
\quad h (x) = -\frac{ a(x)}{T(g) (c_Q)}.
 \end{equation*}
It is obvious that these functions are in $L_c^\infty  (\rn) $. In addition, since $T$ is homogeneous, then there exists a constant $C>0$ such that
\begin{equation*}
	| T (g) (c_Q)  | \ge C M^{-n}.
\end{equation*}
Note that   $\| g \|_{\H _{p',q'} ^{t' , r'} (\rn) }  \approx \ell (Q) ^{n/t'} $ and
\begin{equation*}
	\| h\|_{ M_{p,q}^{t,r} (\rn) } = \frac{\|a\|_{ M_{p,q}^{t,r} (\rn) }}{ |  T (g) (c_Q) |} \le C M^n |Q|^{-1} |Q|^{1/t} .
\end{equation*}
Therefore,
\begin{equation*}
	\| g \|_{\H _{p',q'} ^{t' , r'} (\rn) } 	\| h\|_{ M_{p,q}^{t,r} (\rn) }  \le C M^n.
\end{equation*}
Next we claim that 
\begin{equation} \label{F le log M / M^delta}
		\| a - (  g T^* (h) - h T (g) ) \|_{H^1 (\rn)}  \le C \frac{\log M}{ M^\delta}.
\end{equation}
Indeed, let us set
\begin{equation*}
	F =a - (   g T^* (h) - h T (g) ).
\end{equation*}
Then,
\begin{align*}
	|F (x) | & \le |a (x) +  h T (g) | + |g T^* (h) | \\
	& =\left|  \frac{ a(x) \left( T(g) (c_Q) -T(g) (x) \right)   }{T(g) (c_Q)} \right|  +  |g T^* (h) | =: J_1 (x) + J_2 (x).
\end{align*}
For $J_1$, since  supp $a \subset Q$, then it suffices to consider $x\in Q$. Therefore, using the regularity conditions of $K$, we obtain
\begin{align*}
	|J_1 (x) | & \lesssim  M^n \chi_{Q} (x) \|a\|_{L^\infty(\rn)} \int_{Q (y_0, \ell(Q) /2)} |K(c_Q, y) -K (x,y)| \d y \\
	& \lesssim M^n \chi_{Q} (x) |Q|^{-1}\int_{Q (y_0, \ell(Q) /2)} \frac{|c_Q - x|^\delta}{ |y-c_Q|^{n+\delta}} \d y \\
	& \lesssim M^n \chi_{Q} (x) |Q|^{-1} \frac{\ell(Q)^\delta }{ ( M \ell (Q) )^{n+ \delta}    }  \ell(Q) ^n \\
	& \approx \chi_{Q} (x) M^{-\delta} \ell(Q) ^{-n} .
\end{align*}
As for $J_2 (x)$, since $ \int_\rn a (x) \d x =0$  and supp $a \subset Q$, then we have
\begin{align*}
	|J_2 (x) | & \le  \chi_{ Q (y_0, \ell(Q) /2) } (x)  \left| \int_Q  K (z,x)  \left(\frac{-a(z)}{ T(g) (c_Q) }\right) \d z \right| \\
	& =\chi_{ Q (y_0, \ell(Q) /2) } (x)  \left| \int_Q  \left(    K(c_Q, x) -K (z,x) \right)  \left(\frac{a(z)}{ T(g) (c_Q) }\right) \d z \right| .
\end{align*}
Similar to $J_1$, we obtain $J_2 (x) \lesssim \chi_{ Q (y_0, \ell(Q) /2) } (x) M^{-\delta} \ell(Q) ^{-n} .$ Hence
\begin{equation*}
		|F (x) | \le J_1 (x) + J_2 (x) \lesssim  M^{-\delta} \ell(Q) ^{-n} \left(  \chi_{Q} (x)    + \chi_{ Q (y_0, \ell(Q) /2) } (x)\right).
\end{equation*}
By applying  Lemma \ref{H1 basic} to the function $F(x)$, we obtain (\ref{F le log M / M^delta}).

Now, we can apply (\ref{F le log M / M^delta}) to $a = a_k$, for $k \ge 1$. Then, there exist functions $ \{ g_k\}_{k \ge 1}$, $ \{ h_k\}_{k \ge 1} \subset L_c^\infty (\rn)$, such that
	\begin{equation*}
	\| a_k - [ g_k T^* (h_k) - h_k T (g_k) ] \|_{H^1 (\rn)}  \lesssim \frac{\log M}{ M^\delta} . 
\end{equation*}
With this inequality above, we obtain
\begin{align*}
 & 	\left\|  f - \sum_{k \ge 1}  \lambda_k \left(   g_k T^* (h_k) - h_k T (g_k)   \right)  \right\|_{ H^1 (\rn)} \\
	& \le \sum_{k\ge 1} |\lambda_k| 	\| a_k - [ g_k T^* (h_k) - h_k T (g_k) ] \|_{H^1 (\rn)}  \\
	& \le C\frac{\log M}{ M^\delta} \sum_{k\ge 1} |\lambda_k|  \le \frac{1}{2} \| f\|_{H^1(\rn)}
\end{align*}
if we choose $M$ large enough. Thus the proof is finished.
\end{proof}

The following theorem is the main result of this section.

\begin{theorem} \label{Hardy factorization}
		Let  $1 < q < \infty$. Let $ 1 <p <t<r <\infty $ or $ 1 <p \le t < r =\infty $. Let $T$ be a Calder\'on-Zygmund operator associated with  $K  \in \operatorname{SK}(\delta, A)$. Suppose that $T$ is homogeneous. Then for each $f\in  H^1(\rn)$, there exists a sequence $  \{\lambda_{k,j}\} \in \ell^1 $  and functions $\{g_{k,j} \} , \{ h_{k,j} \}  \subset L_c^\infty (\rn)$ such that
		\begin{equation} \label{f = sum lambda}
			f =  \sum_{j=1}^\infty \sum_{k=  1} ^\infty \lambda_{k,j} \left(   g_{k,j} T^* (h_{k,j}) - h_{k,j} T (g_{k,j})   \right)
		\end{equation}
	in the sense of $ H^1(\rn)$. Moreover, we have
	\begin{equation*}
		\| f\|_{H^1(\rn)}  \approx \inf \left\{\sum_{j=1}^\infty \sum_{k=  1} ^\infty | \lambda_{k,j}|  \| g_{k,j} \|_{\H _{p',q'} ^{t' , r'} (\rn) } 	\| h_{k,j}\|_{ M_{p,q}^{t,r} (\rn) }
		\right\},
	\end{equation*}
where the infimum above is taken over all possible representations of $f$ that satisfy (\ref{f = sum lambda}).
\end{theorem}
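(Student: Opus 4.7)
The plan is to establish the norm equivalence via two separate arguments. For the factorization direction (the upper bound $\inf \gtrsim \|f\|_{H^1(\rn)}$), I would iterate Lemma~\ref{H1 lemma}. Fix $M>10$ once and for all, large enough that the lemma yields the contraction factor $1/2$. Starting from any atomic decomposition $f=\sum_k \lambda_k^{(1)} a_k^{(1)}$ with $\sum_k|\lambda_k^{(1)}|\le 2\|f\|_{H^1(\rn)}$, Lemma~\ref{H1 lemma} produces sequences $\{g_k^{(1)}\},\{h_k^{(1)}\}\subset L_c^\infty(\rn)$ such that the first remainder $f_1:=f-\sum_k\lambda_k^{(1)}(g_k^{(1)}T^\ast(h_k^{(1)})-h_k^{(1)}T(g_k^{(1)}))$ satisfies $\|f_1\|_{H^1(\rn)}\le\tfrac12\|f\|_{H^1(\rn)}$ and $\sum_k|\lambda_k^{(1)}|\|g_k^{(1)}\|_{\mathcal H_{p',q'}^{t',r'}(\rn)}\|h_k^{(1)}\|_{M_{p,q}^{t,r}(\rn)}\lesssim M^n\|f\|_{H^1(\rn)}$. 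Applying the same procedure inductively to each $f_j$ (re-atomizing at every stage with the uniform factor $2$) produces triples $(\lambda_k^{(j)},g_k^{(j)},h_k^{(j)})$ with $\|f_j\|_{H^1(\rn)}\le 2^{-j}\|f\|_{H^1(\rn)}$. Telescoping produces the representation~(\ref{f = sum lambda}) in $H^1(\rn)$, while the geometric series $\sum_j 2^{-j}$ gives $\sum_{j,k}|\lambda_k^{(j)}|\|g_k^{(j)}\|_{\mathcal H_{p',q'}^{t',r'}(\rn)}\|h_k^{(j)}\|_{M_{p,q}^{t,r}(\rn)}\lesssim M^n\|f\|_{H^1(\rn)}$.

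For the reverse direction I would use Fefferman--Stein $H^1$-$\BMO$ duality combined with the commutator boundedness. For any $b\in L_c^\infty(\rn)\cap\BMO$ and $g,h\in L_c^\infty(\rn)$, the adjoint identity yields
\[
\int_{\rn}b(x)\bigl(gT^\ast(h)-hT(g)\bigr)(x)\,dx=-\int_{\rn}h(x)\,[b,T](g)(x)\,dx.
\]
Using the duality pairing between $M_{p,q}^{t,r}(\rn)$ and $\mathcal H_{p',q'}^{t',r'}(\rn)$ from Theorem~\ref{M is predual of H} together with the boundedness of $[b,T]$ on $\mathcal H_{p',q'}^{t',r'}(\rn)$ (obtained by duality from the $M_{p,q}^{t,r}(\rn)$ bound), the right side is controlled by $C\|b\|_{\BMO}\|g\|_{\mathcal H_{p',q'}^{t',r'}(\rn)}\|h\|_{M_{p,q}^{t,r}(\rn)}$. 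Since $(H^1(\rn))^{\ast}=\BMO$ and $L_c^\infty\cap\BMO$ is norming for $\BMO$, this upgrades to the factorization bound $\|gT^\ast(h)-hT(g)\|_{H^1(\rn)}\lesssim\|g\|_{\mathcal H_{p',q'}^{t',r'}(\rn)}\|h\|_{M_{p,q}^{t,r}(\rn)}$. Applied termwise to any admissible representation of $f$, this yields $\|f\|_{H^1(\rn)}\lesssim\sum_{j,k}|\lambda_{k,j}|\|g_{k,j}\|_{\mathcal H_{p',q'}^{t',r'}(\rn)}\|h_{k,j}\|_{M_{p,q}^{t,r}(\rn)}$, and taking the infimum completes the equivalence.

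The hard part is orchestrating the iteration so that the contraction factor $1/2$ and the constant $CM^n$ are both controlled by a single choice of $M$ valid for all stages; varying $M$ between iterations would cause the coefficient sum to diverge, while taking $M$ too small would violate the $1/2$ contraction from Lemma~\ref{H1 lemma}. A further subtlety is verifying that the double series converges to $f$ in the $H^1(\rn)$ norm (not merely that the successive remainders $f_j$ have vanishing norm): this uses the uniform $H^1$ factorization bound from the duality argument, combined with the absolute summability of the coefficients $|\lambda_k^{(j)}|$ guaranteed by the geometric decay of $\|f_j\|_{H^1(\rn)}$.
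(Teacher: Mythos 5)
Your constructive (upper-bound) direction is exactly the paper's argument: fix $M$ once so that Lemma~\ref{H1 lemma} yields the $1/2$ contraction, re-atomize the remainder $f_j$ at each stage, and sum the resulting geometric series. You are also right to insist on the normalization $\sum_k|\lambda_k^{(j)}|\le 2\|f_j\|_{H^1(\rn)}$ at every stage; Lemma~\ref{H1 lemma} as stated silently requires the atomic decomposition to be near-optimal for the bound $\sum_k|\lambda_k|\,\|g_k\|\,\|h_k\|\lesssim M^n\|f\|_{H^1(\rn)}$ and the $1/2$ contraction to hold, and spelling this out is a genuine tightening of the write-up.

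Where you go beyond the paper is the reverse inequality $\|f\|_{H^1(\rn)}\lesssim\inf\{\cdots\}$. The paper's proof stops after establishing the iterated representation and the inequality $\inf\lesssim\|f\|_{H^1(\rn)}$; the converse bound, which is needed for the stated $\approx$, is left implicit. Your duality argument — rewriting $\int_\rn b\,(gT^*h-hTg)\,dx=-\int_\rn h\,[b,T]g\,dx$, applying the boundedness of $[b,T]$ on $\mathcal H_{p',q'}^{t',r'}(\rn)$ and the $M_{p,q}^{t,r}(\rn)$--$\mathcal H_{p',q'}^{t',r'}(\rn)$ pairing, and then invoking $H^1$--$\BMO$ duality — is precisely the mechanism the paper itself uses later (in the proof of Theorem~\ref{char b commutator}) but does not deploy here. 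One small refinement: rather than upgrading the pairing estimate to a bound on $\|gT^*h-hTg\|_{H^1(\rn)}$ term by term (which requires a priori knowledge that each summand lies in $H^1(\rn)$), it is cleaner to pair $b$ against $f$ directly, interchange the sum and the integral using the absolute summability $\sum_{j,k}|\lambda_{k,j}|\,\|g_{k,j}\|_{\mathcal H_{p',q'}^{t',r'}(\rn)}\|h_{k,j}\|_{M_{p,q}^{t,r}(\rn)}<\infty$, and then take the supremum over $\|b\|_{\BMO}\le 1$; since $f\in H^1(\rn)$ is given, this immediately yields $\|f\|_{H^1(\rn)}\lesssim\sum_{j,k}|\lambda_{k,j}|\,\|g_{k,j}\|_{\mathcal H_{p',q'}^{t',r'}(\rn)}\|h_{k,j}\|_{M_{p,q}^{t,r}(\rn)}$. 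With that adjustment the argument is complete and, in fact, more complete than the paper's.
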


\begin{proof}
	We can suppose that $f \in H^1(\rn)$ can be written
	\begin{equation*}
		f = \sum_{k\ge 1}  \lambda_{k,1} a_{k,1} ,
	\end{equation*}
with $\{\lambda_{k,1} \}_{k\ge 1}  \in\ell^1$, and $ \{a_{k,1} \}_{k\ge 1}$ are $L^\infty$-atoms.
Thanks to Lemma \ref{H1 lemma}, there exist functions $\{g_{k,1}\} , \{ h_{k,1} \}  \subset L_c^\infty (\rn)$ such that
\begin{equation*}
	\begin{cases}
		\sum_{k\ge 1} |\lambda_{k,1}| \| g_{k,1} \|_{\H _{p',q'} ^{t' , r'} (\rn) } 	\| h_{k,1}\|_{ M_{p,q}^{t,r} (\rn) } \le C M^n \|f\|_{H^1(\rn)}	\\
			\left\|  f - \sum_{k \ge 1}  \lambda_{k,1} \left(   g_{k,1} T^* (h_{k,1}) - h_{k,1} T (g_{k,1})   \right)  \right\|_{ H^1 (\rn)}  \le \frac{1}{2} \| f\|_{H^1(\rn)} .
	\end{cases}
\end{equation*}
Then let 
\begin{equation*}
	f_1 = f - \sum_{k \ge 1}  \lambda_{k,1} \left(   g_{k,1} T^* (h_{k,1}) - h_{k,1} T (g_{k,1})   \right).
\end{equation*}
Since $f_1 \in  H^1(\rn) $, then we can decompose 
\begin{equation*}
	f_1 = \sum_{k \ge 1} \lambda_{k,2} a_{k,2},
\end{equation*}
with $\{\lambda_{k,2} \}_{k\ge 1}  \in\ell^1$, and $ \{a_{k,2} \}_{k\ge 1}$ are $L^\infty$-atoms.
Applying Lemma \ref{H1 lemma} to $f_1$, there exist functions $\{g_{k,2}\} , \{ h_{k,2} \}  \subset L_c^\infty (\rn)$ such that
\begin{equation*}
	\begin{cases}
		\sum_{k\ge 1} |\lambda_{k,2}| \| g_{k,2} \|_{\H _{p',q'} ^{t' , r'} (\rn) } 	\| h_{k,2}\|_{ M_{p,q}^{t,r} (\rn) } \le C M^n \|f_1\|_{H^1(\rn)} \le  C M^n 2^{-1} \|f\|_{H^1(\rn)}	\\
		\left\|  f - \sum_{k \ge 1}  \lambda_{k,2} \left(   g_{k,2} T^* (h_{k,2}) - h_{k,2} T (g_{k,2})   \right)  \right\|_{ H^1 (\rn)}  \le \frac{1}{2} \| f_1\|_{H^1(\rn)}   \le 2^{-2} \| f\|_{H^1(\rn)}.
	\end{cases}
\end{equation*}
Similarly, we can repeat the above argument to 
\begin{align*}
	f_2 & = f_1 - \sum_{k \ge 1}  \lambda_{k,2} \left(   g_{k,2} T^* (h_{k,2}) - h_{k,2} T (g_{k,2})   \right) \\
	& = f  -\sum_{k \ge 1}  \lambda_{k,1} \left(   g_{k,1} T^* (h_{k,1}) - h_{k,1} T (g_{k,1})   \right)
	-\sum_{k \ge 1}  \lambda_{k,2} \left(   g_{k,2} T^* (h_{k,2}) - h_{k,2} T (g_{k,2})   \right) .
\end{align*}
By induction, we can construct sequence $\{\lambda_{k,j}\} \in \ell^1$, and functions $\{g_{k,j} \} , \{ h_{k,j} \}  \subset L_c^\infty (\rn)$ such that
\begin{equation*}
	\begin{cases}
		f = \sum_{j=1}^N \sum_{k\ge 1} \lambda_{k,j} \left(   g_{k,j} T^* (h_{k,j}) - h_{k,j} T (g_{k,j})   \right) +f_N, \\
		\sum_{j=1}^N  \sum_{k\ge 1} | \lambda_{k,j}|  \| g_{k,j} \|_{\H _{p',q'} ^{t' , r'} (\rn) } 	\| h_{k,j}\|_{ M_{p,q}^{t,r} (\rn) } \le C M^n  \sum_{j =1} ^N  2^{1-j} \|f\|_{H^1(\rn)}, \\
		\| f_N\|_{H^1(\rn)} \le 2^{-N}   \|f\|_{H^1(\rn)} ,
	\end{cases}
\end{equation*}
which yields the desired result when $N \to \infty$. Thus, the proof is complete.
\end{proof}

Using the Hardy factorization, we obtain a characterization of functions $b \in \BMO$ through the boundedness of commutators  on the Bourgain-Morrey-Lorentz space.

\begin{theorem} \label{char b commutator}
	Let  $1 < q < \infty$. Let $ 1 <p <t<r <\infty $ or $ 1 <p \le t < r =\infty $. Let $T$ be a Calder\'on-Zygmund operator associated with  $K  \in \operatorname{SK}(\delta, A)$. Let $b \in L_{\operatorname{loc}}^1 (\rn)$ and $T$ be homogeneous. Then $b\in \BMO$ and
	\begin{equation*}
			\|b \|_{BMO} \lesssim  \|[b,T]\|_{  M_{p,q}^{t,r} (\rn)   \to  M_{p,q}^{t,r} (\rn)  }.
	\end{equation*}
\end{theorem}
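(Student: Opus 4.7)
The plan is to combine the weak Hardy factorization of Theorem \ref{Hardy factorization} with Fefferman's $H^{1}$--$\BMO$ duality. It suffices to establish
\begin{equation*}
\left|\int_{\rn} b(x)f(x)\,dx\right| \lesssim \|[b,T]\|_{M_{p,q}^{t,r}\to M_{p,q}^{t,r}}\,\|f\|_{H^{1}(\rn)}
\end{equation*}
for $f$ ranging over a dense subset of $H^{1}(\rn)$, e.g., finite linear combinations of $L^{\infty}$-atoms. For such $f$ the pairing is absolutely convergent because each atom has compact support and $b\in L^{1}_{\operatorname{loc}}(\rn)$; once the displayed inequality holds on this dense class, Fefferman's duality $(H^{1}(\rn))^{*}=\BMO$ will place $b$ in $\BMO$ with the desired norm control.

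Given such an $f$, I would apply Theorem \ref{Hardy factorization} to decompose
\begin{equation*}
f = \sum_{k,j}\lambda_{k,j}\bigl(g_{k,j}T^{*}(h_{k,j}) - h_{k,j}T(g_{k,j})\bigr),\qquad g_{k,j},h_{k,j}\in L^{\infty}_{c}(\rn).
\end{equation*}
For each such pair $(g,h)$, the adjoint relation $\int T(bg)h\,dx = \int bg\cdot T^{*}h\,dx$ gives the identity
\begin{equation*}
\int_{\rn} b\bigl(gT^{*}h - hTg\bigr)\,dx = -\int_{\rn}\bigl(bTg - T(bg)\bigr)h\,dx = -\int_{\rn}[b,T]g\cdot h\,dx,
\end{equation*}
where the classical pointwise $[b,T]g$ on $L^{\infty}_{c}$ coincides with the extension of $[b,T]$ to $M_{p,q}^{t,r}$ granted by the hypothesis. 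Summing over $(k,j)$ and applying the H\"older-type duality of Theorem \ref{M is predual of H} yields
\begin{equation*}
\left|\int_{\rn} bf\,dx\right| \le \|[b,T]\|_{M_{p,q}^{t,r}\to M_{p,q}^{t,r}}\sum_{k,j}|\lambda_{k,j}|\,\|g_{k,j}\|_{M_{p,q}^{t,r}}\,\|h_{k,j}\|_{\H_{p',q'}^{t',r'}}.
\end{equation*}

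The main obstacle is a \emph{polarity mismatch}: Theorem \ref{Hardy factorization} as stated controls $\sum|\lambda_{k,j}|\,\|g_{k,j}\|_{\H_{p',q'}^{t',r'}}\|h_{k,j}\|_{M_{p,q}^{t,r}}$, whereas the estimate above pairs the factors the other way. I would resolve this by establishing the symmetric variant
\begin{equation*}
\|f\|_{H^{1}(\rn)}\approx \inf\sum_{k,j}|\lambda_{k,j}|\,\|g_{k,j}\|_{M_{p,q}^{t,r}}\,\|h_{k,j}\|_{\H_{p',q'}^{t',r'}}.
\end{equation*}
Reinspecting the atomic construction of Lemma \ref{H1 lemma}, in which $g=\chi_{Q(y_{0},\ell(Q)/2)}$ and $h=-a/T(g)(c_{Q})$, Example \ref{example chi_Q} gives $\|g\|_{M_{p,q}^{t,r}}\approx |Q|^{1/t}$, while Lemma \ref{chi Q f in H} combined with the homogeneity lower bound $|T(g)(c_{Q})|\gtrsim M^{-n}$ produces $\|h\|_{\H_{p',q'}^{t',r'}}\lesssim M^{n}|Q|^{1/t'-1}$. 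Since $1/t+1/t'=1$, the swapped product is again $O(M^{n})$, so the inductive scheme in the proof of Theorem \ref{Hardy factorization} carries over verbatim and delivers the symmetric decomposition. Substituting back into the previous chain gives $|\int_{\rn} bf\,dx|\lesssim \|[b,T]\|_{M_{p,q}^{t,r}\to M_{p,q}^{t,r}}\|f\|_{H^{1}(\rn)}$ on the dense class, and Fefferman's $H^{1}$--$\BMO$ duality concludes the proof.
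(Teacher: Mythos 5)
Your proposal is correct, and it takes a genuinely different route from the paper's. The paper applies Theorem~\ref{Hardy factorization} exactly as stated and pairs $\|[b,T](g_{k,j})\|_{\H_{p',q'}^{t',r'}}$ against $\|h_{k,j}\|_{M_{p,q}^{t,r}}$, which yields the bound in terms of $\|[b,T]\|_{\H_{p',q'}^{t',r'}\to\H_{p',q'}^{t',r'}}$; passage to the $M_{p,q}^{t,r}\to M_{p,q}^{t,r}$ operator norm is then left to a duality remark following the theorem. You instead pair the opposite way and use $\|[b,T]\|_{M_{p,q}^{t,r}\to M_{p,q}^{t,r}}$ directly, which requires the ``swapped'' factorization $\|f\|_{H^1}\approx\inf\sum|\lambda_{k,j}|\,\|g_{k,j}\|_{M_{p,q}^{t,r}}\|h_{k,j}\|_{\H_{p',q'}^{t',r'}}$. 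Your verification that this variant holds is right: in the construction of Lemma~\ref{H1 lemma}, $g=\chi_{Q(y_0,\ell(Q)/2)}$ has $\|g\|_{M_{p,q}^{t,r}}\approx|Q|^{1/t}$ and $h=-a/T(g)(c_Q)$ has $\|h\|_{\H_{p',q'}^{t',r'}}\lesssim M^n|Q|^{1/t'-1}$ (via Lemma~\ref{chi Q f in H} and the homogeneity lower bound), so the swapped product is again $O(M^n)$ and the inductive scheme of Theorem~\ref{Hardy factorization} goes through unchanged. The upshot is that your route proves the inequality in the form in which the theorem is actually stated, whereas the paper's argument literally produces the dual-space operator norm and relies on the after-remark for the transfer; both approaches ultimately rest on the same factorization machinery and the same truncation/limiting argument needed to justify the formal identity $\int b(gT^*h-hTg)\,dx=-\int[b,T]g\cdot h\,dx$ when $b$ is merely in $L^1_{\mathrm{loc}}$.
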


\begin{proof}
	To obtain the desired result, we use the Hardy factorization in Theorem \ref{Hardy factorization}, and the duality between $\BMO $ and $H^1 (\rn)$. Note that $ H^1 (\rn) \cap L_c^\infty (\rn)$ is dense in  $ H^1 (\rn)$. 
	Next, for every $L >0$, put
	\begin{equation*}
		b_L (x)  = b(x) \chi_{B(0,L)} (x).
	\end{equation*}
For each $f\in  H^1 (\rn) \cap L_c^\infty (\rn)$, using Theorem \ref{Hardy factorization}, there exist functions  $\{g_{k,j} \} , \{ h_{k,j} \}  \subset L_c^\infty (\rn)$  and   a sequence $  \{\lambda_{k,j}\} \in \ell^1 $  such that
		\begin{equation*}
	f =  \sum_{j=1}^\infty \sum_{k=  1} ^\infty \lambda_{k,j} \left(   g_{k,j} T^* (h_{k,j}) - h_{k,j} T (g_{k,j})   \right)
\end{equation*}
in the sense of $ H^1(\rn)$. Furthermore, we have
\begin{equation*}
	\|f\|_{H^1 (\rn)}  \approx \sum_{j=1}^\infty \sum_{k=  1} ^\infty | \lambda_{k,j}|  \| g_{k,j} \|_{\H _{p',q'} ^{t' , r'} (\rn) } 	\| h_{k,j}\|_{ M_{p,q}^{t,r} (\rn)}.
\end{equation*}
Since $f \in H^1 (\rn) \cap L_c^\infty (\rn) $, then we obtain
\begin{equation*}
	\lim_{L \to \infty} \langle b_L, f \rangle_{re} = \langle b, f \rangle_{re}
\end{equation*}
where we denote $\langle b, f \rangle_{re} = \int_\rn  b (x) f (x) \d  x $.
Thus,\begin{align*}
	 \langle b, f \rangle_{re} =\lim_{L \to \infty} \langle b_L, f \rangle_{re} &= \lim_{L \to \infty} \left\langle b_L,  \sum_{j=1}^\infty \sum_{k=  1} ^\infty \lambda_{k,j} \left(   g_{k,j} T^* (h_{k,j}) - h_{k,j} T (g_{k,j})   \right) \right\rangle_{re}  \\
	 & = \sum_{j=1}^\infty \sum_{k=  1} ^\infty \lambda_{k,j}  \lim_{L \to \infty} \left\langle b_L,    g_{k,j} T^* (h_{k,j}) - h_{k,j} T (g_{k,j})    \right\rangle_{re}  \\
	 	 & = \sum_{j=1}^\infty \sum_{k=  1} ^\infty \lambda_{k,j}  \left\langle b,   \left(   g_{k,j} T^* (h_{k,j}) - h_{k,j} T (g_{k,j})   \right) \right\rangle_{re}  \\
	 	 	 	 & = \sum_{j=1}^\infty \sum_{k=  1} ^\infty \lambda_{k,j}  \left\langle [b,T]( g_{k,j}) ,   h_{k,j}   \right\rangle_{re}  .
\end{align*}
Note that in the fourth step, we use the fact that $  g_{k,j} T^* (h_{k,j}) - h_{k,j} T (g_{k,j})    \in H ^1 (\rn)$ and the support of $  g_{k,j} T^* (h_{k,j}) - h_{k,j} T (g_{k,j}) $ is compact.

Since $[b,T]$ maps $\H _{p',q'} ^{t' , r'} (\rn) \to \H _{p',q'} ^{t' , r'} (\rn)  $, it follows that
\begin{align*}
	|\langle b, f \rangle_{re}| & \le\sum_{j=1}^\infty \sum_{k=  1} ^\infty \lambda_{k,j}  \|  [b,T]( g_{k,j}) \|_{ \H _{p',q'} ^{t' , r'} (\rn)  } \| h_{k,j}\|_{ M_{p,q}^{t,r} (\rn) } \\
	& \lesssim \|[b,T]\|_{ \H _{p',q'} ^{t' , r'} (\rn) \to \H _{p',q'} ^{t' , r'} (\rn)  } \sum_{j=1}^\infty \sum_{k=  1} ^\infty \lambda_{k,j}  \|  g_{k,j} \|_{ \H _{p',q'} ^{t' , r'} (\rn)  } \| h_{k,j}\|_{ M_{p,q}^{t,r} (\rn) } \\
	& \lesssim \|[b,T]\|_{ \H _{p',q'} ^{t' , r'} (\rn) \to \H _{p',q'} ^{t' , r'} (\rn)  } \|f\|_{  H^1 (\rn) }.
\end{align*}
Next by (\cite[exercise 3.2.1]{G142})
\begin{equation*}
	\|b\|_{\BMO} \approx \sup_{ f \in H^1(\rn) \le 1 } \left| \int_\rn b (x) f (x) \d x \right| \approx \sup_{ f \in H^1(\rn) \le 1 } \left| \int_\rn b (x)  \overline{f (x)} \d x \right|,
\end{equation*}
we obtain
\begin{equation*}
	\|b \|_{\BMO} \lesssim  \|[b,T]\|_{ \H _{p',q'} ^{t' , r'} (\rn) \to \H _{p',q'} ^{t' , r'} (\rn)  }.
\end{equation*}
Hence we complete the proof.
\end{proof}

\begin{remark}
	Thanks to  the duality, we observe that the conclusion in Theorem \ref{char b commutator} also holds for $\H _{p',q'} ^{t' , r'} (\rn) $  in place of  $M_{p,q}^{t,r} (\rn) $.
\end{remark}

\section{Compactness of commutators} \label{Compactness}
In the last section, we consider the compactness characterization of commutators in Bourgain-Morrey-Lorentz spaces.

We have proved that the Bourgain-Morrey-Lorentz space is a translation-invariant
lattices on $\rn$. 
By  dominated converge theorem, for every sequence $\{ S_k\}_{k \in \mathbb N} \subset \rn$ satisfying $S_k \subset S_{k-1}  \subset \cdots \subset S_1$ and $\cap_{k \in \mathbb N} S_k = \emptyset$, we have
\begin{equation*}
	\lim_{n \to \infty}  \| f \chi_{S_n} \|_{M_{p,q}^{t,r} (\rn) }  =0.
\end{equation*} 
By \cite[Theorem 2.3]{B15}, we have the following compactness criterion in Bourgain-Morrey-Lorentz spaces.

\begin{lemma} \label{precompact}
	Let $1<q <\infty  $.
	Let $1 \le  p < t < r <\infty$ or $ 1 \le p \le t < r= \infty $. 
	A subset $\mathcal F \subset M_{p,q}^{t,r} (\rn) $ is totally bounded if the following conditions are valid:
	
	{\rm (ii)} $\mathcal F$ is a bounded set, that is, $\sup_{f \in \mathcal F}  \|f\|_{M_{p,q}^{t,r} (\rn) } <\infty;$
	
	{\rm  (ii)} $\mathcal F$ uniformly vanishes at infinity, that is,
	\begin{equation*}
		\lim_{R\to \infty} \sup _{f \in  \mathcal F} 	\|  f \chi_{B^c (0,R)}  \|_{  M_{p,q}^{t,r} (\rn)  }  =0;
	\end{equation*}
	
	{\rm  (iii)} $\mathcal F$ is equicontinuous, that is,
	\begin{equation}\label{trans con}
		\lim _{b \to 0} \sup _{ f \in  \mathcal F} \sup_{y\in B(0,b)} 	\|  f  - \tau_y   f  \|_{  M_{p,q}^{t,r} (\rn) } =0.
	\end{equation}
	Here and what follows, $\tau _y$ denotes the translation operator: $\tau_y  f (x) :=  f (x-y)$.
	
\end{lemma}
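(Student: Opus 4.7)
The plan is to invoke the Kolmogorov-Riesz-type criterion of \cite[Theorem 2.3]{B15}, which applies to any translation-invariant Banach lattice in which norms restricted to shrinking sets with empty intersection vanish. Both abstract hypotheses are already in place: translation invariance is Lemma~\ref{translation BML}, while the vanishing-norm property is recorded in the paragraph immediately preceding the statement via dominated convergence. Conditions (i)--(iii) on $\mathcal{F}$ are precisely the input of that theorem, and the conclusion follows directly.

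For a self-contained argument following the classical Kolmogorov-Riesz scheme, I would fix $\epsilon > 0$ and argue in three steps. First, by (ii), choose $R > 0$ with $\sup_{f \in \mathcal{F}} \|f \chi_{B^c(0,R)}\|_{M_{p,q}^{t,r}(\rn)} < \epsilon/3$, reducing the task to total boundedness of $\mathcal{F}_R := \{f \chi_{B(0,R)} : f \in \mathcal{F}\}$. Second, let $\varphi_\eta$ be a standard mollifier supported in $B(0,\eta)$; writing $\varphi_\eta * f - f = \int \varphi_\eta(y)(\tau_y f - f)\,\d y$ and invoking Lemma~\ref{Minkowski type BML} together with the equicontinuity (\ref{trans con}), pick $\eta > 0$ so small that $\sup_{f \in \mathcal{F}} \|\varphi_\eta * f - f\|_{M_{p,q}^{t,r}(\rn)} < \epsilon/3$.

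Third, I would apply Arzel\`a-Ascoli to the mollified family $\mathcal{G} := \{(\varphi_\eta * f)\chi_{B(0,R+1)} : f \in \mathcal{F}\}$ on the compact set $\overline{B(0,R+1)}$. Uniform boundedness follows from H\"older in Lorentz spaces applied locally, via $|\varphi_\eta * f(x)| \le \|\varphi_\eta\|_{L^{p',q'}} \|f\|_{L^{p,q}(B(x,\eta))}$, combined with the $M_{p,q}^{t,r}$-boundedness of $\mathcal{F}$; equicontinuity comes from smoothness of $\varphi_\eta$. Arzel\`a-Ascoli then yields a finite sup-norm $\epsilon/(3 C_R)$-net for $\mathcal{G}$, where $C_R := \|\chi_{B(0,R+1)}\|_{M_{p,q}^{t,r}(\rn)} < \infty$ by Example~\ref{example chi_Q}. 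Combining the three approximations produces a finite $\epsilon$-net for $\mathcal{F}$ in $M_{p,q}^{t,r}(\rn)$.

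The main obstacle is the transfer of total boundedness from $C(\overline{B(0,R+1)})$ to $M_{p,q}^{t,r}(\rn)$: one needs the bound $\|g \chi_{B(0,R+1)}\|_{M_{p,q}^{t,r}(\rn)} \le \|g\|_{L^\infty(\overline{B(0,R+1)})} C_R$, which uses the lattice property together with finiteness of $C_R$. Finiteness of $C_R$ is precisely where the numerical constraint $p < t < r < \infty$ or $p \le t < r = \infty$ enters via Example~\ref{example chi_Q}, so the plan engages the full hypotheses of the lemma.
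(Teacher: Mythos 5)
Your first paragraph is exactly how the paper proceeds: it records translation invariance (Lemma~\ref{translation BML}) and the absolute-continuity (vanishing-norm) property of $M_{p,q}^{t,r}(\rn)$, and then cites \cite[Theorem 2.3]{B15} with no further argument. The supplementary Kolmogorov--Riesz sketch is a reasonable self-contained alternative not present in the paper, though you would still need a step controlling the tail $(\varphi_\eta * f)\chi_{B^c(0,R+1)}$ (via $\varphi_\eta*(f\chi_{B^c(0,R)})$ and the convolution inequality), and both Lemma~\ref{Minkowski type BML} and Corollary~\ref{convolution BML} are stated only for $p>1$ whereas this lemma permits $p=1$.
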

Similar to \cite[Theorem 1.6]{DK24}, we have the following result about the compactness of the commutator of Calder\'on-Zygmund operator and functions $b \in \CMO$ for Bourgain-Morrey-Lorentz spaces.

\begin{theorem}
		Let  $1 < q < \infty$. Let $ 1 <p <t<r <\infty $ or $ 1 <p \le t < r =\infty $.  
		Let $T$ be a Calder\'on-Zygmund operator associated with  $K  \in \operatorname{SK}(\delta, A)$.
		If $b \in \CMO$, then $[b,T] $  is a compact  operator on  $  M_{p,q}^{t,r} (\rn) $. Conversely, for $b \in L_{\operatorname{loc}}^1 (\rn)$, 
		if $T$ is homogeneous, and $[b,T]$ is a compact operator  on $  M_{p,q}^{t,r} (\rn) $, then $b\in \CMO$.
\end{theorem}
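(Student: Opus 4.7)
The plan is to prove the two directions separately, using the standard CMO characterization via three vanishing-type conditions on the mean oscillation combined with the compactness criterion of Lemma \ref{precompact}.

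For the sufficiency direction, I would first reduce to the case $b \in C_c^\infty(\rn)$ by density (Theorem \ref{dense block} supplies $C_c^\infty$ density in the predual; for $\BMO$ we use the very definition of $\CMO$). If $b_j \in C_c^\infty(\rn)$ approximates $b$ in $\BMO$, then the operator-norm bound
\begin{equation*}
\|[b-b_j,T]\|_{M_{p,q}^{t,r}(\rn)\to M_{p,q}^{t,r}(\rn)} \lesssim \|b-b_j\|_{\BMO}
\end{equation*}
established in the commutator boundedness theorem shows $[b,T]$ is the norm limit of $[b_j,T]$, so it suffices to prove each $[b_j,T]$ is compact. Fixing $b\in C_c^\infty(\rn)$, I would verify the three bullets of Lemma \ref{precompact} for the image $\mathcal F =\{[b,T]f: \|f\|_{M_{p,q}^{t,r}(\rn)}\le 1\}$. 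Boundedness follows from the commutator estimate. Uniform decay at infinity follows from the fact that $b$ has compact support together with the kernel decay: for $x$ far from $\operatorname{supp}b$, $|[b,T]f(x)|$ is controlled by $|b(x)||Tf(x)|+\int_{\operatorname{supp}b}|K(x,y)||b(y)||f(y)|\,\mathrm{d}y$, each of which can be absorbed using the $M_{p,q}^{t,r}$ boundedness of $T$ and the Hölder/Minkowski inequalities of Section~\ref{preliminaries}. For equicontinuity, I would split $[b,T]f(x)-[b,T]f(x-y)$ into a term containing the difference $b(x)-b(x-y)$ (controlled by the smoothness of $b$) and a term involving the kernel difference $K(x,z)-K(x-y,z)$, the latter handled by the standard $\delta$-regularity of $K$ together with the Minkowski-type inequality (Lemma \ref{Minkowski type BML}).

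For the necessity, I would proceed by contradiction. By Theorem \ref{char b commutator}, compactness (and hence boundedness) of $[b,T]$ already gives $b\in\BMO$. It therefore suffices to show that if $b\in\BMO\setminus\CMO$ then $[b,T]$ cannot be compact. By the Uchiyama characterization, $b\in\CMO$ is equivalent to the simultaneous vanishing
\begin{equation*}
\lim_{|Q|\to 0}\tfrac{1}{|Q|}\!\!\int_Q\!|b-b_Q| = \lim_{|Q|\to\infty}\tfrac{1}{|Q|}\!\!\int_Q\!|b-b_Q| = \lim_{|c_Q|\to\infty}\tfrac{1}{|Q|}\!\!\int_Q\!|b-b_Q| = 0.
\end{equation*}
If any of these three conditions fails, one can select a sequence of cubes $\{Q_k\}$ and, for each $k$, a test function $f_k$ supported near $Q_k$ (typically a normalized indicator of a measurable subset of $Q_k$ on which $b-b_{Q_k}$ has definite sign, rescaled so that $\|f_k\|_{M_{p,q}^{t,r}(\rn)}\approx 1$) such that $\|[b,T]f_k-[b,T]f_j\|_{M_{p,q}^{t,r}(\rn)}\gtrsim \|b\|_{\BMO(Q_k)}>c>0$ for $j\ne k$. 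The three regimes (small, large, translated cubes) produce sequences violating respectively the equicontinuity, uniform-decay-at-infinity, and shift-vanishing bullets of Lemma \ref{precompact}, contradicting compactness.

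The main obstacle I expect is the third bullet of Lemma \ref{precompact}, the equicontinuity/translation-continuity, in the sufficiency proof: Bourgain-Morrey-Lorentz spaces are not translation continuous in general (translation is only bounded with a constant, as Lemma \ref{translation BML} shows, not continuous), so one cannot simply invoke continuity of translations on $f$. The standard workaround is to exploit that the cancellation $[b,T]f(x)-[b,T]f(x-y)$ factors through either the smooth modulus of continuity of $b$ or the kernel regularity of $T$, neither of which requires translation continuity of the underlying space, and then to use the $M_{p,q}^{t,r}$ boundedness of $\M$ and $T$ (Theorems \ref{HL M} and the Calderón-Zygmund theorem above) together with a dominated convergence argument in the decomposition over dyadic annuli around $\operatorname{supp}b$. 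A companion subtlety is that the second bullet requires a quantitative decay rate in $|x|$ that is controlled by the Morrey-Lorentz norm; here I would split into a local part vanishing from compactness of $\operatorname{supp}b$ and a tail part handled by the kernel size estimate, using the sum structure of the $M_{p,q}^{t,r}$ norm over dyadic cubes to bound the tail contribution uniformly in $\|f\|_{M_{p,q}^{t,r}(\rn)}\le 1$.
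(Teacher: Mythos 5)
Your high-level plan coincides with the paper's: for sufficiency, approximate $b\in\CMO$ by $b_\epsilon\in C_c^\infty(\rn)$, use the commutator-norm estimate $\|[b-b_\epsilon,T]\|\lesssim\|b-b_\epsilon\|_{\BMO}$ to reduce to compactly supported smooth $b$, and verify the three bullets of Lemma \ref{precompact}; for necessity, invoke Theorem \ref{char b commutator} to get $b\in\BMO$ and then argue by contradiction via Uchiyama's characterization (the paper's Lemma \ref{CMO char}). You also correctly flag the central subtlety that $M_{p,q}^{t,r}(\rn)$ is not translation continuous. Still, two of your technical steps differ from the paper and would need reworking.

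On equicontinuity, your two-term decomposition of $[b,T]f(x)-[b,T]f(x-y)$ into a $b(x)-b(x-y)$ piece and a kernel-difference piece is not sufficient as written. The $b$-difference term multiplies an integral of $K(x,z)f(z)$ that diverges unless handled via the maximal truncated operator, which the paper controls with Cotlar's inequality; and the kernel regularity $|K(x,z)-K(x-y,z)|\lesssim |y|^\delta/|x-z|^{n+\delta}$ only holds off the near-diagonal $|x-z|\lesssim|y|$, so the near-diagonal contribution must be estimated separately using the size bound. The paper handles both by introducing a truncation parameter $\zeta$, decomposing into four terms $I_1,\dots,I_4$ (far/near times two $b$-differences), and finally choosing $|z|<\zeta^2$; your sketch omits the truncation and the near-diagonal case, so it would not close.

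On necessity, you propose constructing $\{f_k\}$ with a uniform pairwise separation $\|[b,T]f_k-[b,T]f_j\|\gtrsim c>0$ for $j\ne k$, directly violating total boundedness. The paper instead uses compactness to extract a subsequence with $[b,T]\phi_k\to G$ in $M_{p,q}^{t,r}(\rn)$, deduces $\|G\|_{M_{p,q}^{t,r}(\rn)}\gtrsim 1$ from a lower pointwise bound, and then shows $[b,T]\phi_k\to 0$ in an auxiliary space ($M_{p,q}^{t_2,r}$ with $t_2>t$, or $L^w$ with $w<t$ for the small-cube case), forcing $G=0$ and a contradiction. This sidesteps the almost-orthogonality estimate your pairwise-separation approach would require; the latter might be made to work, but it is not what the paper does and is genuinely harder to set up, because you would have to control the interaction $\langle [b,T]f_k,\,[b,T]f_j\rangle$ across widely separated or nested cubes.
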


\begin{proof}
		 Assume that $b \in \CMO$. Let $E$ be a bounded set in $M_{p,q}^{t,r} (\rn) $. It is enough to show that $[b,T] (E)$ is relatively compact in $M_{p,q}^{t,r} (\rn) $. 
	Since $b \in \CMO$, then for every $\epsilon>0$, there exists a function $b_\epsilon \in C_c^\infty (\rn)$ such that $\| b- b_\epsilon\|_{BMO} < \epsilon$. Then for each $f \in E$, we have
	\begin{align*}
		\| [b,T] (f) \|_{ M_{p,q}^{t,r} (\rn)}  &\le \| [b - b_\epsilon,T] (f) \|_{ M_{p,q}^{t,r} (\rn)} + \| [b_\epsilon,T] (f) \|_{ M_{p,q}^{t,r} (\rn)} \\
		& \lesssim \| b- b_\epsilon\|_{BMO} \| f \|_{ M_{p,q}^{t,r} (\rn)} + \| [b_\epsilon,T] (f) \|_{ M_{p,q}^{t,r} (\rn)} \\
		& \le C \epsilon + \| [b_\epsilon,T] (f) \|_{ M_{p,q}^{t,r} (\rn)} .
	\end{align*}
	By this inequality, it suffices to show that $[ b_\epsilon,T] (E)$  is relatively compact in $ M_{p,q}^{t,r} (\rn)$.
	
	Since $E$ is a bounded set in $M_{p,q}^{t,r} (\rn)$, by the upper bound of $[ b_\epsilon,T] (E)$, we obtain $[ b_\epsilon,T] (E)$ satisfies (i) in Lemma \ref{precompact}.
	
	Next, we prove that $[ b_\epsilon,T] (E)$ satisfies (ii) in Lemma \ref{precompact}. Indeed, suppose that supp $b_\epsilon \subset Q(0,R_\epsilon)$ for some $R_\epsilon >0$. Then for any $f\in E$, and for $x \in  \rn \backslash Q(0,R)$ with $R > 10 R_\epsilon$, we obtain $|x-y| \approx |x| $ for any $y \in Q(0,R_\epsilon)$.
	Hence, 
	\begin{align*}
		| [b_\epsilon,T] (f) (x)| & = | T(b_\epsilon f) (x) |  \\
		& \lesssim \| b_\epsilon\|_{L^\infty (\rn)}  \int_{ Q(0,R_\epsilon)  }  \frac{|f(y)|}{ |x-y|^n } \d y\\
		& \lesssim \| b_\epsilon\|_{L^\infty (\rn)} |x|^{-n} \int_{ Q(0,R_\epsilon)  }  |f(y)|\d y \\
		& \lesssim \| b_\epsilon\|_{L^\infty (\rn)} |x|^{-n}  \| f\|_{M_{p,q}^{t,r} (\rn)  }  |Q(0,R_\epsilon)|^{1/t'} \\
		& \lesssim  |x|^{-n} R_\epsilon ^{n /t'}.
	\end{align*}
	Let us fix $k \in \mathbb Z$ and fix $Q(0,R)$. 
	\begin{align*}
		&\| [b_\epsilon,T] f \chi_{B^c (0,R)}  \|_{  M_{p,q}^{t,r} (\rn)  } \\
		&\approx  \sup_{ \|g\|_{ \H _{p',q'} ^{t' , r'} (\rn)} =1 }  \int_\rn \left| [b_\epsilon,T] f (x) \chi_{B^c (0,R)}  (x) g (x) \right|  \d x \\
		& \lesssim     R_\epsilon ^{n /t'} \sup_{ \|g\|_{ \H _{p',q'} ^{t' , r'} (\rn)} =1 }  \int_\rn |x|^{-n} \left|  \chi_{B^c (0,R)}  (x) g (x) \right|  \d x \\ 
		& \lesssim   R_\epsilon ^{n /t'} \sup_{ \|g\|_{ \H _{p',q'} ^{t' , r'} (\rn)} =1 }  \sum_{k=1}^\infty  \int_{B(0,2^k R   )\backslash B(0,2^{k-1} R   )} |x|^{-n} \left| g (x) \right|  \d x \\ 
		& \lesssim R_\epsilon ^{n /t'} \sup_{ \|g\|_{ \H _{p',q'} ^{t' , r'} (\rn)} =1 }  \sum_{k=1}^\infty  (2^k R) ^{-n} \int_{B(0,2^k R   )\backslash B(0,2^{k-1} R   )} \left|   g (x) \right|  \d x \\ 
		& \lesssim  R_\epsilon ^{n /t'}  \sum_{k=1}^\infty  (2^k R) ^{-n}  (2^{k} R)^{n/t} \\ 
		& \lesssim  R_\epsilon ^{n /t'}  R^{-n/t'} . 
	\end{align*}
	Hence
	\begin{equation*}
		\| [b_\epsilon,T] f \chi_{B^c (0,R)}  \|_{  M_{p,q}^{t,r} (\rn)  } \lesssim R_\epsilon ^{n /t'}  R^{-n/t'} \| b_\epsilon\|_{L^\infty (\rn)}  \| f\|_{M_{p,q}^{t,r} (\rn)  }.
	\end{equation*}
	Since $ n/t' >0 $, we obtain $ \| [b_\epsilon,T] f \chi_{B^c (0,R)}  \|_{  M_{p,q}^{t,r} (\rn)  }  \to 0 $  as $R \to \infty$. Therefore, (ii) in Lemma \ref{precompact} follows.
	
	Next, we prove the equicontinuity of   $[b_\epsilon,T]$. It suffices to show that for every $\zeta >0$, if $|z|$ is sufficiently small (merely depending on $\zeta$) then, for every $f \in E$,
	\begin{equation*}
		\| [b_\epsilon,T] f (\cdot + z)  - [b_\epsilon,T] f (\cdot) \|_{ M_{p,q}^{t,r} (\rn)  }  \le C \zeta^\delta,
	\end{equation*}
	where  the constant $C >0$ is independent of $ f, \zeta, |z| $.
	
	To obtain the desired result, we recall the maximal operator of $T$, defined by 
	\begin{equation*}
		\mathcal T (f) (x) = \sup_{\zeta>0} |T_\zeta (f) (x) |,
	\end{equation*}
	where $T_\zeta$, the truncated operator of $T$, is defined by
	\begin{equation*}
		T_\zeta (f) (x) = \int_{ \{y\in \rn :|x-y| >\zeta  \} }  K (x,y)  f(y) \d y.
	\end{equation*}
	
	We first recall  Cotlar's  inequality (see \cite[p. 291, Lemma 6.1]{T86}). For all $\ell >0$,
	\begin{equation*}
		\mathcal T (f) (x) \lesssim \mathcal M_{\ell} (T (f) )  (x)  + \mathcal  M f (x).
	\end{equation*}
	Now, for $x\in \rn$, we have
	\begin{align*}
		& [b_\epsilon,T] f (x + z)  - [b_\epsilon,T] f (x) \\
		&= \int_\rn \left( b_\epsilon (y) - b_\epsilon (x+z) \right)  K (x+z , y) f(y) \d y \\
		&  \quad - \int_\rn \left( b_\epsilon (y) - b_\epsilon (x) \right)  K (x , y) f(y) \d y \\
		& = \int_{ \{ y:|x-y| > \zeta^{-1} |z| \} } \left( b_\epsilon (x) - b_\epsilon (x+z) \right)  K (x , y) f(y) \d y \\
		& \quad + \int_{\{ y:|x-y| > \zeta^{-1} |z| \}} \left( b_\epsilon (y) - b_\epsilon (x+z) \right) \left(  K (x +z , y)  -K (x,y) \right) f(y) \d y \\
		& \quad + \int_{\{ y:|x-y| \le \zeta^{-1} |z|\} } \left( b_\epsilon (x) - b_\epsilon (y) \right) K (x  , y)  f(y) \d y \\
		& \quad + \int_{\{ y:|x-y| \le \zeta^{-1} |z| \} } \left( b_\epsilon (y) - b_\epsilon (x+z) \right)  K (x +z , y)  f(y) \d y \\
		& = : I_1 + I_2 + I_3 + I_4.
	\end{align*}
	We first estimate $I_1$.
	\begin{equation*}
		|I_1| \le  \left|b_\epsilon (x) - b_\epsilon (x+z) \right|  \left| \int_{ \{ y:|x-y| > \zeta^{-1} |z|  \} } K (x , y) f(y) \d y\right| \le \| \nabla b_\epsilon \|_{L^\infty(\rn)}  |z| 	\mathcal T (f) (x).
	\end{equation*}
	Therefore, by  Cotlar's  inequality, we obtain
	\begin{align*}
		\| I_1 \|_{M_{p,q}^{t,r} (\rn)  } \le \| \nabla b_\epsilon \|_{L^\infty(\rn)}  |z| 	\| \mathcal T (f) \|_{M_{p,q}^{t,r} (\rn)  }  \lesssim \| \nabla b_\epsilon \|_{L^\infty(\rn)}  |z| \|f\|_{ M_{p,q}^{t,r} (\rn)  }.
	\end{align*} 
	As for $I_2$, by the smoothness of kernel $K$, we get
	\begin{equation*}
		|I_2| \lesssim \| b_\epsilon\|_{L^\infty(\rn)}  |z|^\delta \int_{\{ y:|y| > \zeta^{-1} |z| \} } \frac{|f(x-y)|}{|y|^{n+\delta}} \d y.
	\end{equation*}
	Applying Minkowski's inequality, we obtain
	\begin{align*}
		\| I_2 \|_{M_{p,q}^{t,r} (\rn)  } & \lesssim  \| b_\epsilon\|_{L^\infty(\rn)}  |z|^\delta \|f\|_{M_{p,q}^{t,r} (\rn)  } \int_{\{ y:|y| > \zeta^{-1} |z| \} } \frac{1}{|y|^{n+\delta}} \d y  \\
		& \lesssim  \| b_\epsilon\|_{L^\infty(\rn)}  \zeta^\delta \|f\|_{M_{p,q}^{t,r} (\rn)  } .
	\end{align*}
	For $I_3$, from the size condition of $K$ and the change of variable, we have
	\begin{align*}
		|I_3| &\lesssim \| \nabla b_\epsilon \|_{L^\infty(\rn)}  \int_{\{ y:|x-y| \le \zeta^{-1} |z|\} } |x-y|  |x-y|^{-n} |f(y)|\d y \\
		&= \| \nabla b_\epsilon \|_{L^\infty(\rn)}  \int_{\{ y:|y| \le \zeta^{-1} |z|\} }  |y|^{-n+1} |f(x-y)|\d y .
	\end{align*}
	Applying Minkowski's inequality again, we obtain
	\begin{align*}
		\| I_3 \|_{M_{p,q}^{t,r} (\rn)  } & \lesssim  \| b_\epsilon\|_{L^\infty(\rn)}  \|f\|_{M_{p,q}^{t,r} (\rn)  } \int_{\{ y:|y| \le \zeta^{-1} |z|\} }  |y|^{-n+1}\d y \\
		& \lesssim  \| b_\epsilon\|_{L^\infty(\rn)}  \|f\|_{M_{p,q}^{t,r} (\rn)  } \zeta^{-1} |z| .
	\end{align*}
	Finally, for $I_4$, arguing as in the proof of $I_3$, we also obtain
	\begin{align*}
		|I_4| & \lesssim \| \nabla b_\epsilon \|_{L^\infty(\rn)}  \int_{\{ y:|x-y| \le \zeta^{-1} |z|\} } |x+z-y|  |x+z-y|^{-n} |f(y)|\d y \\
		& = \| \nabla b_\epsilon \|_{L^\infty(\rn)}  \int_{\{ y:|x-y| \le \zeta^{-1} |z|\} }  |x+z-y|^{-n+1} |f(y)|\d y \\
		& \lesssim \| \nabla b_\epsilon \|_{L^\infty(\rn)}  \int_{\{ y:|y| \le \zeta^{-1} |z| +|z| \} }  |y|^{-n+1} |f(y)|\d y .
	\end{align*}
	Thus, applying Minkowski's inequality again, we obtain
	\begin{align*}
		\| I_4 \|_{M_{p,q}^{t,r} (\rn)  } & \lesssim  \| b_\epsilon\|_{L^\infty(\rn)}  \|f\|_{M_{p,q}^{t,r} (\rn)  } \int_{\{ y:|y| \le \zeta^{-1} |z| +|z| \} }  |y|^{-n+1} |f(y)|\d y \\
		& \lesssim  \| b_\epsilon\|_{L^\infty(\rn)}  \|f\|_{M_{p,q}^{t,r} (\rn)  } \left(  \zeta^{-1} |z| +|z| \right).
	\end{align*}
	Hence, we obtain
	\begin{equation*}
		\| [b_\epsilon,T] f (\cdot + z)  - [b_\epsilon,T] f (\cdot) \|_{ M_{p,q}^{t,r} (\rn)  } \lesssim |z| +  \zeta^\delta + \zeta^{-1} |z| +  \left(  \zeta^{-1} |z| +|z| \right) .
	\end{equation*}
	Let $|z| < \zeta^2 $,  and we get
	\begin{equation*}
		\| [b_\epsilon,T] f (\cdot + z)  - [b_\epsilon,T] f (\cdot) \|_{ M_{p,q}^{t,r} (\rn)  } \lesssim  \zeta^\delta .
	\end{equation*}
	Therefore,  $[b_\epsilon,T]$ satisfies (ii) in Lemma \ref{precompact}. Thus, we prove that $[b_\epsilon,T]$  is a compact operator on $ M_{p,q}^{t,r} (\rn) $.
	
	Conversely, 	
	suppose that $T$ is homogeneous, and  $[b_\epsilon,T]$  is a compact operator on $ M_{p,q}^{t,r} (\rn) $. By Theorem \ref{char b commutator}, we have that $b \in \BMO$. Next we show that $b \in \CMO$. To get this result,  we first recall a characterization of a function in $\CMO$.
	
	\begin{lemma}
		[Lemma 3, \cite{U78}] \label{CMO char}
		Let $b \in  \BMO $. Then $b \in \CMO$ if and only if $b$ satisfies the following three conditions:
		
		{ \rm (i)}   $ \lim_{R \to \infty}   \sup_{ Q:  |Q| = R }  \frac{1}{|Q| }\int_Q  |b(z) - b_Q | \d z  =0$;
		
		{ \rm (ii)}   $ \lim_{R \to \infty}   \sup_{ Q :  Q \subset B (0,R)^c }  \frac{1}{|Q| }\int_Q  |b(z) - b_Q | \d z  =0$;
		
		{ \rm (iii)}   $ \lim_{R \to 0}   \sup_{ Q:  |Q| = R }  \frac{1}{|Q| }\int_Q  |b(z) - b_Q | \d z  =0$;
	\end{lemma}
	
	\begin{lemma}[Lemma 5.3, \cite{DK24}]
		There exists a constant $M \ge 10$ such that for any ball $B(x_0,R_0) \subset \rn$, there is a ball $B(y_0,R_0)$, $ |x_0 -y_0 | = M R_0 $; and for any $x  \in B(x_0,R_0) $ , $T (\chi_{B(y_0,R_0)})  (x)$  does not change sign and 
		\begin{equation*}
			M^{-n} \lesssim | T (\chi_{B(y_0,R_0)})  (x)|.
		\end{equation*}
	\end{lemma}
	
	Now, we show that $b \in  \CMO$. Seeking a contradiction, we assume that $b \notin \CMO $. Therefore, $b$ violates (i)-(iii).
	
	Case 1: Suppose that $b$ violates (i). Then there exists a sequence of cubes $\{ Q_k = Q ( c_Q, \ell(Q_k) /2 ) \}_{k\ge 1}$ such that $\ell(Q_k) \to \infty$ as $k \to \infty$ and 
	\begin{equation*}
		\frac{1}{|Q_k|} \int_{Q_k}  |b(x) - b_{Q_k} | \d x \ge c_0 >0 \;  \operatorname{for \; each\;} k \ge 1.
	\end{equation*}
	Since  $\ell(Q_k) \to \infty$, we can choose a subsequence of $ \{ \ell(Q_k) \}_{k\ge 1}$ (still denoted by  $ \{ \ell(Q_k)  \}_{k\ge 1}$) such that
	\begin{equation*}
		\ell(Q_k)  \le C^{-1} \ell(Q_{k+1}) ,  \; \forall k \ge 1 
	\end{equation*}
	for some constant $C>10$.
	
	We denote $m_b (\Omega)$, by the median value of function $b$ on a bounded set $\Omega \subset \rn$ (possibly non-unique)  such that
	\begin{equation*}
		\begin{cases}
			| \{  x\in \Omega : b(x) > m_b (\Omega) \}|  \le \frac{1}{2} |\Omega|, \\
			| \{  x\in \Omega : b(x) < m_b (\Omega) \}|  \le \frac{1}{2} |\Omega|.
		\end{cases}
	\end{equation*} 
	Next, for any $k \ge 1$, let $y_k \in \rn$ be such that $ |c_{Q_k}  - y_k | = M \ell (Q_k) /2 $, $M >10$, and put
	\begin{align*}
		& \tilde B_k = B (y_k, R_k) ,  \quad \tilde B_{k,1} = \{ y \in \tilde B_k : b (y) \le m_b ( \tilde B_k) \}, \\
		& \tilde B_{k,1} = \{ y \in \tilde B_k : b (y) \ge m_b ( \tilde B_k) \}, \\
		& B_{k,1}= \{ x\in B_k : b(x) \ge m_b ( \tilde B_k) \}, B_{k,2}= \{ x\in B_k : b(x) < m_b ( \tilde B_k) \}, \\
		& F_{k,1} = \tilde B_{k,1} \backslash \bigcup_{j=1}^{k-1} \tilde B_j , \quad F_{k,2} = \tilde B_{k,2} \backslash \bigcup_{j=1}^{k-1} \tilde B_j .
	\end{align*}
	
	Then from \cite[(5.20)]{DK24}, we obtain
	\begin{equation*}
		\frac{c_0}{4} M^{-n} \lesssim \frac{1}{|B_k|} \int_{B_{k,1} }  \left|   [b,T] (\chi_{F_{k,1}})  (x) \right| \d x.
	\end{equation*}
	Set $ \phi_k (x) = R_k ^{ - n/t} \chi_{F_{k,1}}  (x) $ for each $k \ge 1$. Then for all $k\ge 1$,
	\begin{equation} \label{phi_k gtrsim 1}
		\| \phi_k \|_{  M_{p,q}^{t,r} (\rn) } \gtrsim  R_k ^{ - n/t} R_k^{n/t} =1.
	\end{equation}
	By the compactness of $[b,T]$ on $ M_{p,q}^{t,r} (\rn)$, there exists a subsequence of $\{ [b,T] (\phi_k) \}_{k\ge 1}  $ (still denoted as $\{ [b,T] (\phi_k) \}_{k\ge 1}  $) such that 
	\begin{equation} \label{bT phi_k to G}
		[b,T] (\phi_k) \to G \in M_{p,q}^{t,r} (\rn) 
	\end{equation}
	as $k \to \infty$. By (\ref{phi_k gtrsim 1}), we obtain
	\begin{equation} \label{1 lesssim G}
		1 \lesssim \|G\|_{ M_{p,q}^{t,r} (\rn) }.
	\end{equation}
	Next let $t_2  \in (t,r)$. Since $[b,T]$  maps  $M_{p,q}^{t_2,r} (\rn) \to M_{p,q}^{t_2,r} (\rn) $, then we get
	\begin{equation*}
		\|	[b,T] (\phi_k)  \|_{ M_{p,q}^{t_2,r} (\rn) } \lesssim \|b\|_{BMO (\rn)} \|\phi_k \|_{ M_{p,q}^{t_2,r} (\rn) } \lesssim  \|b\|_{BMO (\rn)} R_k ^{ - n/t} R_k^{n/t_2}.
	\end{equation*}
	This implies that $	[b,T] (\phi_k) \to 0 $ in $M_{p,q}^{t_2,r} (\rn)$ when $k \to \infty$. This contradicts (\ref{1 lesssim G}). Thus, $b$ cannot violate (i).
	
	Case 2: Assume that $b$ violate (ii). The proof of this case is similar to the one of Case 1. We omit it here.
	
	Case 3: Assume that $b$ violate (iii). The proof of this case is similar to the one of Case 1 by considering $\delta_k $ in place of $R_k$, with $\delta_k \to 0$. Since $\delta_k \to 0$, then for every $C >10$, there is a subsequence of $\{ \delta_k\}_{k\ge 1}$ (still denoted as $\{\delta_k\}_{k\ge 1}$)  such that $\delta_{k+1}  \le C^{-1} \delta_k $.
	
	Furthermore, we need to redefine $F_{k,1}$ and $F_{k,2}$:
	\begin{equation*}
		F_{k,1} = \tilde B_{k,1} \backslash \bigcup_{j=K+1}^{\infty} \tilde B_j , \quad  F_{k,2} = \tilde B_{k,2} \backslash \bigcup_{j=k+1}^{\infty} \tilde B_j .
	\end{equation*}
	By the definition of the median value, it is not difficult to verify that $ |F_{k,1}| \approx | \tilde B _k | $, and $ |F_{k,2}| \approx | \tilde B _k | $. This allow us to mimic the proof of Case 1 to get (\ref{bT phi_k to G}) and (\ref{1 lesssim G}).
	
	Now let fix $w $ such that  $1< w<t$. Then we have
	\begin{align*}
		\| [b,T] (\phi_k) \|_{ L^w (\rn) } & \lesssim  \|b\|_{BMO (\rn)} \|\phi_k \|_{ L^w (\rn) } =  \|b\|_{BMO (\rn)} \|\delta_k ^{ - n/t} \chi_{F_{k,1}}   \|_{ L^w (\rn) } \\
		& \lesssim  \|b\|_{BMO (\rn)} \delta_k ^{ - n/t} \delta_k^{n/w} .
	\end{align*}
	This shows  $[b,T] (\phi_k) \to 0$ in $L^w (\rn)$ when $k \to \infty$. As a result, we obtain $G =0$, which contradicts (\ref{1 lesssim G}). Therefore, $b$ must satisfy (iii) in the Lemma \ref{CMO char}.
	
	From the above cases, we conclude that $b \in \CMO $. Hence, we complete the proof.
\end{proof}

\section*{Ethical Approval} 

No applicable for both human and/ or animal studies.

\section*{Competing interests}

The authors have no competing interests to declare that are relevant to the content of this article.

\section*{Authors' contributions}

T. Bai and J. Xu wrote the draft. All authors reviewed the manuscript.


\section*{Availability of data and materials} No data and materials was used for the research described in the article.


\end{document}